\definecolor{zzttqq}{rgb}{0.27,0.27,0.27}
\definecolor{tttttt}{rgb}{0.2,0.2,0.2}
\definecolor{wwwwww}{rgb}{0.4,0.4,0.4}
\definecolor{blau}{rgb}{0.1,0.0,0.9}
\definecolor{gruen}{cmyk}{1.0,0.2,0.7,0.07}
\definecolor{mag}{cmyk}{0.0,0.9,0.3,0.0}
\newcounter{komcounter}
\numberwithin{komcounter}{section}
\newcommand{\klockan}{\the\hours:{\ifnum\minutes<10 0\fi}\the\minutes}
\newcommand{\tid}{\today\ \klockan}
\newcommand{\prtid}{\smash{\raise 10mm \hbox{\LaTeX ed \tid}}}
\renewcommand{\prtid}{}
\def\sectionmark#1{} 
\def\subsectionmark#1{}
\newcommand{\sectnr}{\ifnum \c@secnumdepth >\z@
                 \thesection.\hskip 1em\relax \fi}
\def\@evenhead{\footnotesize\rm\thepage\hfil\leftmark\hfil\llap{\prtid}}
\def\@oddhead{\footnotesize\rm\rlap{\prtid}\hfil\rightmark\hfil\thepage}
\def\tableofcontents{\section*{Contents} 
 \@starttoc{toc}}
\let\Thebibliography=\thebibliography
\renewcommand{\thebibliography}[1]{\def\@mkboth##1##2{}\Thebibliography{#1}
\addcontentsline{toc}{section}{References}
\frenchspacing 
\setlength{\@topsep}{0pt}
\setlength{\itemsep}{0pt}%
\setlength{\parskip}{0pt plus 2pt}%
}
\newcommand{\authortitle}[2]{\author{#1}\title{#2}\markboth{#1}{#2}}
\def\@seccntformat#1{\csname the#1\endcsname.\quad}
\long\def\@makecaption#1#2{%
  \vskip\abovecaptionskip
  \sbox\@tempboxa{ #1. #2}%
  \ifdim \wd\@tempboxa >\hsize
    #1. #2\par
  \else
    \global \@minipagefalse
    \hb@xt@\hsize{\hfil\box\@tempboxa\hfil}%
  \fi
  \vskip\belowcaptionskip}
\renewcommand*\l@section[2]{%
  \ifnum \c@tocdepth >\z@
    \addpenalty\@secpenalty
    \setlength\@tempdima{2em}
    \begingroup
      \parindent \z@ \rightskip \@pnumwidth
      \parfillskip -\@pnumwidth
      \leavevmode \bfseries
      \advance\leftskip\@tempdima
      \hskip -\leftskip
      #1\nobreak\hfil \nobreak\hb@xt@\@pnumwidth{\hss #2}\par
    \endgroup
  \fi}
\def\numberline#1{\hb@xt@\@tempdima{#1.\hfil}} 
\DeclareMathOperator{\Mod}{Mod}
\newcommand{\Modp}{\Mod_p}
\DeclareMathOperator{\capa}{cap}
\newcommand{\capp}{\capa_p}
\DeclareMathOperator{\dist}{dist}
\DeclareMathOperator{\diam}{diam}
\DeclareMathOperator{\spt}{supp}
\DeclareMathOperator{\supp}{supp}
\DeclareMathOperator{\interior}{int}
\newcommand{\Cp}{{C_p}}
\theoremstyle{plain}
\newtheorem{theorem}{Theorem}[section]
\newtheorem{cor}[theorem]{Corollary}
\newtheorem{thm}[theorem]{Theorem}
\newtheorem{lemma}[theorem]{Lemma}
\newtheorem{lem}[theorem]{Lemma}
\newtheorem{prop}[theorem]{Proposition}
\theoremstyle{definition}
\newtheorem{definition}[theorem]{Definition}
\newtheorem{deff}[theorem]{Definition}
\newtheorem{ex}[theorem]{Example}
\newtheorem{example}[theorem]{Example}
\newtheorem{rem}[theorem]{Remark}
\newtheorem{remark}[theorem]{Remark}
\newtheorem{openprob}[theorem]{Open problem}
\numberwithin{equation}{section}
\newcounter{saveenumi}
\let\Enumerate=\enumerate
\renewcommand{\enumerate}{\Enumerate%
\setlength{\@topsep}{0pt}
\setlength{\itemsep}{0pt}%
\setlength{\parskip}{0pt plus 1pt}%
\renewcommand{\theenumi}{\textup{(\alph{enumi})}}%
\renewcommand{\labelenumi}{\theenumi}%
}
\let\endEnumerate=\endenumerate
\renewcommand{\endenumerate}{\endEnumerate\unskip}
\newcommand{\art}[6]{{\sc #1, \rm #2, \it #3\/ \bf #4 \rm (#5), \mbox{#6}.}}
\newcommand{\auth}[2]{{#2. #1}}
\def\idxauth{\auth}
\newcommand{\artprep}[3]{{\sc #1, \rm #2, #3.}}
\newcommand{\artin}[3]{{\sc #1, \rm #2, in #3.}}
\newcommand{\book}[3]{{\sc #1, \it #2, \rm #3.}}
\newcommand{\AND}{{\rm and }}
\newcommand{\eqv}{\ensuremath{
\mathchoice{\quad \Longleftrightarrow \quad}{\Leftrightarrow}
                {\Leftrightarrow}{\Leftrightarrow}} }
\newcommand{\imp}{\ensuremath{\Rightarrow} }
\gdef\eeaa#1pt{#1}}      
\def\accentadjtext#1{\setbox0\hbox{$#1$}\kern   
                \expandafter\eeaa\the\fontdimen1\textfont1 \ht0 }
\def\accentadjscript#1{\setbox0\hbox{$#1$}\kern 
                \expandafter\eeaa\the\fontdimen1\scriptfont1 \ht0 }
\def\accentadjscriptscript#1{\setbox0\hbox{$#1$}\kern   
                \expandafter\eeaa\the\fontdimen1\scriptscriptfont1 \ht0 }
\def\accentadjtextback#1{\setbox0\hbox{$#1$}\kern       
                -\expandafter\eeaa\the\fontdimen1\textfont1 \ht0 }
\def\accentadjscriptback#1{\setbox0\hbox{$#1$}\kern     
                -\expandafter\eeaa\the\fontdimen1\scriptfont1 \ht0 }
\def\accentadjscriptscriptback#1{\setbox0\hbox{$#1$}\kern 
                -\expandafter\eeaa\the\fontdimen1\scriptscriptfont1 \ht0 }
\def\itoverline#1{{\mathsurround0pt\mathchoice
        {\rlap{$\accentadjtext{\displaystyle #1}
                \accentadjtext{\vrule height1.593pt}
                \overline{\phantom{\displaystyle #1}
                \accentadjtextback{\displaystyle #1}}$}{#1}}
        {\rlap{$\accentadjtext{\textstyle #1}
                \accentadjtext{\vrule height1.593pt}
                \overline{\phantom{\textstyle #1}
                \accentadjtextback{\textstyle #1}}$}{#1}}
        {\rlap{$\accentadjscript{\scriptstyle #1}
                \accentadjscript{\vrule height1.593pt}
                \overline{\phantom{\scriptstyle #1}
                \accentadjscriptback{\scriptstyle #1}}$}{#1}}
        {\rlap{$\accentadjscriptscript{\scriptscriptstyle #1}
                \accentadjscriptscript{\vrule height1.593pt}
                \overline{\phantom{\scriptscriptstyle #1}
                \accentadjscriptscriptback{\scriptscriptstyle #1}}$}{#1}}}}
\newcommand{\setm}{\setminus}
\renewcommand{\subsetneq}{\varsubsetneq}
\renewcommand{\emptyset}{\varnothing}
\def\vint{\mathop{\mathchoice%
          {\setbox0\hbox{$\displaystyle\intop$}\kern 0.22\wd0%
           \vcenter{\hrule width 0.6\wd0}\kern -0.82\wd0}%
          {\setbox0\hbox{$\textstyle\intop$}\kern 0.2\wd0%
           \vcenter{\hrule width 0.6\wd0}\kern -0.8\wd0}%
          {\setbox0\hbox{$\scriptstyle\intop$}\kern 0.2\wd0%
           \vcenter{\hrule width 0.6\wd0}\kern -0.8\wd0}%
          {\setbox0\hbox{$\scriptscriptstyle\intop$}\kern 0.2\wd0%
           \vcenter{\hrule width 0.6\wd0}\kern -0.8\wd0}}%
          \mathopen{}\int}
\newcommand{\PP}{{\cal P}}
\newcommand{\R}{{\mathbb R}}
\newcommand{\oR}{{\overline{\mathbb R}}}
\newcommand{\N}{{\mathbb N}}
\newcommand{\C}{{\mathbb C}}
\newcommand{\D}{{\mathbb D}}
\newcommand{\p}{{$p\mspace{1mu}$}}
\newcommand{\pp}{{$p\mspace{1mu}$}}
\newcommand{\Np}{N^{1,p}}
\newcommand{\de}{\delta}
\newcommand{\End}{E}
\newcommand{\g}{\gamma}
\newcommand{\ga}{\gamma}
\newcommand{\gat}{\tilde{\gamma}}
\newcommand{\G}{\Gamma}
\newcommand{\Ga}{\Gamma}
\newcommand{\Om}{\Omega}
\newcommand{\eps}{\varepsilon}
\newcommand{\la}{\lambda}
\newcommand{\dmu}{d\mu}
\newcommand{\loc}{_{\rm loc}}
\newcommand{\al}{\alpha}
\newcommand{\bs}{\setminus}
\newcommand{\bdry}{\partial}
\newcommand{\clOm}{{\overline{\Om}}}
\newcommand{\clOmm}{{\overline{\Om}\mspace{1mu}}^M}
\newcommand{\clOmP}{{\overline{\Om}\mspace{1mu}}^P}
\newcommand{\Gj}[2]{G_{#1}(#2)}
\newcommand{\Hr}[1]{H{(#1)}}
\newcommand{\xij}{{x_{i,j}}}
\newcommand{\xijj}{{x_{i,j+1}}}
\newcommand{\xixjj}{{x_{i_x,j+1}}}
\newcommand{\xiio}{{x_{i+1,0}}}
\newcommand{\xoj}{{x_{0,j}}}
\newcommand{\Bij}{{B_{i,j}}}
\newcommand{\Bixj}{{B_{i_x,j}}}
\newcommand{\Bijj}{{B_{i,j+1}}}
\newcommand{\Bixjj}{{B_{i_x,j+1}}}
\newcommand{\Biio}{{B_{i+1,0}}}
\newcommand{\Bio}{{B_{i,0}}}
\newcommand{\Boo}{{B_{0,0}}}
\newcommand{\Boj}{{B_{0,j}}}
\newcommand{\dM}{d_M}
\newcommand{\bdy}{\partial}
\newcommand{\bdySP}{\partial_{\rm SP}}
\newcommand{\bdyM}{\partial_{M}}
\newcommand{\Gjr}{\Gj{j}{r}}
\newcommand{\Ct}{\widetilde{C}}
\newcommand{\Omt}{\widetilde{\Om}}
\newcommand{\CPI}{C_{\textup{PI}}}
\newenvironment{ack}{\medskip{\it Acknowledgement.}}{}
\begin{document}

\authortitle{Tomasz Adamowicz, Anders Bj\"orn, Jana Bj\"orn
    and Nageswari Shanmugalingam}
{Prime ends for domains in metric spaces}
\author{
Tomasz Adamowicz
\\
\it\small Department of Mathematics, Link\"opings universitet, \\
\it\small SE-581 83 Link\"oping, Sweden\/{\rm ;}
\it \small tadamowi@gmail.com 
\\
\\
Anders Bj\"orn \\
\it\small Department of Mathematics, Link\"opings universitet, \\
\it\small SE-581 83 Link\"oping, Sweden\/{\rm ;}
\it \small anders.bjorn@liu.se
\\
\\
Jana Bj\"orn \\
\it\small Department of Mathematics, Link\"opings universitet, \\
\it\small SE-581 83 Link\"oping, Sweden\/{\rm ;}
\it \small jana.bjorn@liu.se
\\
\\
Nageswari Shanmugalingam
\\
\it \small  Department of Mathematical Sciences, University of Cincinnati, \\
\it \small  P.O.\ Box 210025, Cincinnati, OH 45221-0025, U.S.A.\/{\rm ;}
\it \small shanmun@uc.edu
}

\date{April 27, 2012}
\maketitle

\noindent{\small
{\bf Abstract}.
In this paper we propose a new definition of prime ends for domains in metric spaces 
under rather general assumptions. We compare our
prime ends to those of Carath\'eodory and N\"akki. 
Modulus ends and prime ends, defined by means of the \p-modulus of curve 
families, are also discussed and related to the prime ends. 
We provide 
characterizations of singleton prime ends and 
relate them to the notion of accessibility 
of boundary points, and introduce a topology on the prime  end boundary. 
We also study relations between the prime end boundary and the Mazurkiewicz boundary.
Generalizing the notion of John domains, we introduce  almost John domains,
and we investigate
prime ends in the settings of John domains, almost
John domains and domains which are finitely connected at the boundary. 
}

\bigskip
\noindent
{\small \emph{Key words and phrases}:
Accessibility,
almost John domain,
capacity, 
doubling measure,
end,
finitely connected at the boundary,
John domain,
locally connected,
Mazurkiewicz distance,
metric space, 
\p-modulus, 
Poincar\'e inequality,
prime end,
uniform domain.
}

\medskip
\noindent
{\small Mathematics Subject Classification (2010):
Primary: 30D40; 
Secondary: 30L99, 31B15, 31B25, 31C45, 31E05, 35J66,
54F15, 54F35.
}

\tableofcontents

\section{Introduction}
The classical Dirichlet boundary value problem associated with 
a differential operator $L$ consists in
finding a function $u$
 which  satisfies the equation $L u = 0$ on  a domain $\Om$ 
and the boundary condition $u=f$ on $\partial\Om$ for given boundary
data $f:\partial\Om\to\R$.  
This problem has been studied extensively for various elliptic differential
operators, including the Laplacian $\Delta$ and
its nonlinear counterpart
the \p-Laplacian $\Delta_p$. 
Perhaps the most general method for solving the Dirichlet problem
for these equations is the Perron method introduced independently by 
Perron~\cite{per} and Remak~\cite{remak},
and further refined in the linear case
by Wiener and Brelot
(and therefore often called the PWB method in the linear case).
For the nonlinear case see
Heinonen--Kilpel\"ainen--Martio~\cite{hkm} and the notes therein,
and Bj\"orn--Bj\"orn--Shanmugalingam~\cite{BBS2}.

The Dirichlet problem, as posed above with $f$ defined on
the topological boundary $\bdry\Om$, 
is in many cases unnecessarily restrictive. 
For example, in the slit disk (see Example~\ref{ex-slit-disc})
one boundary value 
is prescribed for each point in the slit,
even though it would be more natural 
to have two boundary values 
at those points (except for the tip),
obtained by approaching the slit from either side.
On the other hand, in some domains with complicated boundary there may
be nontrivial parts of the boundary which are essentially invisible for
the solutions and therefore should be treated 
accordingly in the Dirichlet problem.

For linear operators such as $\Delta$, 
this drawback has been earlier addressed on $\R^n$ and Riemannian manifolds
using the Martin boundary, see Martin~\cite{martin}, Ancona~\cite{Anc2}, \cite{Anc3}
and Anderson--Schoen~\cite{AnSc}. The minimal Martin kernel functions,
which compose the Martin boundary of the domain, are analogs of 
Poisson kernels for more irregular domains and
provide us with integral representations for the solutions of the corresponding Dirichlet problem. In the 
slit disk one can see that there are two distinct minimal Martin kernels corresponding to each point in the slit
(except for the  tip).
Although, as shown by e.g.\ Holopainen--Shanmugalingam--Tyson~\cite{HST}
and Lewis--Nystr\"om~\cite{LNy}, 
a Martin boundary can be defined even for nonlinear operators such as
the \p-Laplacian 
and its generalizations to metric spaces,
we cannot hope to use the Martin boundary as a 
kernel for solving
the corresponding Dirichlet problem in the nonlinear case. 

The goal of this
paper is to instead develop an alternative notion of boundary, 
called the prime end boundary, which can give rise  to
a more comprehensive potential theory suitable for the Perron method
and taking the above geometrical concern into account. 
Prime ends were introduced 
by Carath\'eodory~\cite{car}  in 1913  for simply connected planar domains.
His approach is suitable also for finitely connected planar domains,
but to be able to treat more general domains satisfactorily we propose
a different definition. 
Even though we work in metric spaces, 
our results are new also for simply connected planar domains,
and our prime ends are different
from Carath\'eodory's also in this case.

Roughly speaking, a prime end corresponds to a part of the topological
boundary which can be reached from the domain in a connected way.
In very special domains, such as uniform domains
and domains which are locally connected at the boundary, the prime end boundary 
coincides with the topological boundary, but in more general domains
it need not even satisfy the T2 separation condition.
In between, there is a large class of domains for which the prime end boundary
behaves well and
provides more flexibility in the Dirichlet problem than the topological
boundary.

We introduce a natural topology on the prime end boundary, 
with the aim at solving the Dirichlet problem with respect to 
the prime end boundary.
Such a Dirichlet problem, and the related potential theory, are studied 
in a companion paper to this one, 
Bj\"orn--Bj\"orn--Shanmugalingam~\cite{BBSdir}.
Even the standard 
Dirichlet problem with respect to the topological boundary
benefits from the study of prime ends,
see A.~Bj\"orn~\cite{ABcomb}.

As already mentioned, the notion of prime ends was first proposed by 
Cara\-th\'eo\-dory for simply connected planar domains.
It was later used successfully by e.g.\  Beurling~\cite{Beu}, 
Ohtsuka~\cite{oh}, Ahlfors~\cite{Ah2},  N\"akki~\cite{na}  and 
Minda--N\"akki~\cite{Min} 
to study problems related to boundary regularity of
conformal and quasiconformal mappings in Euclidean domains. 
Others who have formulated versions of prime ends include
Kaufmann~\cite{Kau}, Mazurkiewicz~\cite{Maz},  Epstein~\cite{Ep}
and Karmazin~\cite{Ka3}, \cite{Ka}.
More recently, prime ends have been used by e.g.\  Ancona~\cite{Anc} and
Rempe~\cite{Re} in various settings to study problems related to
potential theory and dynamical systems.
These studies are set in Euclidean domains or domains in a topological 
manifold (as in~\cite{Maz}), and generally require
that the domain be a simply connected planar domain,
or that it is locally connected at the boundary.
The literature on prime ends is quite substantial, and we cannot hope 
to provide an exhaustive
list of references here; we recommend the interested reader to also 
consider papers cited in the above references.

The exposition of this paper is as follows. 
Section~\ref{sect-prelim} is devoted to the preliminary 
notions and definitions needed in the paper.  
Our aim is to develop prime ends in metric spaces under rather general
assumptions.
We use the standard assumptions that the metric space is complete 
and equipped with a doubling measure supporting a Poincar\'e inequality,
but often these assumptions can be substantially weakened; this
is pointed out at the end of Section~\ref{sect-prelim}.
Examples of spaces satisfying the standard assumptions mentioned above include
various manifolds, Heisenberg and Carnot groups, Carnot--Carath\'eodory spaces,
certain fractals and some
closed subsets of $\R^n$, see Appendix~A in
Bj\"orn--Bj\"orn~\cite{BBbook}.

In Section~\ref{sect-Car} we give a brief description of Carath\'eodory's
notion of prime ends, the prototype 
for most subsequent notions of prime ends, including ours.
The discussion in Section~\ref{sect-Car} provides a framework for 
our definition of ends 
and prime ends introduced 
in Section~\ref{ends-prime-e}. 
We illustrate our notion of ends and prime ends 
through various examples in Section~\ref{comp-PEvsCar}, where we also
compare our ends and prime ends with Carath\'eodory's and discuss some 
advantages of our approach.

To tie in the nonlinear potential theory, in Section~\ref{sect-Modp-ends}
we further refine the notions of ends and prime ends by introducing 
$\text{Mod}_p$-ends and $\text{Mod}_p$-prime ends, which 
through the \p-modulus of curve 
families take into account the geometry of the domain.
This part is somewhat motivated by the works
of Ahlfors~\cite{Ah2},  Beurling~\cite{Beu} and
N\"akki~\cite{na}.
We also relate our prime ends to \p-parabolicity and point out 
some geometrical differences 
arising from the conformal and nonconformal \p-modulus.

In~\cite{CL}, Collingwood and Lohwater provided classifications 
of Carath\'eodory prime ends. 
For us,  the class of singleton prime ends is 
the most interesting because of its connection to (path)accessibility and
finite connectedness at the boundary. 
Hence in Section~\ref{sect-access} we study ends with
singleton impressions, 
provide geometric conditions for an end to be a prime end, and
present a simple way of constructing prime ends 
 at accessible boundary points. 
For these results our modification of Carath\'eodory's definition
is essential.

The goal of Section~\ref{sect-top} is to construct a topology
on the collection of (prime) ends, or rather
on the (prime) end closure of the domain.  
Here we also
characterize the convergence of a sequence of ends to an end in terms of the 
Mazurkiewicz distance (sometimes called the inner diameter distance).
Section~\ref{sect-Mazur} is devoted to studying the topology on the prime end boundary.
We show that the set of singleton prime ends is homeomorphic to the Mazurkiewicz boundary 
obtained by completing the domain with respect to its Mazurkiewicz distance,
and this homeomorphism extends in a natural way to the domain itself. 

Section~\ref{sect-finconn} focuses on studying the prime end boundary 
of domains which are finitely connected at the boundary. 
We show that in such domains 
the prime end closure of the domain is metrizable, and that
all prime ends have singleton impressions and
can be obtained as connected
components of small neighborhoods of boundary points. 

The final section of this paper, Section~\ref{sect-John}, 
focuses on a more special class of domains, namely John domains.
For a more extensive theory,
we introduce almost John domains,
which e.g.\ makes it possible to include some cusp domains
in our discussion.
Here we show that such domains are finitely connected at the boundary, 
have only singleton prime ends, and that each boundary point is 
accessible and corresponds to the impression of at least one prime end.
For certain values of $p$ we also show that prime ends 
in almost John domains are exactly the
$\Modp$-ends, which reflects the use of extremal length in the construction
of Carath\'eodory prime ends due to Schlesinger~\cite{sc}.
For the more special class of uniform domains some additional results are 
obtained.

Auxiliary results related to modulus and capacitary estimates used in 
this paper are collected in an appendix. 
Some of them are also of independent interest.
Examples are given throughout the paper to illustrate various 
geometric situations and properties of prime ends.
In fact, all the examples given
in this paper are Euclidean domains, and indicate the possibilities 
which can occur even in the Euclidean setting when the
domains are not as nice as the ones considered in the works of Ahlfors, 
Carath\'eodory and N\"akki. 
The theorems, on the other hand, are formulated under least possible
assumptions to emphasize the generality of our theory.

\begin{ack}
This research began in 2008  when T.~A. was a postdoc at
the University of Cincinnati.
Part of the research was done during the visit of A.~B. and J.~B to
the University of Cincinnati in 2010,
and during the visit of N.~S. to Link\"opings universitet in 2011.
The authors wish to thank these institutions for their kind hospitality.
We also wish to thank David Herron, David Minda and Raimo N\"akki for
helpful discussions related to this research.

A.~B. and J.~B. were supported by the Swedish Science Research Council.
A.~B. was also a Fulbright scholar during his
visit to the University of Cincinnati, supported by the Swedish
Fulbright Commission,
while J.~B. was a Visiting Taft Fellow
during her visit to the University of Cincinnati,
supported by the Charles Phelps Taft Research Center at the University
of Cincinnati.
N.~S. was supported by the Taft Research Center
and by the Simons Foundation grant \#200474.

A.~B. and J.~B. belong to the European Science
Foundation Networking Programme \emph{Harmonic and Complex Analysis and
Applications} and to the NordForsk network \emph{Analysis and Applications}.
\end{ack}

\section{Preliminaries}
\label{sect-prelim}

Let $(X,d,\mu)$ be a metric measure space equipped with
a metric $d$ and a measure $\mu$ (and containing more than one point).
We will
assume that  $\mu$ is a Borel measure such that
$0<\mu(B)<\infty$ for all balls $B$ in $X$.

We also let $1 \le p  <\infty$
be fixed. We shall sometimes impose additional assumptions on $p$.
Throughout the paper, $\Om\subsetneq X$ will be a bounded domain in
$X$, i.e.\ a  
bounded nonempty connected open subset of $X$ that is not $X$ itself.

A wide class of metric measure spaces of current interest, including
weighted and unweighted Euclidean spaces, Riemannian manifolds,
Heisenberg groups, and other Carnot--Carath\'eodory spaces, all have
locally doubling measures that support a Poincar\'e inequality
locally. Since we are interested in unifying potential theory on all
these spaces, we will assume these properties for the metric spaces
considered in this paper. Because the domain under consideration is
assumed to be bounded, there is no loss of generality in assuming
the doubling condition and the Poincar\'e inequality as global
properties, with only simple modifications needed to go from spaces
with globally held properties to spaces with locally held
properties.

A measure $\mu$ is said to be \emph{doubling\/} if there is a constant $C_\mu>0$
such that for all balls $B=B(x,r)=\{y \in X : d(x,y)<r\}$,
\[
   \mu(2B)\le C_\mu \mu(B),
\]
where $\la B(x,r)=B(x,\la r)$. If $\mu$ is doubling, then $X$ is
complete if and only if it is proper (i.e.\ every closed bounded set
is compact), see Proposition~3.1 in Bj\"orn--Bj\"orn~\cite{BBbook}.

A consequence of the doubling condition is the following lower mass
bound. There exist $C, Q>0$ such that for all $x\in X$, $0<r\le R$
and $y\in B(x,R)$,
\begin{equation}\label{lower-mass-bound}
  \frac{\mu(B(y,r))}{\mu(B(x,R))}
\ge  \frac{1}{C} \left(\frac{r}{R}\right)^Q.
\end{equation}
Indeed,  $Q=\log_2 C_\mu$ and $C=C_\mu^2$ will do, 
see Lemma~3.3 in Bj\"orn--Bj\"orn~\cite{BBbook},
but there may be a better, i.e.~smaller, exponent $Q$.
Note also that \eqref{lower-mass-bound} implies that
$\mu$ is doubling, i.e.~$\mu$ is doubling if and only if
there is an exponent $Q$ such that \eqref{lower-mass-bound} holds.

If $X$ is also connected then there exist constants $C>0$ and $q>0$
such that for all $x\in X$, $0<r\le R$ and $y\in B(x,R)$,
\begin{equation}\label{upper-mass-bound}
    \frac{\mu(B(y,r))}{\mu(B(x,R))} \le  C \left(\frac{r}{R}\right)^{q}.
\end{equation}
Note that we always have $0<q\le Q$ and that any $0<q'<q$ and $Q'>Q$ will
do as well.

We say that $X$ is \emph{Ahlfors $Q_0$-regular} if there is a constant $C$
such that
\[
           \frac{1}{C} r^{Q_0} \le \mu(B(x,r)) \le C r^{Q_0}
\]
for all balls $B(x,r) \subset X$ with $r < 2 \diam X$.
In this case, the best choices for $q$ and $Q$
in \eqref{lower-mass-bound} and \eqref{upper-mass-bound}
are to let
$q=Q=Q_0$.
We emphasize that in this paper we do \emph{not} restrict ourselves to
Ahlfors regular metric spaces.

Garofalo--Marola~\cite{GaMa} introduced the \emph{pointwise dimension}
$q_0(x)$ (called $Q(x)$ in~\cite{GaMa})
as the supremum of all $q>0$ such that
\begin{equation}\label{upper-mass-bound-q}
    \frac{\mu(B(x,r))}{\mu(B(x,R))} \le  C_q \left(\frac{r}{R}\right)^{q}
\end{equation}
for some $C_q>0$ and all $0<r\le R\le\diam X$. Since the
analysis considered in this paper is local, we do not need 
the above inequality for all $R>0$. 

\begin{deff}\label{ptwise-dim}
Given $x \in X$
we consider the \emph{pointwise dimension set} $Q(x)$ of all possible $q>0$
for which there are constants $C_q>0$ and $R_q>0$ such that
\eqref{upper-mass-bound-q} holds
for all $0<r\le R\le R_q$. 
\end{deff}

Observe that $Q(x)$  
is a bounded interval. Indeed,
$Q(x)=(0,q_0]$ or $Q(x)=(0,q_0)$
for some nonnegative  number $q_0$,
as in the following examples.

\begin{example}
Let $X=\R^n$ with the measure $d\mu=w\,dx$, where
\[
w(x)=\max\{1,\log(1/|x|)\}.
\]
Then for all $0<r\le1/e$ we have
\[
\mu(B(0,r))=\al_n r^n \biggl( \frac1n + \log \frac1r \biggr),
\]
where $\al_n$ is the Lebesgue measure of the unit ball in $\R^n$.
It follows that for $x=0$, \eqref{upper-mass-bound-q} holds for all $q<n$
but not for $q=n$, i.e.\ $Q(0)=(0,n)$.
On the other hand, $Q(x)=(0,n]$ for $x\ne0$.
\end{example}

\begin{example}
Let $X= \R^n$ be equipped with the doubling measure
$d\mu(x)=|x|^\al\, dx$  for some fixed $\al>-n$.
 Then $\mu(B(0,r))$ is comparable to $r^{n+\al}$, while for $x\ne0$,
$\mu(B(x,r))$ is comparable to $r^n$ with comparison constant
depending on $|x|$. Thus $Q(0)=(0, n+\alpha]$ whereas
$Q(x)=(0, n]$ if $x\not =0$. It follows that
\eqref{lower-mass-bound} and~\eqref{upper-mass-bound} hold with
$q=\min\{n,n+\al\}$ and $Q=\max\{n,n+\al\}$.
Note that for $\al$ close to $-n$ we have $q$ close to 0.
\end{example}

Note that $q\le q_0  \le Q$, where $q$ and $Q$ are as
in~\eqref{lower-mass-bound} and~\eqref{upper-mass-bound}.
If the measure $\mu$ is Ahlfors $Q_0$-regular, then
$q_0=Q_0$ and $Q(x)=(0,Q_0]$ for all $x$. The pointwise
dimension $Q(x)$ will appear in some of our results in connection
with the capacity and the modulus of curve families.

\begin{deff}
A nonnegative Borel  function $g$ on $X$
is an \emph{upper gradient}
 of an extended real-valued function $u:X\to[-\infty,\infty]$
if for all nonconstant rectifiable curves $\gamma:[0,l_\ga]\to X$,
parameterized by arc length $ds$,
 we have
 \begin{equation}\label{eq:upperGrad}
    |u(\ga(0))-u(\ga(l_\ga))|\le \int_\gamma g\, ds 
 \end{equation}
whenever both $u(\g(0))$ and $u(\g(l_\g))$ are finite, and
$\int_\g g\, ds=\infty $ otherwise.
If $g$ is a nonnegative measurable function (not necessarily Borel) on $X$
and if (\ref{eq:upperGrad}) holds for \p-a.e.~nonconstant
rectifiable curve,
then $g$ is a \emph{\p-weak upper gradient} of~$u$.

By saying that a property holds for \p-a.e.\ rectifiable curve, we
mean that it fails only for a curve family $\Ga$ with zero 
\p-modulus,
i.e\ that there is a Borel function $\rho \in L^p(X)$
such that $\int_\ga \rho \, ds=\infty$ for all $\ga \in \Ga$.
This is consistent with the definition of \p-modulus
in~\eqref{eq-deff-modulus} below.
Since the \p-weak upper gradient $g$ can be modified on a set of measure
zero to obtain a Borel function, it can be shown that
$\int_{\gamma} g\,ds$ is defined (with a
value in $[0,\infty]$) for \p-a.e.\ rectifiable curve.
\end{deff}

Here and throughout the paper we require curves to be nonconstant,
unless otherwise stated explicitly.

Upper gradients were introduced by Heinonen
and Koskela~\cite{HeKo96}, \cite{hk} (where they were called
very weak gradients),
whereas \p-weak upper gradients were first defined in
Koskela--MacManus~\cite{KoMc}.
In \cite{KoMc} it was also shown
that if $g \in L^p(X)$ is a \p-weak upper gradient of $u$,
then one can find a sequence $\{g_j\}_{j=1}^\infty$
of upper gradients of $u$ such that $g_j \to g$ in $L^p(X)$.
If $u$ has an upper gradient in $L^p(X)$, then
it has a \emph{minimal \p-weak upper gradient} $g_u \in L^p(X)$
in the sense
 that for every \p-weak upper gradient $g \in L^p(X)$ of $u$, $g_u \le g$ a.e.,
see Corollary~3.7 in Shanmugalingam~\cite{Sh-harm}
and Theorem~7.16 in  Haj\l asz~\cite{Ha-cont}.

Next we define a version of Sobolev spaces on the metric space
$X$ due to Shanmugalingam~\cite{Sh-rev}.

\begin{deff} \label{def:Np}
        Whenever $u:X \to [-\infty,\infty]$ is a measurable function,
let
\[
        \|u\|_{\Np(X)} = \biggl( \int_X |u|^p \, \dmu
                + \inf_g  \int_X g^p \, \dmu \biggr)^{1/p},
\]
where the infimum is taken over all upper gradients $g$ of $u$.
The \emph{Newtonian space} on $X$ is
\[
        \Np (X) = \{u: \|u\|_{\Np(X)} <\infty \}/\sim,
\]
where two functions $u$ and $v$ are said to be equivalent, denoted $u\sim v$, if
$\|u-v\|_{\Np(X)}=0$.
\end{deff}

 We say that $X$ supports a \emph{\p-Poincar\'e
  inequality} if
there exist constants $\CPI>0$ and $\lambda \ge 1$
such that for all balls $B \subset X$ and
all $u\in \Np (X)$,
\begin{equation} \label{PI-ineq}
        \vint_{B} |u-u_B| \,\dmu
        \le \CPI (\diam B) \biggl( \vint_{\lambda B} g_u^{p} \,\dmu \biggr)^{1/p},
\end{equation}
where $ u_B :=\vint_B u \, d\mu :=\mu(B)^{-1}\int_B u \,\dmu$.
Such Poincar\'e inequalities are sometimes called \emph{weak} since
we allow for $\la >1$.

\medskip

\emph{From now on we assume that the space $X$ is complete
and supports a \p-Poincar\'e inequality, and that the measure $\mu$
is doubling.}

\medskip

A consequence of the above standing assumptions is that
$X$ is \emph{$L$-quasi\-convex}, i.e.\ for every
$x,y \in X$ there is a rectifiable curve with length at most
$L d(x,y)$ connecting $x$ and $y$, where $L$ only depends on the
doubling constant and the constants in the \p-Poincar\'e inequality.
This result is due to Semmes, see Theorem~17.1 in
Cheeger~\cite{Cheeg}, Proposition~4.4 in Haj\l
asz--Koskela~\cite{hak} or Theorem~4.32 in
Bj\"orn--Bj\"orn~\cite{BBbook} for a proof.
Theorem~4.32 in
\cite{BBbook} contains an explicit estimate for $L$.

A direct consequence of the quasiconvexity is that $X$ is locally 
connected.
Many of the results in this paper hold under the weaker assumption
that $X$ is a locally connected proper metric space.
This is true for all the results in
Sections~\ref{ends-prime-e}--\ref{sect-finconn},
except for the results concerning $\Modp$-{\rm(}prime\/{\rm)}ends.
The results in Section~\ref{sect-John}, apart from the $\Modp$-results,
hold under the assumptions that $X$ is a quasiconvex proper metric space
and $\mu$ is doubling.

\begin{remark} \label{rmk-locconn}
Recall that $X$ is \emph{locally\/ \textup{(}path\/\textup{)}connected}
if every neighborhood of a point $x \in X$ contains a (path)connected
neighborhood.
If $X$ is a locally connected proper metric space, then
$X$ is locally pathconnected
by the Mazurkiewicz--Moore--Menger theorem, see
Theorem~1, p.~254, in Kuratowski~\cite{kuratowski2}.
In particular every component of an open set is open and pathconnected, 
see Theorem~2, p.~253,
in \cite{kuratowski2}.
\end{remark}

\section{Carath\'eodory ends and prime ends}
\label{sect-Car}

In this section we give a brief overview of Carath\'eodory's definitions of ends
and prime ends from his 1913 paper \cite{car}
for simply connected planar domains.

 Let $\Om \subset \R^2$ be a simply connected domain.
A \emph{cross-cut} of
 $\Om$ is a 
 Jordan arc in $\Om$ with endpoints on the boundary
 of $\Om$. A sequence $\{c_k\}_{k=1}^\infty$
of cross-cuts is called a \emph{chain}
 if for every $k$, (1) $\overline{c}_k \cap \overline{c}_{k+1}=\emptyset$,
and
 (2) every cross-cut $c_k$ separates $\Om$ into exactly
 two subdomains, one containing $c_{k-1}$ and the other containing
 $c_{k+1}$;
let $D_k$ be the latter subdomain.
The \emph{impression} of the chain is $\bigcap_{k=1}^\infty \itoverline{D}_k$,
which is a nonempty connected compact set.

Carath\'eodory then defined the concept of division of a chain by
another chain
and says that two chains are equivalent if they divide each other.
This leads to an equivalence relation for which the equivalence
classes are called \emph{ends}. The impression is  
independent of the representative chain of an equivalence class.

The ends are naturally partially ordered by division, and 
a \emph{prime end} is an end divisible only by itself,
or in other terms, is minimal with respect to the partial ordering.
The impression of a prime end is always a subset of $\bdy \Om$,
see Theorem~9.2 in Collingwood--Lohwater~\cite{CL}.

Later it was realised that if one imposes an extremal length
condition on the chains then the corresponding ends are
automatically minimal, and they are therefore called prime
ends. When this approach is used there are no ends (other
than prime ends) and no need for weeding out ends which are
not prime ends. This approach leads to the same class of prime ends as in
Carath\'eodory's approach.
 According to our investigations, the first use of
extremal length in connection with prime ends is
due to Schlesinger~\cite{sc}.

One of the main motivations for Carath\'eodory was the (nowadays) well-known
correspondence between points on the unit circle
and prime ends of the image $\Phi(\D)$ of
a conformal mapping $\Phi: \D \to \C$, where $\D$ is the unit disk
in the complex plane.
This is one reason why prime ends
are important tools
in various situations, and
the theory works very well for simply (and finitely) connected
planar domains.
For infinitely connected domains,
and for general domains in higher dimensions,
the theory does not work quite so well,
cf. Kaufmann~\cite{Kau}, Mazurkiewicz~\cite{Maz} and N\"akki~\cite{na}.
However, when restricted to certain domains it
 has proved useful also in higher dimensions,
see N\"akki~\cite{nakki70},
Ohtsuka~\cite{oh} and Karmazin~\cite{Ka3}, \cite{Ka}.

We want to study prime ends in quite general metric spaces,
with a view towards a theory that lends itself to the study of
Dirichlet problems.
We therefore give two approaches.
In the first one
we start by defining ends and then say that the prime ends are the
ends which are minimal (with respect to the partial order).
In the other approach we require the ends initially to satisfy
a \p-modulus condition, and to distinguish these ends from the earlier
ones we call them $\Modp$-ends.
Here we have a choice of $p\ge1$, leading
us to different notions.
Since extremal length is connected with the $2$-modulus in $\R^2$, the
\p-modulus condition seems to be a natural generalization to consider.
In our generality $\Modp$-ends need not be minimal,
and we therefore also introduce $\Modp$-prime ends.

\section{Ends and prime ends}\label{ends-prime-e}

We are now ready to give our definition of  ends and prime ends.

\begin{definition}\label{def-accset}
 A bounded connected set $E\subsetneq\Omega$
is an \emph{acceptable} set if
$\itoverline{E}\cap \partial \Omega$ is nonempty.
\end{definition}

Since an acceptable set $E$ is bounded and connected, we know
that $\itoverline{E}$ is compact and connected. Moreover, $E$ is
infinite, as otherwise we would have $\itoverline{E}=E \subset \Om$.
Therefore, $\itoverline{E}$ is a continuum. Recall that a
\emph{continuum} is a connected compact set containing more than one
point.

\begin{deff}\label{deff-chain}
A sequence $\{E_k\}_{k=1}^\infty$ of acceptable sets is a \emph{chain} if
\begin{enumerate}
\item \label{it-subset}
$E_{k+1}\subset E_k$ for all $k=1,2,\ldots$;
\item \label{pos-dist}
$\dist(\Omega\cap\bdry E_{k+1},\Omega\cap \bdry E_k )>0$
for all $k=1,2,\ldots$;
\item \label{impr}
The \emph{impression}
$\bigcap_{k=1}^\infty \itoverline{E}_k \subset \bdry\Om$.
\end{enumerate}
\end{deff}

\begin{remark} \label{rmk-interior}
As $\{\itoverline{E}_k\}_{k=1}^\infty$ is a decreasing sequence of continua, the
impression is either a point or a continuum.
Moreover,
\ref{it-subset} and \ref{pos-dist} above imply that
$E_{k+1}\subset \interior E_{k}$. In particular, $\interior E_{k} \ne \emptyset$.
\end{remark}

\begin{deff}
We say that a chain $\{E_k\}_{k=1}^\infty$ \emph{divides} the chain
$\{F_k\}_{k=1}^\infty$ if for each $k$ there exists $l$
such that $E_{l}\subset F_k$.
(Here we implicitly assume that $k$ and $l$ are positive integers.
We make similar implicit assumptions throughout the paper to enhance readability.)
Two chains are \emph{equivalent} if they divide each other.
A collection of all mutually equivalent chains is called an \emph{end} and denoted $[E_k]$, where
$\{E_k\}_{k=1}^\infty$ is any of the chains in the equivalence class.
The \emph{impression of} $[E_k]$, denoted $I[E_k]$, is defined as the impression of
any representative chain.

The collection of all ends is called the \emph{end boundary} and is
denoted $\partial_E\Omega$.
\end{deff}

Note that the impression of an end is independent of the choice
of representative chain.
Indeed, if $\{E_k\}_{k=1}^\infty$ divides $\{F_k\}_{k=1}^\infty$ then
$I[E_k] \subset I[F_k]$, and the opposite inclusion holds similarly
if the two chains are equivalent. 
Hence the above definition of impression of an end makes sense.

Note also that if a chain $\{F_k\}_{k=1}^\infty$ divides $\{E_k\}_{k=1}^\infty$,
then it divides every chain
equivalent to $\{E_k\}_{k=1}^\infty$. Furthermore, if $\{F_k\}_{k=1}^\infty$ divides $\{E_k\}_{k=1}^\infty$,
then every chain equivalent to $\{F_k\}_{k=1}^\infty$ also divides $\{E_k\}_{k=1}^\infty$.
Therefore, the relation of division extends in a natural way from chains to ends, giving a partial order on ends.

\begin{remark} \label{rmk-open}
Let $\{E_k\}_{k=1}^\infty$ be a chain.
By Remark~\ref{rmk-interior}, $E_{k+1} \subset \interior E_k$.
Unfortunately, $\interior E_k$ is not necessarily connected. 
However, let $G_k$ be the component of $\interior E_k$ containing $E_{k+1}$. 
Since $X$ is locally connected, we see that $G_k$ is open, 
and hence is an acceptable
set. As $\bdy G_k \subset \bdy \interior E_k \subset \bdy E_k$,
we get that $\{G_k\}_{k=1}^\infty$ is a chain.
Since $E_{k+1} \subset G_k$ for each $k$, 
we see that $\{G_k\}_{k=1}^\infty$ is
divisible by $\{E_k\}_{k=1}^\infty$.
On the other hand, $\{G_k\}_{k=1}^\infty$ clearly divides $\{E_k\}_{k=1}^\infty$,
and thus they are equivalent
and $[G_k]= [E_k]$. 
As a consequence, we could have required that acceptable sets are open
to obtain an equivalent definition of ends. This observation is used in the proofs of
Propositions~\ref{prop1A-A}, \ref{prop-top-pe} and Theorem~\ref{thm-b-John-cor}.
On the other hand, with our definition we have a bit more
freedom when constructing examples.
\end{remark}

Let us next show that
there is a certain redundancy in the collection of ends.

\begin{example}\label{needPrime1}
Let $\Omega=(0,1)^2$ 
be the unit square in the plane and  let
$E_k=(0,1)\times(0,1/k)$ and 
$F_k=\Omega\cap B\bigl(\bigl(\frac{1}{2}, 0\bigr), 1/2k\bigr)$,
$k=1,2,\ldots$.
Then the chain $\{E_k\}_{k=1}^\infty$ is
divisible by the chain $\{F_k\}_{k=1}^\infty$, but $\{F_k\}_{k=1}^\infty$
is not divisible by $\{E_k\}_{k=1}^\infty$.
\end{example}

Such a redundancy might not cause a problem in some applications
(see e.g.\ Miklyukov~\cite{Mik}, where the
analogues of acceptable sets are not even required to be connected), but
since one of our aims is to use the notion of ends to construct
a more general boundary of a domain,
such a redundancy creates a difficulty in using the collection of
all ends as  a boundary.
To overcome this  type of redundancy, we consider the minimal ends
in the following sense.

\begin{deff}\label{prime-end}
An end $[E_k]$ is
a \emph{prime end} if it is not divisible by any other end.
The collection of all prime ends is called the \emph{prime end boundary} and is
denoted $\partial_P\Omega$.
\end{deff}

The following is a natural problem about existence of prime ends. 
We have only been able to solve it in a special case, 
see Proposition~\ref{prop-end-divisible}.
The difficulty in the more general setting is that we do not know 
if a totally ordered uncountable collection
of ends (ordered by division) has a maximal end, 
and hence we are unable to use Zorn's lemma. (Zorn's lemma
was actually first stated by Kuratowski,
see the discussion on p.\ 30 in Bourbaki~\cite{bourbaki}.)

\begin{openprob}
Is it true that every end is divisible by some prime end?
\end{openprob}

The question has a positive answer in the setting of Carath\'eodory ends. 
Indeed,  Carath\'eodory's theorem ensures a correspondence between
points on the unit circle and prime ends of a given simply connected
domain $\Om \subsetneq \R^2$. Let $\{c_n\}_{n=1}^{\infty}$ be (a chain
corresponding to) a Carath\'eodory end in $\Om$ and $\Phi$ be a conformal 
map from
$\Om$ to the unit disk $\D$. 
For each point $x \in \bdy \D$ in the impression of the Carath\'eodory end
 determined by $\{\Phi(c_n)\}_{n=1}^{\infty}$ in the unit disk, we
define the singleton impression prime end for $\D$ by taking a sequence of
circles centered at $x$ intersected with $\D$. The preimage
of this prime end under $\Phi$ gives a Carath\'eodory prime end in $\Om$ dividing 
the end $\{c_n\}_{n=1}^{\infty}$.

\section{Examples and comparison with Carath\'eodory's definition}\label{comp-PEvsCar}

We shall see later that in certain domains,
every boundary point corresponds to at least one prime end.
However, the following example illustrates that in some situations
one may need to also consider ends which are not prime ends.

\begin{example}\label{comb}
(The topologist's comb, see Figure~\ref{fig1}.)
Let $\Om$ be the unit square $(0,1)^2 \subset\R^2$ with the segments
$S_k=\bigl[\tfrac12,1\bigr)\times\{2^{-k}\}$ removed for each $k$.
Let $x_0=\bigl(\tfrac12,0\bigr)$ and let $I=\bigr(\tfrac12,1\bigr]\times\{0\}$
be the set
of inaccessible points, see Definition~\ref{deff-access-pt}.
Then the sets
\[
 E_k=\bigl(\bigl(\tfrac12-2^{-k}, 1\bigr)\times (0, 2^{-k})\bigr)\cap \Om,\quad k=1,2,\ldots,
\]
define an end with the impression $I\cup\{x_0\}$.
However, this is not a prime end, as it is divisible by the chain
$\{B(x_0,2^{-k})\cap \Om \}_{k=1}^\infty$,
which defines a prime end with impression $\{x_0\}$.
Note that there is no prime end with impression containing a point from $I$.
We point out here that by Proposition~\ref{prop-single-Modp}
the prime ends of this domain are also $\Modp$-prime ends 
for $1 \le p\le 2$, in the sense of Definition~\ref{deff-Modp-end}.

Note that with
Carath\'eodory's prime ends the situation is different.
In this case $\{x_0\}$ is not the impression of any Carath\'eodory
prime end, but
instead $I \cup \{x_0\}$ is the impression of a Carath\'eodory prime end.
\end{example}

\begin{figure}[t]
\centering
\subfloat[\empty]
{
\begin{tikzpicture}[line cap=round,line join=round,>=triangle 45, scale =1.55 ]
\clip(-3.23,1.57) rectangle (1.26,5.74); 
\fill[line width=0.4pt,dash pattern=on 4pt off 4pt,color=zzttqq,fill=zzttqq,fill opacity=0.05] (-3,3.6) -- (-3,2.1) -- (0,2.1) -- (0,3.6) -- cycle;
\fill[line width=0.4pt,dash pattern=on 4pt off 4pt,color=zzttqq,fill=zzttqq,fill opacity=0.1] (-2.25,2.85) -- (-2.25,2.1) -- (0,2.1) -- (0,2.86) -- cycle;
\fill[line width=0.4pt,dash pattern=on 4pt off 4pt,color=zzttqq,fill=zzttqq,fill opacity=0.1] (-1.89,2.45) -- (-1.89,2.1) -- (0,2.1) -- (0,2.46) -- cycle;
\draw  [line width=0.6pt] (-3,2.1)-- (-3,5.1);
\draw [line width=0.6pt] (0,5.1)-- (-3,5.1);
\draw [line width=0.6pt] (0,2.1)-- (0,5.1);
\draw [line width=0.6pt] (0,2.1)-- (-3,2.1);
\draw [line width=0.6pt] (-1.5,3.6)-- (0,3.6);
\draw [line width=0.6pt] (-1.5,2.85)-- (0,2.85);
\draw [line width=0.6pt] (-1.5,2.46)-- (0,2.46);
\draw [line width=0.6pt] (-1.5,2.29)-- (0,2.29);
\draw [line width=2pt] (-1.51,2.1)-- (0,2.1);
\draw [line width=0.4pt,dash pattern=on 2pt off 2pt,color=zzttqq] (0,3.6)-- (-3,3.6);
\draw [line width=0.4pt,dash pattern=on 2pt off 2pt,color=zzttqq] (-2.25,2.85)-- (-2.25,2.1);
\draw [line width=0.4pt,dash pattern=on 2pt off 2pt,color=zzttqq] (0,2.85)-- (-2.25,2.85);
\draw [line width=0.4pt,dash pattern=on 2pt off 2pt,color=zzttqq] (-1.89,2.45)-- (-1.89,2.1);
\draw [line width=0.4pt,dash pattern=on 2pt off 2pt,color=zzttqq] (0,2.46)-- (-1.89,2.46);
\draw[color=black] (0.13,3.6) node {${\frac12}$};
\draw[color=black] (0.14,2.85) node {${\frac14}$};
\draw[color=black] (0.15,2.46) node {${\frac18}$};
\draw[color=tttttt] (-1.46,1.89) node {${x_0=(\frac12,0)}$};
\draw[color=black] (-0.25,1.89) node {${I[E_k]}$};
\draw[color=zzttqq] (-2.81,3.43) node {${E_1}$};
\draw[color=zzttqq] (-2.06, 2.68) node {${E_2}$};
\draw[color=black] (-2.25,1.89) node {${\frac14}$};
\draw[color=zzttqq] (-1.71, 2.30) node {${E_3}$};
\end{tikzpicture}
}
\subfloat[\empty]
{
\begin{tikzpicture}[line cap=round,line join=round,>=triangle 45,x=0.59cm,y=0.59cm]
\clip(-2.21,-4.32) rectangle (6.77,5.44);
\fill[color=zzttqq,fill=zzttqq,fill opacity=0.1] (-1,1) -- (-1,-3) -- (5,-3) -- (5,1) -- cycle;
\fill[line width=0.4pt,color=zzttqq,fill=zzttqq,fill opacity=0.1] (-0.48,-1) -- (-0.48,-3) -- (4.52,-3) -- (4.52,-1) -- cycle;
\fill[color=zzttqq,fill=zzttqq,fill opacity=0.1] (-0.24,-2) -- (-0.24,-3) -- (4.26,-3) -- (4.26,-2) -- cycle;
\draw [line width=0.6pt] (-2,5)-- (-2,-3);
\draw [line width=0.6pt] (-2,-3)-- (6,-3);
\draw [line width=0.6pt] (6,-3)-- (6,5);
\draw [line width=0.6pt] (6,5)-- (-2,5);
\draw [line width=0.6pt] (-2,1)-- (4,1);
\draw [line width=0.6pt] (-2,-1)-- (4,-1);
\draw [line width=0.6pt] (-2,-2)-- (4,-2);
\draw [line width=0.6pt] (0,0)-- (6,0);
\draw [line width=0.6pt] (0,-1.5)-- (6,-1.5);
\draw [line width=0.6pt] (0,-2.30)-- (6,-2.30);
\draw [line width=1.3pt] (-2,0)-- (0,0);
\draw [line width=1.3pt] (-2,-1.5)-- (0,-1.5);
\draw [line width=1.3pt] (-2,-2.30)-- (0,-2.30);
\draw [dash pattern=on 2pt off 2pt,color=zzttqq] (-1,1)-- (-1,-3);
\draw [dash pattern=on 2pt off 2pt,color=zzttqq] (-1,-3)-- (5,-3);
\draw [dash pattern=on 2pt off 2pt,color=zzttqq] (5,-3)-- (5,1);
\draw [dash pattern=on 2pt off 2pt,color=zzttqq] (5,1)-- (-1,1);
\draw [line width=0.4pt,dash pattern=on 2pt off 2pt,color=zzttqq] (-0.48,-1)-- (-0.48,-3);
\draw [line width=0.4pt,dash pattern=on 2pt off 2pt,color=zzttqq] (-0.48,-3)-- (4.52,-3);
\draw [line width=0.4pt,dash pattern=on 2pt off 2pt,color=zzttqq] (4.52,-3)-- (4.52,-1);
\draw [line width=0.4pt,dash pattern=on 2pt off 2pt,color=zzttqq] (4.52,-1)-- (-0.48,-1);
\draw [dash pattern=on 2pt off 2pt,color=zzttqq] (-0.24,-2)-- (-0.24,-3);
\draw [dash pattern=on 2pt off 2pt,color=zzttqq] (-0.24,-3)-- (4.26,-3);
\draw [dash pattern=on 2pt off 2pt,color=zzttqq] (4.26,-3)-- (4.26,-2);
\draw [dash pattern=on 2pt off 2pt,color=zzttqq] (4.26,-2)-- (-0.24,-2);
\draw[color=black] (-1.6, -0.3) node {$c_1$};
\draw[color=black] (-1.6, -1.8) node {$c_2$};
\draw[color=black] (-1.6, -2.6) node {$c_3$};
\draw[color=zzttqq] (-0.55, 0.64) node {$E_1$};
\draw[color=zzttqq] (-0.12,-1.25) node {\tiny{$E_2$}};
\draw[color=black] (0,-3.55) node {$\frac14$};
\draw[color=black] (4.1,-3.55) node {$\frac34$};
\draw [line width=3pt,color=black] (0,-3)-- (4,-3);
\draw[color=black] (2,-3.55) node {$I[E_k]$};
\end{tikzpicture}
}
\caption{\label{fig1}%
Examples~\ref{comb} (left) and \ref{ex-double-equilat-comb} (right).}
\end{figure}
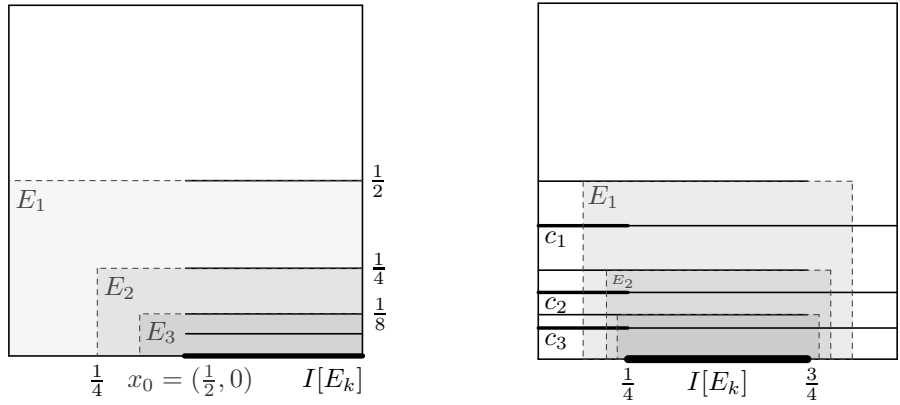
The following example is a major motivation for us.

\begin{example} \label{ex-slit-disc}
Let $\Om$ be the \emph{slit disk} $B((0,0),1) \setm ((-1,0]\times \{0\})
\subset  \R^2$. 
Then for each $x \in [-1,0)\times \{0\}$ there are two prime ends
with the impression $\{x\}$
(one from the upper half-plane and one from the lower
half-plane).
For $x \in \bdy \Om \setm ([-1,0)\times \{0\})$
there is exactly one prime end
with the impression $\{x\}$. These
are the only prime ends associated with $\Om$.
In this example the Carath\'eodory prime ends are the same
as our prime ends.
\end{example}

\begin{prop}\label{prop-car-our-end}
 Let $\Om$ be a bounded simply connected domain in the plane. If
 $\{c_k\}_{k=1}^\infty$ is a Carath\'eodory 
end with impression in the boundary, in particular if it is 
a Carath\'eodory
prime end, then
$[D_k]$ is an end in our sense, 
where $D_k$ are defined as in Section~\ref{sect-Car}.
\end{prop}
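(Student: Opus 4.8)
The plan is to verify that the sequence $\{D_k\}_{k=1}^\infty$ is itself a chain in the sense of Definition~\ref{deff-chain}; once this is done, $[D_k]$ is by definition an end, i.e.\ a point of $\partial_E\Om$. So I would check, in turn, that each $D_k$ is an acceptable set (Definition~\ref{def-accset}) and that $\{D_k\}$ has the three properties \ref{it-subset}, \ref{pos-dist} and \ref{impr}.

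Acceptability and property~\ref{impr} come directly from the Carath\'eodory picture. Each $D_k$ is a subdomain of $\Om$, hence a bounded, nonempty, connected, open and proper subset of $\Om$. Moreover $\itoverline{D}_k\supset\bigcap_{j=1}^\infty\itoverline{D}_j$, and the right-hand side is exactly the impression of the Carath\'eodory chain $\{c_k\}$, which by hypothesis --- or, when $\{c_k\}$ is a prime end, by Theorem~9.2 in Collingwood--Lohwater~\cite{CL} --- is a nonempty subset of $\bdy\Om$. Hence $\itoverline{D}_k\cap\bdy\Om\neq\emptyset$, so $D_k$ is acceptable, and at the same time $\bigcap_{j=1}^\infty\itoverline{D}_j\subset\bdy\Om$, which is precisely property~\ref{impr}.

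Property~\ref{pos-dist} is also quick. Since the endpoints of $c_k$ lie on $\bdy\Om$, the cross-cut $c_k$ is relatively closed in $\Om$, so $\Om\setminus c_k$ is open; any component of $\Om\setminus c_k$ is therefore relatively clopen in $\Om\setminus c_k$, and hence its boundary within $\Om$ is contained in $c_k$. In particular $\Om\cap\bdy D_k\subset c_k$, and likewise $\Om\cap\bdy D_{k+1}\subset c_{k+1}$. As $\itoverline{c}_k$ and $\itoverline{c}_{k+1}$ are disjoint compact subsets of $\itoverline{\Om}$, their distance is positive, so \ref{pos-dist} holds.

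The heart of the matter --- and the only place where I expect a genuine obstacle --- is property~\ref{it-subset}, i.e.\ $D_{k+1}\subset D_k$, which needs a bit of planar topology. Let $D_k'$ be the other component of $\Om\setminus c_k$ (the one containing $c_{k-1}$) and $D_{k+1}'$ the other component of $\Om\setminus c_{k+1}$ (the one containing $c_k$); note that $c_k\subset D_{k+1}'$ and that $D_{k+1}'$ is open. By the observation of the previous paragraph, $\Om\cap\bdy D_k'\subset c_k$, and this set is nonempty, since otherwise $D_k'$ would be clopen in the connected set $\Om$, forcing $D_k'=\Om$, which is impossible as $D_k\neq\emptyset$. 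Pick $y\in\Om\cap\bdy D_k'$. Then $y\in c_k\subset D_{k+1}'$, so a neighbourhood of $y$ lies in $D_{k+1}'$; since $y\in\itoverline{D_k'}$, this neighbourhood meets $D_k'$, whence $D_k'\cap D_{k+1}'\neq\emptyset$. Now $D_k'$ is connected and disjoint from $c_{k+1}$ (because $c_{k+1}\subset D_k$), so it lies in a single component of $\Om\setminus c_{k+1}$; as it meets $D_{k+1}'$, that component is $D_{k+1}'$, and in particular $D_k'\cap D_{k+1}=\emptyset$. Finally $D_{k+1}$ is connected and disjoint from $c_k$ (because $c_k\subset D_{k+1}'$), so it is contained either in $D_k$ or in $D_k'$; the second possibility is now ruled out, hence $D_{k+1}\subset D_k$. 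This completes the verification that $\{D_k\}$ is a chain, and therefore that $[D_k]$ is an end in our sense.
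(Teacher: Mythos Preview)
Your proof is correct and follows the only natural route --- verifying the three chain conditions and acceptability directly from the Carath\'eodory data --- which is exactly what the paper has in mind when it says the result ``follows directly from the definitions and the fact that the impression of a Carath\'eodory prime end is always a subset of the boundary.'' The paper gives no further detail, so your careful verification of the nestedness $D_{k+1}\subset D_k$ (the only nontrivial point) simply fills in what the authors leave to the reader.
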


This follows directly from the definitions
and the fact that the impression of 
a Carath\'eodory
prime end is always a subset of the boundary, see Section~\ref{sect-Car}.
(Of course, if $\{c_k\}_{k=1}^\infty$ is a Carath\'eodory end with impression
containing some point in $\Om$, then $[D_k]$ is not an end in our sense.)

Observe that not every Carath\'eodory prime end is a prime end
in our sense, see Example~\ref{comb}.
This is due to the fact that we have more ends in some cases,
which again depends on the fact that we only require that an acceptable set
$E$ is connected, not that its relative boundary $\Om \cap \bdy E$ is connected.

That we do not recover Carath\'eodory's prime ends in the simply connected
planar case is a drawback in our theory, and in some situations
our theory is not as useful as 
Carath\'eodory's.
On the other hand, it is well known that Carath\'eodory's theory
is limited to simply and finitely connected planar domains.
We will see in Section~\ref{sect-access} that there is a close connection
between our prime ends and accessibility of boundary points, a connection
lost with Carath\'eodory's definition as shown by Example~\ref{comb}
($x_0$ is an accessible point but there is no Carath\'eodory prime end with
impression equal to $\{x_0\}$).
This connection is crucial for our results in
Sections~\ref{sect-Mazur}--\ref{sect-John}.

We now give one more example showing that
Carath\'eodory's prime ends and our prime ends
need not  be the same in general.

\begin{example}(Double equilateral comb, see Figure~\ref{fig1}.)
\label{ex-double-equilat-comb}
Let $\Om\subset \R^2$ be the domain obtained from the unit
square $(0,1)^2$ by removing the collection of segments
$\bigl(0, \tfrac{3}{4}\bigr]\times \{2^{-n}\}$
and $\bigl[\tfrac14, 1\bigr)\times \{3\cdot 2^{-n-2}\}$
for $n=1,2, \ldots$.  From the point of view of Carath\'eodory's
theory
introduced in the beginning of
Section~\ref{sect-Car} we take the  chain of cross-cuts
\[
 c_{k}=\bigl(0,\tfrac14\bigr)\times \{3\cdot 2^{-k-2} \}
\quad \text{for } k=1,2, \ldots,
\]
which gives a Carath\'eodory prime end with impression $[0,1]\times \{0\}$.

To obtain a prime end in our sense
we define the acceptable sets
\[
 E_{k}=\Om\cap \bigl(\bigl(\tfrac14-2^{-k-2}, \tfrac34+2^{-k-2}\bigr)
   \times \bigl(0, 2^{-k}\bigr)\bigr)
\quad \text{for } k=1,2, \ldots.
\]
Then $[E_k]$ is a prime end with impression
 $I[E_k]=\bigl[\tfrac14,\tfrac34\bigr]\times\{0\}$.
Thus the Carath\'eodory prime end above is not a prime end in our sense, as it is divisible by
$[E_k]$.

  We point out here that $[E_k]$ is also a $\Modp$-prime end for all 
  $p\geq 1$, in the sense of Definition~\ref{deff-Modp-end}.
\end{example}

\begin{remark} \label{rmk-differences}
Our definition of prime ends differs from  earlier definitions.
The boundaries of our
acceptable sets correspond to Cara\-th\'eo\-dory's \emph{cross-cuts},
and our acceptable sets correspond to the components $D_k$ in
Cara\-th\'eo\-dory's definition.
The following are the main differences between our definition and 
Cara\-th\'eo\-dory's definition:
\begin{enumerate}
\item \label{cc-1} Carath\'eodory's  cross-cuts are connected,
while the boundaries of
acceptable sets need not be.
\item\label{cc-2} Cross-cuts break the domain into exactly two components, 
whereas the boundaries of acceptable sets break the underlying domain 
into at least two components.
\end{enumerate}

There are several reasons for these differences.
First,  the topology of a metric space
is more complicated than that of $\R^n$.
(The reader could think of $\R^n$ with a number of holes removed as a
particular example of a metric space under consideration.)
But even in the simply connected planar case 
it would have been more restrictive had we required 
the boundaries of acceptable sets to be connected,
as in Carath\'eodory's definition, see Example~\ref{comb}.
In more complicated geometries the difference is even larger,
as e.g.\ not even boundaries of balls need to be connected.

Our modification of the definition of ends and prime ends is essential
for many of the results in this paper.
In particular,  in Section~\ref{sect-access} we obtain a close connection between
singleton prime ends and accessibility, 
a connection which fails for Carath\'eodory ends, as 
is again demonstrated by Example~\ref{comb}.

In Remark~\ref{importante}
we discuss N\"akki's definition of prime ends on $\R^n$ from~\cite{na}.
N\"akki, following Carath\'eodory,
requires cross-sets to be connected, and so his definition
has the same  drawback as Carath\'eodory's in connection with the
results in this paper, the main difference being again
\ref{cc-1} and \ref{cc-2}.

Among the many definitions of prime ends given by Karmazin~\cite{Ka3}, \cite{Ka}
is probably
the one  closest to ours.
Karmazin  however studies only prime ends on $\R^n$ and 
with different applications of the theory than ours.
\end{remark}

\section{Modulus ends and modulus prime ends}
\label{sect-Modp-ends}

The notion of ends and prime ends discussed in the previous sections does
not take into account the
potential theory associated with the domain. 
Using the following notion of \p-modulus, in
this section we give a
subclass of ends and prime ends
associated with the potential theory. 
Here, $1\le p<\infty$ is fixed.

Let $\Gamma$ be a family of (nonconstant)
rectifiable curves in $X$.
The \emph{\p-modulus} of the family $\Gamma$ is 
\begin{equation}   \label{eq-deff-modulus}
  \Modp(\Gamma):=\inf_{\rho} \int_X\rho^p\, d\mu,
\end{equation}
where the infimum is taken over all nonnegative Borel  functions
$\rho$ on $X$ such that $\int_\gamma\rho\, ds\ge 1$ for every
$\gamma\in\Gamma$ and $ds$ denotes the arc length measure. 
(As usual $\inf \emptyset := \infty$.)
It is straightforward to verify that $\Mod_p$ is an outer measure
on the collection of all rectifiable curves on $X$. In particular, if $\G_1$ and $\G_2$ are two families
of (nonconstant)
rectifiable curves in $X$ such that $\Gamma_1\subset\Gamma_2$,
then $\Mod_p(\G_1)\le\Mod_p(\G_2)$. This
monotonicity property will be useful in this paper.
For more on \p-modulus we refer the interested reader to
Heinonen~\cite{he} and V\"ais\"al\"a~\cite{vaisala}.

The $n$-modulus in $\R^n$ can be used to define and investigate extremal length as well as 
(quasi)conformal and quasiregular maps. Further applications of the \p-modulus include its relation to
capacities and Loewner spaces.
See V\"ais\"al\"a~\cite{vaisala}, 
Heinonen--Koskela~\cite{hk}, Kallunki--Shanmugalingam~\cite{KaSh}, 
Vuorinen~\cite{vu}, and Lemma~\ref{MP}.

For nonempty sets $U \subset X$, $E \subset U$ and  $F \subset U$,
 we let $\G(E,F,U)$ denote the family of all
(nonconstant) rectifiable curves
$\ga:[0,l_\ga]\to U$
such that $\ga(0)\in E$ and $\ga(l_\ga)\in F$.
As in \cite{vaisala}, the
modulus of the curve family $\G(E,F,U)$ is
\[
    \Modp(E,F,U):= \Modp(\G(E,F,U)).
\]

\begin{deff}
A chain $\{E_k\}_{k=1}^\infty$
(see Definition~\ref{deff-chain})
is a \emph{$\Modp$-chain} if
  \begin{equation}\label{deff-Modp-end-cond}
    \lim_{k\to \infty} \Modp(E_k, K, \Omega)=0
  \end{equation}
for every compact set $K\subset \Omega$.
 \label{deff-Modp-end}
An end $[E_k]$ is a \emph{$\Modp$-end} if
there is a $\Modp$-chain representing it.
A $\Modp$-end $[E_k]$ is a \emph{$\Modp$-prime end} if
the only $\Modp$-end dividing it is $[E_k]$ itself.
\end{deff}

For $p>1$ it is equivalent to assume that~\eqref{deff-Modp-end-cond} 
holds for some compact $K\subset \Om$ with positive
capacity, see Lemma~\ref{lem-cpt-equiv}.
We do not know if this is true for $p=1$.

\begin{lem} \label{lem-enum-prime-end}
\begin{enumerate}
\item \label{item-div-chain}
A chain dividing a $\Modp$-chain is also a $\Modp$-chain.
\item \label{item-deff-Modp-end}
Any chain representing a  $\Modp$-end is a  $\Modp$-chain.
\item \label{it-div-Modp}
An end dividing a $\Modp$-end is also a $\Modp$-end.
\item \label{it-prime-end-eq}
A $\Modp$-end is a prime end if and only if it is a $\Modp$-prime end.
\end{enumerate}
\end{lem}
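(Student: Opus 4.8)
The plan is to establish the four parts in order, with part~\ref{item-div-chain} carrying essentially all the weight through the monotonicity of $\Modp$ under inclusion of curve families (stated just after \eqref{eq-deff-modulus}), and parts~\ref{item-deff-Modp-end}--\ref{it-prime-end-eq} then following almost formally.

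For part~\ref{item-div-chain}, suppose the chain $\{F_k\}_{k=1}^\infty$ divides the $\Modp$-chain $\{E_k\}_{k=1}^\infty$. I would fix a compact set $K\subset\Om$ and $\eps>0$. The key observation is that whenever $F\subset E$ are subsets of $\Om$, every curve in $\G(F,K,\Om)$ also lies in $\G(E,K,\Om)$ (its initial point lies in $F\subset E$), so monotonicity of $\Modp$ gives $\Modp(F,K,\Om)\le\Modp(E,K,\Om)$. Since $\{E_k\}_{k=1}^\infty$ is a $\Modp$-chain, choose $k$ with $\Modp(E_k,K,\Om)<\eps$; since $\{F_k\}_{k=1}^\infty$ divides $\{E_k\}_{k=1}^\infty$, choose $l$ with $F_l\subset E_k$; and since chains are decreasing by Definition~\ref{deff-chain}\ref{it-subset}, we get $F_m\subset F_l\subset E_k$ for every $m\ge l$, whence $\Modp(F_m,K,\Om)\le\Modp(E_k,K,\Om)<\eps$. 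As $K$ and $\eps$ were arbitrary, \eqref{deff-Modp-end-cond} holds for $\{F_k\}_{k=1}^\infty$, so it is a $\Modp$-chain.

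Part~\ref{item-deff-Modp-end} is then immediate: a chain representing a $\Modp$-end is equivalent to some $\Modp$-chain representing the same end, hence in particular divides that $\Modp$-chain, so part~\ref{item-div-chain} applies. For part~\ref{it-div-Modp}, if an end dividing a $\Modp$-end $[E_k]$ is represented by a chain $\{F_k\}_{k=1}^\infty$, then $\{F_k\}_{k=1}^\infty$ divides some $\Modp$-chain representing $[E_k]$ (such a chain exists by the definition of $\Modp$-end), so $\{F_k\}_{k=1}^\infty$ is a $\Modp$-chain by part~\ref{item-div-chain}; thus the dividing end is a $\Modp$-end.

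For part~\ref{it-prime-end-eq}, the forward implication is trivial: a $\Modp$-end that is not divisible by any other end is in particular not divisible by any other $\Modp$-end. Conversely, if $[E_k]$ is a $\Modp$-prime end and some end $[F_k]$ divides it, then $[F_k]$ is a $\Modp$-end by part~\ref{it-div-Modp}, so minimality of $[E_k]$ among $\Modp$-ends forces $[F_k]=[E_k]$; hence $[E_k]$ is divisible only by itself, i.e.\ a prime end. I do not expect a genuine obstacle; the only point needing care is to invoke the decreasing property of chains so that a single inclusion $F_l\subset E_k$ propagates to the whole tail $\{F_m\}_{m\ge l}$ and therefore controls $\lim_{m\to\infty}\Modp(F_m,K,\Om)$.
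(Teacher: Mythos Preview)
Your proof is correct and follows essentially the same approach as the paper: part~\ref{item-div-chain} via monotonicity of $\Modp$ under set inclusion, and parts~\ref{item-deff-Modp-end}--\ref{it-prime-end-eq} as formal consequences. The only cosmetic difference is that the paper phrases~\ref{item-div-chain} using a subsequence $F_{n_k}\subset E_k$ and the monotonicity of $k\mapsto\Modp(F_k,K,\Om)$, whereas you give an explicit $\eps$--tail argument; these are equivalent presentations of the same idea.
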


\begin{proof}
\ref{item-div-chain}
Let $\{F_k\}_{k=1}^\infty$ be a chain dividing the
$\Modp$-chain $\{E_k\}_{k=1}^\infty$.
Then for each $k$ there exists $n_k$ such that
$F_{n_k} \subset E_k$.
The monotonicity of $\Modp$ then implies that for every compact $K\subset \Om$,
\[
     \lim_{k \to \infty} \Modp(F_{k},K,\Om)
    = \lim_{k \to \infty} \Modp(F_{n_k},K,\Om)
    \le \lim_{k \to \infty} \Modp(E_k,K,\Om)
     =0,
\]
and hence $\{F_k\}_{k=1}^\infty$ is  a $\Modp$-chain.

\ref{item-deff-Modp-end}
Let $\{E_k\}_{k=1}^\infty$ be a chain representing a $\Modp$-end $[E_k]$.
By definition there is a $\Modp$-chain $\{F_k\}_{k=1}^\infty$
representing the end $[E_k]$.
As $\{E_k\}_{k=1}^\infty$ and $\{F_k\}_{k=1}^\infty$ are
equivalent chains,
it follows from
\ref{item-div-chain} that $\{E_k\}_{k=1}^\infty$ is also a $\Modp$-chain.

\ref{it-div-Modp}
This also follows from~\ref{item-div-chain}.

\ref{it-prime-end-eq}
Let $[E_k]$ be a $\Modp$-end.
If $[E_k]$ is also a prime end, then there is no other end dividing it,
let alone any other $\Modp$-end dividing it.
Thus $[E_k]$ must be a $\Modp$-prime end.
Conversely, if $[E_k]$ is a $\Modp$-prime end and $[F_k]$ is an end dividing
$[E_k]$, then \ref{it-div-Modp} shows that $[F_k]$ is a $\Modp$-end. 
Hence $[F_k]=[E_k]$, and thus $[E_k]$ is a prime end.
\end{proof}

\begin{example}\label{needPrime}
Let $\Om$ be the unit ball in $\R^n$, $n\ge3$, with a radius
removed.
Then for every boundary point $x \in \bdy \Om$ there is a prime end $[F_k^x]$
with $\{x\}$ as its impression, see Corollary~\ref{access1}.

Let $I$ be a closed subsegment of the removed radius and let
\[
E_k=\{x\in\Om: \dist(x,I)<2^{-k}\}.
\]
Then $[E_k]$ is an end
with $I$ as its impression.
This is not a prime end as it is divisible by $[F_k^x]$ for
every $x \in I$.
If $p\le n-1$, then
$\Modp( E_k,K, \Omega)\to 0$ as $k\to\infty$,
and  thus $[E_k]$ is a $\Modp$-end but
not a  $\Modp$-prime end.
\end{example}

Under some conditions, see e.g.\ Section~\ref{John},
all $\Mod_p$-ends are $\Mod_p$-prime ends,
and in this case one does not need to do the further subdivision.

Recall that if $\{E_k\}_{k=1}^\infty$ is a chain,
then $\{\itoverline{E}_k\}_{k=1}^\infty$ is a decreasing sequence of
continua, and so the
impression is either a point or a continuum.
Lemmas~\ref{lem-imp-subset-bdry} and~\ref{lem-cpt-equiv} imply that
if a sequence $\{E_k\}_{k=1}^\infty$ of open acceptable sets satisfies the
conditions \ref{it-subset} and \ref{pos-dist}
 of Definition~\ref{deff-chain} and
$\lim_{k\to\infty}\Mod_p(E_k,K,\Omega)=0$ for some compact set $K\subset\Om$ with
positive \p-capacity, $p>Q-1$, then $\{E_k\}_{k=1}^\infty$
satisfies all the conditions
of Definition~\ref{deff-chain},
and is therefore a chain.
Thus, in view of Remark~\ref{rmk-open}, $\Modp$-ends could be equivalently
defined using only \ref{it-subset} and \ref{pos-dist}
in Definition~\ref{deff-chain} together with \eqref{deff-Modp-end-cond},
when $p>Q-1$.

The notion of $\Modp$-prime end is similar 
to the concept of \p-parabolic prime
ends discussed in Miklyukov~\cite{Mik} and Karmazin~\cite{Ka2}.
The name \p-parabolicity has been used in 
the  literature to
denote spaces where there is not enough room out at infinity
in the sense that the collection of all
curves that start from a fixed ball and leave every compact
subset of the space has \p-modulus zero.
A prime end
is a $\Modp$-prime end if there is insufficient room close to the impression of the
prime end.
In this sense one could think of  $\Modp$-prime ends as
\p-parabolic ends of the domain.
See \cite{CHS}, \cite{Db}, \cite{Gr1}, \cite{GrSa}, 
 \cite{Ho}, \cite{HoK}, \cite{HS},  \cite{Kakut}, \cite{LT}, 
\cite{LW} and \cite{NR}
for some applications of parabolic ends.

The condition $\lim_{k\to \infty} \Modp(E_k, K, \Omega)=0$
depends heavily on $p$. For example, if $p>Q$, where
$Q$ is from~\eqref{lower-mass-bound}, then the collection of all curves
in $X$ passing through $x$ has positive \p-modulus.
For Ahlfors $Q$-regular $X$ this follows from Theorem~4.3 in
Adamowicz--Shanmugalingam~\cite{adsh},
and the proof therein holds also in our case.
Hence in general
there are no $\Modp$-chains with $x$ in their impressions.
However, it can happen that for some $x\in\bdry\Om$,
and every compact $K\subset\Om$ we have
\begin{equation}
\lim_{r\to 0} \Modp (B(x,r)\cap\Om,K,\Om) = 0,
\end{equation}
even if $p>Q$.
This is the case e.g.\  if $\Om\subset\R^n$ (unweighted)
has an outward polynomial cusp at $x$
of degree $m$ and $p\le m+n-1$, see
Durand-Cartagena--Shanmugalingam--Williams~\cite[Example~2.2]{dsw}.

\begin{rem}\label{importante}
Based on the notion of $n$-modulus, 
N\"akki~\cite{na} introduced another variant of prime ends in $\R^n$: 
A connected subset $A$ of a
domain $\Om\subset \R^n$  is called a \emph{cross-set} if (1) it is relatively
closed in $\Om$, (2) $\itoverline{A}\cap \partial \Om\not=\emptyset$,
and (3) $\Om\setminus A$ consists of two components whose boundaries
intersect $\bdry \Om$. A collection $\{A_k\}_{k=1}^\infty$ of cross-sets is called a
\emph{N\"akki chain} if $A_k$ separates $A_{k-1}$ and $A_{k+1}$ (within $\Om$) for
all $k$. A N\"akki chain is a \emph{N\"akki prime chain} if (a$'$)
$\Mod_n(A_{k+1}, A_{k}, \Om)<\infty$, and (b$'$) for any continuum
$K\subset \Om$ we have that 
$\lim_{k\to \infty} \Mod_n(A_k, K, \Om)=0$. The equivalence classes 
with respect to division of N\"akki prime chains define N\"akki prime ends. 

 If $A_k$ is a cross-set, then the component of $\Om\setminus A_k$ containing  
$A_{k+1}$ is an acceptable
set in our sense. Denote this component by $E_k$.

In the domains $\Om\subset \R^n $ considered
by N\"akki (so-called quasiconformally collared domains), condition
(a$'$) is equivalent to $\dist(A_{k+1},A_k)>0$, by Lemma~2.3 in~\cite{na}. 
Similarly, for such domains with $p=n$, 
condition~\ref{pos-dist} of Definition~\ref{deff-chain}
is equivalent to
\begin{equation*}   
 \Mod_p(\Om\cap \partial E_{k+1}, \Om\cap \partial E_{k}, \Om)<\infty.
\end{equation*}
The same holds if $p=Q$ and $\Om$ is Ahlfors $Q$-regular and Loewner,
by~(3.9) in Heinonen--Koskela~\cite{hk}.
(For  definition and discussion of Loewner spaces
see \cite{hk} and Heinonen~\cite{he}.)
However, $\Om$ is in general not Ahlfors $Q$-regular and Loewner,
even if $X$ happens to be.

In the nonconformal case $p \ne Q$, Example~2.7 
in Adamowicz--Shanmugalingam~\cite{adsh}
and Example~\ref{rem_p_n} below show that the corresponding equivalence
can fail even in nice domains, and
in  more general metric spaces there is usually no value
of $p$ for which it is true.
We therefore explicitly require that
chains $\{E_k\}_{k=1}^\infty$ satisfy
\[
\de_k:=\dist(\Omega \cap \partial E_{k+1}, \Omega\cap\partial E_{k})>0.
\]
This modification automatically implies that
$\Modp(\Omega \cap \partial E_{k+1},\Om\cap\partial E_k, \Om)<\infty$, 
since the function 
$\rho=\chi_\Om/\de_k$
is admissible in the definition of
$\Modp(\Omega \cap \partial E_{k+1},\Om\cap\partial E_k, \Om)$.
\end{rem}

\begin{ex}\label{rem_p_n}
Let $\Om=B((0, 0), 2)\subset \R^2$,
$E=[-1,0]\times\{0\}$ and $F=[0,1]\times \{0\}$.
If $1\le p<2$, then $\Modp(E,F,\Om)<\infty$
even though $\dist(E,F)=0$, as we shall next see. In fact $E\cap F\not = \emptyset$.

Let $\Ga_0$ be the family of (nonconstant) rectifiable
curves in $\Om$ passing through the origin.
Since singletons have zero \p-capacity in $\R^2$, we have
$\Modp(\Ga_0)=0$.
We shall therefore in this example only consider curves which
do not pass through the origin.
Let $\ga:[0,l_\ga]\to\Om$  
be such a rectifiable curve connecting $E$ to $F$ in $\Om$.
Joining $\ga$ with its reflection in the real axis makes a closed curve $\gat$
in $\Om$ around the origin.
The residue theorem now yields that
\[
\int_{\gat} \frac{dz}{z} = 2\pi i n,
\]
when $\gat$ is positively oriented and $n \ge 1$ is an integer.
Using symmetry we obtain that
\[
   n\pi = \frac{1}{2}\biggl|\int_{\gat} \frac{dz}{z}\biggr|
 = \biggl|\int_\ga \frac{dz}{z}\biggr|
 \le \int_\ga \frac{|dz|}{|z|}
  = \int_\ga \frac{ds}{|\ga(s)|},
\]
where $ds$ denotes the arc length measure.
It follows that the function $\rho(z)=1/\pi|z|$ is admissible in the
definition of $\Modp(E,F,\Om)$ and hence
\[
\Modp(E,F,\Om) \le \int_\Om \rho^p\,dx\,dy
   = 2\pi^{1-p} \int_0^2 r^{1-p}\,dr = \frac{2^{3-p} \pi^{1-p}}{2-p} <\infty.
\]
\end{ex}

\section{Singleton ends and accessibility}
\label{sect-access}

It is useful to have criteria identifying ends
which are prime ends and $\Modp$-prime ends.
Ends are naturally divided into two classes, those with singleton impressions and
those with larger (continuum) impressions.
The former are simpler to handle,
and the main focus in the later sections will be on singleton ends.
A \emph{singleton end} is an end with a singleton impression.

The classification of ends is a classical topic considered initially
by Cara\-th\'eo\-dory~\cite{car}. See Sections~9.7 and~9.8 in
Collingwood--Lohwater~\cite{CL} for an
extensive classification of prime ends for simply connected planar domains.
For us it is enough to distinguish between singleton ends
and nonsingleton ends.

\begin{prop}  \label{prop-end-single}
If an end has a singleton impression,
then it  is a prime end.
\end{prop}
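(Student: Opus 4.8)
The plan is to verify the definition of prime end directly: writing the given end as $[E_k]$ with impression $I[E_k]=\{x_0\}$ (so $x_0\in\bdry\Om$), I must show that if an end $[F_k]$ divides $[E_k]$ then $[F_k]=[E_k]$, i.e.\ that $\{E_k\}$ in turn divides $\{F_k\}$. So fix representative chains $\{E_k\}$ and $\{F_k\}$ with $\{F_k\}$ dividing $\{E_k\}$.

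The first step is the key geometric fact that a singleton impression forces $\diam\itoverline{E}_k\to0$. Indeed, since $\Om$ is bounded and $X$ is proper, each $\itoverline{E}_k$ is compact, and $\{\itoverline{E}_k\}_k$ decreases to $\bigcap_k\itoverline{E}_k=\{x_0\}$; if the diameters did not tend to $0$ one could pick $a_k,b_k\in\itoverline{E}_k$ with $d(a_k,b_k)$ bounded below, extract convergent subsequences $a_{k_j}\to a$ and $b_{k_j}\to b$, and observe that $a,b$ lie in every closed set $\itoverline{E}_n$ and hence both equal $x_0$, a contradiction.

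Now fix $k$; the goal is to produce $l$ with $E_l\subset F_k$. Let $\de:=\dist(\Om\cap\bdry F_{k+1},\Om\cap\bdry F_k)>0$, which is positive by condition~\ref{pos-dist} of Definition~\ref{deff-chain}, and choose $l$ so large that $\diam\itoverline{E}_l<\de$. Since $\{F_j\}$ divides $\{E_j\}$ there is $m_0$ with $F_{m_0}\subset E_l$; with $m:=\max\{m_0,k+2\}$ and $F_{k+2}\subset\interior F_{k+1}$ (Remark~\ref{rmk-interior}) we get $\emptyset\ne F_m\subset E_l\cap\interior F_{k+1}$, so $E_l$ meets $\interior F_{k+1}\subset\interior F_k$. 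Suppose, for contradiction, that $E_l\not\subset F_k$. Then the connected set $E_l\subset\Om$ meets $\interior F_k$ but is not contained in it, hence meets $\bdry F_k$ (a connected set that meets an open set without being contained in it must meet the boundary of that set); pick $b\in E_l\cap\Om\cap\bdry F_k$. Since $\interior F_{k+1}\subset\interior F_k$ is disjoint from $\bdry F_k$, this $b$ shows that $E_l\not\subset\interior F_{k+1}$, so the same reasoning yields a point $c\in E_l\cap\Om\cap\bdry F_{k+1}$. But then $b,c\in\itoverline{E}_l$ gives $d(b,c)\le\diam\itoverline{E}_l<\de$, contradicting $d(b,c)\ge\dist(\Om\cap\bdry F_{k+1},\Om\cap\bdry F_k)=\de$. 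Hence $E_l\subset F_k$, and as $k$ was arbitrary, $\{E_k\}$ divides $\{F_k\}$; the two chains are then equivalent, so $[F_k]=[E_k]$ and $[E_k]$ is a prime end.

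The main thing to be careful about is the interplay in the third step: the index $l$ must simultaneously make $\diam\itoverline{E}_l$ small \emph{and} keep $E_l$ meeting $\interior F_{k+1}$, and the latter turns out not to be an extra constraint, because the division hypothesis lets one bury $F_m$ arbitrarily deep inside $F_{k+1}$, so in fact every $E_l$ meets $\interior F_{k+1}$. The elementary topological lemma about connected sets crossing boundaries should be invoked explicitly, since interiors of acceptable sets need not be connected (cf.\ Remark~\ref{rmk-open}) and one therefore cannot use more naive separation arguments.
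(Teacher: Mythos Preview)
Your proof is correct and follows essentially the same approach as the paper: first establish that a singleton impression forces $\diam E_k\to0$ (the paper isolates this as Lemma~\ref{lem-single-char}), then use the positive separation of $\Om\cap\bdry F_{k+1}$ and $\Om\cap\bdry F_k$ together with the connectedness of $E_l$ to derive a contradiction from $E_l\not\subset F_k$ (the paper isolates this step as Lemma~\ref{lem-identify-prime-end}, with the boundary-crossing argument recorded as Remark~\ref{rem-connected-diam}). The only difference is organizational: the paper factors the argument into two reusable lemmas, whereas you carry it out inline.
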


Note, however, that there are prime ends with nonsingleton impressions,
see Example~\ref{ex-double-equilat-comb}.
Proposition~\ref{prop-end-single} follows directly from the following two lemmas.

\begin{lem} \label{lem-single-char}
Let $[E_k]$ be an end. Then
\[
\diam I[E_k] = \lim_{k\to\infty} \diam E_k.
\]
In particular, $[E_k]$ is a singleton end if and only if
$\diam E_k\to0$ as $k\to\infty$.
\end{lem}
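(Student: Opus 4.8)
The plan is to prove the two inequalities $\diam I[E_k]\le\lim_{k\to\infty}\diam E_k$ and $\diam I[E_k]\ge\lim_{k\to\infty}\diam E_k$ separately, after first checking that the limit on the right exists. Since $E_{k+1}\subset E_k$, the sequence $\diam E_k=\diam\itoverline{E}_k$ is nonincreasing, and it is bounded below by $0$ and above by $\diam E_1<\infty$; hence it converges to $L:=\inf_k\diam E_k\in[0,\infty)$. The first inequality is then immediate: $I[E_k]=\bigcap_k\itoverline{E}_k\subset\itoverline{E}_k$ for every $k$, so $\diam I[E_k]\le\diam\itoverline{E}_k=\diam E_k$, and letting $k\to\infty$ gives $\diam I[E_k]\le L$.

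For the reverse inequality I would argue by compactness. For each $k$, the set $\itoverline{E}_k$ is compact (as noted, the closure of an acceptable set is a continuum), so I can pick $x_k,y_k\in\itoverline{E}_k$ realizing the diameter, $d(x_k,y_k)=\diam\itoverline{E}_k=\diam E_k$. All these points lie in the compact set $\itoverline{E}_1$, so after passing to a subsequence I may assume $x_k\to x$ and $y_k\to y$. Fixing any $m$, I have $x_k,y_k\in\itoverline{E}_m$ for all $k\ge m$ because $\itoverline{E}_k\subset\itoverline{E}_m$, and since $\itoverline{E}_m$ is closed this forces $x,y\in\itoverline{E}_m$; as $m$ was arbitrary, $x,y\in\bigcap_m\itoverline{E}_m=I[E_k]$. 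Therefore $\diam I[E_k]\ge d(x,y)=\lim_k d(x_k,y_k)=L$, where the last equality holds because the whole sequence $\diam E_k$ converges to $L$ and so does every subsequence. Combining with the first inequality yields $\diam I[E_k]=L=\lim_{k\to\infty}\diam E_k$.

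The ``in particular'' statement then follows at once: a nonempty set has diameter $0$ precisely when it is a singleton, and $I[E_k]$ is always nonempty (a decreasing intersection of nonempty compacta), so $[E_k]$ is a singleton end iff $\diam I[E_k]=0$ iff $\diam E_k\to0$. I do not expect any serious obstacle here; the content of the lemma is just that $\diam$ is continuous along a decreasing sequence of compact sets, and the only point needing a moment's care is the subsequence extraction — it is harmless because the quantity $\lim_k\diam E_k$ is already known to exist, so its value is unaffected by passing to a subsequence of the pairs $(x_k,y_k)$.
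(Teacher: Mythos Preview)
Your proof is correct and follows essentially the same approach as the paper's: both use a compactness argument to extract subsequential limits $x,y\in I[E_k]$ realizing the limiting diameter. The only cosmetic difference is that the paper picks $x_k,y_k\in E_k$ with $d(x_k,y_k)\ge(1-\eps)\diam E_k$ and lets $\eps\to0$ at the end, whereas you pick $x_k,y_k\in\itoverline{E}_k$ attaining the diameter exactly, which is slightly cleaner.
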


\begin{proof}
Since $E_{k+1}\subset E_k$, it is clear that the limit on the right-hand
side exists.
As $I[E_k] = \bigcap_{k=1}^\infty \itoverline{E}_k$ and
$\diam \itoverline{E}_k = \diam E_k$, one inequality is obvious.

For the converse inequality, let $\eps>0$ be arbitrary and choose
$x_k, y_k \in E_k$ so that
$d(x_k,y_k)\ge (1-\eps) \diam E_k$  for $k=1,2,\ldots$.
By compactness, both $\{x_k\}_{k=1}^\infty$ and $\{y_k\}_{k=1}^\infty$
have converging subsequences $x_{k_j}\to x_0\in I[E_k]$
and $y_{k_j}\to x_0\in I[E_k]$.
It follows that
\begin{align*}
\diam I[E_k] &\ge d(x_0,y_0) = \lim_{j\to\infty} d(x_{k_j}, y_{k_j}) \\
&\ge (1-\eps) \lim_{j\to\infty} \diam E_{k_j}=(1-\eps)\lim_{k\to\infty}\diam E_k.
\end{align*}
Since $\eps>0$ was arbitrary, this completes the proof.
\end{proof}

\begin{lem}\label{lem-identify-prime-end}
Let $[E_k]$ and $[F_k]$ be two ends such that $[E_k]$ divides
$[F_k]$ and assume that $\lim_{k\to\infty}\diam F_k=0$.
Then
$[E_k]= [F_k]$.
\end{lem}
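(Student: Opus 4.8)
The plan is to prove that $[F_k]$ divides $[E_k]$; combined with the hypothesis that $[E_k]$ divides $[F_k]$, this yields $[E_k]=[F_k]$. By Remark~\ref{rmk-open} we may assume that the chains $\{E_k\}_{k=1}^\infty$ and $\{F_k\}_{k=1}^\infty$ consist of \emph{open} acceptable sets; this affects neither the division relation between the two ends nor the value of $\lim_{k\to\infty}\diam F_k$, which equals $\diam I[F_k]$ by Lemma~\ref{lem-single-char}. Recall also from Remark~\ref{rmk-interior} that conditions~\ref{it-subset} and~\ref{pos-dist} give $E_{k+1}\subset\interior E_k=E_k$; moreover $\Om\cap\bdry E_{k+1}\subset E_k$, since a point of $\Om\cap\bdry E_{k+1}$ lies in $\itoverline{E}_k$ but, being at distance $\de_k:=\dist(\Om\cap\bdry E_{k+1},\Om\cap\bdry E_k)>0$ from $\Om\cap\bdry E_k$, cannot lie on $\bdry E_k$. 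Hence also $\itoverline{E}_{k+1}\cap\Om\subset E_k$.

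Fix $k_0$. For each $m$, since $[E_k]$ divides $[F_k]$ there is $l$ with $E_l\subset F_m$, and enlarging $l$ (using~\ref{it-subset}) we may take $l\ge k_0+1$, so that $\emptyset\ne E_l\subset F_m\cap E_{k_0+1}$; in particular $F_m$ meets $E_{k_0+1}$. Now choose $m$ so large that $\diam F_m<\de_{k_0}$, which is possible because $\diam F_k\to0$. I claim $F_m\subset E_{k_0}$. The key point is that $F_m$ cannot meet $\Om\cap\bdry E_{k_0}$: if some $w\in F_m\cap\Om\cap\bdry E_{k_0}$, then $F_m$ must be disjoint from $\Om\cap\bdry E_{k_0+1}$ (any point of the latter would be within $\diam F_m<\de_{k_0}$ of $w$, contradicting $\dist(\Om\cap\bdry E_{k_0+1},\Om\cap\bdry E_{k_0})=\de_{k_0}$), whence $F_m$ — being connected, meeting the open set $E_{k_0+1}$, and missing its relative boundary $\Om\cap\bdry E_{k_0+1}$ — satisfies $F_m\subset\itoverline{E}_{k_0+1}\cap\Om\subset E_{k_0}$; since $E_{k_0}$ is open this contradicts $w\in\bdry E_{k_0}$. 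Therefore $F_m\cap\bdry E_{k_0}=\emptyset$ (recall $F_m\subset\Om$), and then $F_m$, being connected and meeting $E_{k_0}$, must lie in $E_{k_0}$.

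As $k_0$ was arbitrary, $\{F_k\}_{k=1}^\infty$ divides $\{E_k\}_{k=1}^\infty$, i.e.\ $[F_k]$ divides $[E_k]$, and therefore $[E_k]=[F_k]$.

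I expect the main difficulty to be exactly the claim $F_m\subset E_{k_0}$: the naive reasoning ``$F_m$ is connected, of small diameter, and meets $E_{k_0}$, hence lies in $E_{k_0}$'' is simply false, since a small connected set can straddle $\bdry E_{k_0}$. What makes it work is the positive buffer $\de_{k_0}$ guaranteed by condition~\ref{pos-dist} between the consecutive relative boundaries $\Om\cap\bdry E_{k_0}$ and $\Om\cap\bdry E_{k_0+1}$: once $\diam F_m<\de_{k_0}$, the connected set $F_m$ cannot reach from one of these boundaries to the other, which is precisely what pins $F_m$ inside $E_{k_0}$. A minor technical wrinkle is the possible interaction with $\bdry\Om$; this is smoothed over by first passing to open representatives of the two ends via Remark~\ref{rmk-open}, so that $\interior E_k=E_k$ and the connectedness arguments involving relative boundaries go through cleanly.
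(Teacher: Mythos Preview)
Your proof is correct and takes essentially the same approach as the paper: both hinge on the fact that a connected $F_m$ meeting $E_{k_0+1}$ but not contained in $E_{k_0}$ must have diameter at least $\de_{k_0}=\dist(\Om\cap\bdry E_{k_0+1},\Om\cap\bdry E_{k_0})$. The paper phrases this as a short contradiction argument via Remark~\ref{rem-connected-diam} (so it never needs the reduction to open representatives), but the core idea is the same.
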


The following observation will be used in the proof
of Lemma~\ref{lem-identify-prime-end} and
also later in the paper.

\begin{remark}  \label{rem-connected-diam}
If a connected set $F\subset\Om$ intersects both $A$
and $\Om\setm A$,
then $F\cap (\Om\cap\bdry A)$ is nonempty.

A direct consequence is that if
$E_k$, $E_{k+1}$ and $F$ are connected subsets of $\Om$ with
$E_{k+1}\subset E_k$, $E_{k+1}\cap F \ne \emptyset$ and
$F\setm E_k \ne \emptyset$, then $F$ meets
both $\Om\cap\bdry E_{k+1}$ and $\Om\cap\bdry E_k$, which implies in turn that
$\dist(\Om\cap\bdry E_{k+1}, \Om\cap\bdry E_{k}) \le \diam F$.
\end{remark}

\begin{proof}[Proof of Lemma~\ref{lem-identify-prime-end}]
Assume that $[E_k] \ne [F_k]$,
i.e.\ that $[F_k]$ does not divide $[E_k]$.
Then it follows that there exists 
$l$ such that for each  
$n$ we can find 
$m_n\geq n$ with $F_{m_n}\setminus E_l\ne\emptyset$.
By the nested property of the chain
$\{F_k\}_{k=1}^\infty$ we get that
$F_k\setminus E_l\ne \emptyset$ for all $k$. From this we infer that
for all $k$ there is a point $y_k\in F_k\setm E_l$.

As $[E_k]$ divides $[F_k]$, for every  $k$ there exists
$j_k\ge l+1$ such that $E_{j_k}\subset F_k$.
Let $x_{k}\in E_{j_k}$ be arbitrary.
Then $x_{k}\in F_k\cap E_{l+1}$ and $y_k\in F_k \setm E_l$.
As $F_k$ is connected, Remark~\ref{rem-connected-diam} implies that
\[
   \dist(\Om\cap\bdry E_{l+1}, \Om\cap\bdry E_{l})
    \le \diam F_k  \to 0\quad \text{as }k \to \infty,
\]
contradicting the fact that $\{E_k\}_{k=1}^\infty$ is a chain.
\end{proof}

For $\Modp$-prime ends we have the following result.

\begin{prop} \label{prop-single-Modp}
If $[E_k]$ is a singleton end with impression
$I[E_k]=\{x\}$ and $1\le p \in Q(x)\ne (0,1],$  
then $[E_k]$ is a $\Modp$-prime end.
\end{prop}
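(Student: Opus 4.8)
The plan is to reduce the statement to a single modulus estimate. Since $[E_k]$ has singleton impression, Proposition~\ref{prop-end-single} shows it is already a prime end, and by Lemma~\ref{lem-enum-prime-end}\ref{it-prime-end-eq} a $\Modp$-end that is a prime end is automatically a $\Modp$-prime end. Hence it suffices to prove that the given chain $\{E_k\}_{k=1}^\infty$ is a $\Modp$-chain, that is, $\lim_{k\to\infty}\Modp(E_k,K,\Om)=0$ for every compact $K\subset\Om$.

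First I would localize. By Lemma~\ref{lem-single-char}, $\diam E_k\to0$, and since $x\in\itoverline{E}_k$ for every $k$, this forces $E_k\subset B(x,r_k)$ with $r_k:=2\diam E_k\to0$. Fix a compact $K\subset\Om$; then $R_0:=\dist(x,K)>0$ because $x\in\bdry\Om$. The hypothesis $1\le p\in Q(x)\ne(0,1]$ is exactly what allows us to choose $q\in Q(x)$ with $q\ge p$ and $q>1$ (take $q=p$ if $p>1$, and use $\sup Q(x)>1$ if $p=1$); let $C_q,R_q$ be the constants from \eqref{upper-mass-bound-q} for this $q$, and put $R=\tfrac12\min\{R_q,R_0\}$. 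For all large $k$ we have $r_k<R$, and any curve in $\Om$ from $E_k$ to $K$ is in particular a curve in $X$ from $B(x,r_k)$ to $X\setminus B(x,R)$, so by monotonicity of the modulus
\[
\Modp(E_k,K,\Om)\le\Modp(B(x,r_k),X\setminus B(x,R),X).
\]
It remains to show the right-hand side tends to $0$ as $r_k\to0$.

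For that I would use the standard logarithmic (dyadic-annulus) test function. Put $N=\lfloor\log_2(R/r_k)\rfloor$, let $A_j=\{y\in X:2^jr_k\le d(x,y)\le 2^{j+1}r_k\}$ for $j=0,\dots,N-1$ (all contained in $\overline{B}(x,R)$), and set $\rho=N^{-1}\sum_{j=0}^{N-1}(2^jr_k)^{-1}\chi_{A_j}$. Every rectifiable curve joining $B(x,r_k)$ to $X\setminus B(x,R)$ contains, for each $j$, a subcurve lying in $A_j$ and joining its two bounding spheres, hence of length at least $2^jr_k$; summing over $j$ shows $\rho$ is admissible. Since the $A_j$ overlap only in consecutive pairs, $\int_X\rho^p\,d\mu\le 2^pN^{-p}\sum_{j=0}^{N-1}\mu(A_j)(2^jr_k)^{-p}$, and inserting $\mu(A_j)\le\mu(B(x,2^{j+1}r_k))\le C_q(2^{j+1}r_k/R)^q\mu(B(x,R))$ from \eqref{upper-mass-bound-q} turns the sum into a geometric sum in $j$ with ratio $2^{q-p}$; using $2^Nr_k\le R$ this is bounded by a constant independent of $k$ when $q>p$, and equals $N$ up to a constant when $q=p$. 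In the first case $\int_X\rho^p\,d\mu\le C/N^p\to0$; in the second case $q=p>1$, so $\int_X\rho^p\,d\mu\le C/N^{p-1}\to0$. Either way $\Modp(B(x,r_k),X\setminus B(x,R),X)\to0$ as $k\to\infty$, which is what we needed. The one genuinely delicate point is precisely this dichotomy: when $p=1$ the naive choice $q=p$ yields only a bounded, not a vanishing, bound, and it is the assumption $Q(x)\ne(0,1]$ that supplies a strictly larger admissible exponent $q>1$; the rest is routine bookkeeping with geometric sums and with the (harmless) pairwise overlap of the annuli.
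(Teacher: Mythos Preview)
Your proof is correct and follows the same overall architecture as the paper: use Proposition~\ref{prop-end-single} to get a prime end, establish the $\Modp$-chain property, and then invoke Lemma~\ref{lem-enum-prime-end}\,\ref{it-prime-end-eq}. The paper's own proof is a three-line reduction to Lemma~\ref{lem-cap-0-mod-0} in the appendix, so the substantive comparison is between your modulus estimate and that lemma.

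The difference is genuine. The paper proves Lemma~\ref{lem-cap-0-mod-0} by first converting modulus to condenser capacity via Lemma~\ref{MP} (which in turn rests on the measurability theorem of \cite{jjrrs}) and then invoking the capacity bounds of Garofalo--Marola~\cite{GaMa}, splitting into the cases $1<p<q_0$, $p=q_0\in Q(x)$, and finally $p=1$ via H\"older. You instead bound the modulus directly with the classical dyadic-annulus test function, using only the pointwise upper mass bound~\eqref{upper-mass-bound-q} and an elementary geometric-sum dichotomy $q>p$ versus $q=p>1$. Your route is more self-contained (no capacity identity, no external capacity estimates) and makes transparent exactly where the hypothesis $Q(x)\ne(0,1]$ enters, namely to supply a usable $q>1$ when $p=1$; the paper's route packages the estimate as a reusable lemma and connects it to the existing capacity literature. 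Both handle the same edge cases, just organized differently.
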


\begin{proof}
By Lemma~\ref{lem-single-char}, $\diam E_k \to 0$ as $k \to \infty$,
and thus $[E_k]$ is a $\Modp$-end by Lemma~\ref{lem-cap-0-mod-0}.
Moreover, Proposition~\ref{prop-end-single} shows that  $[E_k]$ is a prime end,
and hence it is a $\Modp$-prime end
by Lemma~\ref{lem-enum-prime-end}\,\ref{it-prime-end-eq}.
\end{proof}

\begin{deff}    \label{deff-access-pt}
We say that a point $x\in\partial\Om$ is an \emph{accessible} boundary point
if there is a {\rm(}possibly nonrectifiable\/{\rm)} curve
$\gamma:[0,1]\to X$ such that $\gamma(1)=x$ and $\gamma([0,1))\subset\Omega$.
Moreover, if $[E_k]$ is an end and there
is
a curve $\ga$ as above
such that for every $k$ there is $0 < t_k <1$ 
with
\(
\gamma([t_k,1))\subset E_k
\),
then $x\in\bdry\Om$ is \emph{accessible through $[E_k]$}.
\end{deff}

Note that $x\in\bdry\Om$ can be accessible through $[E_k]$ only if
$x\in I[E_k]$.

In the following lemma we use curves to construct prime ends
at accessible points.
A similar construction has been used by Karmazin~\cite{Ka3},~\cite{Ka}.

\begin{lem}  \label{lem-curve-imp-prime-end}
Let $\ga:[0,1]\to X$ be a curve such that $\ga([0,1))\subset\Om$ and
$\ga(1)=x\in\bdry\Om$.
Let also  $\{r_k\}_{k=1}^\infty$ be a strictly decreasing sequence
converging to zero as $k\to\infty$.
Then there exist 
 a sequence $\{\de_k\}_{k=1}^\infty$
of positive numbers smaller than $1$ and a prime end $[F_k]$ such that
$I[F_k]=\{x\}$, $\ga([\de_k,1))\subset F_k$
and $F_k$ is a component of $\Om\cap B(x,r_k)$ for all $k=1,2,\ldots$.
In particular, $x$ is accessible through $[F_k]$.
If  $1\le p\in Q(x) \ne(0,1]$, then this prime end is also
a $\Mod_p$-prime end.
\end{lem}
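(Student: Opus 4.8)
The plan is to construct the sets $F_k$ directly as components of $\Om \cap B(x,r_k)$ along the curve $\ga$, verify they form a chain whose impression is $\{x\}$, and then invoke the earlier results. First I would fix the curve $\ga$. For each $k$, since $\ga(1)=x$ and $\ga$ is continuous, there is $0<s_k<1$ such that $\ga([s_k,1)) \subset B(x,r_k)$; the tail $\ga([s_k,1))$ is connected and lies in $\Om \cap B(x,r_k)$, so it is contained in a single component of $\Om \cap B(x,r_k)$, which we take to be $F_k$. Since $X$ is locally connected (and proper), every component of the open set $\Om \cap B(x,r_k)$ is open, so $F_k$ is open; it is bounded and connected, and $x \in \itoverline{F}_k \cap \bdry\Om$ because $\ga(t)\to x$ as $t\to1$, so $F_k$ is acceptable. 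Because $\{r_k\}$ is strictly decreasing, the tails of $\ga$ are nested, so for $k'>k$ the tail defining $F_{k'}$ lies inside $F_k$; by connectedness this forces $F_{k'}\subset F_k$ for $k'$ large enough, and after passing to a subsequence of the $r_k$ (which does not affect the conclusion, since we only need \emph{some} admissible $\{r_k\}$-indexed construction — actually we must be slightly careful here) we may assume $F_{k+1}\subset F_k$ for every $k$. I would then set $\de_k = s_k$ (shrinking if needed so that $\ga([\de_k,1))\subset F_k$ and $\de_k<1$).

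The verification that $\{F_k\}$ is a chain requires checking the three conditions of Definition~\ref{deff-chain}. Condition (a) is the nesting just arranged. For condition (c), note $\itoverline{F}_k \subset \itoverline{B(x,r_k)}$, so $\bigcap_k \itoverline{F}_k \subset \bigcap_k \itoverline{B(x,r_k)} = \{x\}$, and $x \in \bdry\Om$; since $\ga(t)\to x$ with $\ga(t)\in F_k$ for $t$ near $1$, the impression is exactly $\{x\}\subset\bdry\Om$. The one genuinely delicate point is condition (b), namely $\dist(\Om\cap\bdry F_{k+1}, \Om\cap\bdry F_k)>0$. The key observation is that $\Om\cap\bdry F_{k+1}\subset \bdry B(x,r_{k+1})$: indeed $F_{k+1}$ is a component of the open set $\Om\cap B(x,r_{k+1})$, so its boundary relative to $\Om$ cannot meet the open set $\Om\cap B(x,r_{k+1})$ itself, hence $\Om\cap\bdry F_{k+1}$ lies in $\Om\setm B(x,r_{k+1}) \subset \{y:d(x,y)\ge r_{k+1}\}$, and being in $\itoverline{F}_{k+1}\subset\itoverline{B(x,r_{k+1})}$ it lies on the sphere $\{d(x,y)=r_{k+1}\}$. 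Similarly $\Om\cap\bdry F_k \subset \{d(x,y)=r_k\}$. Since $r_{k+1}<r_k$, these two sets are separated by the positive distance $r_k-r_{k+1}>0$, giving condition (b). This is the step I expect to be the main obstacle, because it is where the choice of $F_k$ as a \emph{component} of a ball (rather than an arbitrary acceptable set) is essential; with that choice the argument is short.

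Once $\{F_k\}$ is verified to be a chain, $I[F_k]=\{x\}$ is a singleton, so $[F_k]$ is a prime end by Proposition~\ref{prop-end-single}. The accessibility statement is immediate: the curve $\ga$ satisfies $\ga([0,1))\subset\Om$, $\ga(1)=x$, and $\ga([\de_k,1))\subset F_k$ for all $k$, which is precisely the definition of $x$ being accessible through $[F_k]$ (and in particular $x\in I[F_k]$). Finally, if $1\le p \in Q(x)\ne(0,1]$, then since $\diam F_k\le 2r_k\to0$, Lemma~\ref{lem-single-char} gives $\diam F_k\to0$, so Proposition~\ref{prop-single-Modp} applies and $[F_k]$ is a $\Modp$-prime end. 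The remaining bookkeeping — ensuring the $\de_k$ are $<1$ and that one can arrange $F_{k+1}\subset F_k$ for the \emph{given} sequence $\{r_k\}$ rather than a subsequence (which follows because if the tail of $\ga$ past $s_{k+1}$ reaches into $B(x,r_k)$ it lies in a component of $\Om\cap B(x,r_k)$ containing points of the tail past $s_k$, hence in $F_k$, using that $\ga([s_{k+1},1))$ is connected and meets $F_k$) — is routine.
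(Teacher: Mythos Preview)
Your proposal is correct and follows essentially the same approach as the paper: define $F_k$ as the component of $\Om\cap B(x,r_k)$ containing the tail of $\ga$, use $\Om\cap\bdry F_k\subset\bdry B(x,r_k)$ to get $\dist(\Om\cap\bdry F_{k+1},\Om\cap\bdry F_k)\ge r_k-r_{k+1}>0$, and then apply Propositions~\ref{prop-end-single} and~\ref{prop-single-Modp}. Your worry about needing a subsequence for the nesting $F_{k+1}\subset F_k$ is unnecessary (as you yourself note at the end): once the $\de_k$ are chosen increasing, $\ga([\de_{k+1},1))\subset\ga([\de_k,1))\subset F_k$, and since $F_{k+1}$ is a connected subset of $\Om\cap B(x,r_k)$ meeting $F_k$, it lies in $F_k$.
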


\begin{proof}
Note first that by the continuity of $\ga$, for each $k=1,2,\ldots,$
there exists $0 < \de_k<1$ such that
\[
\ga([\de_k,1))\subset \Om \cap B(x,r_k).
\]
Let $F_k$ be the component of $\Om \cap B(x,r_k)$ containing
$\ga(\de_k)$.
It follows directly that
$\ga([\de_k,1))\subset F_k$ and hence
$x\in\itoverline{F}_k$, showing that $F_k$ is an acceptable set.
Also, by construction, $F_{k+1}\subset F_k$ for each $k=1,2,\ldots.$

Since $\Om\cap\bdry F_k \subset \bdry B(x,r_k)$, it follows that
for all $k=1,2,\ldots,$
\[
   \dist(\Om\cap\partial F_{k+1},\Om\cap\partial F_{k}) 
   \ge   r_k - r_{k+1} > 0.
\]
Also, as ${F}_k \subset B(x,r_k)$, we have that $I[F_k]=\{x\}$.

Finally, Proposition~\ref{prop-end-single} implies that $[F_k]$ is
a prime end.
Moreover, if $1\le p\in Q(x)\ne(0,1]$, then it is also a $\Mod_p$-prime end by
Proposition~\ref{prop-single-Modp}.
\end{proof}

\begin{cor}\label{access1}
Let $x\in \partial \Om$ be an accessible boundary point.
Then there is a prime end $[F_k]$ with $I[F_k]=\{x\}$.
If $1\le p\in Q(x)\ne(0, 1]$
then this prime end is a $\Mod_p$-prime end.
\end{cor}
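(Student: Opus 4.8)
The plan is to deduce Corollary~\ref{access1} directly from Lemma~\ref{lem-curve-imp-prime-end}, which does essentially all the work. First I would invoke the definition of accessibility (Definition~\ref{deff-access-pt}): since $x\in\partial\Om$ is an accessible boundary point, there exists a (possibly nonrectifiable) curve $\ga:[0,1]\to X$ with $\ga(1)=x$ and $\ga([0,1))\subset\Om$. This is precisely the hypothesis needed to apply Lemma~\ref{lem-curve-imp-prime-end}.

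Next I would choose an auxiliary strictly decreasing sequence $\{r_k\}_{k=1}^\infty$ of positive numbers with $r_k\to0$ as $k\to\infty$; any such sequence will do, e.g.\ $r_k=1/k$. Applying Lemma~\ref{lem-curve-imp-prime-end} to this curve $\ga$ and this sequence $\{r_k\}_{k=1}^\infty$ produces a sequence $\{\de_k\}_{k=1}^\infty$ of positive numbers less than $1$ and a prime end $[F_k]$ with $I[F_k]=\{x\}$, with $\ga([\de_k,1))\subset F_k$, and with each $F_k$ a component of $\Om\cap B(x,r_k)$. In particular this $[F_k]$ is the desired prime end with singleton impression $\{x\}$, establishing the first assertion. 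The second assertion is immediate from the final sentence of Lemma~\ref{lem-curve-imp-prime-end}: if $1\le p\in Q(x)\ne(0,1]$, then $[F_k]$ is also a $\Mod_p$-prime end.

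There is no real obstacle here; the corollary is a straightforward specialization of the lemma, obtained by discarding the extra structural information (the explicit $\de_k$'s and the identification of $F_k$ as a component of $\Om\cap B(x,r_k)$) that the lemma provides but that the corollary does not need to record. The only ingredient beyond quoting the lemma is the observation that an accessible boundary point supplies, by definition, exactly the curve $\ga$ that the lemma requires as input, and that a decreasing null sequence $\{r_k\}_{k=1}^\infty$ can always be chosen.
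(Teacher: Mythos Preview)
Your proposal is correct and matches the paper's approach exactly: the paper states Corollary~\ref{access1} immediately after Lemma~\ref{lem-curve-imp-prime-end} with no explicit proof, precisely because it follows by feeding the accessibility curve and any strictly decreasing null sequence $\{r_k\}$ into that lemma, just as you describe.
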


\begin{prop}  \label{prop-prime-end-iff-diam-0}
 Let $[E_k]$ be an end and $x\in I[E_k]$ be
accessible through $[E_k]$.
Then the following are equivalent\/\textup{:}
\begin{enumerate}
\item \label{it-prime}
$[E_k]$ is a prime end\/\textup{;}
\item \label{it-x}
$[E_k]$ is a singleton end.
\setcounter{saveenumi}{\value{enumi}}
\end{enumerate}

If\/ $1\le p\in Q(x)\ne(0, 1]$
then  the following
statement is also equivalent to the statements above\/\textup{:}
\begin{enumerate}
\setcounter{enumi}{\value{saveenumi}}
\item \label{it-Modp}
$[E_k]$ is a $\Modp$-prime end.
\end{enumerate}
\end{prop}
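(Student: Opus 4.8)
The plan is to prove the chain of equivalences by showing $\ref{it-prime}\Leftrightarrow\ref{it-x}$ first, and then, under the extra hypothesis on $p$, proving $\ref{it-x}\Leftrightarrow\ref{it-Modp}$. The implication $\ref{it-x}\Rightarrow\ref{it-prime}$ is immediate from Proposition~\ref{prop-end-single}, and does not even use accessibility. For $\ref{it-x}\Rightarrow\ref{it-Modp}$ (again without needing accessibility beyond what gives $\ref{it-x}\Rightarrow\ref{it-prime}$), I would invoke Proposition~\ref{prop-single-Modp}: a singleton end with impression $\{x\}$ and $1\le p\in Q(x)\ne(0,1]$ is automatically a $\Modp$-prime end. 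Likewise $\ref{it-Modp}\Rightarrow\ref{it-prime}$ holds with no extra work, since a $\Modp$-prime end is in particular an end not divisible by any other $\Modp$-end, and by Lemma~\ref{lem-enum-prime-end}\,\ref{it-prime-end-eq} a $\Modp$-end is a prime end iff it is a $\Modp$-prime end (so $\ref{it-Modp}\Rightarrow\ref{it-prime}$ is just that lemma). So the only implication where accessibility through $[E_k]$ must really be used is $\ref{it-prime}\Rightarrow\ref{it-x}$.

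For that implication I would argue by contrapositive: suppose $[E_k]$ is \emph{not} a singleton end, so by Lemma~\ref{lem-single-char} we have $\diam E_k\to d>0$. The goal is to produce an end $[F_k]$ that strictly divides $[E_k]$, contradicting primeness. Here is where accessibility enters: let $\ga\colon[0,1]\to X$ be the curve from Definition~\ref{deff-access-pt} with $\ga(1)=x\in I[E_k]$, $\ga([0,1))\subset\Om$, and, for each $k$, some $t_k\in(0,1)$ with $\ga([t_k,1))\subset E_k$. Pick a strictly decreasing sequence $r_k\to0$ with, say, $r_1<d/2$. By Lemma~\ref{lem-curve-imp-prime-end} applied to this $\ga$ and $\{r_k\}$, there is a prime end $[F_k]$ with $I[F_k]=\{x\}$, with $F_k$ a component of $\Om\cap B(x,r_k)$, and with $\ga([\de_k,1))\subset F_k$ for suitable $\de_k\in(0,1)$.

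It remains to check that $[F_k]$ divides $[E_k]$ but $[E_k]$ does not divide $[F_k]$, so that $[F_k]\ne[E_k]$ and $[E_k]$ is not prime. For the first part: fix $k$; choose an index $m$ so large that $\de_m\ge t_k$ — this is possible since $\de_m\to$ something $<1$? no, more carefully: I want $F_m\subset E_k$. Use that $F_m\subset\Om\cap B(x,r_m)$, and since $\diam E_k\to d>0$ while $\diam(\Om\cap B(x,r_m))\le 2r_m\to0$, for $m$ large $F_m$ has diameter $<\de_k$-type smallness; combine this with the fact that $F_m$ contains the tail $\ga([\de_m,1))$ and, for $m$ large enough that $\de_m\ge t_k$, that tail lies in $E_k$, so $F_m$ meets $E_k$; then since $F_m$ is connected with small diameter and meets $E_k$, Remark~\ref{rem-connected-diam} (used as in the proof of Lemma~\ref{lem-identify-prime-end}, comparing $\dist(\Om\cap\bdry E_{k+1},\Om\cap\bdry E_k)$ against $\diam F_m$) forces $F_m\subset E_k$ once $\diam F_m<\dist(\Om\cap\bdry E_{k+1},\Om\cap\bdry E_k)$ and $F_m\subset E_{k}\cup(\Om\setm\itoverline{E_k})$; one gets $F_m\subset E_k$ for all large $m$, hence $[F_k]$ divides $[E_k]$. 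For the converse direction, $[E_k]$ cannot divide $[F_k]$: if it did, then by Lemma~\ref{lem-identify-prime-end} (since $\diam F_k\to0$) we would get $[E_k]=[F_k]$, but then $\diam E_k\to0$, contradicting $d>0$. Therefore $[F_k]$ strictly divides $[E_k]$, so $[E_k]$ is not a prime end, completing the contrapositive. The main obstacle is the bookkeeping in the division argument — correctly extracting $F_m\subset E_k$ from the smallness of $\diam F_m$ together with the positive-distance condition \ref{pos-dist} in the definition of the chain $\{E_k\}$, rather than merely $F_m\cap E_k\ne\emptyset$; the curve $\ga$ supplies the nonempty intersection, and condition \ref{pos-dist} upgrades it to containment exactly as in Lemma~\ref{lem-identify-prime-end}.
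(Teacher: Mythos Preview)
Your approach is essentially the paper's: construct the singleton prime end $[F_k]$ from the accessibility curve via Lemma~\ref{lem-curve-imp-prime-end}, then use Remark~\ref{rem-connected-diam} together with condition~\ref{pos-dist} to show $[F_k]$ divides $[E_k]$; the paper frames \ref{it-prime}\,$\Rightarrow$\,\ref{it-x} directly (prove $[F_k]$ divides $[E_k]$ and let primeness force $[E_k]=[F_k]$) rather than by contrapositive, but the mechanism is identical. One wobble to fix: you cannot justify ``$\de_m\ge t_k$ for large $m$'' from Lemma~\ref{lem-curve-imp-prime-end}, but you don't need it --- for every $m$ and $k$ the point $\ga(\max\{\de_m,t_{k+1}\})$ already lies in $F_m\cap E_{k+1}$, which is exactly the nonempty intersection your containment argument requires.
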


The assumption of accessibility is essential in Proposition~\ref{prop-prime-end-iff-diam-0}.
That \ref{it-prime} $\imp$ \ref{it-x} fails without this assumption follows from Example~\ref{ex-double-equilat-comb}.

\begin{proof}
\ref{it-x} $\imp$ \ref{it-prime}
This follows from Proposition~\ref{prop-end-single}.

\ref{it-prime} $\imp$ \ref{it-x} 
As $x$ is accessible through $[E_k]$, there exists
a curve $\ga:[0,1]\to X$ and an increasing sequence of positive numbers
$t_k\to1$ 
as $k\to\infty$, such that $\ga(1)=x$
and for $k=1,2\ldots$, $\ga([t_k,1))\subset E_k$.
Lemma~\ref{lem-curve-imp-prime-end} with e.g.\ $r_k=2^{-k}$
provides us with a prime end $[F_k]$
such that $I[F_k]=\{x\}$ and $\ga([\de_k,1))\subset F_k$ for some $0 <\de_k<1$,
$k=1,2,\ldots.$

We shall show that $[F_k]$ divides $[E_k]$. If not, then there
exists $k$ such that for every $l\ge k+1$ there is a point $x_l\in
F_l\setm E_k$. Since $t_j\to 1$ as $j\to\infty$, for every
$l \ge k+1$ we can find  $j_l\ge l+1$ such that $t_{j_l}\ge \de_l$ and
hence $y_l:=\ga(t_{j_l})\in E_{j_l} \subset E_{k+1}$
and $y_l \in F_l$.
As $x_l\notin E_k$ and
$y_l\in E_{k+1}$,  Remark~\ref{rem-connected-diam} yields
\[
\dist(\Om\cap\bdry E_{k+1},\Om\cap\bdry E_{k}) \le \diam F_l \to0
\quad \text{as }l\to\infty,
\]
which contradicts the definition of chains.
Hence, $[F_k]$ divides $[E_k]$, and as $[E_k]$ is a prime end, it
follows that $[E_k]=[F_k]$, and in particular $I[E_k]=\{x\}$.

Let us finally assume that $1\le p\in Q(x)\ne(0,1]$.

\ref{it-x} $\imp$ \ref{it-Modp}
This follows from Proposition~\ref{prop-single-Modp}.

\ref{it-Modp} $\imp$ \ref{it-prime}
This follows from Lemma~\ref{lem-enum-prime-end}\,\ref{it-prime-end-eq}.
\end{proof}

\begin{prop}\label{prop1A-A}
If $[E_k]$ is an end and $I[E_k]=\{x\}$,
then $x$ is accessible through $[E_k]$.
\end{prop}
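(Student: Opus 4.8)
The plan is to produce the curve required by Definition~\ref{deff-access-pt} by concatenating, inside the successive sets $E_k$, short paths that march towards $x$, and to use the hypothesis $I[E_k]=\{x\}$ only at the very end to force the concatenation to land at $x$.

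First I would reduce to the case where every $E_k$ is open. By Remark~\ref{rmk-open} there is an equivalent chain $\{G_k\}_{k=1}^\infty$ of open connected acceptable sets with $G_{k+1}\subset E_{k+1}\subset G_k\subset E_k$ for all $k$ (here $G_k$ is the component of $\interior E_k$ containing $E_{k+1}$). Since $[G_k]=[E_k]$, we have $I[G_k]=\{x\}$, and any curve $\ga$ for which, for each $k$, there is $t_k<1$ with $\ga([t_k,1))\subset G_k$ automatically satisfies $\ga([t_k,1))\subset E_k$; so it suffices to establish accessibility through the end $[G_k]$. After renaming the $G_k$ as $E_k$, I may thus assume each $E_k$ is open and connected, hence pathconnected by Remark~\ref{rmk-locconn}. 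I would also record that $\diam E_k\to0$ as $k\to\infty$, by Lemma~\ref{lem-single-char}, since $[E_k]$ is a singleton end.

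Next I would carry out the construction: choose $x_k\in E_k$ for each $k$ (so that $x_{k+1}\in E_{k+1}\subset E_k$), use pathconnectedness of $E_k$ to pick a continuous path $\ga_k\colon[1-2^{-k},1-2^{-k-1}]\to E_k$ from $x_k$ to $x_{k+1}$, and set $\ga=\ga_k$ on $[1-2^{-k},1-2^{-k-1}]$, $\ga\equiv x_1$ on $[0,\tfrac12]$, and $\ga(1)=x$. Consecutive pieces agree at the junction points $1-2^{-k-1}$ (both equal $x_{k+1}$), so $\ga$ is continuous on $[0,1)$ with $\ga([0,1))\subset\bigcup_k E_k\subset\Om$. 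Then I would verify, with $t_k:=1-2^{-k}$, that $\ga([t_k,1))\subset E_k$: if $t\in[t_k,1)$ then $t\in[1-2^{-j},1-2^{-j-1}]$ for some $j\ge k$, so $\ga(t)=\ga_j(t)\in E_j\subset E_k$. Finally, continuity of $\ga$ at $1$ follows because for $t\in[t_k,1)$ both $\ga(t)$ and $x$ lie in $\itoverline{E}_k$ (the latter since $x\in I[E_k]$), whence $d(\ga(t),x)\le\diam E_k\to0$; thus $\ga$ is a curve with $\ga([0,1))\subset\Om$, $\ga(1)=x\in\bdry\Om$ and $\ga([t_k,1))\subset E_k$ for every $k$, which is precisely accessibility through $[E_k]$.

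The main — and essentially only — obstacle is that a connected subset of $X$ need not be pathconnected, so the reduction to open acceptable sets via Remark~\ref{rmk-open} (and with it the local pathconnectedness supplied by Remark~\ref{rmk-locconn}) is indispensable; after that, everything is a routine concatenation, with the hypothesis $I[E_k]=\{x\}$ entering only to give $\diam E_k\to0$ and hence $\ga(t)\to x$.
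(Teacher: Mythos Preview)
Your proof is correct and follows essentially the same approach as the paper's: reduce to open acceptable sets via Remark~\ref{rmk-open}, use local connectedness to get pathconnectedness of each $E_k$, concatenate paths between points $x_k\in E_k$, and invoke $\diam E_k\to0$ (Lemma~\ref{lem-single-char}) for continuity at $t=1$. The only differences are cosmetic (your dyadic parametrization versus the paper's $[1-1/k,1-1/(k+1)]$, and your more explicit verification of the tail condition).
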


\begin{proof}
By Remark~\ref{rmk-open}, we can assume that each $E_k$ is open.
As $X$ is locally connected and $E_k$ is connected, it follows that 
 $E_k$ is pathconnected, see Remark~\ref{rmk-locconn}. 
Choose $x_k\in E_k\setm E_{k+1}$ for $k=1,2,\ldots.$
Since both $x_k$ and $x_{k+1}$ belong to the pathconnected set $E_k$,
there exists a curve  $\ga_k:[1-1/k,1-1/(k+1)] \to E_k$
connecting $x_k$ to $x_{k+1}$.
Let $\ga$ be the union of all these curves.
More precisely, let $\ga:[0,1] \to X$ be given
by $\ga(t)=\ga_k(t)$ if $t \in [1-1/k,1-1/(k+1)]$, $k=1,2,\ldots$,
and $\ga(1)=x$.
Because $\diam E_k \to0$, we know that $\ga$ is continuous
at $1$, and hence $x$ is
accessible along $\ga$ through $[E_k]$.
\end{proof}

The following two corollaries summarize some of the results in
this section.

\begin{cor} \label{cor1A}
A prime end $[E_k]$ is a singleton end if and only if its impression $I[E_k]$
contains a point which is accessible through $[E_k]$.
\end{cor}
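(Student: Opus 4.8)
The plan is to prove \texttt{Cor}~\ref{cor1A} by combining the results already established in this section, so that the proof is essentially a bookkeeping argument pulling together Proposition~\ref{prop-prime-end-iff-diam-0} and Proposition~\ref{prop1A-A}.

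First I would treat the forward implication. Suppose $[E_k]$ is a singleton prime end, say $I[E_k]=\{x\}$. By Proposition~\ref{prop1A-A}, applied to the end $[E_k]$ with singleton impression $\{x\}$, the point $x$ is accessible through $[E_k]$. This immediately gives a point of $I[E_k]$ (namely $x$ itself) which is accessible through $[E_k]$, as desired. Note that this direction does not even use that $[E_k]$ is a prime end; it holds for any singleton end.

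For the converse, suppose $[E_k]$ is a prime end and that its impression $I[E_k]$ contains a point $x$ which is accessible through $[E_k]$. Then the hypotheses of Proposition~\ref{prop-prime-end-iff-diam-0} are satisfied (an end $[E_k]$ together with a point $x\in I[E_k]$ accessible through $[E_k]$), so statements \ref{it-prime} and \ref{it-x} there are equivalent. Since $[E_k]$ is a prime end, i.e.\ \ref{it-prime} holds, we conclude that \ref{it-x} holds, that is, $[E_k]$ is a singleton end. This completes the proof.

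There is no real obstacle here: everything needed has been set up in the preceding propositions, and the only mild point to be careful about is matching hypotheses --- in particular checking that ``$x$ accessible through $[E_k]$'' is exactly the hypothesis required by Proposition~\ref{prop-prime-end-iff-diam-0}, and that Proposition~\ref{prop1A-A} is stated for a general singleton end (not only a prime end). Since both match, the proof is a short two-line deduction in each direction. One could even phrase it as: ``If $[E_k]$ is a singleton (prime) end, Proposition~\ref{prop1A-A} gives accessibility of the impression point through $[E_k]$; conversely, if $[E_k]$ is a prime end whose impression contains a point accessible through $[E_k]$, then by Proposition~\ref{prop-prime-end-iff-diam-0} ($\ref{it-prime}\Rightarrow\ref{it-x}$) it is a singleton end.''
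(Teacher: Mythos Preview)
Your proof is correct and takes essentially the same approach as the paper, which simply states that the result follows directly from Propositions~\ref{prop-prime-end-iff-diam-0} and~\ref{prop1A-A}. Your more detailed exposition of how the two implications are handled by these propositions is exactly right.
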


\begin{proof}
This follows directly from Propositions~\ref{prop-prime-end-iff-diam-0}
and~\ref{prop1A-A}.
\end{proof}

\begin{cor}  \label{cor-access-equiv-end}
Let $x \in \bdy\Om$.
Then the following are equivalent\/\textup{:}
\begin{enumerate}
\item \label{i2-acc}
$x$ is accessible\textup{;}
\item \label{i2-end}
there is an end $[E_k]$ with $I[E_k]=\{x\}$\textup{;}
\item \label{i2-prime-end}
\setcounter{saveenumi}{\value{enumi}}
there is a prime end $[E_k]$ with $I[E_k]=\{x\}$.
\end{enumerate}

If $1\le p\in Q(x)\ne(0,1]$, then also the following 
statements are equivalent to the statements above\/\textup{:}
\begin{enumerate}
\setcounter{enumi}{\value{saveenumi}}
\item \label{i2-Modp}
there is a $\Modp$-end $[E_k]$ with $I[E_k]=\{x\}$\textup{;}
\item \label{i2-Modp-prime}
there is a $\Modp$-prime end $[E_k]$ with $I[E_k]=\{x\}$.
\end{enumerate}
\end{cor}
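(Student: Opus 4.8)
The plan is to close a short cycle of implications, all of whose links are already available from results established above. Concretely, I would first prove the basic equivalence by the cycle \ref{i2-acc} $\imp$ \ref{i2-prime-end} $\imp$ \ref{i2-end} $\imp$ \ref{i2-acc}, and then, under the extra hypothesis $1\le p\in Q(x)\ne(0,1]$, splice in \ref{i2-Modp} and \ref{i2-Modp-prime} by running a second cycle \ref{i2-acc} $\imp$ \ref{i2-Modp-prime} $\imp$ \ref{i2-Modp} $\imp$ \ref{i2-end}, reusing the link \ref{i2-end} $\imp$ \ref{i2-acc} already proved.

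For the basic part: \ref{i2-acc} $\imp$ \ref{i2-prime-end} is precisely the first assertion of Corollary~\ref{access1}, which at any accessible point $x$ produces a prime end $[F_k]$ with $I[F_k]=\{x\}$. The implication \ref{i2-prime-end} $\imp$ \ref{i2-end} is immediate, since every prime end is in particular an end. Finally \ref{i2-end} $\imp$ \ref{i2-acc} is Proposition~\ref{prop1A-A}: an end with impression $\{x\}$ renders $x$ accessible (indeed accessible through that end), hence accessible in the sense of Definition~\ref{deff-access-pt}. This already gives the equivalence of \ref{i2-acc}, \ref{i2-end} and \ref{i2-prime-end}.

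Now assume $1\le p\in Q(x)\ne(0,1]$. The second assertion of Corollary~\ref{access1} states that the prime end constructed from an accessible point is moreover a $\Modp$-prime end, which gives \ref{i2-acc} $\imp$ \ref{i2-Modp-prime}. The implication \ref{i2-Modp-prime} $\imp$ \ref{i2-Modp} is trivial, a $\Modp$-prime end being by definition a $\Modp$-end, and \ref{i2-Modp} $\imp$ \ref{i2-end} is equally trivial, a $\Modp$-end being an end. Composing with \ref{i2-end} $\imp$ \ref{i2-acc} from the previous paragraph shows that \ref{i2-Modp} and \ref{i2-Modp-prime} are equivalent to the first three statements, which completes the proof.

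I expect no genuine obstacle in this corollary: it is a bookkeeping consequence of Corollary~\ref{access1} together with Proposition~\ref{prop1A-A}, the real content having already been absorbed into the curve-to-prime-end construction of Lemma~\ref{lem-curve-imp-prime-end}, the singleton criterion Proposition~\ref{prop-end-single}, and the $\Modp$ bookkeeping of Lemma~\ref{lem-enum-prime-end}. The only points to double-check while writing are purely notational: that ``accessible through $[E_k]$'' is correctly downgraded to ``accessible'' where needed, and that the hypothesis $1\le p\in Q(x)\ne(0,1]$ is invoked only for the links involving \ref{i2-Modp} and \ref{i2-Modp-prime}.
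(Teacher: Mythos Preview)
Your proof is correct and follows essentially the same route as the paper: the cycle \ref{i2-acc} $\imp$ \ref{i2-prime-end} $\imp$ \ref{i2-end} $\imp$ \ref{i2-acc} via Corollary~\ref{access1} and Proposition~\ref{prop1A-A}, followed by splicing in the $\Modp$ statements under the extra hypothesis. The only cosmetic difference is that the paper enters the $\Modp$ cycle at \ref{i2-end} $\imp$ \ref{i2-Modp-prime} (citing Proposition~\ref{prop-single-Modp} directly) rather than at \ref{i2-acc} $\imp$ \ref{i2-Modp-prime} (citing the second part of Corollary~\ref{access1}), but these amount to the same thing.
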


\begin{proof}
\ref{i2-acc} \imp \ref{i2-prime-end}
This follows from Corollary~\ref{access1}.

\ref{i2-prime-end} \imp \ref{i2-end}
This is trivial.

\ref{i2-end} \imp \ref{i2-acc}
This follows from Proposition~\ref{prop1A-A}.

Let us finally assume that $1\le p\in Q(x)\ne(0,1]$.

\ref{i2-end} \imp \ref{i2-Modp-prime}
This follows from Proposition~\ref{prop-single-Modp}

\ref{i2-Modp-prime} \imp \ref{i2-Modp} \imp \ref{i2-end}
These implications are trivial.
\end{proof}

\section{The topology on ends and prime ends}
\label{sect-top}

We would like to find
homeomorphisms between the prime end boundary
 $\bdy_P \Omega$ and other boundaries.
To do so we need a topology on $\bdy_P \Om$,  and in fact
on the prime end closure $\clOmP:=\Om \cup \bdy_P \Om$.
We will introduce a topology on the larger set $\Om \cup \bdy_E \Om$,
where $\bdy_E \Om$ is the end boundary.
It then naturally induces a topology on
$\clOmP$ and also on the boundaries connected
with $\Modp$-prime ends.

\begin{definition}
\label{seqtoprime}
We say that a \emph{sequence of points} $\{x_n\}_{n=1}^\infty$ in $\Omega$
\emph{converges to the end} $[E_k]$,
and write $ x_n \to [E_k]$ as $n \to \infty$,
if for all $k$ there exists $n_k$
such that $x_n\in E_k$ whenever $n \ge n_k$.
\end{definition}

If $x_n \to [E_k]$ as $n\to\infty$, and $[E_k]$ divides $[F_k]$,
then $x_n$ also converges to $[F_k]$.
Thus the limit of a sequence need not be unique,
and we therefore avoid writing $\lim_{n \to \infty} x_n$.
It is less obvious that this problem remains even if we restrict our
attention to prime ends, see Example~\ref{ex-Jana-two-limits} below.

\begin{definition}\label{conv}
A \emph{sequence of ends} $\{[E_k^n]\}_{n=1}^\infty$ \emph{converges
to the end} $[E_k^\infty]$ if
for every $k$ there is $n_{k}$
such that for each
$n\ge n_{k}$ there exists $l_{n, k}$ with
$E_{l_{n, k}}^n \subset E_{k}^\infty$.
\end{definition}

Note that the integers $n_k$ and $l_{n,k}$ in Definitions~\ref{seqtoprime}
and~\ref{conv}
depend on the
representative chain  of the corresponding ends.
However, both  notions of convergence are
independent of the choice of representative chain.

\begin{deff}  \label{def-closed}
Convergence of points and ends defines a topology on
$\Om \cup \bdy_E \Om$. In this topology,
a collection  $C \subset \Om \cup \bdy_E \Om$ of points and ends is \emph{closed}
if whenever (a point or an  end) $y\in \Om \cup \bdy_E \Om$
is a limit of a sequence in $C$,
then $y \in C$.
\end{deff}

In this topology, a sequence $\{x_n\}_{n=1}^\infty$ of points in $\Om$
converges to a point $y \in \Om$ as given by the metric
topology, and no sequence of ends converges to a point in $\Om$.

\begin{prop}
The topology defined above is indeed a topology on $\Om \cup \bdy_E \Om$.
\end{prop}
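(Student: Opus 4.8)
The plan is simply to verify that the family $\mathcal{F}$ of subsets of $\Om\cup\bdy_E\Om$ declared closed in Definition~\ref{def-closed} satisfies the three axioms for the system of closed sets of a topology: it contains $\emptyset$ and the whole space, it is stable under arbitrary intersections, and it is stable under finite unions. The first two are immediate. The empty set contains no sequence, so the defining implication holds vacuously; the whole space contains every limit, so it too is closed. If $\{C_\alpha\}$ is any family in $\mathcal{F}$ and $y$ is a limit of a sequence contained in $\bigcap_\alpha C_\alpha$, then that sequence lies in each $C_\alpha$, hence $y\in C_\alpha$ for every $\alpha$, and thus $y\in\bigcap_\alpha C_\alpha$.

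The only axiom needing an argument is stability under finite unions, and the key point is that each notion of convergence used in Definition~\ref{def-closed} --- metric convergence of a sequence of points to a point of $\Om$, convergence of a sequence of points to an end in the sense of Definition~\ref{seqtoprime}, and convergence of a sequence of ends to an end in the sense of Definition~\ref{conv} --- is inherited by subsequences: if $\{y_n\}_{n=1}^\infty$ converges to $y$ in one of these senses, so does every subsequence $\{y_{n_j}\}_{j=1}^\infty$. I would check this directly from the definitions; for instance, in the situation of Definition~\ref{conv}, if for each $k$ there is $n_k$ with $E_{l_{n,k}}^n\subset E_k^\infty$ for all $n\ge n_k$, then the same $n_k$ works along the subsequence since $n_j\ge n_k$ once $j$ is large, and the cases of Definition~\ref{seqtoprime} and of ordinary metric convergence are no different. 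Note that convergence is defined only for sequences all of whose terms are points or all of whose terms are ends, so only sequences of these two types enter Definition~\ref{def-closed}.

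Granting the subsequence remark, finite unions follow by pigeonholing. Let $C_1,C_2\in\mathcal{F}$ and let $y$ be a limit of a sequence $\{y_n\}_{n=1}^\infty$ (of points, resp.\ of ends) contained in $C_1\cup C_2$. At least one of $\{n:y_n\in C_1\}$ and $\{n:y_n\in C_2\}$ is infinite, say the former; the corresponding subsequence lies entirely in $C_1$, is of the same type, and still converges to $y$, so $y\in C_1\subset C_1\cup C_2$. An obvious induction extends this to arbitrary finite unions, completing the verification. I expect the only place one actually opens the definitions to be the subsequence-invariance step, which is routine; the genuine subtlety worth flagging is that limits here need not be unique (the space need not be Hausdorff), but this is irrelevant to the closed-set axioms, which never invoke uniqueness of limits --- and I would not claim at this stage that topological convergence in the resulting topology coincides with the convergence of Definitions~\ref{seqtoprime} and~\ref{conv}, only that $\mathcal{F}$ is indeed the closed-set system of a topology.
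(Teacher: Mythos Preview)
Your proposal is correct and follows essentially the same approach as the paper: verify the three closed-set axioms directly, with the only nontrivial step being finite unions via the pigeonhole-and-subsequence argument. You are somewhat more explicit than the paper about why convergence passes to subsequences and about the irrelevance of non-unique limits, but the overall structure is identical.
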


\begin{proof}
(1) The empty set  and $\Om \cup\bdy_E \Om$ are clearly closed.

(2) Let $C_1$ and $C_2$ be closed subsets of $\Om \cup \bdy_E \Om$.
Assume that
$\{y_n\}_{n=1}^\infty$
is a sequence in $C_1\cup C_2$
such that $y_n\to y_{\infty}$. Then there is a subsequence
$\{y_{n_k}\}_{k=1}^{\infty}$ lying entirely either in $C_1$ or else in $C_2$.
Since a subsequence of a convergent
sequence converges to the same limit,
it follows that $y_\infty\in C_1$ or $y_\infty\in C_2$.
Hence  $y_\infty\in C_1\cup C_2$.
By induction, for any positive integer $N$ we have that
$\bigcup_{n=1}^N C_n$ is closed whenever $C_1,\ldots, C_N$ are closed.

(3) Now, let  $\{C_i\}_{i\in \mathcal{I}}$ be a collection of
closed
subsets of $\Om \cup \bdy_E \Om$.
Consider a sequence $\{y_n\}_{n=1}^\infty\subset \bigcap_{i\in\mathcal{I}}C_i$.
If $y_n\to y_\infty$ as $n \to \infty$, then $y_\infty\in C_i$ for all $i\in \mathcal{I}$, since the
$C_i$ are closed.
Therefore, $y_\infty  \in \bigcap_{i\in \mathcal{I}}C_i$ and the
intersection is closed.
\end{proof}

In the rest of this section we discuss the
topology on $\Om \cup \bdy_E \Om$
and the induced topology on the prime end closure $\clOmP$
to gain a better understanding for them.
Given an open set $G \subset \Om$, let $G^\End$
be the union of $G$ and all the ends $[E_k]$ such that
$E_k \subset G$ for some $k$. The letter E in the superscript
stands for ``ends''.

\begin{prop}\label{prop-top-pe}
The collection of sets
\[
    \mathcal{C}:=\{G,G^\End: G \subset \Om \text{ is open}\}
\]
forms a basis for our topology.
\end{prop}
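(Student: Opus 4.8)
The plan is to check the two defining properties of a basis for the topology of Definition~\ref{def-closed}: first, that every member of $\mathcal{C}$ is open; and second, that for every open set $U$ and every $y\in U$ there is a member $B$ of $\mathcal{C}$ with $y\in B\subset U$. These two facts together say precisely that $\mathcal{C}$ is a basis (applying the second with $U=\Om\cup\bdy_E\Om=\Om^E$ also shows that $\mathcal{C}$ covers the space). Throughout I would use only the convergence facts recorded after Definitions~\ref{seqtoprime}--\ref{def-closed}: point sequences converge to points in the metric sense; convergence of a sequence to an end $[E_k]$ means that for every $m$ eventually each term lies in $E_m$ if it is a point and has a chain member contained in $E_m$ if it is an end; no sequence of ends converges to a point of $\Om$; and a subsequence of a convergent sequence has the same limit. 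A handy reformulation I would lean on is: a set $U$ is open if and only if every sequence converging to a point of $U$ is eventually contained in $U$ --- the nontrivial direction follows by applying the subsequence fact to the part of the sequence lying in the complement of $U$.

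For the first property, fix an open $G\subset\Om$. To see $G$ is open I would argue by contradiction: if $(y_n)\subset(\Om\cup\bdy_E\Om)\setm G$ converged to $y\in G$, then $y$ is a point $x\in\Om$, so all but finitely many $y_n$ are points (by the ``no ends to a point'' fact and the subsequence fact) and $y_n\to x$ metrically, forcing $y_n\in G$ eventually. For $G^E$, suppose $(y_n)\subset(\Om\cup\bdy_E\Om)\setm G^E$ converges to $y\in G^E$: if $y$ is a point of $G$ the same argument gives $y_n\in G\subset G^E$ eventually; if $y$ is an end $[E_k]$ with $E_{k_0}\subset G$, then at level $k_0$ the definition of convergence to $[E_k]$ puts each $y_n$ (for large $n$) either into $E_{k_0}\subset G$ or, if $y_n$ is itself an end, makes one of its chain members sit inside $E_{k_0}\subset G$; either way $y_n\in G^E$ eventually. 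In all cases this contradicts the choice of $(y_n)$, so both $G$ and $G^E$ are open.

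For the second property, let $U$ be open and $y\in U$. If $y=x\in\Om$, I would note that $B(x,1/n)\cap\Om\subset U$ for some $n$: otherwise, choosing $x_n\in(B(x,1/n)\cap\Om)\setm U$ gives a sequence converging (metrically, hence in the space) to $x\in U$ but never entering $U$; then $G:=B(x,1/n)\cap\Om$ is a member of $\mathcal{C}$ with $x\in G\subset U$. If $y=[E_k]$ is an end, I would first show $E_{k_0}^E\subset U$ for some $k_0$: if not, pick $y_k\in E_k^E\setm U$ for each $k$, and observe (using $E_k\subset E_m$ for $k\ge m$, and splitting into the point and end cases for $y_k$) that $y_k\to[E_k]$, so eventually $y_k\in U$ --- a contradiction. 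To obtain a basis element of the form $G^E$ with $G$ truly open, I would then invoke Remark~\ref{rmk-open} to pass to an equivalent chain $\{G_k\}$ of open acceptable sets; since $\{G_k\}$ divides $\{E_k\}$ there is $l$ with $G_l\subset E_{k_0}$, and then $G_l^E\subset E_{k_0}^E\subset U$ while $[E_k]=[G_k]\in G_l^E$, with $G_l$ open. Thus $B=G_l^E\in\mathcal{C}$ works.

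The hard part is not any single estimate but the uniform bookkeeping: because a sequence in this space may freely mix points and ends, each argument must be run in parallel for the ``point'' terms (using the metric clause of convergence) and the ``end'' terms (using the chain-containment clause), and one must remember to exclude ends accumulating at interior points via the subsequence fact. The one genuinely delicate point is that the basis is required to consist of sets $G$ and $G^E$ with $G$ \emph{open}, whereas the chain sets of a general end need not be open; Remark~\ref{rmk-open} is exactly what repairs this and keeps the construction inside $\mathcal{C}$.
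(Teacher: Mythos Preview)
Your proof is correct and follows essentially the same approach as the paper: both arguments establish openness of $G^E$ via the sequential criterion and then, for an end $[E_k]$ in an open set $U$, show that some $E_{k_0}^E$ sits inside $U$ by contradiction, invoking Remark~\ref{rmk-open} to arrange that the chain sets are open. Your version differs only in presentation---you prove openness of $G$ explicitly and handle mixed point/end sequences directly rather than passing to a subsequence of pure type, and you apply Remark~\ref{rmk-open} after finding $k_0$ rather than before---but these are cosmetic.
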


\begin{proof}
We first prove that $G^\End$ is open in our topology
if $G \subset \Om$ is open.
For this, we show that $F=(\Om \cup \bdy_E\Om) \setm G^\End$ is
closed in the sense of Definition~\ref{def-closed}.
Since $F \cap \Om=\Om \setm G$ is closed in $\Om$,
if a  sequence $\{x_n\}_{n=1}^\infty \subset F \cap \Om$ converges to
$x_\infty\in\Om$, then $x_\infty\in F$.
Next, if $\{x_n\}_{n=1}^\infty \subset F \cap \Om$ converges to an end  $[E_k]$,
then for each $k$ we can find $n$ such that $x_n \in E_k$.
In particular, $E_k \cap (F \cap \Om) \ne \emptyset$,
or equivalently $E_k \not\subset G$.
As this holds for all $k$, we see that $[E_k] \notin G^\End$,
i.e.\ $[E_k] \in F$.

Similarly, if $\{[E_k^n]\}_{n=1}^\infty \subset F \cap \bdy_E \Om$
converges to an end  $[E_k^\infty]$, then for
every $k$ there are $n$ and $l$ such that $E_l^n \subset E_k^\infty$.
Since $[E_k^n] \notin G^\End$ we must have that $E_l^n \not \subset G$,
and in particular $E_k^\infty \not\subset G$, showing that
$[E_k^\infty] \in F$.

Thus all the
sets in $\mathcal{C}$ are open.
Since $\mathcal{C}$ contains all the open subsets of $\Om$ it is enough
to show that if $H \subset \Om \cup \bdy_E \Om$ is open in our topology
and $[E_k] \in H$, then there is an open set $G \subset \Om$
such that $[E_k] \in G^\End \subset H$.
By Remark~\ref{rmk-open} we may assume that the sets $E_k$ are open.
We shall show that $E_k^\End \subset H$ for some $k$,
i.e.\ $G=E_k$ will do.

Assume that this is false.
Then by passing to a subsequence if necessary, either
\begin{enumerate}
\item \label{top-aa}
there are points $x_n \in (E_n^\End \cap \Om) \setm H = E_n \setm H$
for all $n$; or
\item \label{top-bb}
there are ends $[F_k^n] \in (E_n^\End \cap \bdy_E \Om) \setm H $
for all $n$.
\end{enumerate}

In case~\ref{top-aa}, $x_n \to [E_k]$ as $n \to \infty$,
by definition.
As $[E_k]\in H$, this contradicts the fact that $H$ is open.
Also in case~\ref{top-bb} we see that
$[F_k^n] \to [E_k]\in H$ as $n \to \infty$,
by definition, contradicting the openness of $H$ again.

We thus conclude that indeed $E_k^\End \subset H$ for some $k$,
and thus $\mathcal{C}$ is a basis for our topology.
\end{proof}

One may ask if the collection
\begin{equation} \label{eq-G1G2}
   \{G_1 \cup G_2^E : G_1,G_2 \subset \Om \text{ are open}\}
\end{equation}
may contain all open sets in our topology.
Example~\ref{ex-Ge} below shows that this is not true.

When restricting to prime ends it directly follows from
Proposition~\ref{prop-top-pe} that the collection
\[
   \{G, G^P: G \subset \Om \text{ is open}\},
\]
where $G^P=G^\End \cap \clOmP$, forms a basis
for our topology on $\clOmP$.
This time it follows from Example~\ref{ex-Gp} 
that not all open sets can be written in the form
$G_1 \cup G_2^P$.

\begin{example} \label{ex-Ge}
Let $\Om=(0,3)^2 \subset\R^2$,
$I=[0,3] \times \{0\}$ and  
$E_k=(0,3)\times(0,1/k)$. 
Then $[E_k]$ is an end with impression $I$.
Furthermore, let $G_1=(0,2)^2$ and $G_2=(1,3) \times (0,2)$.
If
\begin{equation} \label{eq-G}
         G_1^E \cup G_2^E \subset G_3^E \cup G_4
\end{equation}
 for some open sets
$G_3,G_4\subset \Om$, then $[E_k] \in G_3^E$.
But $[E_k] \notin G_1^E \cup G_2^E$.
Thus we cannot have equality in \eqref{eq-G} and hence
the collection \eqref{eq-G1G2}
does not contain all open sets.
\end{example}

\begin{example} \label{ex-Gp}
Let $\Omt \subset \R^2$ be the double equilateral comb from
Example~\ref{ex-double-equilat-comb}, and let $\Om=\Omt \times (0,1)$.
Note that $\Om\subset\R^3$ is simply connected and homeomorphic to a ball.
For $a,b \in [0,1]$, let $I^{a,b}$ be the closed line segment with end points
$\bigl(\tfrac{1}{4},0,a\bigr)$ and
$\bigl(\tfrac{3}{4},0,b\bigr)$.
Let further
\[
    E^{a,b}_k=\{x \in \Om : \dist(x,I^{a,b}) < 1/k\}.
\]
Then $[E_k^{a,b}]$ is a prime end with impression $I^{a,b}$.
Let next
\[
   G_1=\Omt \times \bigl(0,\tfrac{2}{3}\bigr)
   \quad \text{and} \quad
   G_2=\Omt \times \bigl(\tfrac{1}{3},1\bigr).
\]
If
\begin{equation} \label{eq-GP}
         G_1^P \cup G_2^P \subset G_3^P \cup G_4
\end{equation}
 for some open sets
$G_3, G_4 \subset\Om$, then $[E_k^{0,1}] \in G_3^P$.
But $[E_k^{0,1}] \notin G_1^P \cup G_2^P$. 
Thus we cannot have equality in \eqref{eq-GP} and hence
the collection
\[
   \{G_1 \cup G_2^P : G_1,G_2 \subset \Om \text{ are open}\}
\]
does not contain all open sets in $\clOmP$.

Observe also that
$\{[E_k^{a,1-a}] : 0 \le a \le 1\}$ is an uncountable collection
of prime ends such that no pair of them can be separated
as in the T2 separation condition below, since the sequence
$\bigl\{\bigl(\tfrac12,\tfrac1{2n+1},\tfrac12\bigr)\bigr\}_{n=1}^\infty$
converges to all of them.
\end{example}

Recall that a topological space $Y$ satisfies the \emph{T1 separation condition} if any
two distinct points can be separated,
i.e.\  if each point lies in an open set which
does not contain the other point. (An equivalent way to formulate the T1 separation condition is to require
that every singleton set is closed.)
If the two open sets can be chosen to be disjoint, then
$Y$ satisfies the \emph{T2 separation condition}.
A topological space is \emph{Hausdorff} if it satisfies the T2 separation condition.

If $[E_k]$ and $[F_k]$ are two distinct ends such that $[E_k]$ divides $[F_k]$, then
any neighborhood of $[F_k]$ contains $[E_k]$, and thus
the topology on $\Om \cup \bdy_E\Om$ does not satisfy 
the T1 separation condition.
If we however restrict ourselves to prime ends, i.e.\
to $\clOmP$, then the T1 separation condition is satisfied.

\begin{prop}
The topology on $\clOmP$ satisfies the T1 separation condition.
\end{prop}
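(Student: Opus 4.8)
The plan is to show that any two distinct prime ends $[E_k]$ and $[F_k]$ can be separated by open sets of the form described in Proposition~\ref{prop-top-pe}; equivalently, that each singleton $\{[E_k]\}$ is closed in $\clOmP$. First I would use the partial order on ends: since $[E_k]$ and $[F_k]$ are \emph{prime} ends, neither divides the other (if one divided the other, they would be equal, being minimal). So by the definition of division there exists an index $l$ such that no $E_m$ is contained in $F_l$, and symmetrically an index $l'$ such that no $F_m$ is contained in $E_{l'}$. By Remark~\ref{rmk-open} I may assume all the sets $E_k$ and $F_k$ are open.

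The candidate separating neighborhoods are $E_{l}^P$ and $F_{l'}^P$. Clearly $[E_k] \in E_l^P$ and $[F_k] \in F_{l'}^P$, and both are open in $\clOmP$ by (the prime-end version of) Proposition~\ref{prop-top-pe}. It remains to check the disjointness-type conclusion needed for T1, namely that $[F_k] \notin E_l^P$ and $[E_k] \notin F_{l'}^P$. Suppose $[F_k] \in E_l^P$; then $F_m \subset E_l$ for some $m$, which contradicts the choice of $l$. Symmetrically for the other containment. Hence $[E_k]$ lies in an open set not containing $[F_k]$, which is exactly the T1 separation condition. (Equivalently, one can phrase this as: the complement of $\{[E_k]\}$ is open, since any other prime end $[F_k]$ has, by minimality, an index $l$ with $F_m \not\subset E_l$ for all $m$, giving the basic neighborhood $F_l^P \ni [F_k]$ that misses $[E_k]$ — here one uses that $[E_k] \in F_l^P$ would force some $E_m \subset F_l$, i.e.\ $[E_k]$ divides $[F_k]$, whence equality, a contradiction.)

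The main subtlety — really the only place one must be careful — is that division is a relation on \emph{ends}, not on representative chains, so one must make sure the indices $l, l'$ extracted from "neither divides the other" are honest: the statement "$[E_k]$ does not divide $[F_k]$" unwinds to "there exists $l$ such that for every $m$, $E_m \not\subset F_l$," and this is independent of the chosen representatives by the remarks following Definition~\ref{prime-end} (if division held for one pair of representatives it would hold for all). Once that is pinned down, everything else is a direct application of the basis description of the topology on $\clOmP$ and the definition of $G^P$; there is no hard analytic content. I would also remark, for contrast with the preceding discussion, that Example~\ref{ex-Gp} shows T2 genuinely fails in general, so T1 is the best one can hope for at this level of generality.
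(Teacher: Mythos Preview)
Your approach via the basic open sets $G^P$ from Proposition~\ref{prop-top-pe} is a legitimate route, different in flavor from the paper's: the paper argues directly from the sequential definition of closedness, observing that the constant sequence $[E_k]$ can only converge to a prime end $[E_k^\infty]$ if $[E_k]$ divides $[E_k^\infty]$, whence equality by primality of $[E_k^\infty]$.

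However, your execution has a bookkeeping error that breaks the argument as written. You extract $l$ from ``$[E_k]$ does not divide $[F_k]$'' (so no $E_m \subset F_l$) and $l'$ from ``$[F_k]$ does not divide $[E_k]$'' (so no $F_m \subset E_{l'}$), and then propose $E_l^P$ and $F_{l'}^P$ as the separating neighborhoods. But to verify $[F_k] \notin E_l^P$ you would need that no $F_m \subset E_l$, which is a condition on $E_l$; the index $l$ you chose constrains $F_l$, not $E_l$. The claim ``$F_m \subset E_l$ for some $m$ contradicts the choice of $l$'' is simply false. The fix is to swap the indices: use $E_{l'}^P$ as the neighborhood of $[E_k]$ missing $[F_k]$, and $F_l^P$ as the neighborhood of $[F_k]$ missing $[E_k]$.

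The parenthetical rephrasing has a further logical slip. You deduce from ``$E_m \subset F_l$ for some $m$'' that ``$[E_k]$ divides $[F_k]$''. That implication is false: division requires a containment $E_{m(j)} \subset F_j$ for \emph{every} $j$, not just one fixed $j=l$. What you should do there is invoke primality of $[F_k]$ (not of $[E_k]$): since $[F_k]$ is prime and $[E_k]\ne[F_k]$, the end $[E_k]$ cannot divide $[F_k]$, so there is an $l$ with $E_m \not\subset F_l$ for all $m$; then $F_l^P$ contains $[F_k]$ and omits $[E_k]$ directly, with no need for the erroneous division step.
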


\begin{proof}
If $x \in \Om$, then $\{x\}$ is closed in our topology.
Thus to verify the T1 separation condition it suffices
 to show that a singleton set
$\{[E_k]\}$ is closed for any prime end $[E_k]$.
We thus need to consider the sequence $\{[E_k^n]\}_{n=1}^\infty$,
with $[E_k^n]=[E_k]$ for all $n$.
Assume that $\{[E_k^n]\}_{n=1}^{\infty}$ converges to a prime end $[E_k^{\infty}]$.
As the sequence is constant it is not hard to see that
$[E_k]$ must divide $[E_k^{\infty}]$.
Since $[E_k^{\infty}]$ is a prime end, we thus must have $[E_k]=[E_k^{\infty}]$.
Hence the set $\{[E_k]\}$ is closed.
\end{proof}

The topology obtained on $\clOmP$ does not need to satisfy the T2 separation condition,
and can thus be nonmetrizable,
as shown by Example~\ref{ex-Gp} and
the following example.
In Corollary~\ref{metric-lemma-5-12} we will show that this topology is metrizable 
if $\Om$ is finitely connected at the boundary.

\begin{figure}[t]
\centerline{
\begin{tikzpicture}[line cap=round,line join=round,>=triangle 45,x=1.0cm,y=1.0cm]
\clip(-3.3,0.54) rectangle (5.98,6.3);
\fill[color=zzttqq,fill=zzttqq,fill opacity=0.1] (-3,3) -- (-3,1) -- (3,1) -- (3,3) -- cycle;
\fill[color=zzttqq,fill=zzttqq,fill opacity=0.1] (-3,2) -- (-3,1) -- (2,1) -- (2,2) -- cycle;
\fill[color=zzttqq,fill=zzttqq,fill opacity=0.1] (-3,1.52) -- (-3,1) -- (1.5,1) -- (1.5,1.5) -- cycle;
\draw [line width=0.6pt] (-3,5)-- (-3,1);
\draw [line width=0.6pt] (-3,1)-- (5,1);
\draw [line width=0.6pt] (5,1)-- (5,5);
\draw [line width=0.6pt] (5,5)-- (-3,5);
\draw [line width=0.6pt] (-3,3)-- (-1,3);
\draw [line width=0.6pt] (3,3)-- (5,3);
\draw [line width=0.6pt] (-3,2)-- (0,2);
\draw [line width=0.6pt] (2,2)-- (5,2);
\draw [line width=0.6pt] (-3,1.5)-- (0.52,1.5);
\draw [line width=0.6pt] (5,1.5)-- (1.46,1.5);
\draw [line width=0.6pt] (-1,4)-- (3,4);
\draw [line width=0.6pt] (-1.98,2.5)-- (4,2.5);
\draw [line width=0.6pt] (-2.48,1.76)-- (4.56,1.76);
\draw [line width=0.6pt] (-2.74,1.33)-- (4.78,1.33);
\draw [dash pattern=on 2pt off 2pt,color=zzttqq] (-3,3)-- (-3,1);
\draw [dash pattern=on 2pt off 2pt,color=zzttqq] (-3,1)-- (3,1);
\draw [dash pattern=on 2pt off 2pt,color=zzttqq] (3,1)-- (3,3);
\draw [dash pattern=on 2pt off 2pt,color=zzttqq] (3,3)-- (-3,3);
\draw [dash pattern=on 2pt off 2pt,color=zzttqq] (-3,2)-- (-3,1);
\draw [dash pattern=on 2pt off 2pt,color=zzttqq] (-3,1)-- (2,1);
\draw [dash pattern=on 2pt off 2pt,color=zzttqq] (2,1)-- (2,2);
\draw [dash pattern=on 2pt off 2pt,color=zzttqq] (2,2)-- (-3,2);
\draw [dash pattern=on 2pt off 2pt,color=zzttqq] (-3,1)-- (1.5,1);
\draw [dash pattern=on 2pt off 2pt,color=zzttqq] (1.5,1)-- (1.5,1.5);
\draw [dash pattern=on 2pt off 2pt,color=zzttqq] (1.5,1.5)-- (-3,1.5);
\draw [line width=2.4pt,color=black] (-3,1)-- (1,1);
\draw [line width=1.6pt,dash pattern=on 1pt off 2pt on 5pt off 4pt,color=black] (1,1)-- (5,1);
\draw[color=zzttqq] (-2.72,2.78) node {$E_1$};
\draw[color=zzttqq] (-2.72,1.79) node {$E_2$};
\draw[color=black] (-0.7,0.7) node {$I[E_k]$};
\draw[color=black] (3.28,0.7) node {$I[F_k]$};
\end{tikzpicture}
}
\caption{\label{fig2}%
Example~\ref{ex-Jana-two-limits}.}
\end{figure}
\begin{example} \label{ex-Jana-two-limits}
(See Figure~\ref{fig2}.)
Let $\Om\subset\R^2$ be obtained from the rectangle $(-1,1)\times(0,1)$
by removing the segments
\[
   (-1,-2^{-k}] \times\{2^{-k}\},\quad [2^{-k},1) \times \{2^{-k}\}
 \quad \text{and} \quad
[-1+2^{-k},1-2^{-k}] \times \{3\cdot 2^{-k-1}\},
\]
$k=1,2,\ldots$.
Then the sets
\[
E_k = \Om \cap ((-1,2^{-k}) \times (0,2^{-k}))
\quad \text{and} \quad
F_k = \Om \cap ((-2^{-k},1) \times (0,2^{-k}))
\]
define two prime ends with impressions
\[
I[E_k]=[-1,0]\times\{0\} \quad \text{and} \quad
I[F_k]=[0,1]\times\{0\}.
\]
These prime ends are clearly different but the sequence
$\{(0,2^{-n})\}_{n=1}^\infty$ converges to both of them.
It follows that any neighborhood of any of these two prime ends
contains all but a finite number of points from this sequence.
Hence these two prime ends do not have disjoint neighborhoods,
or in other terms the T2 separation condition fails.
It is easy to modify $\Om$ so that the impressions 
of the two prime ends
have a common interval and not just a common point. 

The domain  $\Om$ above is not simply connected.
To get a simply connected domain 
consider
\[
\Om' = (\Om \times(0,1]) \cup ((-1,1)\times(0,1)\times (1,2))
\]
or Example~\ref{ex-Gp}.
\end{example}

Definition~\ref{conv} implies that
if $[E_k^n] \to [E_k^\infty]$ as $n \to \infty$, then
there are sequences $\{x_{i}^n\}_{i=1}^\infty$, $n=1,2,\ldots$, and
$\{x_i^\infty\}_{i=1}^\infty$ in $\Omega$, which converge to
$[E_{k}^n]$ and $[E_{k}^\infty]$ respectively as $i\to\infty$,
and satisfy
\[
\lim_{n \to \infty} \limsup_{i \to \infty} d(x_i^n, x_i^\infty)=0.
\]

 However, even with the additional assumption that the diameters 
 of $[E_{k}^n]$ and $[E_{k}^\infty]$ converge to $0$, this sequential criterion
does not imply convergence of prime ends. Consider e.g.\
the slit disk (Example~\ref{ex-slit-disc})
and let $x_i^n$ converge to a point on the slit from one side and
$x_i^\infty$ from the other side.
Instead, one can  use the Mazurkiewicz distance associated with the connectedness properties of the domain.

\begin{deff} \label{deff-dM}
We define the \emph{Mazurkiewicz distance} $\dM$ on $\Om$ by
\[
     \dM(x,y) =\inf \diam E,
\]
where the infimum is over all connected sets $E \subset \Om$
containing $x,y \in \Om$.
\end{deff}

Clearly,  $d_M$ is a metric on $\Omega$.
When $x,y\in\Omega$, we have $d_M(x,y)\ge d(x,y)$.

\begin{lemma}   \label {lem-dM-sequential}
Let $\{[E_k^n]\}_{n=1}^\infty$, $n=1,2,\ldots$, and $[E_k^\infty]$ be ends.
Then the following are equivalent\/\textup{:}
\begin{enumerate}
\item \label{lem-dM-a}
The sequence of ends $\{[E_k^n]\}_{n=1}^\infty$ converges to the end $[E_k^\infty]$ and
$[E_k^\infty]$ is a singleton end.

\item \label{lem-dM-b}
Whenever
$\{x_{i}^n\}_{i=1}^\infty$, $n=1,2,\ldots$, and $\{x_i^\infty\}_{i=1}^\infty$
are sequences in $\Omega$, which converge to
$[E_{k}^n]$ and $[E_{k}^\infty]$, respectively, as $i\to\infty$,
we must have
\begin{equation}  \label{eq-dM-to0}
    \lim_{n \to \infty} \limsup_{i \to \infty} d_M(x_i^n, x_i^\infty)=0.
\end{equation}
\end{enumerate}
\end{lemma}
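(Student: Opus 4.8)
The plan is to prove the two implications separately. The implication \ref{lem-dM-a}~$\Rightarrow$~\ref{lem-dM-b} is a direct ``trapping'' argument, while \ref{lem-dM-b}~$\Rightarrow$~\ref{lem-dM-a} is best handled by contraposition, extracting the two conclusions (that $[E_k^\infty]$ is a singleton end, and that $\{[E_k^n]\}_{n=1}^\infty$ converges to $[E_k^\infty]$) one at a time.

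For \ref{lem-dM-a}~$\Rightarrow$~\ref{lem-dM-b}: By Lemma~\ref{lem-single-char}, $\diam E_k^\infty\to0$ as $k\to\infty$. Given $\eps>0$, pick $K$ with $\diam E_K^\infty<\eps$. Since $\{[E_k^n]\}_{n=1}^\infty$ converges to $[E_k^\infty]$, there is $n_K$ such that for each $n\ge n_K$ some $E_l^n\subset E_K^\infty$. If now $\{x_i^n\}_i$ and $\{x_i^\infty\}_i$ converge to $[E_k^n]$ and $[E_k^\infty]$ respectively, then for $n\ge n_K$ both $x_i^n$ and $x_i^\infty$ lie in $E_K^\infty$ for all large $i$; since $E_K^\infty$ is a connected subset of $\Om$, it witnesses $d_M(x_i^n,x_i^\infty)\le\diam E_K^\infty<\eps$ for all large $i$. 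Hence $\limsup_{i\to\infty}d_M(x_i^n,x_i^\infty)\le\eps$ for every $n\ge n_K$, and letting $\eps\to0$ yields \eqref{eq-dM-to0}.

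For \ref{lem-dM-b}~$\Rightarrow$~\ref{lem-dM-a}: First I would show $[E_k^\infty]$ is a singleton end. If not, then by Lemma~\ref{lem-single-char} $\diam E_i^\infty\ge\delta:=\diam I[E_k^\infty]>0$ for all $i$, so we may choose $x_i^\infty,y_i^\infty\in E_i^\infty$ with $d(x_i^\infty,y_i^\infty)\ge\delta/2$; both sequences converge to $[E_k^\infty]$. Taking any $x_i^n\in E_i^n$ (so $x_i^n\to[E_k^n]$) and applying \ref{lem-dM-b} to the pairs $(\{x_i^n\},\{x_i^\infty\})$ and $(\{x_i^n\},\{y_i^\infty\})$, the triangle inequality for $d_M$ forces $\limsup_{i\to\infty}d_M(x_i^\infty,y_i^\infty)=0$, contradicting $d_M(x_i^\infty,y_i^\infty)\ge d(x_i^\infty,y_i^\infty)\ge\delta/2$. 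Next, still assuming \ref{lem-dM-b}, I would show the ends converge. If not, there are an index $k_0$ and infinitely many $n$ for which $E_l^n\setminus E_{k_0}^\infty\ne\emptyset$ for every $l$; for such $n$ choose $x_i^n\in E_i^n\setminus E_{k_0}^\infty$, for the remaining $n$ choose $x_i^n\in E_i^n$, and choose $x_i^\infty\in E_i^\infty$. For the infinitely many bad $n$ and every $i\ge k_0+1$, any connected $G\subset\Om$ containing $x_i^n$ and $x_i^\infty$ meets both $\Om\setminus E_{k_0}^\infty$ and $E_{k_0+1}^\infty$, so by Remark~\ref{rem-connected-diam} $\diam G\ge\dist(\Om\cap\bdry E_{k_0+1}^\infty,\Om\cap\bdry E_{k_0}^\infty)$, which is positive because $\{E_k^\infty\}_{k=1}^\infty$ is a chain. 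Hence $d_M(x_i^n,x_i^\infty)$ is bounded below by a fixed positive constant for these $n$ and all large $i$, so $\limsup_{n\to\infty}\limsup_{i\to\infty}d_M(x_i^n,x_i^\infty)>0$, again contradicting \eqref{eq-dM-to0}.

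The main obstacle is the reverse implication: one must recognize that \ref{lem-dM-b} encodes \emph{two} separate pieces of information and that each is most naturally recovered by contraposition, engineering point sequences to defeat \eqref{eq-dM-to0} — for the singleton conclusion using the diameter characterization of Lemma~\ref{lem-single-char}, and for the convergence conclusion using the positive-gap condition built into the definition of a chain via Remark~\ref{rem-connected-diam}. The forward implication, by contrast, is routine once one observes that an acceptable set $E$ witnesses $d_M(x,y)\le\diam E$ for any two of its points $x,y$.
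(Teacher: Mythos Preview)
Your proposal is correct and follows essentially the same approach as the paper's proof: the forward implication is the same trapping argument using $\diam E_K^\infty$ as a witness for $d_M$, and the reverse implication is handled by the same two contrapositive constructions (bad sequences via Remark~\ref{rem-connected-diam} for convergence, and the triangle-inequality trick with two sequences in $E_i^\infty$ for the singleton conclusion). The only cosmetic differences are that the paper proves convergence first and then the singleton property, and phrases the singleton step as a direct estimate $\diam E_k^\infty \le 2(d_M(x_k^\infty,z_k^n)+d_M(z_k^n,y_k^\infty)) \to 0$ rather than as a contradiction.
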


Recall that by Lemma~\ref{lem-single-char} an end $[E_k]$ has a singleton impression if and only if
$\lim_{k \to \infty}\diam E_k =0$.

\begin{proof}
\ref{lem-dM-a} $\imp$ \ref{lem-dM-b}
For all $k$ there exists $n_k$ such that for each $n\ge n_k$ we can
find $l_{n,k}$ such that
$E_{l_{n,k}}^n\subset E_k^\infty$.
Let  $\{x_{i}^n\}_{i=1}^\infty$ and $\{x_i^\infty\}_{i=1}^\infty$
converge to $[E_k^n]$, $n=1,2,\ldots$, and $[E_k^\infty]$, respectively.
Fix $k$, $n$ and $l_{n,k}$ as above for a moment.
Then there exists $m_{n,k}$ such
that for all $i\geq m_{n,k}$ we have $x_i^n\in E_{l_{n,k}}^n\subset E_k^\infty$.
Similarly, there is
$m_k$ such that for $i\geq m_k$ we have that $x_i^\infty\in E_k^\infty$.
Hence for $i\ge\max\{m_{n, k}, m_{k}\}$ we have that
\[
d_M(x_i^n, x_i^\infty)\leq \diam E_k^\infty.
\]
Since $\diam E_k^\infty \to 0$ we conclude that
\[
0\leq\limsup_{n \to \infty} \limsup_{i \to \infty} d_M(x_i^n,
x_i^\infty) \leq \diam E_k^\infty\to0.
\]

\ref{lem-dM-b} $\imp$ \ref{lem-dM-a}
Assume first that
$\{[E_k^n]\}_{n=1}^\infty$ does not converge to the
end $[E_k^\infty]$. Then there exists $k_0$ such that for all $n$
there is $m_n\geq n$ with the property that $E_i^{m_n}\setminus
E_{k_0}^\infty\ne\emptyset$ for all $i=1,2,\ldots$.
For each $n=1,2,\ldots$ we define a sequence
$\{x_i^n\}_{i=1}^\infty$ by choosing $x_i^n\in E_i^{n}\setminus E_{k_0}^\infty$
if this set is nonempty and $x_i^n\in E_i^{n}$ otherwise. By construction,
$x_i^n\to [E_{k}^n]$, as $i\to \infty$.
Let also $x_i^\infty\in E_i^\infty$ be arbitrary, $i=1,2,\ldots$.
Then $x_i^{m_n}\notin E_{k_0}^\infty$ and $x_i^\infty\in E_{k_0+1}^\infty$
whenever $i>k_0$.
Remark~\ref{rem-connected-diam} yields
\[
 d_M(x_i^{m_n}, x_i^\infty)\geq \dist (\Om\cap\bdry E_{k_0+1}^\infty, \Om\cap\bdry
 E_{k_0}^\infty)>0.
\]
Letting $i\to\infty$ and then $n\to\infty$ implies that
\[
    \limsup_{n \to \infty} \limsup_{i \to \infty} d_M(x_i^n, x_i^\infty)>0,
\]
i.e.\ \eqref{eq-dM-to0} fails. Thus $[E_k^n]\to [E_k^\infty]$.

To see that $[E_{k}^\infty]$ is a singleton end, choose
$x_k^\infty, y_k^\infty \in E_{k}^\infty$ so that
$d(x_k^\infty, y_k^\infty)\geq \frac12 \diam E_{k}^\infty$, $k=1,2,\ldots$.
Let $z_k^n\in E_{k}^n$, $n,k=1,2,\ldots$, be arbitrary.
Observe that $x_j^\infty\to [E_{k}^\infty]$, $y_j^\infty\to [E_{k}^\infty]$
and $z_j^n\to [E_{k}^n]$ for $n=1,2,\ldots$, as $j\to\infty$.
We then have by the triangle inequality that
\[
\diam E_k^\infty \le 2 d(x_k^\infty,y_k^\infty)
\le 2d_M(x_k^\infty,y_k^\infty)
\le 2 ( d_M(x_k^\infty,z_k^n) + d_M(z_k^n,y_k^\infty) ).
\]
Letting $k\to\infty$ and then $n\to\infty$ together with \eqref{eq-dM-to0}
(used twice) completes the proof.
\end{proof}

\section{Prime ends and the Mazurkiewicz boundary}
\label{sect-Mazur}

We now focus on describing embeddings and homeomorphisms between the
prime end boundary and two other boundaries, the topological
boundary and the Mazur\-kiewicz boundary. 
Our investigations are motivated by the fact
that such mappings allow us to discuss the correspondence between prime ends 
and their impressions, with a view towards boundary value problems.

In Bj\"orn--Bj\"orn--Shanmugalingam~\cite{BBSdir} the Dirichlet problem
for \p-harmonic functions, with boundary data defined on the
Mazurkiewicz boundary, is
studied in domains which are finitely connected at the boundary
(see Definition~\ref{def-finite-conn} 
for the notion of finite connectedness at the boundary).
By Theorem~\ref{thm-fin-con-homeo} this is equivalent to studying
the Dirichlet problem with respect to the prime end boundary for such domains.
We refer to \cite{BBSdir} for further details on the Dirichlet
problem, but this is another important motivation for this and the next section.

We saw in Section~\ref{sect-access} that
accessibility of a
boundary point determines whether there is a
prime end with a singleton impression
at this point. Furthermore, Lemma~\ref{lem-dM-sequential}
tells us that there is a strong link between the Mazurkiewicz
distance on $\Om$ and the topology
of $\partial_P\Om$.
Motivated by these, we consider the boundary of
$\Om$ with respect to the Mazurkiewicz
distance in this section.
Recall that the Mazurkiewicz distance was introduced in
Definition~\ref{deff-dM}. 

\begin{remark}\label{preserveLength}
Because $X$ is locally connected, $d_M$ and $d$ define the same topology on $\Om$.
\end{remark}

The completion of the metric space $(\Omega, d_M)$ is denoted $\clOmm$,
and $d_M$ extends in the standard way to $\clOmm$:
For $\dM$-Cauchy
sequences $\{x_n\}_{n=1}^\infty, \{y_n\}_{n=1}^\infty\in \Om$ we define the
equivalence relation
\[
 \{x_n\}_{n=1}^\infty \sim \{y_n\}_{n=1}^\infty
 \quad \text{if} \quad \lim_{n\to \infty} d_M(x_n, y_n) =0.
\]
Note that every Cauchy sequence is trivially equivalent to any
of its subsequences.

The collection of all equivalence classes of $\dM$-Cauchy sequences can be
formally considered to be $\clOmm$, but
we will identify equivalence classes of $\dM$-Cauchy sequences having a
limit in $\Omega$ with that limit point.
By considering equivalence classes of $\dM$-Cauchy sequences without
limits in $\Om$ we define the boundary of $\Om$ with respect to $d_M$
as $\partial_M\Om=\clOmm\setminus \Om$.
Since $X$ is proper, we know that $\Om$ is locally compact
with respect to $d_M$, and it
follows that $\Om$ is an open subset of
$\clOmm$.
We extend the original metric $d_M$ on $\Om$  to $\clOmm$
by setting
\[
 d_M(x^*, y^*)=\lim_{n\to\infty}d_M(x_n, y_n),
\]
if $x^*=\{x_n\}_{n=1}^\infty \in \clOmm$ and
$y^*= \{y_n\}_{n=1}^\infty   \in \clOmm$.
This is well defined and an extension of $\dM$.

By the construction of $\clOmm$ and Remark~\ref{preserveLength}, every point in  $\Om$
can be identified with exactly one equivalence class of
$\dM$-Cauchy sequences in $\Om$.
This is, of course, not true on the boundary of $\Om$ in general, as illustrated by the following example.

\begin{example}\label{ex-maz-top-slit}
Consider the planar slit disk in Example~\ref{ex-slit-disc}.
To every point $x\in [-1,0)\times\{0\}$ there correspond
exactly two points in the Mazurkiewicz boundary given by sequences approaching
$x$ from the upper and lower half-planes, respectively.
For instance the sequence
 $x_n=(x, (-1)^n/n)$ for $n=2, 3, \ldots$ converges to $x$
in the Euclidean metric but is not a $d_M$-Cauchy sequence
 as
\[
d_M(x_{n}, x_{n+1})\geq 2|x| \quad \text{for } n=2, 3, \ldots.
\]
\end{example}

In the next example we show that a point in the topological boundary need
not correspond to a limit of any $d_M$-Cauchy sequence
in the Mazurkiewicz boundary.

\begin{example}{(The topologist's comb)}\label{ex-maz-top-comb}
 Let $\Om$ be the topologist's comb as in Example~\ref{comb}.
Then no point in the bottom segment $I=\bigl(\frac12, 1\bigr]\times \{0\}$
corresponds to an element of $\partial_M \Om$.
Namely, any sequence of points in $\Om$ converging to a point in $I$
fails the Cauchy condition with respect to the Mazurkiewicz distance.
\end{example}

\begin{lemma}\label{lem-M-bdry-to-bdry}
There is a continuous map $\Psi:\clOmm \to \clOm$ such that $\Psi|_{\Om}$ is the identity map
and $\Psi|_{\bdyM \Om}:\partial_M\Omega \to \partial \Omega$.
\end{lemma}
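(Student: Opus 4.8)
The plan is to define $\Psi$ directly on equivalence classes of $d_M$-Cauchy sequences. Since $d_M(x,y)\ge d(x,y)$ for $x,y\in\Om$, every $d_M$-Cauchy sequence $\{x_n\}_{n=1}^\infty$ in $\Om$ is also $d$-Cauchy; as $\Om$ is bounded and $X$ is proper, $\clOm$ is compact, so $\{x_n\}_{n=1}^\infty$ has a $d$-limit lying in $\clOm$. I would therefore set $\Psi(x^*):=\lim_{n\to\infty}x_n$ (limit taken in $d$) for $x^*=\{x_n\}_{n=1}^\infty\in\clOmm$. First I would check this is well defined: if $\{x_n\}_{n=1}^\infty\sim\{y_n\}_{n=1}^\infty$, then $d(x_n,y_n)\le d_M(x_n,y_n)\to0$, so the two sequences have the same $d$-limit. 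Since a point of $\Om$ is identified in $\clOmm$ with (the class of) the constant sequence at that point, this immediately gives $\Psi|_\Om=\mathrm{id}$.

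Continuity is then a formality. For $x^*=\{x_n\}_{n=1}^\infty$ and $y^*=\{y_n\}_{n=1}^\infty$ in $\clOmm$, continuity of the metric $d$ yields
\[
   d(\Psi(x^*),\Psi(y^*))=\lim_{n\to\infty}d(x_n,y_n)
   \le\lim_{n\to\infty}d_M(x_n,y_n)=d_M(x^*,y^*),
\]
where the last equality is exactly the definition of the extension of $d_M$ to $\clOmm$. Hence $\Psi\colon(\clOmm,d_M)\to(\clOm,d)$ is $1$-Lipschitz, and in particular continuous.

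The one substantive point is that $\Psi$ maps $\bdyM\Om$ into $\bdry\Om$; equivalently, the $d$-limit of a $d_M$-Cauchy sequence having no $d_M$-limit in $\Om$ cannot lie in $\Om$. I would argue this by contradiction. Suppose $x_0:=\Psi(x^*)\in\Om$ for some $x^*=\{x_n\}_{n=1}^\infty\in\bdyM\Om$. Choose $r>0$ with $B(x_0,r)\subset\Om$ and let $U$ be the component of $B(x_0,r)$ containing $x_0$; since $X$ is locally connected, $U$ is open, so $B(x_0,\rho)\subset U$ for some $\rho>0$, while $\diam U\le 2r$. For $n$ large, $x_n\in B(x_0,\rho)\subset U$, and since $U\subset\Om$ is connected, $d_M(x_n,x_0)\le\diam U\le 2r$. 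As $r>0$ was arbitrary, $d_M(x_n,x_0)\to0$, so $\{x_n\}_{n=1}^\infty$ converges to $x_0\in\Om$ in $d_M$, contradicting $x^*\in\bdyM\Om$. (This is just the $d$-to-$d_M$ direction of Remark~\ref{preserveLength}, that $d$ and $d_M$ induce the same topology on $\Om$.) This contradiction step, the only place where the topology of $\Om$—i.e.\ local connectedness—is used, is the main obstacle; everything else is a formal consequence of the inequality $d_M\ge d$ together with the compactness of $\clOm$.
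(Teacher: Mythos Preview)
Your proof is correct and follows essentially the same approach as the paper's: define $\Psi$ via $d$-limits of $d_M$-Cauchy sequences, check well-definedness, and obtain continuity from the $1$-Lipschitz estimate $d(\Psi(x^*),\Psi(y^*))\le d_M(x^*,y^*)$. The only difference is that you spell out in detail why $\Psi(\bdyM\Om)\subset\bdry\Om$ using local connectedness, whereas the paper leaves this implicit (it is immediate from Remark~\ref{preserveLength}, which you yourself cite at the end).
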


This mapping need not be injective nor surjective in general, as demonstrated by the slit disk and the
topologist's comb in Examples~\ref{ex-maz-top-slit} and~\ref{ex-maz-top-comb}, respectively.

\begin{proof}
 Let $\{x_n\}_{n=1}^\infty$ be a $d_M$-Cauchy sequence in $\Om$ representing
a point in $\clOmm$.
 Since $d(x_i,x_j)\le d_M(x_i,x_j)$, it follows that $\{x_n\}_{n=1}^\infty$
is a Cauchy sequence in
the given metric $d$
 as well, and so by the completeness of $X$, we can set
\[
   \Psi\left(\{x_n\}_{n=1}^\infty\right)=\lim_{n\to\infty} x_n\in \clOm.
\]
The map $\Psi$ is well defined, since every sequence representing the same
point in $\clOmm$  converges to the same limit in the given metric $d$.

To prove the continuity of $\Psi$,  consider
$\{x_n\}_{n=1}^\infty, \{y_n\}_{n=1}^\infty \in \clOmm$
and let $x=\Psi\left(\{x_n\}_{n=1}^\infty\right)$
and $y=\Psi\left(\{y_n\}_{n=1}^\infty\right)$.
Then by definition we have that
\[
d_M(\{x_n\}_{n=1}^\infty,\{y_n\}_{n=1}^\infty)=\lim_{n\to\infty} d_M(x_n, y_n)\geq \lim_{n\to\infty} d(x_n, y_n)=d(x,y).
\]
 Therefore $d(\Psi(\{x_n\}_{n=1}^\infty), \Psi(\{y_n\}_{n=1}^\infty))\!\le d_M(\{x_n\}_{n=1}^\infty,\{y_n\}_{n=1}^\infty)$, that is,
 $\Psi$ is $1$-Lipschitz continuous.
\end{proof}

Next, we show that under rather general assumptions, the prime end boundary
and the Mazurkiewicz boundary coincide.

\begin{theorem}\label{thm-homeo-primeends}
Assume that every prime end in $\Om$ has a singleton impression.
Then there is a homeomorphism
$\Phi: \partial_P\Omega\rightarrow \partial_M\Omega$.
\end{theorem}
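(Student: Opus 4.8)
The plan is to map a prime end $[E_k]\in\bdy_P\Om$ to the point of $\bdy_M\Om$ obtained by choosing $x_k\in E_k$ for each $k$. Since every prime end has a singleton impression, Lemma~\ref{lem-single-char} gives $\diam E_k\to0$, and because $E_m\subset E_k$ and $E_k$ is connected we have $\dM(x_m,x_l)\le\diam E_k$ for all $m,l\ge k$; thus $\{x_k\}_{k=1}^\infty$ is a $\dM$-Cauchy sequence, whose $d$-limit is the impression point $x\in\bdy\Om$, so it has no limit in $\Om$ and represents a point of $\bdy_M\Om=\clOmm\setm\Om$. I would set $\Phi([E_k])$ equal to this point, and then record two facts proved by the same estimate: (i) $\Phi$ is well defined, i.e.\ independent of the choice of $x_k\in E_k$ and of the representative chain (if $\{E_k\}$ and $\{F_k\}$ are equivalent, pick $j$ with $\diam F_j$ small and $l$ with $E_l\subset F_j$; then $x_k,y_k\in F_j$ for $k\ge\max(l,j)$, so $\dM(x_k,y_k)\le\diam F_j$); and (ii) every sequence $\{z_i\}_i$ in $\Om$ with $z_i\to[E_k]$ in the sense of Definition~\ref{seqtoprime} is $\dM$-Cauchy and represents $\Phi([E_k])$. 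Fact~(ii) is used several times below.

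To prove injectivity, I would suppose $\Phi([E_k])=\Phi([F_k])$, so that $\dM(x_k,y_k)\to0$ for $x_k\in E_k$, $y_k\in F_k$, and show that $[E_k]$ divides $[F_k]$. If it did not, there would be $k_0$ with $E_l\setm F_{k_0}\ne\emptyset$ for every $l$; fixing $w_l$ in this set, joining $x_l$ to $y_l\in F_l$ by a connected subset of $\Om$ of diameter smaller than the positive gaps in the chain $\{F_k\}$, and using Remark~\ref{rem-connected-diam}, one forces $x_l\in F_{k_0+1}$ for all large $l$; but then $E_l$ is connected and meets both $F_{k_0+1}$ (at $x_l$) and $\Om\setm F_{k_0}$ (at $w_l$), so Remark~\ref{rem-connected-diam} gives $\diam E_l\ge\dist(\Om\cap\bdy F_{k_0+1},\Om\cap\bdy F_{k_0})>0$, contradicting $\diam E_l\to0$. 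Hence $[E_k]$ divides $[F_k]$, and since $\diam F_k\to0$ Lemma~\ref{lem-identify-prime-end} gives $[E_k]=[F_k]$.

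For surjectivity, given $\xi\in\bdy_M\Om$ represented by a $\dM$-Cauchy sequence $\{x_i\}$ with $d$-limit $x\in\bdy\Om$, I would fix radii $r_k\downarrow0$ with $r_1$ small enough that $\Om\not\subset B(x,r_1)$, and $N_1<N_2<\cdots$ with $x_i\in B(x,\tfrac12r_k)$ and $\dM(x_i,x_j)<\tfrac14r_k$ for all $i,j\ge N_k$. For such $i,j$ a connected subset of $\Om$ of diameter $<\tfrac14r_k$ joins $x_i$ and $x_j$ and therefore lies in $B(x,r_k)$, so the tail $\{x_i\}_{i\ge N_k}$ lies in a single component $E_k$ of $\Om\cap B(x,r_k)$; these sets are open, connected, bounded, decreasing, with $x\in\overline{E_k}\cap\bdy\Om$, hence acceptable. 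Since $\Om\cap\bdy E_k\subset\bdy B(x,r_k)$ we get $\dist(\Om\cap\bdy E_{k+1},\Om\cap\bdy E_k)\ge r_k-r_{k+1}>0$, and $\bigcap_k\overline{E_k}=\{x\}\subset\bdy\Om$, so $\{E_k\}$ is a chain with singleton impression and $[E_k]$ is a prime end by Proposition~\ref{prop-end-single}. Since $x_{N_k}\in E_k$ and $\{x_{N_k}\}_k$ is a subsequence of $\{x_i\}$, fact~(i) gives $\Phi([E_k])=\xi$.

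Continuity of $\Phi$ and of $\Phi^{-1}$ I would deduce from Lemma~\ref{lem-dM-sequential}. First, by Proposition~\ref{prop-top-pe} and Remark~\ref{rmk-open} the sets $E_k^P$, $k=1,2,\dots$, form a countable neighbourhood basis at $[E_k]$ in $\clOmP$; hence $\bdy_P\Om$ is first countable and topological convergence of prime ends is precisely convergence in the sense of Definition~\ref{conv}, and since $\bdy_M\Om$ is metric it suffices to show $\Phi$ preserves and reflects convergent sequences. Given prime ends $[E_k^n]$ and $[E_k^\infty]$ (necessarily singleton ends) and sequences $\{x_i^n\}_i\to[E_k^n]$, $\{x_i^\infty\}_i\to[E_k^\infty]$, fact~(ii) shows these represent $\Phi([E_k^n])$ and $\Phi([E_k^\infty])$, so
\[
 \dM\bigl(\Phi([E_k^n]),\Phi([E_k^\infty])\bigr)=\lim_{i\to\infty}\dM(x_i^n,x_i^\infty),
\]
whence $\dM(\Phi([E_k^n]),\Phi([E_k^\infty]))\to0$ as $n\to\infty$ if and only if $\lim_{n\to\infty}\limsup_{i\to\infty}\dM(x_i^n,x_i^\infty)=0$, which by Lemma~\ref{lem-dM-sequential} is equivalent to $[E_k^n]\to[E_k^\infty]$. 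This makes $\Phi$ a homeomorphism. I expect the main obstacle to be exactly this last step: one has to arrange the sequential framework so that sequences really detect the topology on $\bdy_P\Om$ and, via fact~(ii), that point sequences converging to ends are precisely the Cauchy representatives of their images — after which Lemma~\ref{lem-dM-sequential} carries the argument. The connectedness bookkeeping in the injectivity step is somewhat delicate as well, but routine given Remark~\ref{rem-connected-diam}.
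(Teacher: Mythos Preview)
Your proposal is correct and follows essentially the same route as the paper: define $\Phi$ via any choice $x_k\in E_k$, verify well-definedness, surjectivity via components of $\Om\cap B(x,r_k)$, injectivity via Remark~\ref{rem-connected-diam}, and bicontinuity via Lemma~\ref{lem-dM-sequential}. Two minor remarks. First, your injectivity argument works but is more circuitous than necessary: rather than assuming $\dM(x_k,y_k)\to0$ and pushing $x_l$ into $F_{k_0+1}$ before invoking the gap, it is cleaner (as the paper does) to argue the contrapositive directly---if $[F_k]$ does not divide $[E_k]$, pick $y_l\in F_l\setm E_{k_0}$ and $x_l\in E_l\subset E_{k_0+1}$, so that Remark~\ref{rem-connected-diam} immediately gives $\dM(x_l,y_l)\ge\dist(\Om\cap\bdy E_{k_0+1},\Om\cap\bdy E_{k_0})>0$. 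Second, your first-countability observation is correct and a clean way to reduce to sequences; the paper instead proves the stronger Theorem~\ref{thm-homeo-primeends-2} (extending $\Phi$ over $\Om$) and treats point-sequences and end-sequences separately, but on $\bdy_P\Om$ alone your packaging via Lemma~\ref{lem-dM-sequential} and fact~(ii) is equivalent.
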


This is a special case of the following result. Recall from
Proposition~\ref{prop-end-single} that every singleton end is a prime end.

\begin{theorem}\label{thm-homeo-primeends-2}
Let $\bdySP \Om$ be the set of all singleton ends.
Then there is a homeomorphism
$\Phi: \Om \cup \bdySP \Om\to \clOmm$
such that $\Phi|_{\Om}$ is the identity map
and
$\Phi|_{\bdySP \Om}: \bdySP \Om\to \bdyM \Om$.
\end{theorem}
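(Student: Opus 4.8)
The plan is to construct an explicit bijection $\Phi$ and then check that it and its inverse are continuous; Theorem~\ref{thm-homeo-primeends} then follows by restriction, since under its hypothesis $\bdySP\Om=\bdy_P\Om$ by Proposition~\ref{prop-end-single}. I would let $\Phi$ be the identity on $\Om$. For a singleton end $[E_k]$ with $I[E_k]=\{x\}$, Proposition~\ref{prop1A-A} gives a curve $\ga$ with $\ga(1)=x$ that is eventually inside each $E_k$; sampling points $x_n=\ga(s_n)$ with $s_n\uparrow1$ gives $x_n\to[E_k]$, and since $x_n,x_m$ lie in the connected set $E_k$ for large $n,m$, Lemma~\ref{lem-single-char} shows $d_M(x_n,x_m)\le\diam E_k\to0$, so $\{x_n\}_{n=1}^\infty$ is $d_M$-Cauchy; I set $\Phi([E_k])=[\{x_n\}_{n=1}^\infty]\in\clOmm$. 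Well-definedness (independence of the curve and of the representative chain) is the same $\diam E_k\to0$ estimate, and since $x_n\to x\in\bdy\Om$ in $d$ while $d$ and $d_M$ induce the same topology on $\Om$ (Remark~\ref{preserveLength}), the class $\Phi([E_k])$ has no $d_M$-limit in $\Om$, i.e.\ it lies in $\bdyM\Om$. In particular $\Phi(\Om)=\Om$ and $\Phi(\bdySP\Om)\subset\bdyM\Om$ are disjoint, which already gives injectivity on $\Om$ and between $\Om$ and $\bdySP\Om$.

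For injectivity on $\bdySP\Om$ I would take singleton ends $[E_k],[F_k]$ with $\Phi([E_k])=\Phi([F_k])$, which unwinds to $d_M(x_n,y_n)\to0$ for the sampled sequences, and argue by contradiction: if $[E_k]$ does not divide $[F_k]$, fix $k_0$ with $E_l\not\subset F_{k_0}$ and pick $w_l\in E_l\setm F_{k_0}$ for all $l$; for each $l$ choose $n$ so large that $x_n\in E_l$, $y_n\in F_{k_0+1}$, and a connected $G_n\ni x_n,y_n$ with $\diam G_n$ close to $d_M(x_n,y_n)$. Then $E_l\cup G_n$ is connected (they share $x_n$), meets $F_{k_0+1}$ and $\Om\setm F_{k_0}$, so Remark~\ref{rem-connected-diam} bounds $\dist(\Om\cap\bdy F_{k_0+1},\Om\cap\bdy F_{k_0})$ by $\diam E_l+\diam G_n\to0$, contradicting Definition~\ref{deff-chain}\,\ref{pos-dist}. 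Hence each of $[E_k],[F_k]$ divides the other and they are equal.

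Surjectivity is where the real work is. Given $\eta\in\bdyM\Om$ represented by a $d_M$-Cauchy sequence, I would pass to a subsequence $\{x_n\}_{n=1}^\infty$ (representing the same point) with $d_M(x_n,x_{n+1})<2^{-n}$; it is $d$-Cauchy, hence $d$-converges to some $x$, and $x\in\bdy\Om$ since a limit in $\Om$ would be a $d_M$-limit by Remark~\ref{preserveLength}. Choosing connected $C_n\subset\Om$ of diameter $<2^{-n+1}$ joining $x_n$ to $x_{n+1}$ and letting $U_n$ be the component of $\Om\cap\{y:\dist(y,C_n)<2^{-n}\}$ containing $C_n$ — open and pathconnected by Remark~\ref{rmk-locconn}, with diameter $<2^{-n+2}$ — I would string together paths in the $U_n$ into a curve $\ga\colon[0,1]\to X$ with $\ga([0,1))\subset\Om$, $\ga(1)=x$, $\ga(1-1/n)=x_n$, continuous at $1$ because $\diam U_n\to0$. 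Lemma~\ref{lem-curve-imp-prime-end} with $r_k=2^{-k}$ then produces a singleton prime end $[F_k]$, $I[F_k]=\{x\}$, with $\ga([\de_k,1))\subset F_k$, so $x_n\in F_k$ for all large $n$, i.e.\ $x_n\to[F_k]$; hence $\Phi([F_k])=[\{x_n\}_{n=1}^\infty]=\eta$ and $[F_k]\in\bdySP\Om$.

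Finally, for the homeomorphism I would use the basis $\{G,G^\End\}$ from Proposition~\ref{prop-top-pe} (restricted to $\Om\cup\bdySP\Om$, with the sets taken open by Remark~\ref{rmk-open}) against the $d_M$-ball basis on $\clOmm$. Continuity of $\Phi$ at $[E_k]$: given $r>0$ pick $j$ with $\diam E_j<r$ (Lemma~\ref{lem-single-char}); every point of $E_j$ and every singleton end with a member set inside $E_j$ has $\Phi$-image within $d_M$-distance $\diam E_j$ of $\Phi([E_k])$, so the neighborhood $E_j^\End\cap(\Om\cup\bdySP\Om)$ maps into $B_{d_M}(\Phi([E_k]),r)$; at points of $\Om$ continuity is immediate from Remark~\ref{preserveLength}. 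Continuity of $\Phi^{-1}$: given a basic neighborhood of $[E_k]$ with $E_j$ inside it, set $\de=\dist(\Om\cap\bdy E_{j+2},\Om\cap\bdy E_{j+1})>0$; if $\Phi(\xi)$ is within $d_M$-distance $\de/2$ of $\Phi([E_k])$ then Remark~\ref{rem-connected-diam}, applied as in the injectivity step, forces $\xi$ into $E_{j+1}\subset E_j$ when $\xi\in\Om$, and forces any approximating sets $F_l$ of a nearby singleton end first into $E_{j+1}$ and then (using $\diam F_l\to0$ below $\dist(\Om\cap\bdy E_{j+1},\Om\cap\bdy E_j)$) into $E_j$, so $\xi$ lies in $E_j^\End$; at interior points one uses that $\Om$ is open in $\clOmm$ and $d,d_M$ agree topologically. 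The main obstacle is precisely the surjectivity step — manufacturing an access curve out of a bare $d_M$-Cauchy sequence in a form to which Lemma~\ref{lem-curve-imp-prime-end} applies — together with the careful exploitation of condition~\ref{pos-dist} for the openness of $\Phi$; the remaining verifications are routine bookkeeping with the two topologies.
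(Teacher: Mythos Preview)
Your argument is correct, and the overall architecture (define $\Phi$ by sampling, prove well-definedness, injectivity via Remark~\ref{rem-connected-diam}, surjectivity by building a singleton prime end from a $d_M$-Cauchy sequence, then bicontinuity) matches the paper's. There are, however, two genuine differences in execution. For surjectivity the paper bypasses the curve construction entirely: after passing to a rapidly $d_M$-Cauchy subsequence converging in $d$ to $x\in\bdy\Om$, it directly sets $E_k$ to be the component of $\Om\cap B(x,2^{-k})$ containing $x_k$ and checks from the $d_M$-estimates that $\{E_k\}$ is a chain with impression $\{x\}$ and $x_j\in E_k$ for $j\ge k$; this is shorter than your route through path-gluing and Lemma~\ref{lem-curve-imp-prime-end}, though yours has the virtue of reusing that lemma. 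For continuity the paper argues sequentially and invokes Lemma~\ref{lem-dM-sequential}, which packages the equivalence between end-convergence to a singleton end and the $d_M$-limit condition; your approach via the basis of Proposition~\ref{prop-top-pe} and the positive-distance condition~\ref{pos-dist} is more hands-on but avoids appealing to that lemma. Either pair of choices works; the paper's are slightly more economical, yours lean more on previously established machinery.
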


Recall that by Lemma~\ref{lem-single-char}
an end $[E_k]$ has a singleton impression if and only if
$\lim_{k \to \infty}\diam E_k =0$.
We will use this fact (implicitly) several times in the proof below.

\begin{proof}
\emph{Step} 1. \emph{Definition of $\Phi$.}
Let $[E_k]\in\bdySP\Om$.
For each $k$ choose $x_k\in E_k$.
Then for $l\geq k$ we have that $x_k, x_l\in E_k$
and as $E_k$ is connected, this implies that
\[
   d_M(x_k, x_l)\leq \diam E_k \to 0, \quad \text{as } k\to\infty.
\]
Thus, $\{x_k\}_{k=1}^\infty$ is a $d_M$-Cauchy sequence
and corresponds to a point  $y \in \clOmm$.
If $y$ belonged to $\Om$, then we would have
$y \in \bigcap_{k=1}^\infty \itoverline{E}_k\cap\Om=I[E_k]\cap\Om =\emptyset$,
which is a contradiction.
Thus $y \in \bdyM \Om$,
and we define
\[
 \Phi([E_k])=y.
\]
For $x \in \Om$ we, of course, set
$\Phi(x)=x$.

\emph{Step} 2. \emph{$\Phi$ is well defined.}
Assume that $\{E_k\}_{k=1}^\infty$ and $\{E'_k\}_{k=1}^\infty$
are equivalent chains, and let $x_k\in E_k$
and $x'_k\in E'_k$, $k=1,2,\ldots.$
Then for every $k$, there exists $l_k\ge k$ such that
$E_{l_k} \subset E'_k$.
Hence for all $l\ge l_k$, we have that
$x_l\in E_l\subset E_{l_k}\subset E'_k$ and
$x'_l\in E'_l\subset E'_k$.
Thus
\[
\lim_{l \to \infty} d_M(x_l,x'_l)\le \diam E'_k \to 0 \quad \text{as } k\to \infty,
\]
showing that
$\{x_k\}_{k=1}^\infty$ and $\{x'_k\}_{k=1}^\infty$ are equivalent as
$d_M$-Cauchy sequences. Hence $\Phi$ is well-defined.

\emph{Step} 3. \emph{$\Phi$ is surjective.}
Let $\{x_n\}_{n=1}^\infty$
be a $d_M$-Cauchy sequence
in $\Om$, corresponding to a point in $\bdry_M\Om$.
We can assume that for all $j,k\ge n$,
\begin{equation}
d(x_j,x_k)\le d_M(x_j,x_{k})< 2^{-n-1}.
\label{eq-surjective}
\end{equation}
It follows that $\{x_n\}_{n=1}^\infty$ is a $d$-Cauchy sequence
and converges to some $x\in\bdry\Om$, and moreover,
\begin{equation}
d(x_k,x)\le 2^{-k-1}.
\label{eq-x_k-x}
\end{equation}
For each $k=1,2,\ldots,$
let $E_k$ be the component of
$\Om\cap B(x,2^{-k})$ containing $x_{k}$.
Then for all $j\ge k$,
\eqref{eq-surjective} implies that there exists a connected set
$F_j\subset \Om$ such that $x_j, x_k\in F_j$ and $\diam F_j < 2^{-k-1}$.
From \eqref{eq-x_k-x} it follows that $F_j\subset \Om\cap
B(x,2^{-k})$.
Ass $E_k$ is a component of $\Om \cap B(x,2^{-k})$
and $x_k\in E_k$, we obtain
that the connected set
$F_j\subset E_k$ and thus $x_j \in E_k$ for all $j\geq k$.
Letting $j\to\infty$ shows that $x\in \itoverline{E}_k$ for $k=1,2,\ldots$.

This also shows that $x_{k+1}\in E_k$ and as $E_{k+1}$ is
connected, we obtain that $E_{k+1} \subset E_k$ for all $k$.
Since $\Omega\cap\bdry E_k \subset \bdry B(x,2^{-k})$, we see that
\[
   \dist(\Omega\cap\bdry E_{k+1},\Omega\cap \bdry E_k) \ge 2^{-k-1}>0.
\]
By construction we know that $\diam E_k\to 0$, and hence
$\{E_k\}_{k=1}^\infty$ is a chain
with impression $\{x\}$.
By Proposition~\ref{prop-end-single},
$[E_k]$ is a prime end.
Moreover, $\Phi([E_k])=\{x_n\}_{n=1}^\infty$.
Thus $\Phi$ is surjective.
(That $\Phi|_{\Om}$ is bijective is clear.)

\emph{Step} 4. \emph{$\Phi$ is injective.}
Let $[E_k]$ and $[F_k]$ be two
distinct  singleton prime ends. So $\{F_k\}_{k=1}^\infty$ does not
divide $\{E_k\}_{k=1}^\infty$. Hence, there exists $k$ such
that for each $l$ we can find a point $y_l\in
F_l\setm E_k$. We need to show that $\{y_l\}_{l=1}^\infty$ is not
equivalent to any sequence representing $\Phi([E_k])$.
Let $x_l\in E_l$ for each  $l$.
Since $x_l\in E_{k+1}$ and $y_l\notin E_k$ for $l>k$,
Remark~\ref{rem-connected-diam} yields that every connected set $A$
containing both $x_l$ and $y_l$ satisfies
\[
   \diam A \ge \dist (\Om\cap\bdry E_{k+1}, \Om\cap\bdry E_{k}).
\]
Hence, for each $l\ge k+1$ we have that
\[
d_M(x_l,y_l) \ge \dist (\Om\cap\bdry E_{k+1}, \Om\cap\bdry E_{k})>0.
\]
Thus the two sequences $\{x_l\}_{l=1}^\infty$ and $\{y_l\}_{l=1}^\infty$
are not equivalent,
and $\Phi$ is  injective.

\emph{Step} 5. \emph{$\Phi$ is continuous.}
We need to show that preimages of closed sets are closed.
Since the topologies on $\clOmm$ and $\Om \cup \bdySP$ are given by converging
sequences, it suffices to consider sequential continuity.
As $\Phi|_{\Om}$ is continuous, it is enough to show
that the image of every sequence with a limit in $\bdySP \Om$
has the correct limit. There are two such types of sequences
we need to consider.

Assume first that the sequence
of singleton prime ends
$\{[E_k^n]\}_{n=1}^\infty$ converges to a singleton prime end
$[E_k^\infty]$. 
Let $\Phi([E_k^n])= \{x_k^n\}_{k=1}^\infty$ and
$\Phi([E_k^\infty])= \{x_k^\infty\}_{k=1}^\infty$, where $x_k^n\in E_k^n$, $n=1, 2, \ldots$, and
$x_k^\infty \in E_k^\infty$ are provided by Steps~1 and~2.
Then it is clear that $\{x_k^n\}_{k=1}^\infty$ converges to $[E_k^n]$ for each $n$,
and that $\{x_k^\infty\}_{k=1}^\infty$ converges to $[E_k^\infty]$. 
By Lemma~\ref{lem-dM-sequential},
it follows that $\lim_{n\to\infty}\limsup_{k\to\infty}d_M(x_k^n,x_k^\infty)=0$,
i.e.\ $\lim_{n\to\infty} d_M(\Phi([E_k^n]),\Phi([E_k^\infty]))=0$.
This shows that $\{\Phi([E_k^n])\}_{n=1}^\infty$ converges in $d_M$ to
$\Phi([E_k^\infty])$ as $n\to\infty$.

Assume next that $\Om \ni y_n \to [E_k] \in \bdySP \Om$ as $n \to
\infty$.
Thus for each $k$ we can find $n_k$ such that $y_{n} \in E_k$
whenever $n \ge n_k$.
As $E_k$ is connected we see that
\[
     \dM(y_l, y_n) \le \diam E_k
     \quad \text{for } l,n \ge n_k.
\]
Since $\diam E_k \to 0$ as $k \to \infty$, this shows that
$\{y_n\}_{n=1}^\infty$ is a $\dM$-Cauchy sequence.
As $y_{n_k}\in E_k$, the sequence $\{y_{n_k}\}_{k=1}^\infty$
represents $\Phi([E_k])$ and is equivalent to $\{y_n\}_{n=1}^\infty$,
which is the limit of the sequence $\{y_n\}_{n=1}^\infty$ in $\clOmm$.

Thus we conclude that  $\Phi$ is
continuous.

\emph{Step} 6. \emph{$\Phi^{-1}$ is continuous.}
As in Step~5 there are two types of sequences we need to consider.

Assume first that
the sequence of singleton prime ends $\{[E_k^n]\}_{n=1}^\infty$
does not converge to the singleton prime end $[E_k^\infty]$.
Then by Lemma~\ref{lem-dM-sequential}, there are sequences
$\{x_k^n\}_{k=1}^\infty$ converging to $[E_k^n]$ for each $n$, and 
$\{x_k^\infty\}_{k=1}^\infty$ converging to $[E_k^\infty]$, such that
\[
  \limsup_{n\to\infty}\, \limsup_{k\to\infty}d_M(x_k^n,x_k^\infty)>0.
\]
Because the ends $[E_k^n]$ and $[E_k^\infty]$ are singleton ends, 
it follows that $\Phi([E_k^n])$ is represented by $\{x_k^n\}_{k=1}^\infty$
and $\Phi([E_k^\infty])$ is represented by $\{x_k^\infty\}_{k=1}^\infty$.
Therefore, it is not true that $\lim_{n\to\infty}d_M(\Phi([E_k^n]),\Phi([E_k^\infty]))=0$.

Assume next that $\{x_n\}_{n=1}^\infty$ is a sequence of points
in $\Om$ which does not converge to the singleton prime end $[E_k^\infty]$.
Then we can find $k$ and an increasing
sequence $n_i\to \infty$ 
for which $x_{n_i}\notin E_k^\infty$.
Let $y_l \in E_l^\infty$ for $l=1,2,\ldots$, i.e.\ 
$\Phi([E_l^\infty])=\{y_l\}_{l=1}^\infty$.
As the sequence $\{x_{n_i}\}_{i=1}^\infty$  lies entirely in $\Omega\setminus E_k^\infty$,
Remark~\ref{rem-connected-diam}
shows that
\[
     d_M(y_l,x_{n_i}) \ge \dist(\Om \cap\partial E_{k+1}^\infty,
      \Omega\cap\partial E_k^\infty)
      \quad \text{for } l>k \text{ and all } i.
\]
Thus
\[
     d_M(\Phi([E_k^\infty]),x_{n_i}) \ge \dist(\Om \cap\partial E_{k+1}^\infty,
      \Omega\cap\partial E_k^\infty) > 0,
\]
which shows that $\{x_{n_i}\}_{i=1}^\infty$ cannot
converge to $\Phi([E_k^\infty])$,
and hence neither can  $\{x_{n}\}_{n=1}^\infty$
converge to $\Phi([E_k^\infty])$.

This shows that $\Phi^{-1}$ is continuous and so
$\Phi$ is a homeomorphism.
\end{proof}

\section{Domains finitely connected at the boundary}
\label{sect-finconn}

In general not all prime ends have singleton impressions, as demonstrated by
Example~\ref{ex-double-equilat-comb}. In this section and the next section we
explore conditions under which prime
ends have this property (cf. Section~\ref{sect-access}).

Here we present a topological condition, finite connectedness
at the boundary, which guarantees that all
prime ends have singleton impressions.
Finite connectedness at the boundary
is equivalent to the compactness of $\Om \cup \bdySP \Om$,
where $\bdySP \Om$ is the set of all singleton prime ends,
see Theorem~\ref{thm-clOmm-cpt-new} and the comments after it.

\begin{deff}\label{def-finite-conn}
We say that $\Om$ is \emph{finitely connected\/} at a point
$x_0\in\partial\Om$ if for
every $r>0$ there is an open set $G$ (open in $X$) such that $x_0
\in G \subset B(x_0,r)$ and $G \cap \Om$ has only finitely many
components.
If $\Om$ is finitely connected at every boundary point,
then it is called \emph{finitely connected at the boundary}.
\end{deff}

This terminology follows N\"akki~\cite{nakki70},
who seems to have first used it  in print. 
(N\"akki~\cite{nakki-private} has informed us that
he learned about it from V\"ais\"al\"a, who
however first seems to have used it in print in~\cite{vaisala}.)
Beware that the notion of finitely connected domains is a 
completely different notion.

We now introduce some further notation.
Fix $x_0 \in \bdy \Om$
(we do \emph{not} assume that $\Om$ is finitely connected here).
For each $r>0$ let $\{\Gjr\}_{j=1}^{N(r)}$ be the family of components
of $B(x_0,r) \cap \Om$ which have $x_0$ in their boundary,
i.e.\ $x_0 \in \overline{\Gjr}$.
Here $N(r)$ is either a nonnegative integer or $\infty$.
Let 
\[
    H(r)=(B(x_0,r) \cap \Om) \setm
     \bigcup_{j=1}^{N(r)} \Gjr
\]
be the union of the remaining components (if any).
(The sets $\Gjr$ and $H(r)$ of course depend on $x_0$.)

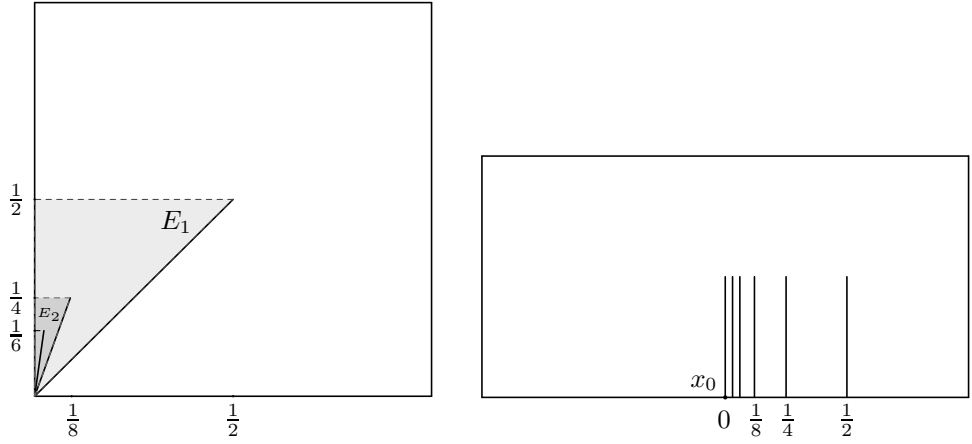
\begin{figure}[t]
\centering
\subfloat[\empty]
{
\begin{tikzpicture}[line cap=round,line join=round,>=triangle 45, scale = 0.87]
\clip(-3.37,-2.34) rectangle (3.4,5.2);
\fill[dash pattern=on 2pt off 2pt,color=zzttqq,fill=zzttqq,fill opacity=0.1] (-3,2) -- (-3,-1) -- (0,2) -- cycle;
\fill[dash pattern=on 2pt off 2pt,color=wwwwww,fill=wwwwww,fill opacity=0.2] (-3,0.5) -- (-3,-1) -- (-2.46,0.5) -- cycle;
\draw  [line width=0.6pt] (-3,5)-- (-3,-1);
\draw  [line width=0.6pt] (-3,-1)-- (3,-1);
\draw  [line width=0.6pt] (3,-1)-- (3,5);
\draw  [line width=0.6pt] (3,5)-- (-3,5);
\draw  [line width=0.6pt] (-3,-1)-- (0,2);
\draw  [line width=0.6pt] (-3,-1)-- (-2.46,0.5);
\draw  [line width=0.6pt] (-3,-1)-- (-2.86,0);
\draw [dash pattern=on 2pt off 2pt,color=zzttqq] (-3,2)-- (-3,-1);
\draw [dash pattern=on 2pt off 2pt,color=zzttqq] (-3,-1)-- (0,2);
\draw [dash pattern=on 2pt off 2pt,color=zzttqq] (0,2)-- (-3,2);
\draw [dash pattern=on 2pt off 2pt,color=wwwwww] (-3,0.5)-- (-3,-1);
\draw [dash pattern=on 2pt off 2pt,color=wwwwww] (-3,-1)-- (-2.46,0.5);
\draw [dash pattern=on 2pt off 2pt,color=wwwwww] (-2.46,0.5)-- (-3,0.5);
\draw [dash pattern=on 2pt off 2pt] (-3,0)-- (-2.9,0);
\fill [color=black] (0,-1) circle (0.5pt);
\draw[color=black] (0,-1.36) node {$\frac12$};
\fill [color=black] (-2.44,-1) circle (0.5pt);
\draw[color=black] (-2.42,-1.36) node {$\frac18$};
\fill [color=black] (-3,2) circle (0.5pt);
\draw[color=black] (-3.28,2) node {$\frac12$};
\fill [color=black] (-3,0.5) circle (0.5pt);
\draw[color=black] (-3.28,0.5) node {$\frac14$};
\fill [color=black] (-3,0) circle (0.5pt);
\draw[color=black] (-3.28,-0.07) node {$\frac16$};
\draw[color=black] (-0.86,1.65) node {$E_1$};
\draw[color=black] (-2.78,0.25) node {\tiny{$E_2$}};
\end{tikzpicture}
}
\subfloat[\empty]
{
\begin{tikzpicture}[line cap=round,line join=round,>=triangle 45, scale =0.8]
\clip(-3.1,-0.44) rectangle (5.98,5.8);
\draw  [line width=0.6pt] (-3,5)-- (-3,1);
\draw  [line width=0.6pt] (-3,1)-- (5,1);
\draw  [line width=0.6pt] (5,1)-- (5,5);
\draw  [line width=0.6pt] (5,5)-- (-3,5);
\draw  [line width=0.6pt] (1,3)-- (1,1);
\draw  [line width=0.6pt] (3,1)-- (3,3);
\draw  [line width=0.6pt] (2,3)-- (2,1);
\draw  [line width=0.6pt] (1.48,3)-- (1.48,1);
\draw  [line width=0.6pt] (1.24,3)-- (1.24,1);
\draw  [line width=0.6pt] (1.12,3)-- (1.12,1);
\fill [color=black] (1,1) circle (1.0pt);
\draw[color=black] (0.65,1.24) node {${x_0}$};
\draw[color=black] (0.98,0.63) node {${0}$};
\draw[color=black] (3,0.63) node {${\frac12}$};
\draw[color=black] (2.03,0.63) node {${\frac14}$};
\draw[color=black] (1.51,0.63) node {${\frac18}$};
\end{tikzpicture}
}
\caption{\label{fig3}%
Examples~\ref{ex-shrinking-pins} (left)  and \ref{ex-10.3} (right).}
\end{figure}

\begin{example}   \label{ex-shrinking-pins}
(See Figure~\ref{fig3}.)
Let $\Om$ be obtained from the unit square $(0,1)^2$ in $\R^2$ by removing
the segments
\[
S_k = \biggl\{(x,y): 0<x\le\frac{1}{2k^2}\text{ and } y=k x \biggr\}, 
\quad k=1,2,\ldots.
\]
Then $\Om$ is finitely connected at the boundary but there are infinitely
many prime ends with the origin as their impression.
For each ``wedge'' there is one such singleton prime end.  The
sequence consisting of these prime ends
converges to the singleton prime end defined by the acceptable sets
\[
E_k = \biggl\{(x,y): 0<x<\frac{1}{2k^2} \text{ and } kx<y<\frac{1}{2k}\biggl\}, \quad k=1,2,\ldots.
\]
\end{example}

\begin{example}
\label{ex-10.3}
(See Figure~\ref{fig3}.)
 Let
\[
\Om=((-1,1)\times (0,1))\setm
\bigl(\bigl\{0,\tfrac12, \tfrac14, \tfrac18, \ldots\bigl\}
\times \bigl(0,\tfrac12\bigl]\bigl)
\]
 (see the above figure).
Then for the point $x_0=(0,0)$ we have that $N(r)=1$ for all $0<r<1$ but
the domain is not finitely connected at $x_0$.
\end{example}

The following characterization of finite connectedness is useful, see
Bj\"orn--Bj\"orn--Shanmugalingam~\cite{BBSmbdy} for a proof.

\begin{prop} \label{prop1-fin}
The set $\Om$ is finitely connected at $x_0$
if and only if for each $r>0$ we have $N(r)<\infty$
and $x_0 \notin \itoverline{\Hr{r}}$.
\end{prop}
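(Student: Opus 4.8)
The plan is to prove the two implications separately; the forward implication is routine and the reverse one is where the real work lies.

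\emph{Forward direction.} Suppose $\Om$ is finitely connected at $x_0$ and fix $r>0$. Pick an open $G$ with $x_0\in G\subset B(x_0,r)$ for which $G\cap\Om$ has only finitely many components $V_1,\dots,V_m$. To bound $N(r)$: for each $j\le N(r)$ we have $x_0\in\itoverline{\Gjr}$ and $x_0\in G$, so the neighbourhood $G$ meets $\Gjr$; since $G\cap\Gjr\subset G\cap\Om=\bigcup_i V_i$, some $V_i$ meets $\Gjr$, and as $V_i$ is a connected subset of $B(x_0,r)\cap\Om$ meeting the component $\Gjr$, we get $V_i\subset\Gjr$. The sets $\Gjr$ being pairwise disjoint and each $V_i$ nonempty, distinct $j$ select distinct $V_i$, so $N(r)\le m<\infty$. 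For $x_0\notin\itoverline{\Hr{r}}$, assume the contrary and fix $\eps_0>0$ with $B(x_0,\eps_0)\subset G$. For each $\eps\in(0,\eps_0]$ there is, by definition of $H(r)$, a component $C_\eps$ of $B(x_0,r)\cap\Om$ with $x_0\notin\itoverline{C_\eps}$ and $C_\eps\cap B(x_0,\eps)\ne\emptyset$; then $C_\eps$ meets $G\cap\Om$, so meets some $V_i$, and as before $V_i\subset C_\eps$, whence $x_0\notin\itoverline{V_i}$. As there are finitely many $V_i$, one of them, say $V_{i^*}$, has $V_{i^*}\subset C_{\eps_k}$ for a sequence $\eps_k\to0$; since all the components $C_{\eps_k}$ contain the nonempty set $V_{i^*}$ they coincide with a single component $C$, and then $C\cap B(x_0,\eps_k)\ne\emptyset$ for all $k$ gives $x_0\in\itoverline{C}$, contradicting $x_0\notin\itoverline{C}$.

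\emph{Reverse direction.} Assume $N(r)<\infty$ and $x_0\notin\itoverline{\Hr{r}}$ for every $r>0$, and fix $r_0>0$. If $B(x_0,s)\cap\Om$ has only finitely many components for some $s\in(0,r_0)$ then $G=B(x_0,s)$ witnesses finite connectedness at $x_0$ for the radius $r_0$; since at most $N(s)<\infty$ of those components meet $x_0$, it suffices to find such an $s$ with $H(s)$ having only finitely many components (and I expect in fact $H(s)=\emptyset$ for all small $s$). A first observation, used repeatedly: if $B(x_0,s)\cap H(r_0)=\emptyset$ then also $B(x_0,s)\cap\itoverline{H(r_0)}=\emptyset$ (a limit point of $H(r_0)$ lying in the open set $B(x_0,s)$ would force points of $H(r_0)$ into $B(x_0,s)$), and then every component of $B(x_0,s)\cap\Om$, being a connected subset of $B(x_0,r_0)\cap\Om$ avoiding $H(r_0)$, lies in one of the finitely many sets $\Gj{1}{r_0},\dots,\Gj{N(r_0)}{r_0}$; so the problem reduces to controlling how each single $\Gj{j}{r_0}$ is cut up by small balls about $x_0$. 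The argument I would run is by contradiction: if $H(s)$ is nonempty (or has infinitely many components) for arbitrarily small $s$, choose points $y_n$ in such components with $y_n\to x_0$; using that the component of $y_n$ at any fixed larger scale $\tau$ must meet $x_0$ (otherwise $y_n\in H(\tau)$ and $x_0\in\itoverline{H(\tau)}$) one shows the relevant scales tend to $0$ and that each $y_n$ lies in some $\Gj{i}{\tau}$; because $N(\tau)<\infty$ one passes to a subsequence landing in a single $\Gj{i}{\tau}$, and then properness of $X$ yields a limiting configuration contradicting $x_0\notin\itoverline{H(t)}$ at some scale $t$.

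\textbf{The main obstacle} is precisely this last step of the reverse implication: ruling out ``trapped pockets'' of $\Om$, i.e.\ parts of the domain close to $x_0$ in the metric $d$ that can be joined to $x_0$ inside $\Om$ only along paths leaving every small ball $B(x_0,s)$ (the mechanism behind the topologist's comb, Example~\ref{comb}). The hypothesis $x_0\notin\itoverline{\Hr{r}}$ is exactly what forbids such pockets from accumulating at $x_0$, and the delicate point is to use it at \emph{all} scales at once, through the compactness supplied by properness, to get a uniform positive lower bound on $\dist(x_0,\itoverline{H(s)})$ for small $s$ and thereby conclude $H(s)=\emptyset$ eventually. Once that is established, $G=B(x_0,s)$ for such an $s$ is the required neighbourhood, and combined with $N(s)<\infty$ this completes the proof.
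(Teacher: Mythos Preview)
The paper does not prove this proposition; it refers to \cite{BBSmbdy}. So there is no in-paper argument to compare against, and your proposal has to stand on its own.

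Your forward direction is correct and cleanly argued.

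Your reverse direction, however, is both incomplete and harder than necessary. You implicitly restrict yourself to neighbourhoods of the form $G=B(x_0,s)$, which forces you to show that $B(x_0,s)\cap\Om$ has finitely many components for some small $s$; you then reduce this to showing $H(s)=\emptyset$ for small $s$. But at that point you only offer a vague sketch (``one shows the relevant scales tend to $0$'', ``properness of $X$ yields a limiting configuration contradicting \ldots'') without actually carrying it through. This is exactly the step you flag as the main obstacle, and it is not overcome.

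The detour is avoidable because Definition~\ref{def-finite-conn} does not require $G$ to be a ball. In fact you have already done all the work: you observed that for $s$ with $B(x_0,s)\cap H(r_0)=\emptyset$, every component of $B(x_0,s)\cap\Om$ lies in some $G_j(r_0)$. Now simply set
\[
G=B(x_0,s)\cup\bigcup_{j=1}^{N(r_0)}G_j(r_0).
\]
Since $X$ is locally connected (Remark~\ref{rmk-locconn}), each $G_j(r_0)$ is open, so $G$ is open in $X$; clearly $x_0\in G\subset B(x_0,r_0)$. Your own observation gives $B(x_0,s)\cap\Om\subset\bigcup_j G_j(r_0)$, hence $G\cap\Om=\bigcup_{j=1}^{N(r_0)}G_j(r_0)$, which has at most $N(r_0)<\infty$ connected components. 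That is the entire reverse implication; no compactness, no contradiction argument, and no need to analyse $H(s)$ for small $s$.
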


We next present two auxiliary results which will primarily be applied
to  ends, but we state them here for more general sets since it will be useful later
in the proofs of Theorem~\ref{thm-fin-con-homeo} 
and Proposition~\ref{prop-finn-conn-N-ends}.

\begin{lem}  \label{lem-ex-G-j}
Assume that $\Om$ is finitely connected at $x_0\in\bdry\Om$.
Let $A_k\subsetneq\Om$ be such that
$A_{k+1}\subset A_k$, $x_0\in \itoverline{A}_k$
and $\dist(x_0,\Om\cap\bdry A_k)>0$ for each $k=1,2,\ldots$.
Furthermore, let 
$0<r_k<\dist(x_0,\Om\cap\bdry A_k)$ be a sequence decreasing to zero.
Then for each $k=1,2,\ldots$ 
there is a component
$G_{j_k}(r_k)$ of $B(x_0, r_k) \cap \Om$ intersecting $A_l$ for each $l=1,2,\ldots$,
and such that
$x_0\in \itoverline{G_{j_k}(r_k)}$ and $G_{j_k}(r_k) \subset A_k$.
\end{lem}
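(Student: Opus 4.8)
The plan is to fix $k$, set $r=r_k$, and work with the finitely many components of $B(x_0,r)\cap\Om$ having $x_0$ in their closure, which I denote $G_1(r),\dots,G_{N(r)}(r)$ as above, together with $H(r)$. By Proposition~\ref{prop1-fin}, finite connectedness at $x_0$ gives $N(r)<\infty$ and $x_0\notin\itoverline{H(r)}$. The first thing I would record is a separation dichotomy: since $r<\dist(x_0,\Om\cap\bdry A_k)$, the ball $B(x_0,r)$ misses $\Om\cap\bdry A_k$, so any connected subset $C$ of $B(x_0,r)\cap\Om$ is disjoint from $\bdry A_k$ and therefore lies entirely in $\interior A_k$ or entirely in $X\setm\itoverline{A}_k$ (these two open sets together with $\bdry A_k$ partition $X$, and no assumption on $A_k$ is needed). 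In particular, if such a $C$ meets $A_k$, then $C\subset\interior A_k\subset A_k$. This applies to each $G_j(r)$.

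Next I would show that \emph{every} $A_l$ meets at least one $G_j(r)$. Using $x_0\notin\itoverline{H(r)}$, choose $0<\rho\le r$ with $B(x_0,\rho)\cap H(r)=\emptyset$; then every point of $B(x_0,\rho)\cap\Om$ lies in a component of $B(x_0,r)\cap\Om$ that is not one of the components making up $H(r)$, hence in $\bigcup_{j=1}^{N(r)}G_j(r)$. Since $x_0\in\itoverline{A}_l$, there is a point $w\in A_l\cap B(x_0,\rho)\subset\Om\cap B(x_0,\rho)$, so $w\in G_j(r)$ for some $j$, i.e.\ $G_j(r)\cap A_l\neq\emptyset$. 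The point to stress is that this argument uses only that $x_0$ is a limit point of $A_l$; it does \emph{not} require $r<\dist(x_0,\Om\cap\bdry A_l)$, which would typically fail for large $l$ since $r=r_k$ is held fixed.

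Finally I would pass from ``for each $l$ some $j$'' to ``some $j$ for all $l$'' by a pigeonhole argument. Put $J(l)=\{\,j: 1\le j\le N(r),\ G_j(r)\cap A_l\neq\emptyset\,\}$. By the previous step $J(l)\neq\emptyset$, and since $A_{l+1}\subset A_l$ we have $J(l+1)\subset J(l)$, so $\{J(l)\}_{l=1}^{\infty}$ is a decreasing sequence of nonempty subsets of the finite set $\{1,\dots,N(r)\}$; hence it stabilizes and $\bigcap_{l=1}^{\infty}J(l)\neq\emptyset$. Choosing $j_k\in\bigcap_{l}J(l)$, the component $G_{j_k}(r_k)$ meets every $A_l$, satisfies $x_0\in\itoverline{G_{j_k}(r_k)}$ by the very definition of the family $\{G_j(r_k)\}$, and, since it meets $A_k$, satisfies $G_{j_k}(r_k)\subset A_k$ by the separation dichotomy of the first paragraph. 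The only real subtlety is the bookkeeping just mentioned — keeping $r=r_k$ fixed while quantifying over all $A_l$ — and I expect this, rather than the finiteness argument itself, to be the main thing to get right; the separation dichotomy is routine point-set topology.
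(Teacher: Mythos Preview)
Your proof is correct and follows essentially the same approach as the paper's: use Proposition~\ref{prop1-fin} to get finitely many $G_j(r_k)$ with $x_0$ in their closure and $x_0\notin\itoverline{H(r_k)}$, observe that each $A_l$ must meet one of these finitely many components, and then use the nestedness of the $A_l$ together with the finiteness of the index set to find a single component meeting all $A_l$. Your decreasing-$J(l)$ argument is just a slightly more explicit version of the paper's ``one component meets infinitely many, hence all, $A_l$'', and your separation dichotomy spells out what the paper records in one line as ``connected and $r_k<\dist(x_0,\Om\cap\bdry A_k)$ implies $G_{j_k}(r_k)\subset A_k$''.
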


\begin{proof}
 Consider the components $G_1(r_k), \ldots, G_{N(r_k)}(r_k)$ of
$B(x_0,r_k)\cap\Om$ which have $x_0$ in their boundary. Let
$H(r_k)=(\Om\cap B(x_0,r_k))\setminus\bigcup_{j=1}^{N(r_k)}G_j(r_k)$. As
$\Om$ is finitely connected at $x_0$, Proposition~\ref{prop1-fin} shows
that $x_0\notin \itoverline{\Hr{r_k}}$, so for each $l=1,2,\ldots$,
at least one of
$G_1(r_k), \ldots, G_{N(r_k)}(r_k)$ has a nonempty intersection with
$A_l$. 
Since there are only finitely many components
$G_j(r_k)$, $j=1,2,\ldots,N(r_k)$,
at least one of them intersects infinitely many (and thus all) $A_l$.
Call this component $G_{j_k}(r_k)$.
As it is connected and $r_k<\dist(x_0,\Om\cap\bdry A_k)$,
we must have $G_{j_k}(r_k) \subset A_k$.
\end{proof}

\begin{lem}  \label{lem-ex-chain-10}
Assume that $\Om$ is finitely connected at $x_0\in\bdry\Om$.
Let $A_k\subsetneq\Om$ and $r_k>0$ be as in the statement of Lemma~\ref{lem-ex-G-j}.

Then there exists a prime end $[F_k]$  such that  $I[F_k]=\{x_0\}$,
$F_k=G_{j_k}(r_k)$ for some
$1\le j_k\le N(r_k)$ and
$F_{k}\subset A_k$, $k=1,2,\ldots$.
\end{lem}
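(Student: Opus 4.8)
The plan is to build the chain $\{F_k\}_{k=1}^\infty$ on top of Lemma~\ref{lem-ex-G-j} by a compactness argument that forces the components chosen at the different scales $r_k$ to be nested, and then to verify directly that the resulting sequence is a chain with impression $\{x_0\}$, so that Proposition~\ref{prop-end-single} applies.

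First I would fix, for each $k$, the finite collection
\[
  \mathcal{S}_k=\{G:G\text{ is a component of }B(x_0,r_k)\cap\Om,\ x_0\in\itoverline{G}\text{ and }G\cap A_k\neq\emptyset\}.
\]
Each $\mathcal{S}_k$ is contained in $\{G_1(r_k),\dots,G_{N(r_k)}(r_k)\}$, hence finite since $N(r_k)<\infty$ by Proposition~\ref{prop1-fin}, and it is nonempty because the component produced by Lemma~\ref{lem-ex-G-j} belongs to it. Since $r_{k+1}<r_k$, every $G\in\mathcal{S}_{k+1}$ is a connected subset of $B(x_0,r_k)\cap\Om$, so it lies in a unique component $\pi_k(G)$ of $B(x_0,r_k)\cap\Om$; from $x_0\in\itoverline{G}\subseteq\itoverline{\pi_k(G)}$ and $A_{k+1}\subseteq A_k$ we get $\pi_k(G)\in\mathcal{S}_k$. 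Thus $(\mathcal{S}_k,\pi_k)$ is an inverse system of nonempty finite sets, so its inverse limit is nonempty (König's lemma); choosing an element yields a sequence $\{F_k\}_{k=1}^\infty$ with $F_k\in\mathcal{S}_k$ and $\pi_k(F_{k+1})=F_k$, and since $F_{k+1}\subseteq\pi_k(F_{k+1})=F_k$ this sequence is decreasing. Each $F_k$ is some component $G_{j_k}(r_k)$ with $x_0\in\itoverline{F_k}$; since $F_k$ is connected, meets $A_k$, lies in $B(x_0,r_k)$ and $r_k<\dist(x_0,\Om\cap\bdry A_k)$, it cannot meet $\Om\cap\bdry A_k$, so Remark~\ref{rem-connected-diam} forces $F_k\subseteq A_k$; in particular $F_k\subsetneq\Om$ is bounded and connected with $x_0\in\itoverline{F_k}\cap\bdry\Om$, i.e.\ $F_k$ is acceptable.

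It then remains to check the three conditions of Definition~\ref{deff-chain}. Condition \ref{it-subset} is the nesting just obtained. For \ref{pos-dist} I would use that $X$ is locally connected, so $B(x_0,r_k)\cap\Om$ is open and its component $F_k$ is relatively clopen there; hence $\Om\cap\bdry F_k\subseteq\bdry B(x_0,r_k)\subseteq\{y:d(x_0,y)=r_k\}$, and the triangle inequality gives $\dist(\Om\cap\bdry F_{k+1},\Om\cap\bdry F_k)\ge r_k-r_{k+1}>0$ (this uses that $\{r_k\}$ is strictly decreasing). For \ref{impr}, $\itoverline{F_k}\subseteq\itoverline{B(x_0,r_k)}$ with $r_k\to0$ gives $\bigcap_k\itoverline{F_k}\subseteq\{x_0\}$, while $x_0\in\itoverline{F_k}$ for all $k$ gives $\bigcap_k\itoverline{F_k}=\{x_0\}\subseteq\bdry\Om$. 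Thus $\{F_k\}$ is a chain with $I[F_k]=\{x_0\}$, so $[F_k]$ is a prime end by Proposition~\ref{prop-end-single}; together with $F_k=G_{j_k}(r_k)$ and $F_k\subseteq A_k$ this is precisely the assertion.

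The main obstacle is the nesting $F_{k+1}\subseteq F_k$. Lemma~\ref{lem-ex-G-j} selects a component at each scale $r_k$ independently, and a naive downward induction does not work: even though $F_k$ meets every $A_l$, it need not meet $A_l\cap B(x_0,r_{k+1})$, so one cannot by hand find the next component inside the current one. The inverse-limit argument is exactly what supplies the compatibility; the remaining verifications (acceptability, the distance between relative boundaries, the impression) are routine once local connectedness and the strict monotonicity of $\{r_k\}$ are used.
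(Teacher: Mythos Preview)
Your proof is correct and follows essentially the same approach as the paper: both extract a nested sequence from the finitely branching tree of components $G_j(r_k)$ via a K\"onig-lemma/compactness argument. The paper encodes this by putting a metric on descending paths in the tree and intersecting nested compact path sets $\PP_k$, whereas you invoke the inverse limit of the nonempty finite sets $\mathcal{S}_k$ directly; the underlying idea is the same, your formulation being slightly more streamlined.
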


\begin{proof}
Consider the rooted tree whose vertices are
$G_j(r_k)$, $j=1,2,\ldots,N(r_k)$, $k=1,2,\ldots$,
and  where two vertices are connected by an edge
if and only if they
are $G_j(r_k)$ and $G_i(r_{k+1})$ for some $i$, $j$ and $k$
with 
$G_i(r_{k+1}) \subset G_j(r_k)$.

Consider the collection $\PP$ of all
descending paths in the tree starting from the root
(including finite ones).
We introduce a metric $t$ on $\PP$ by letting
$t(p,q)=2^{-n}$, where $n$ is the level where the paths $p$ and $q$ branch
(or end),
i.e.\ $n$ is the largest integer such that $p$ and $q$
have a common vertex $G_j(r_n)$.
Since $\Om$ is finitely connected at $x_0$,
for each $l=1,2,\ldots$ there are only finitely many vertices
in the first $l$ levels of the tree.
It follows that $\PP$ is totally bounded in the metric $t$.

For each $k=1,2,\ldots$, we consider the subcollection $\PP_k$
consisting of all paths $p\in\PP$ for which there exists a component
$G_j(r_{k})\subset A_k$ such that $p$ passes through the vertex
$G_j(r_{k})$. 
Lemma~\ref{lem-ex-G-j} guarantees that each $\PP_k$  is nonempty.
Let $p\in\PP_{k+1}$ and let $G_i(r_{k+1})$ be a vertex $p$ passes through.
Let $G_j(r_{k})$ be the component of $\Om\cap B(x_0,r_k)$ containing
$G_i(r_{k+1})$.
Since $A_{k+1}\subset A_k$, we see that $G_j(r_{k})\cap A_k$ is nonempty
and as $G_j(r_{k})$ is connected and $r_k<\dist(x_0,\Om\cap\bdry A_k)$,
we conclude that $G_j(r_{k})\subset A_k$.
Hence $\PP_{k+1}\subset\PP_k$ for $k=1,2,\ldots$.

We now verify that each $\PP_k$ is complete.
Indeed, if $\{p_n\}_{n=1}^\infty\subset \PP_k$ is a Cauchy sequence
in the metric $t$, then for every $l=1,2,\ldots$, there exists
$n_l$ such that the paths $p_n$ and $p_m$ have the first $l$ vertices
in common, whenever $n,m\ge n_l$.
This makes it possible to construct a path $p\in \PP_k$ which for every
$l=1,2,\ldots$ has the first $l$ vertices in common with all $p_n$,
$n\ge n_l$, i.e.\ $p_n\to p$ in the metric $t$.

As $\PP$ is totally bounded, it follows that all $\PP_k$, $k=1,2,\ldots$,
are compact.
Hence $\{P_k\}_{k=1}^\infty$ is a decreasing sequence of nonempty compact sets,
and thus
there exists an infinite path $q \in \bigcap_{k=1}^\infty \PP_k$.
The vertices through which it passes
define the end $[F_k]$ such that $F_k=G_{j_k}(r_k)\subset A_k$, $k=1,2,\ldots$.
Since $\diam F_k\le 2r_k \to 0$ as $k \to \infty$,
this end is a prime end by
Proposition~\ref{prop-end-single} (and Lemma~\ref{lem-single-char}).
\end{proof}

\begin{prop}   \label{prop-end-divisible}
Assume that $\Om$ is finitely connected at $x_0\in\bdry\Om$.
If $[E_k]$ is an end with $x_0\in I[E_k]$,
then there is a  prime end $[F_k]$ dividing $[E_k]$ such that
$I[F_k]=\{x_0\}$.
If moreover
$[E_k]$ is a prime end, then
$[E_k]=[F_k]$ and $I[E_k]=\{x_0\}$.
\end{prop}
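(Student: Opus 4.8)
The plan is to feed the chain itself into Lemma~\ref{lem-ex-chain-10}. Let $\{E_k\}_{k=1}^\infty$ be a representative chain of $[E_k]$; by Remark~\ref{rmk-open} we may assume that each $E_k$ is open. Since $x_0\in I[E_k]=\bigcap_k\itoverline{E}_k\subset\bdry\Om$, we have $x_0\in\itoverline{E}_k$ for every $k$, the inclusions $E_{k+1}\subset E_k$ hold by Definition~\ref{deff-chain}\,\ref{it-subset}, and $\de_k:=\dist(\Om\cap\bdry E_{k+1},\Om\cap\bdry E_k)>0$ by Definition~\ref{deff-chain}\,\ref{pos-dist}. The idea is then to apply Lemma~\ref{lem-ex-chain-10} with $A_k=E_k$ and with a sequence $r_k\downarrow0$ chosen so that $0<r_k<\dist(x_0,\Om\cap\bdry E_k)$; its conclusion gives a prime end $[F_k]$ with $I[F_k]=\{x_0\}$, $F_k=G_{j_k}(r_k)$ for suitable $j_k$, and $F_k\subset E_k$ for all $k$.

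The only hypothesis of Lemma~\ref{lem-ex-chain-10} (via Lemma~\ref{lem-ex-G-j}) that is not immediate is that $\dist(x_0,\Om\cap\bdry E_k)>0$ for each $k$, and proving this is the main obstacle. First, $\itoverline{E}_{k+1}\cap(\Om\cap\bdry E_k)=\emptyset$: a point $p$ in this intersection would lie in $\Om$ and, since $E_k$ is open, satisfy $p\notin E_k$ and hence $p\notin E_{k+1}$, so $p\in\Om\cap\bdry E_{k+1}$ as well, contradicting $\de_k>0$. To upgrade this to a positive distance I would use that $\Om$ is finitely connected at $x_0$. Suppose, for contradiction, that $z_j\in\Om\cap\bdry E_k$ with $z_j\to x_0$. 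Fix $r<\tfrac12\de_k$; by Proposition~\ref{prop1-fin} there is $\rho\in(0,r)$ with $B(x_0,\rho)\cap\Om\subset\bigcup_{j=1}^{N(r)}G_j(r)$, so that both the points $z_j$ (for large $j$) and the points of $E_{k+1}$ near $x_0$ (which exist as $x_0\in\itoverline{E}_{k+1}$) fall into the finitely many components $G_1(r),\dots,G_{N(r)}(r)$ of $B(x_0,r)\cap\Om$ having $x_0$ in their closure. One then wants a single such component $G_{j_0}(r)$ meeting both $E_{k+1}$ and, through some $z_j\in\Om\setminus E_{k+1}$, its complement; applying Remark~\ref{rem-connected-diam} inside $G_{j_0}(r)$ would produce a point of $\Om\cap\bdry E_{k+1}$ within distance $\diam G_{j_0}(r)\le2r<\de_k$ of that $z_j$, contradicting the definition of $\de_k$. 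Making this selection rigorous — i.e.\ excluding the scenario in which the $z_j$ and the nearby points of $E_{k+1}$ persistently lie in different components — is the delicate point; finite connectedness at $x_0$ together with the local connectedness of $X$ (Remark~\ref{rmk-locconn}) are what should force a common component, along the lines of the proof of Lemma~\ref{lem-ex-G-j}.

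Granting $\dist(x_0,\Om\cap\bdry E_k)>0$, Lemma~\ref{lem-ex-chain-10} applied with $A_k=E_k$ yields a prime end $[F_k]$ with $I[F_k]=\{x_0\}$ and $F_k\subset E_k$ for every $k$. Since $F_k\subset E_k$ for all $k$, choosing $l=k$ shows that $\{F_k\}_{k=1}^\infty$ divides $\{E_k\}_{k=1}^\infty$, i.e.\ the prime end $[F_k]$ divides $[E_k]$; this is the first assertion. If moreover $[E_k]$ is itself a prime end, then by Definition~\ref{prime-end} it is not divisible by any end other than itself, so $[F_k]=[E_k]$ and hence $I[E_k]=I[F_k]=\{x_0\}$, which is the final assertion.
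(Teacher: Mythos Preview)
Your overall strategy---feed the chain into Lemma~\ref{lem-ex-chain-10} with $A_k=E_k$---is exactly the paper's, and your final paragraph is correct. The gap is precisely where you flag it: you never establish $\dist(x_0,\Om\cap\bdry E_k)>0$, and your attempt to force a single component $G_{j_0}(r)$ containing both some $z_j\in\Om\cap\bdry E_k$ and a point of $E_{k+1}$ is genuinely incomplete. Nothing you have written excludes the scenario in which the $z_j$'s persistently land in components of $B(x_0,r)\cap\Om$ disjoint from $E_{k+1}$, and finite connectedness alone does not obviously rule this out.

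The paper sidesteps this entirely with a one-line observation that does \emph{not} use finite connectedness: since $\dist(\Om\cap\bdry E_{k+1},\Om\cap\bdry E_k)>0$, the point $x_0$ cannot lie in the closure of both $\Om\cap\bdry E_k$ and $\Om\cap\bdry E_{k+1}$ (otherwise that distance would be zero). Hence at least one of $\dist(x_0,\Om\cap\bdry E_k)$ and $\dist(x_0,\Om\cap\bdry E_{k+1})$ is positive, and one may pass to a subsequence of $\{E_k\}_{k=1}^\infty$---which yields an equivalent chain---for which all these distances are positive. Lemma~\ref{lem-ex-chain-10} then applies directly. Finite connectedness at $x_0$ enters only inside that lemma, not in this reduction step; your detour through the components $G_j(r)$ to verify the hypothesis is unnecessary.
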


\begin{proof}
As $\dist(\Om\cap\bdry E_{k+1}, \Om\cap\bdry E_k) >0$, at least one
of $\dist(x_0,\Om\cap\bdry E_k)$ and
$\dist(x_0,\Om\cap\bdry E_{k+1})$ must be positive.
We can therefore 
choose a subsequence of
$\{E_k\}_{k=1}^\infty$ 
to obtain an equivalent chain, also denoted $\{E_k\}_{k=1}^\infty$,
where all those distances are positive.
Hence, we can inductively construct a sequence $\{r_k\}_{k=1}^\infty$
decreasing to zero, such that $0<r_k<\dist(x_0,\Om\cap\bdry A_k)$.
Lemma~\ref{lem-ex-chain-10}, applied with $A_k=E_k$,
provides a prime end $[F_k]$ with the
desired properties.
If moreover $[E_k]$ is a prime end, then we must have $[E_k]=[F_k]$ and
thus $I[E_k]=\{x_0\}$.
\end{proof}

\begin{thm}  \label{thm-fin-con-homeo}
Assume that $\Om$ is finitely connected at the boundary.
Then all prime ends
have singleton impressions, and every $x\in\bdry\Om$ is the impression
of a prime end and is accessible.

Furthermore, if $1\le p\in Q(x)\ne (0,1]$ for each $x\in\bdry\Om$,
then $\partial_P\Om$ is also the $\Mod_p$-prime end boundary.
\end{thm}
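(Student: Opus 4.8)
The plan is to read off all four claims from the machinery of Sections~\ref{sect-Modp-ends}--\ref{sect-access} together with Proposition~\ref{prop-end-divisible} and Lemmas~\ref{lem-ex-G-j}--\ref{lem-ex-chain-10}, which is where all the real work has already been done. First I would fix $x_0\in\bdry\Om$ and exhibit a prime end with impression $\{x_0\}$. Put $A_k=\Om\cap B(x_0,1/k)$, taking $k$ large enough that $A_k\subsetneq\Om$ (possible since $\Om\ne\emptyset$ and $x_0\notin\Om$, so some point of $\Om$ lies outside a small ball about $x_0$). Because $\Om$ is open and $x_0\in\bdry\Om$ one checks immediately that $x_0\in\itoverline{A}_k$, that $A_{k+1}\subset A_k$, and that $\Om\cap\bdry A_k\subset\bdry B(x_0,1/k)$, whence $\dist(x_0,\Om\cap\bdry A_k)\ge 1/k>0$. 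With $r_k=1/(k+1)$ the data $(A_k,r_k)$ (after harmless reindexing) satisfy the hypotheses of Lemma~\ref{lem-ex-G-j}, and Lemma~\ref{lem-ex-chain-10}, whose standing hypothesis ``$\Om$ finitely connected at $x_0$'' holds by assumption, then produces a prime end $[F_k]$ with $I[F_k]=\{x_0\}$. By Proposition~\ref{prop1A-A} (equivalently Corollary~\ref{cor-access-equiv-end}) $x_0$ is accessible through $[F_k]$, hence accessible.

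Next I would show conversely that no prime end has a larger impression. Let $[E_k]$ be an arbitrary prime end; by Remark~\ref{rmk-interior} and Definition~\ref{deff-chain}\,\ref{impr} its impression $I[E_k]$ is a nonempty subset of $\bdry\Om$, so pick $x_0\in I[E_k]$. Since $\Om$ is finitely connected at $x_0$, Proposition~\ref{prop-end-divisible} supplies a prime end dividing $[E_k]$ with impression $\{x_0\}$; as $[E_k]$ is itself a prime end this dividing end must equal $[E_k]$, so $I[E_k]=\{x_0\}$. Thus every prime end is a singleton end, completing the first three assertions.

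For the $\Modp$ statement, assume $1\le p\in Q(x)\ne(0,1]$ for every $x\in\bdry\Om$. By the part just proved every prime end $[E_k]$ satisfies $I[E_k]=\{x\}$ for some $x\in\bdry\Om$, and then Proposition~\ref{prop-single-Modp} gives that $[E_k]$ is a $\Modp$-prime end. Conversely a $\Modp$-prime end is in particular a $\Modp$-end, which is a prime end by Lemma~\ref{lem-enum-prime-end}\,\ref{it-prime-end-eq}. Hence $\partial_P\Om$ and the $\Modp$-prime end boundary consist of exactly the same ends; since both carry the subspace topology inherited from $\Om\cup\bdy_E\Om$, they coincide as topological spaces, which is the meaning of ``$\partial_P\Om$ is also the $\Modp$-prime end boundary''.

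I do not expect a genuine obstacle: the substance lies in the tree-plus-compactness construction of Lemmas~\ref{lem-ex-G-j}--\ref{lem-ex-chain-10} and in Proposition~\ref{prop-end-divisible}, all already available. The only points needing a line of care are verifying that the truncated balls $A_k=\Om\cap B(x_0,1/k)$ really meet the hypotheses of Lemma~\ref{lem-ex-chain-10} for all large $k$ (which uses nothing beyond $x_0\in\bdry\Om$ and the openness of $\Om$, together with connectedness of $\Om$ to see that $\Om\cap\bdry A_k\ne\emptyset$), and stating explicitly that the final identification is one of topological spaces, not merely of underlying sets of ends.
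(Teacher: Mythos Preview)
Your proposal is correct and follows essentially the same route as the paper: Proposition~\ref{prop-end-divisible} for singleton impressions, Lemma~\ref{lem-ex-chain-10} for existence of a prime end at each boundary point, Proposition~\ref{prop1A-A} for accessibility, and Proposition~\ref{prop-single-Modp} together with Lemma~\ref{lem-enum-prime-end}\,\ref{it-prime-end-eq} for the $\Modp$ statement. The only difference is cosmetic: to feed Lemma~\ref{lem-ex-chain-10} the paper takes the constant sequence $A_k=\Om\setminus\{y\}$ for a fixed $y\in\Om$ (so that $\Om\cap\bdry A_k=\{y\}$ and the distance hypothesis is immediate), whereas you use the shrinking balls $A_k=\Om\cap B(x_0,1/k)$; both choices verify the hypotheses equally well.
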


\begin{proof}
That all prime ends have singleton impressions follows from
Proposition~\ref{prop-end-divisible}.
If $x\in\bdry\Om$, then applying Lemma~\ref{lem-ex-chain-10} to
$A_k=\Om\setm\{y\}$ for some $y\in\Om$ yields a prime end $[F_k]$ with $\{x\}$ as
its impression.
Proposition~\ref{prop1A-A} shows that $x$ is accessible.
Finally, Proposition~\ref{prop-single-Modp} shows that
all prime ends are also $\Mod_p$-prime ends if 
$p$ is as in the statement of the theorem.
\end{proof}

The next few results relate prime ends to the Mazurkiewicz boundary.
The conclusions about metrizability and compactness will be important
for future studies on Dirichlet problems with respect to
prime end boundaries. 

The following result follows directly from 
Theorems~\ref{thm-homeo-primeends-2} and~\ref{thm-fin-con-homeo}.

\begin{cor}\label{metric-lemma-5-12}
Assume that $\Om$ is finitely connected at the boundary. 
Then there is a homeomorphism $\Phi: \clOmP \to \clOmm$ such that $\Phi|_{\Om}$ is the 
identity map. Moreover,  the prime end
closure $\clOmP$ is metrizable with the metric
\(
  m_P(x,y):=d_M(\Phi(x), \Phi(y)).
\)
The topology on $\clOmP$ given by this metric
is equivalent to the topology given by
the sequential convergence discussed in Section~\ref{sect-top}.
\end{cor}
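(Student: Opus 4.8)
The plan is to obtain everything by assembling the two theorems just proved, together with the elementary fact that a space homeomorphic to a metric space is metrizable via the transported metric.

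First I would invoke Theorem~\ref{thm-fin-con-homeo}: since $\Om$ is finitely connected at the boundary, every prime end of $\Om$ has a singleton impression, hence is a singleton end. Therefore $\bdySP\Om=\bdy_P\Om$, and consequently $\clOmP=\Om\cup\bdySP\Om$, not only as sets but also as topological spaces, because by construction the topology on $\clOmP$ is the one induced from the topology on $\Om\cup\bdy_E\Om$ of Section~\ref{sect-top} (restricted to the prime ends). Next I would apply Theorem~\ref{thm-homeo-primeends-2} to the space $\Om\cup\bdySP\Om=\clOmP$; it hands us a homeomorphism $\Phi\colon\clOmP\to\clOmm$ which is the identity on $\Om$ and which carries $\bdySP\Om=\bdy_P\Om$ onto $\bdyM\Om$. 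This already gives the first assertion of the corollary.

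For the metrizability claim, I would note that $(\clOmm,d_M)$ is a metric space and that transporting $d_M$ through the bijection $\Phi$ yields $m_P(x,y):=d_M(\Phi(x),\Phi(y))$, which is automatically a metric on $\clOmP$: nonnegativity, symmetry, the triangle inequality, and the property $m_P(x,y)=0\iff x=y$ are all inherited from $d_M$ using that $\Phi$ is a bijection. Since the $m_P$-balls are exactly the $\Phi$-preimages of the $d_M$-balls, the metric topology of $m_P$ is precisely the topology that makes $\Phi\colon\clOmP\to(\clOmm,d_M)$ a homeomorphism; but $\Phi$ is already a homeomorphism for the Section~\ref{sect-top} topology on $\clOmP$, so that topology and the $m_P$-metric topology coincide, which is the asserted equivalence. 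Finally, since $\Phi|_\Om$ is the identity, $m_P$ restricted to $\Om$ equals $d_M$, which by Remark~\ref{preserveLength} induces the original topology on $\Om$.

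I do not anticipate a genuine obstacle; the proof is essentially a bookkeeping argument. The only points that need a word of care are the identification $\clOmP=\Om\cup\bdySP\Om$ as topological spaces (which rests on Theorem~\ref{thm-fin-con-homeo}) and the remark that being homeomorphic to a metric space makes $\clOmP$ metrizable with the pulled-back metric and the same topology — both routine once Theorems~\ref{thm-fin-con-homeo} and~\ref{thm-homeo-primeends-2} are available.
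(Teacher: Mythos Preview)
Your proposal is correct and follows essentially the same approach as the paper, which simply states that the result follows directly from Theorems~\ref{thm-homeo-primeends-2} and~\ref{thm-fin-con-homeo}. You spell out the bookkeeping (the identification $\clOmP=\Om\cup\bdySP\Om$ and the transport of the metric via $\Phi$) in more detail than the paper does, but the underlying argument is identical.
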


\begin{thm} \label{thm-clOmm-cpt-new}
The following are equivalent\/\textup{:}
\begin{enumerate}
\item \label{i3-fin}
$\Om$ is finitely connected at the boundary\/\textup{;}
\item \label{i3-P}
$\clOmP$ is compact and all prime ends have singleton impressions\/\textup{;}
\item \label{i3-SP}
$\Om \cup \bdySP \Om$ is compact\/\textup{;}
\item \label{i3-m}
$\clOmm$ is compact.
\end{enumerate}
\end{thm}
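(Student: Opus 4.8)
The plan is to run the cycle \ref{i3-fin} \imp \ref{i3-P} \imp \ref{i3-SP} \imp \ref{i3-m} \imp \ref{i3-fin}, but three of these links are almost formal. By Theorem~\ref{thm-homeo-primeends-2} the space $\Om\cup\bdySP\Om$ is homeomorphic to $\clOmm$, so \ref{i3-SP} $\iff$ \ref{i3-m}. If all prime ends are singleton ends, then $\bdy_P\Om=\bdySP\Om$, so $\clOmP$ and $\Om\cup\bdySP\Om$ are literally the same topological space; this gives \ref{i3-P} \imp \ref{i3-SP}, and, combined with Theorem~\ref{thm-fin-con-homeo} (which under \ref{i3-fin} guarantees that all prime ends are singleton), it reduces \ref{i3-fin} \imp \ref{i3-P} to the single implication \ref{i3-fin} \imp \ref{i3-m}. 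Hence the whole theorem comes down to \ref{i3-fin} $\iff$ \ref{i3-m}, and since $\clOmm$ is a complete metric space it suffices to decide its \emph{sequential} compactness (sequential compactness being equivalent to compactness for metric spaces).

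For \ref{i3-fin} \imp \ref{i3-m} I would take a sequence in $\clOmm$ and, using density of $\Om$, replace it by a sequence $\{z_n\}\subset\Om$ differing from it by an error tending to $0$ in $d_M$; then, since $X$ is proper and $\Om$ is bounded, pass to a subsequence with $z_n\to x$ in the metric $d$ for some $x\in\clOm$. If $x\in\Om$, then $z_n\to x$ in $d_M$ as well, by Remark~\ref{preserveLength}. If $x\in\bdy\Om$, fix $r_k\downarrow0$; by Proposition~\ref{prop1-fin} we have $N(r_k)<\infty$ and $x\notin\overline{H(r_k)}$, so for $n$ large $z_n$ lies in one of the finitely many components $G_1(r_k),\dots,G_{N(r_k)}(r_k)$ of $B(x,r_k)\cap\Om$. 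A pigeonhole-plus-diagonal extraction then yields a subsequence which, for each fixed $k$, eventually lies inside a single such component $G_{j_k}(r_k)$, and the $G_{j_k}(r_k)$ are automatically nested, because any connected subset of $B(x,r_{k+1})\cap\Om$ lies in a single component of $B(x,r_k)\cap\Om$. Since $G_{j_k}(r_k)$ is connected and contained in $B(x,r_k)$, any two of its points are joined inside it by a set of $d$-diameter $\le2r_k$, so $d_M(a,b)\le2r_k$ for all $a,b\in G_{j_k}(r_k)$; hence the extracted subsequence is $d_M$-Cauchy and converges in $\clOmm$. Thus $\clOmm$ is sequentially compact, hence compact.

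For \ref{i3-m} \imp \ref{i3-fin} I would argue by contraposition. If $\Om$ fails to be finitely connected at some $x_0\in\bdy\Om$, then Proposition~\ref{prop1-fin} gives $r>0$ with $N(r)=\infty$ or $x_0\in\overline{H(r)}$. In the first case choose $y_j\in G_j(r)$ with $d(y_j,x_0)<1/j$; in the second choose $w_n\in H(r)$ with $w_n\to x_0$ and pass to a subsequence lying in pairwise distinct components of $B(x_0,r)\cap\Om$ --- possible because no component comprising $H(r)$ has $x_0$ in its closure, so each contains only finitely many of the $w_n$. Either way we obtain points $y_1,y_2,\dots\in\Om$ with $d(y_j,x_0)\to0$ lying in pairwise distinct components of $B(x_0,r)\cap\Om$. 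Any connected set $E\subset\Om$ containing two of them, $y_i$ and $y_j$ with $i\ne j$, cannot be contained in $B(x_0,r)$ (otherwise it would lie in a single component), so $E$ meets $X\setminus B(x_0,r)$ and therefore $\diam E\ge r-d(y_i,x_0)\ge r/2$ once $i,j$ are large. Hence $d_M(y_i,y_j)\ge r/2$ for all large $i\ne j$, so $\{y_j\}$ has no $d_M$-Cauchy subsequence and $\clOmm$ is not compact.

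The main obstacle is \ref{i3-fin} \imp \ref{i3-m}. The delicate point is not any single estimate but the bookkeeping in the extraction: one must choose the subsequence so that the selected components $G_{j_k}(r_k)$ form a \emph{nested} sequence (not merely a sequence of components of shrinking balls), which is what allows the uniform bound $\diam_{d_M}G_{j_k}(r_k)\le2r_k$ to be fed into the Cauchy criterion. Finite connectedness enters precisely here, via Proposition~\ref{prop1-fin}, both to keep each relevant family of components finite and to force $z_n$ out of the ``bad'' part $H(r_k)$ of the ball.
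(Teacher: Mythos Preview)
Your logical scaffolding matches the paper's exactly: \ref{i3-SP} $\Leftrightarrow$ \ref{i3-m} via Theorem~\ref{thm-homeo-primeends-2}, \ref{i3-P} $\Rightarrow$ \ref{i3-SP} trivially once all prime ends are singleton, and \ref{i3-fin} $\Rightarrow$ \ref{i3-P} via Theorem~\ref{thm-fin-con-homeo}. The one substantive difference is that the paper does not prove \ref{i3-fin} $\Leftrightarrow$ \ref{i3-m} at all; it simply cites the companion paper Bj\"orn--Bj\"orn--Shanmugalingam~\cite{BBSmbdy} for that equivalence. You instead supply a direct, self-contained argument.

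Your direct argument is correct. In \ref{i3-fin} $\Rightarrow$ \ref{i3-m}, the crucial step is indeed the nested selection of components, and your justification works: once the diagonal subsequence eventually lies in both $G_{j_{k+1}}(r_{k+1})$ and $G_{j_k}(r_k)$, some term lies in both, and since $G_{j_{k+1}}(r_{k+1})$ is connected and contained in $B(x,r_k)\cap\Om$, it must sit inside $G_{j_k}(r_k)$. The bound $d_M(a,b)\le\diam G_{j_k}(r_k)\le 2r_k$ then gives the Cauchy condition. In \ref{i3-m} $\Rightarrow$ \ref{i3-fin}, your contrapositive is also sound; in the $H(r)$ case the observation that each component of $H(r)$ misses $x_0$ in its closure, hence contains only finitely many terms of a sequence converging to $x_0$, is exactly what is needed to extract terms in pairwise distinct components.

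So your proof is genuinely more self-contained than the paper's, at the cost of writing out an argument the authors chose to place elsewhere. The trade-off is clear: the paper keeps this theorem's proof to a few lines of cross-referencing, whereas you make the result stand on its own.
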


\begin{proof}
\ref{i3-fin} \eqv \ref{i3-m}
This is shown in Bj\"orn--Bj\"orn--Shanmugalingam~\cite{BBSmbdy}.

\ref{i3-SP} \eqv \ref{i3-m}
This follows directly from
Theorem~\ref{thm-homeo-primeends-2}.

\ref{i3-fin} \imp \ref{i3-P}
By  Theorem~\ref{thm-fin-con-homeo},
 all prime ends have singleton impressions.
Hence $\clOmP=\Om \cup \bdySP \Om$, which is compact
by the already shown implication \ref{i3-fin} \imp \ref{i3-SP}.

\ref{i3-P} \imp \ref{i3-SP}
Since all prime ends have singleton impressions,
we have that
$\Om \cup \bdySP \Om=\clOmP$, which is compact
by assumption.
\end{proof}

The fact that all prime ends have singleton impressions is on its own
not sufficient for $\Om$ to be finitely connected at the boundary, see e.g.\
the topologist's comb in Example~\ref{ex-maz-top-comb}
whose prime end closure $\clOmP$ is not compact.
On the other hand, the double  comb
below has a compact prime end closure 
but is not finitely connected at the boundary and has a nonsingleton prime
end. 
(Note that the double equilateral comb in Example~\ref{ex-double-equilat-comb} does not
have a compact prime end closure.)

\begin{figure}[t]
\centerline{
\begin{tikzpicture}[line cap=round,line join=round,>=triangle 45, scale =0.6]
\clip(-0.53,-1.4) rectangle (13,8);
\draw  [line width=0.6pt] (2,8)-- (2,0);
\draw  [line width=1.4pt] (2,0)-- (10,0);
\draw  [line width=0.6pt] (10,0)-- (10,8);
\draw  [line width=0.6pt] (10,8)-- (2,8);
\draw  [line width=0.6pt] (2,4)-- (6,4);
\draw  [line width=0.6pt] (2,2)-- (8,2);
\draw  [line width=0.6pt] (2,0.98)-- (9.02,0.98);
\draw  [line width=0.6pt] (2,0.46)-- (9.48,0.46);
\draw  [line width=0.6pt] (2,0.23)-- (9.75,0.23);
\draw  [line width=0.6pt] (5.97,3)-- (10,3);
\draw  [line width=0.6pt] (4,1.47)-- (10,1.47);
\draw  [line width=0.6pt] (3.05,0.72)-- (10,0.72);
\draw  [line width=0.6pt] (2.5,0.36)-- (10,0.36);
\fill [color=black] (2,4) circle (0.5pt);
\draw[color=black] (1.60,4) node {${\frac12}$};
\fill [color=black] (2,2) circle (0.5pt);
\draw[color=black] (1.61,2) node {${\frac14}$};
\fill [color=black] (2,0.98) circle (0.5pt);
\draw[color=black] (1.57,1) node {${\frac18}$};
\fill [color=black] (2,0.46) circle (0.5pt);
\fill [color=black] (10,3) circle (0.5pt);
\draw[color=black] (10.43,3) node {${\frac38}$};
\fill [color=black] (10,1.47) circle (0.5pt);
\draw[color=black] (10.50,1.47) node {${\frac{3}{16}}$};
\fill [color=black] (10,0.72) circle (0.5pt);
\draw[color=black] (10.53,0.66) node {${\frac{3}{32}}$};
\draw[color=black] (3.0,-0.45) node {${\frac18}$};
\draw[color=black] (4.0,-0.45) node {${\frac14}$};
\draw[color=black] (6.0,-0.45) node {${\frac12}$};
\draw[color=black] (8.0,-0.45) node {${\frac34}$};
\draw[color=black] (9.0,-0.45) node {${\frac78}$};
\end{tikzpicture}
}
\caption{\label{fig4}
Example~\ref{ex-double-comb}.}
\end{figure}
\begin{example}
\label{ex-double-comb}
(Double comb, see Figure~\ref{fig4}.)
Let $\Om\subset \R^2$ be the domain obtained from the unit
square $(0,1)^2$ by removing the collection of segments
$\bigl(0, 1-2^{-n}\bigr]\times \{2^{-n}\}$
and $\bigl[2^{-n}, 1\bigr)\times \{3\cdot 2^{-n-2}\}$
for $n=1,2, \ldots$. 
Then $\Om$ has a prime end with impression $[0,1] \times \{0\}$.
Note also that this prime end is a $\Mod_p$-prime end for all $p\geq 1$. 
\end{example}

We end this section by
providing more details on prime ends
at certain boundary points.
Note  that if $\Om$ is finitely connected at a boundary point then $N(r)\ge1$
at that point and $r\mapsto N(r)$ is nonincreasing,
see Bj\"orn--Bj\"orn--Shanmugalingam~\cite{BBSmbdy}.

\begin{deff} \label{deff-Nconn}
  Assume that $\Om$ is finitely connected at $x_0\in\partial\Om$
and let
\[
N=\lim_{r\to0}N(r).
\]
Then $\Om$ is \emph{$N$-connected at $x_0$} if $N<\infty$,
and \emph{locally connected at $x_0$} if $N=1$.

If $\Om$ is locally connected at every boundary point,
then $\Om$ is  said to be \emph{locally connected at the boundary}.
\end{deff}

\begin{prop} \label{prop-finn-conn-N-ends}
  Assume that $\Om$ is finitely connected at $x_0\in\partial\Om$.
Then there are exactly $N$ distinct prime ends with impression $\{x_0\}$,
where $N$ is as in Definition~\ref{deff-Nconn}.
Furthermore, there is no other prime end with
$x_0$ in its impression.
\end{prop}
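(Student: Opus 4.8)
The plan is to reduce everything to the combinatorics of the finitely branching tree of components of $B(x_0,r)\cap\Om$ touching $x_0$, in the spirit of the proof of Lemma~\ref{lem-ex-chain-10}.

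First I would dispose of the last assertion: by Proposition~\ref{prop-end-divisible}, any prime end whose impression contains $x_0$ has impression exactly $\{x_0\}$, so it remains to count the prime ends --- equivalently, by Proposition~\ref{prop-end-single}, the ends --- with impression $\{x_0\}$. Fix a sequence $r_k\downarrow0$. Since $r\mapsto N(r)$ is nonincreasing and integer-valued with $N(r)\to N$ as $r\to0$, when $N<\infty$ we may also assume $N(r_k)=N$ for all $k$. Let $T$ be the rooted tree whose level-$k$ vertices are the components $G_1(r_k),\dots,G_{N(r_k)}(r_k)$, with $G_i(r_{k+1})$ joined to the unique component $G_j(r_k)\supset G_i(r_{k+1})$ (unique because $G_i(r_{k+1})$ is connected and lies in $B(x_0,r_k)$). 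Finite connectedness enters twice through Proposition~\ref{prop1-fin}: $N(r_k)<\infty$ makes $T$ finitely branching, while $x_0\notin\overline{H(r_k)}$ forces each level-$k$ vertex to contain at least one level-$(k+1)$ vertex (a sequence in that component tending to $x_0$ would otherwise eventually lie in $H(r_{k+1})$), i.e.\ the parent map from level $k+1$ to level $k$ is surjective. When $N<\infty$ consecutive levels both have size $N$, so these surjections are bijections, each vertex has exactly one child, and $T$ is exactly $N$ pairwise disjoint rays; when $N=\infty$, $T$ has countably many infinite branches, at least $N(r_k)\to\infty$ of them since distinct level-$k$ vertices lie on disjoint sets of branches.

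Next, each infinite branch $(F_k)_k=(G_{j_k}(r_k))_k$ of $T$ gives a chain: the $F_k$ are acceptable (open --- using local connectedness, cf.\ Remark~\ref{rmk-locconn} --- connected, bounded, with $x_0\in\overline{F_k}$), decreasing, and $\Om\cap\bdry F_k\subset\bdry B(x_0,r_k)$ yields $\dist(\Om\cap\bdry F_{k+1},\Om\cap\bdry F_k)\ge r_k-r_{k+1}>0$; since $\diam F_k\le2r_k\to0$, the impression is $\{x_0\}$ and $[F_k]$ is a prime end by Proposition~\ref{prop-end-single}. Two distinct branches $(F_k)_k$ and $(F'_k)_k$ split at some level $k_0$ into the disjoint components $G_{j_{k_0}}(r_{k_0})$ and $G_{j'_{k_0}}(r_{k_0})$; if $\{F'_k\}$ divided $\{F_k\}$, then, as the $F'_k$ decrease, we could choose $l\ge k_0$ with $\emptyset\ne F'_l\subset F_{k_0}$ while also $F'_l\subset G_{j'_{k_0}}(r_{k_0})$ --- impossible. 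Hence $[F_k]\ne[F'_k]$, so $T$ contributes exactly $N$ distinct prime ends with impression $\{x_0\}$.

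Finally I would show every end $[E_k]$ with $I[E_k]=\{x_0\}$ is one of these. By Lemma~\ref{lem-single-char}, $\diam E_k\to0$; combined with $x_0\in\overline{E_k}$ this gives $E_k\subset B(x_0,r_m)\cap\Om$ whenever $\diam E_k<r_m$, and such an $E_k$, being connected with $x_0$ in its closure, lies in a single $G_{j_m}(r_m)$. The nestedness of the $E_k$ makes $j_m$ well defined and shows $(G_{j_m}(r_m))_m$ is an infinite branch of $T$, hence a prime end $[F_m]$ with $E_k\subset F_m$ for all large $k$; thus $\{E_k\}$ divides $\{F_m\}$, and since $[F_m]$ is prime, $[E_k]=[F_m]$. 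This gives the bijection between branches of $T$ and ends with impression $\{x_0\}$, completing the count. The main obstacle is the tree step --- extracting exactly $N$ infinite branches from finite connectedness, i.e.\ the ``exactly one child'' count when $N<\infty$ and the K\"onig-type argument when $N=\infty$, and correctly separating the two roles of Proposition~\ref{prop1-fin} (bounding the branching versus preventing branches from terminating); the remaining bookkeeping with chains and division is routine, of the type already carried out in Lemmas~\ref{lem-identify-prime-end} and~\ref{lem-ex-chain-10}.
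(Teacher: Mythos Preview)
Your proposal is correct and follows essentially the same approach as the paper: both arguments organize the components $G_j(r_k)$ into a finitely branching tree, observe that when $N<\infty$ the surjection between consecutive levels of equal size $N$ is a bijection (so the tree is exactly $N$ rays), and identify prime ends at $x_0$ with infinite branches. The only notable difference is in the uniqueness step: the paper invokes Lemma~\ref{lem-ex-chain-10} with $A_k=E_k$ to produce a branch prime end dividing a given $[E_k]$, whereas you argue more directly from $\diam E_k\to0$ (via Lemma~\ref{lem-single-char}) that each $E_k$ eventually sits inside a single $G_{j_m}(r_m)$, which is slightly cleaner since the compactness machinery of Lemma~\ref{lem-ex-chain-10} is unnecessary once the impression is already known to be a singleton.
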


\begin{proof}
Assume first that $N$ is finite.
Then there exists $r_0$ such that $N(r)=N$ for all $0<r\le r_0$.
For each $j=1,\ldots, N$ and $0<r<r_0$, consider the 
components $G_j(r)$ of $B(x_0,r)\cap\Om$
which  have $x_0$ in their boundaries.
We  label them in such a way that $G_j(r)\subset G_j(r_0)$.
It can be directly checked that for each $j=1,\ldots, N$,
the choice of $E_k^j=G_j(r_0/k)$, $k=1,\ldots$, gives us an end $[E_k^j]$
with impression $\{x_0\}$.
Clearly, these ends are distinct since they belong to different
components of $B(x_0,r_0)\cap\Om$.
By Proposition~\ref{prop-end-single}, they are prime ends.

To see that these are the only such prime ends,
let $[E_k]$ be a prime end with $x_0 \in I[E_k]$.
By Lemma~\ref{lem-ex-chain-10}, applied with $A_k=E_k$,
there are
positive numbers $r_k $ decreasing to $0$  and
a singleton prime end $[F_k]$ dividing
$[E_k]$ such that $F_k=G_{j_k}(r_k)$ for some
$1\le j_k\le N(r_k)=N$.
As $G_{j_k}(r_k)\subset G_{j_0}(r_0)$ we must have $j_k=j_0$, i.e.\
$[F_k]=[E_k^{j_0}]$.
Since $[E_k]$ is a prime end it follows that $[E_k]=[E_k^{j_0}]$,
showing that there are no more prime ends.

If $N$ is infinite, let $n$ be arbitrary and find $\rho_n$ such that
$N(\rho_n)\ge n$.
For each $j=1,2,\ldots,N(\rho_n)$ apply
Lemma~\ref{lem-ex-chain-10}
to the sets $A_k:=G_j(\rho_n)$, $k=1,2,\ldots$ to obtain
$N(\rho_n)\ge n$ distinct prime ends with $x_0$ as their impression.
Letting $n\to\infty$ shows that there are infinitely many  such distinct
prime ends. By Proposition~\ref{prop-end-divisible} there are no other prime ends
containing $x_0$ in their impressions.
\end{proof}

\begin{cor}  \label{cor-loc-conn}
If $\Om$ is locally connected at the boundary and $[E_k]$ is a prime end
in $\Omega$, then $I[E_k]=\{x\}$ for some $x\in\bdry\Om$ and
there exist radii $r_k^x>0$,  such that
\[
B(x, r_k^x)\cap \Om\subset E_k, \quad k=1,2,\ldots.
\]
Furthermore,
for each $x\in\bdry\Om$, the sets $F_k=G_1(1/k)$,
$k=1,2,\ldots$,
define the only prime end $[F_k]$
with $x$ in its impression.

Moreover, the mapping $\Upsilon: [E_k]\mapsto I[E_k]$
extended by identity in $\Om$ is a homeomorphism
between  the prime end closure $\clOmP$ and the topological
closure $\overline{\Om}$.
\end{cor}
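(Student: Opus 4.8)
The plan is to reduce everything to results already available for domains finitely connected at the boundary and to identify $\Upsilon$ with the composition $\Psi\circ\Phi$, where $\Phi\colon\clOmP\to\clOmm$ is the homeomorphism of Corollary~\ref{metric-lemma-5-12} and $\Psi\colon\clOmm\to\clOm$ is the continuous map of Lemma~\ref{lem-M-bdry-to-bdry}. The starting observation is that local connectedness at the boundary is exactly the case $N=1$ of finite connectedness at the boundary (Definition~\ref{deff-Nconn}); since $r\mapsto N(r)$ is nonincreasing and $1\le N(r)\le N=1$ at each $x\in\bdy\Om$, we in fact have $N(r)=1$ for every $r>0$, so that each component $G_1(1/k)$ is well defined.

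First I would record the structural facts. By Theorem~\ref{thm-fin-con-homeo} every prime end $[E_k]$ has a singleton impression $I[E_k]=\{x\}$ with $x\in\bdy\Om$, and every boundary point is such an impression. By Proposition~\ref{prop-finn-conn-N-ends} applied with $N=1$, for each $x\in\bdy\Om$ there is \emph{exactly one} prime end with $x$ in its impression; inspecting the construction in that proof (any admissible $r_0$ works, so take $r_0=1$) shows this prime end is $[G_1(1/k)]$, which is the asserted $[F_k]$. For the radii, given an arbitrary prime end $[E_k]$ with $I[E_k]=\{x\}$, the chains $\{E_k\}_{k=1}^\infty$ and $\{G_1(1/k)\}_{k=1}^\infty$ are equivalent, so for each $k$ there is $l_k$ with $G_1(1/l_k)\subset E_k$; by Proposition~\ref{prop1-fin} we have $x\notin\itoverline{\Hr{1/l_k}}$, hence we may choose $0<r_k^x<1/l_k$ with $B(x,r_k^x)\cap\Hr{1/l_k}=\emptyset$, and then $B(x,r_k^x)\cap\Om\subset G_1(1/l_k)\subset E_k$, as required.

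For the last assertion, $\Upsilon$ is by the above a well-defined bijection of $\clOmP$ onto $\clOm$ (it is the identity on $\Om$, and $[E_k]\mapsto I[E_k]$ is a bijection $\bdy_P\Om\to\bdy\Om$). To prove continuity I would check that $\Upsilon=\Psi\circ\Phi$: both sides are the identity on $\Om$, and if $I[E_k]=\{x\}$ and $x_k\in E_k$, then (as in the proof of Theorem~\ref{thm-homeo-primeends-2}) $\Phi([E_k])$ is the point of $\bdyM\Om$ represented by the $d_M$-Cauchy sequence $\{x_k\}_{k=1}^\infty$, and $\Psi$ sends this point to $\lim_{k\to\infty}x_k$, which equals $x$ because $x\in\itoverline{E}_k$ and $d(x_k,x)\le\diam E_k\to0$. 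Since $\Phi$ is a homeomorphism and $\Psi$ is continuous, $\Upsilon$ is continuous; and since $\Om$ is finitely connected at the boundary, $\clOmP$ is compact by Theorem~\ref{thm-clOmm-cpt-new}, while $\clOm$ is Hausdorff, so the continuous bijection $\Upsilon$ is automatically a homeomorphism.

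Most of the content is already packaged in the cited statements, so the proof is short; the only genuinely new points are the remark that $N(r)\equiv1$, the identification $\Upsilon=\Psi\circ\Phi$, and the routine extraction of the $r_k^x$ from $x\notin\itoverline{\Hr{1/l_k}}$. The step most worth getting right is the identification with $\Psi\circ\Phi$ together with the compact-to-Hausdorff argument, since this is what lets us avoid a direct, and more tedious, verification that $\Upsilon^{-1}$ is sequentially continuous.
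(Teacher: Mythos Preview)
Your argument is correct. The first two paragraphs agree with the paper's approach: you invoke Proposition~\ref{prop-finn-conn-N-ends} with $N=1$ for the bijection, and your explicit construction of the $r_k^x$ via Proposition~\ref{prop1-fin} and the equivalence of $\{E_k\}$ with $\{G_1(1/k)\}$ fills in a step the paper leaves tacit (it simply uses $B(x,r_k^x)\cap\Om\subset E_k^\infty$ in the continuity argument without further comment).

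Where you genuinely diverge is in the homeomorphism step. The paper verifies sequential continuity of $\Upsilon$ and of $\Upsilon^{-1}$ directly: for $\Upsilon$, if $[E_k^n]\to[E_k^\infty]$ then eventually $I[E_k^n]\subset\itoverline{E_l^\infty}$ and $\diam E_l^\infty\to0$; for $\Upsilon^{-1}$, if $I[E_k^n]\to x$ then eventually the singleton $I[E_k^n]$ lies in $B(x,r_l^x)$, and since $\diam E_k^n\to0$ one gets $E_k^n\subset B(x,r_l^x)\cap\Om\subset E_l^\infty$ for large $k$. Your route instead identifies $\Upsilon=\Psi\circ\Phi$ and appeals to the general fact that a continuous bijection from a compact space to a Hausdorff space is a homeomorphism, the compactness coming from Theorem~\ref{thm-clOmm-cpt-new}. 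This is a clean shortcut that exploits machinery already in place and avoids the hands-on check of $\Upsilon^{-1}$; the paper's direct argument, by contrast, is self-contained and makes visible exactly how the radii $r_k^x$ drive the inverse continuity.
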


\begin{proof}
The pairing between prime ends and boundary points
follows from Proposition~\ref{prop-finn-conn-N-ends}, 
which also shows that it is a bijection.

The continuity of $\Upsilon$ is straightforward since if
$[E^n_k]\to[E^\infty_k]$ as $n\to\infty$,
then for each $l$ there is $n_l$ such that
$I[E^n_k]\subset \itoverline{E^\infty_l}$
whenever $n\ge n_l$.
As $\diam E^\infty_l\to0$, this implies that $I[E^n_k]\to I[E^\infty_k]$.

To see that $\Upsilon^{-1}$ is continuous, assume that
$I[E^n_k]\to I[E^\infty_k]=\{x\}$ in the given metric.
We can assume that $B(x, r_k^x)\cap \Om\subset E^\infty_k$
for all $k$.
For each $l$ there exists $n_l$ such that $d(I[E^n_k],x)<r_l^x$
whenever $n\ge n_l$.
Since $\diam E^n_k\to0$ we get
$E^n_k\subset B(x,r_l^x)\cap\Om\subset E^\infty_l$
for sufficiently large $k$, i.e.\ $[E^n_k]\to [E^\infty_k]$.
\end{proof}

\section{(Almost) John and uniform domains}
\label{John}
\label{sect-John}

We saw in Theorem~\ref{thm-fin-con-homeo}
that under some conditions all
prime ends are $\Modp$-ends.
The aim of this section is to look at the converse,
i.e.\ when are $\Modp$-ends
automatically prime ends.
We will obtain this converse (for $p>Q-1$)
for uniform and John domains.
In order to also include outward cusps (which are not John domains) we
introduce \emph{almost John domains},
which to our best knowledge have not appeared earlier in the literature.

In this section $\de_\Om(x)$ stands for the distance of the point
$x\in \Om$ to $X\setminus\Om$ with respect to the given metric $d$.

\begin{deff}\label{def-John-uniform}
A domain $\Om\subset X$ is a \emph{John domain}
if there is a constant $C_\Om \ge 1$,
called a \emph{John constant}, and a point $x_0\in\Om$,
called a \emph{John center},
such that for every $x\in\Om$ there exists a rectifiable \emph{John curve}
$\gamma:[0,l_\ga]\to\Om$ parameterized by arc length, such that
$x=\ga(0)$, $x_0=\ga(l_\ga)$ and
\begin{equation}   \label{eq-def-John}
t \le C_\Om \de_\Om(\ga(t))
    \quad \text{for } 0 \le t \le l_\ga.
\end{equation}

A domain $\Om\subset X$ is a \emph{uniform domain} if there is a
constant $C_\Om \ge 1$, called a \emph{uniform constant},
 such that whenever $x,y\in\Om$ there is a
rectifiable curve
$\gamma:[0,l_\ga]\to\Om$, parameterized by arc length,
connecting $x$ to $y$ and
satisfying the following two conditions:
\[
      l_\ga \le C_\Om d(x,y),
\]
and
\[
      \min\{t,l_\ga - t \}\le C_\Om  \delta_\Om(\ga(t))
    \quad \text{for } 0 \le t \le l_\ga.
\]
\end{deff}

A slit disk or a bounded domain satisfying the interior cone
  condition are John domains, while for instance 
outward cusps,
such as
\begin{equation} \label{eq-outward-cusp}
     \Om= \{(x,y) \in \R^2 : 0 <y<x^3 <1\},
\end{equation}
fail condition (\ref{eq-def-John}). 
Among examples of uniform domains we mention quasidisks,
  bounded Lipschitz domains and domains with fractal boundary such as
  the von Koch snowflake. 
See
  Buckley--Stanoyevitch~\cite{bs01}, Heinonen~\cite{he},
Martio--Sarvas~\cite{MS}, N\"akki--V\"ais\"al\"a~\cite{nv91}
  and V\"ais\"al\"a~\cite{vaisala88} for more information on 
John and uniform  domains. 

Observe that uniform domains are necessarily John domains
and that they are locally connected at the boundary, 
see Proposition~\ref{prop-uniform} below.
Note however that there are plenty of John domains
which are locally connected at
the boundary, but not uniform,
e.g.\
inward
cusps in $\R^2$ such as
\[
    B((0,0),1) \setm \{(x,y): 0 \le y \le x^3 < 1\}.
\]  

\begin{prop} \label{prop-uniform}
If $\Om$ is a uniform domain, then it is locally connected at the boundary.
\end{prop}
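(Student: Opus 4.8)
The goal is to show that a uniform domain $\Om$ is finitely connected at each boundary point $x_0\in\bdry\Om$, and in fact locally connected there, i.e.\ that $N(r)=1$ for all sufficiently small $r>0$. By Proposition~\ref{prop1-fin} it suffices to show that for every $r>0$ we have $N(r/C)<\infty$ (indeed $=1$ for small $r$) and $x_0\notin\itoverline{H(r)}$, but the cleanest route is to prove directly that $B(x_0,r)\cap\Om$ has only one component containing $x_0$ in its closure, for $r$ small enough, and that the remaining components stay away from $x_0$. The uniform condition should let us join any two points of $B(x_0,r)\cap\Om$ near $x_0$ by a curve that does \emph{not} leave a slightly larger ball $B(x_0,Cr)\cap\Om$, forcing them into the same component.

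\textbf{Key steps.} First I would fix $x_0\in\bdry\Om$ and let $C_\Om$ be the uniform constant. Take $r>0$ and pick two points $x,y\in B(x_0,r)\cap\Om$ with $x_0\in\itoverline{G}$ and $x_0\in\itoverline{G'}$ for the components $G,G'$ of $B(x_0,r)\cap\Om$ containing them. The plan is to choose $x,y$ arbitrarily close to $x_0$: say $d(x,x_0)<\eps$ and $d(y,x_0)<\eps$ with $\eps$ to be specified. By the uniform condition there is a curve $\ga:[0,l_\ga]\to\Om$ from $x$ to $y$ with $l_\ga\le C_\Om d(x,y)\le 2C_\Om\eps$. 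Then every point $\ga(t)$ satisfies $d(\ga(t),x_0)\le d(\ga(t),x)+d(x,x_0)\le l_\ga+\eps\le (2C_\Om+1)\eps$. Choosing $\eps$ so that $(2C_\Om+1)\eps<r$ forces $\ga\subset B(x_0,r)\cap\Om$, so $x$ and $y$ lie in the same component; hence there is at most one component of $B(x_0,r)\cap\Om$ meeting the ball $B(x_0,\eps)$. This already shows $x_0$ cannot be in the closure of two distinct components, so $N(r)\le 1$; combined with $x_0\in\bdry\Om$ and the fact that $\Om$ is open and connected (so some component of $B(x_0,r)\cap\Om$ does accumulate at $x_0$) we get $N(r)=1$.

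\textbf{Finishing and the obstacle.} It remains to check $x_0\notin\itoverline{H(r)}$, i.e.\ that the union of the ``bad'' components stays a positive distance from $x_0$; but by the argument above, for $\rho=\eps$ as chosen, \emph{no} component of $B(x_0,r)\cap\Om$ other than the single good one $G_1(r)$ can intersect $B(x_0,\rho)$, so $H(r)\cap B(x_0,\rho)=\emptyset$ and thus $\dist(x_0,H(r))\ge\rho>0$. Together with $N(r)=1$ this gives, via Proposition~\ref{prop1-fin}, that $\Om$ is finitely connected at $x_0$, and by Definition~\ref{deff-Nconn} it is locally connected at $x_0$ since $N=\lim_{r\to0}N(r)=1$. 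As $x_0\in\bdry\Om$ was arbitrary, $\Om$ is locally connected at the boundary. The only genuinely delicate point is making sure the quantitative choice of $\eps$ in terms of $r$ and $C_\Om$ is done before invoking the uniform curve (so that the curve is automatically trapped in $B(x_0,r)$); the length bound $l_\ga\le C_\Om d(x,y)$ in the definition of uniform domain is exactly what makes this work, and notably one does \emph{not} even need the second (cigar) condition for this particular statement.
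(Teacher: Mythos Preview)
Your proposal is correct and uses essentially the same idea as the paper: the length bound $l_\ga\le C_\Om d(x,y)$ from the uniform condition forces the connecting curve between two points near $x_0$ to stay inside $B(x_0,r)$, so all points of $B(x_0,\eps)\cap\Om$ lie in a single component of $B(x_0,r)\cap\Om$. The paper packages the conclusion slightly differently---it simply observes that $G\cup B(x_0,r/4C_\Om)$ is then a neighborhood of $x_0$ meeting $\Om$ in the connected set $G$, thereby verifying Definition~\ref{def-finite-conn} directly---while you route through Proposition~\ref{prop1-fin}; but the core argument and the observation that the cigar condition is not needed are the same.
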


\begin{proof}
Let $x_0 \in \bdy \Om$, $r>0$  and $x,y \in B(x_0,r/4C_\Om) \cap \Om$,
where $C_\Om \ge 1$ is a uniform constant of $\Om$.
Let $G$ be the component of $B(x_0, r)\cap \Om$ containing $x$.
Then $x$ and $y$ can be connected by a curve $\ga:[0,l_\ga] \to \Om$ with length
\[
    l_\ga \le C_\Om d(x,y) \le C_\Om \frac{r}{2C_\Om} = \frac{r}{2}
\]
from which it follows that $\ga \subset B(x_0,r)$ and hence $y \in G$. Thus, $G\cup B(x_0, r/4C_\Om)$ is 
a neighborhood  of $x_0$ whose intersection with $\Om$ is connected.
\end{proof}

 In this section we will show that under some assumptions all
$\Modp$-ends in John domains are prime ends. Let us however
first focus on connections with the results in the previous section.

\begin{thm} \label{thm-John-Ncon}
Let $\Om$ be a John domain.
Then there is a constant $N$ depending only on the doubling
constant $C_\mu$, the John constant $C_\Om$
and the quasiconvexity constant $L$,
such that
$\Om$ is at most $N$-connected at every boundary point.
\end{thm}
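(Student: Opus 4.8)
The plan is to show that at any boundary point $x_0$ there cannot be too many components of $B(x_0,r)\cap\Om$ touching $x_0$, by a volume-counting argument: each such component must contain a ball of radius comparable to $r$ (thanks to the John condition and quasiconvexity), these balls are disjoint, they all sit inside a fixed ball around $x_0$, and the doubling property then bounds their number.

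First I would fix $x_0\in\bdy\Om$ and $r>0$, and let $G$ be any component of $B(x_0,r)\cap\Om$ with $x_0\in\itoverline G$. Pick a point $z\in G$ with, say, $\tfrac12 r \le d(x_0,z) < r$ (such a point exists once $r$ is small compared with $\diam\Om$, since $G$ is connected, touches $x_0$, and meets $\bdy B(x_0,r)$ — here one uses Remark~\ref{rem-connected-diam} or simply connectedness of $G$). Let $x_0^\Om$ be the John center of $\Om$ and $\gamma$ a John curve from $z$ to $x_0^\Om$. Walking along $\gamma$ a controlled distance from $z$, the John inequality \eqref{eq-def-John} gives a point $w$ on $\gamma$ with $\de_\Om(w)\ge c r$ and $d(z,w)\le C r$, for constants $c,C$ depending only on $C_\Om$; moreover, up to that point $\gamma$ stays in $\Om\cap B(x_0, C'r)$ for a fixed $C'$, and — crucially — since $\gamma$ is a curve in $\Om$ joining $z\in G$ to $w$ without leaving $B(x_0,C'r)$, and since for $r$ small enough $B(x_0,C'r)$ is still inside a ball on which the relevant component structure is stable, one checks that $w$ lies in the ``same'' component as $G$ at the larger scale $C'r$. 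The point is that $B(w, cr)\subset\Om$ and $B(w,cr)$ is joined to $G$ inside $B(x_0,C'r)\cap\Om$.

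The main obstacle, which I expect to be the delicate bookkeeping, is precisely this last point: relating a component of $B(x_0,r)\cap\Om$ to a ball that lives only inside the \emph{larger} ball $B(x_0,C'r)\cap\Om$. The clean way around it is to work at a single fixed scale: let $\{G_j\}_{j=1}^{N(r)}$ be the components of $B(x_0,r)\cap\Om$ touching $x_0$, and for each $j$ produce, as above but using quasiconvexity to keep the connecting curve short, a point $w_j\in G_j$ with $B(w_j,cr)\subset G_j$ (not merely $\subset\Om$) — this is possible because the John curve from $z_j$ can be truncated before it would exit $B(x_0,r)$, and a quasiconvex short-cut keeps us inside $G_j$. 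Since distinct $G_j$ are disjoint, the balls $B(w_j,cr)$ are pairwise disjoint and all contained in $B(x_0,2r)$.

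Finally I would count: by the lower mass bound \eqref{lower-mass-bound},
\[
   \mu(B(w_j,cr)) \ge \frac1C\Bigl(\frac{cr}{2r}\Bigr)^Q \mu(B(x_0,2r))
   = c'\,\mu(B(x_0,2r)),
\]
with $c'>0$ depending only on $C_\mu$, $C_\Om$, $L$; and by disjointness and $\bigcup_j B(w_j,cr)\subset B(x_0,2r)$,
\[
   N(r)\,c'\,\mu(B(x_0,2r)) \le \sum_{j} \mu(B(w_j,cr)) \le \mu(B(x_0,2r)),
\]
so $N(r)\le 1/c' =: N$, a bound independent of $r$ and of $x_0$. In particular $N(r)<\infty$ for all small $r$, so $\Om$ is finitely connected at every boundary point, and letting $r\to0$ (using that $r\mapsto N(r)$ is nonincreasing, as recalled before Definition~\ref{deff-Nconn}) shows $\Om$ is at most $N$-connected at $x_0$, with $N$ depending only on $C_\mu$, $C_\Om$ and $L$ as claimed.
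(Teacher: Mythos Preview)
Your overall strategy---produce, for each component, a ball of radius comparable to $r$ inside $\Om$ and then count by doubling---is exactly the paper's. The gap is in your ``clean way'' paragraph. You pick $z_j\in G_j$ with $\tfrac12 r\le d(x_0,z_j)<r$ and then assert that ``the John curve from $z_j$ can be truncated before it would exit $B(x_0,r)$'' to produce $w_j$ with $\de_\Om(w_j)\ge cr$ and $B(w_j,cr)\subset G_j$. But the John curve may exit $B(x_0,r)$ after arc length as small as $r-d(x_0,z_j)$, which can be arbitrarily small; at that moment the John inequality only gives $\de_\Om\ge (r-d(x_0,z_j))/C_\Om$, and no uniform $c$ emerges. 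Even if you did obtain $\de_\Om(w_j)\ge cr$, nothing prevents $w_j$ from sitting on $\bdy B(x_0,r)$, so $B(w_j,cr)\not\subset B(x_0,r)$ and hence $B(w_j,cr)\not\subset G_j$. The unexplained ``quasiconvex short-cut'' does not repair this.

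Your first paragraph, which you abandon, is actually the correct route and is what the paper does: work at two scales. The paper takes components $G_1,\ldots,G_k$ of $B(x_0,3r)\cap\Om$ that meet $B(x_0,r)$, picks $x_j\in G_j\cap B(x_0,r)$, follows the John curve until it first hits $\bdy B(x_0,2r)$ at a point $y_j$ (this forces arc length $\ge r$, hence $\de_\Om(y_j)\ge r/C_\Om$), and sets $B_j=B(y_j,r/LC_\Om)$. Disjointness of the $B_j$ is argued via quasiconvexity: if $B_i\cap B_j\ne\emptyset$, a quasiconvex curve of length $<r/C_\Om$ joins $y_i$ to $y_j$ inside $\Om$ (since $\de_\Om(y_j)>r/C_\Om$) and inside $B(x_0,3r)$, forcing $G_i=G_j$. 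A simple fix entirely in your spirit, avoiding the second scale, is to choose $z_j\in G_j$ with $d(x_0,z_j)<r/4$ and let $w_j$ be the first point on the John curve at distance $r/2$ from $x_0$; then the arc length to $w_j$ is at least $r/4$, so $\de_\Om(w_j)\ge r/(4C_\Om)$, and $B(w_j,r/(4C_\Om))\subset B(x_0,r)\cap\Om$ is connected to $z_j$ inside $B(x_0,r)\cap\Om$, hence lies in $G_j$.
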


Recall that quasiconvexity was discussed 
at the end of Section~\ref{sect-prelim}.

This result can also be deduced from Lemma~4.3 in 
Aikawa--Shan\-mu\-ga\-lin\-gam~\cite{aish} by an argument similar
to the one at the beginning of the proof below.
Our proof is more direct and has been inspired by the proof of
 Theorem~2.18 in N\"akki--V\"ais\"al\"a~\cite{nv91} 
for domains in $\R^n$. 
It follows from the proof below that 
$N$ can be chosen as the integer part of
$C_\mu^2(3LC_\Om)^{\log_2 C_\mu}$.

\begin{proof}
Let $x_0$ be a John center (with 
 John constant $C_\Om$) and $x\in \bdry \Om$.
Let $B=B(x, r)$ be a ball such that $x_0\notin 3B$.
It is enough to prove that the
ball $B$ intersects at most $N$ components of
$3B\cap \Om$.
The union of these components together with $B$ then makes
the open neighborhood $G$ of $x$  as in Definition~\ref{def-finite-conn}
(for the radius $3r$).
Let $G_1,\ldots, G_k$ be some components of $3B\cap\Om$
which intersect $B$, 
and let $x_j\in G_j \cap B$ for $j=1,\ldots, k$.
Since $\Om$ is a John domain, there exist John curves $\gamma_j$ joining
$x_j$ to  $x_0$.
As
$x_0\notin 3B$, we see that $\gamma_j\cap G_j \cap \bdry 2B
\not=\emptyset$. Choose $y_j=\ga(t_j)\in \gamma_j\cap G_j \cap \bdry 2B$.
Since
$x_j$ is contained in $B$ we have $d(x_j, y_j)>r$, and by the John condition,
$\delta_\Om(y_j)>r/C_\Om$.
Let $B_j=B(y_j,r/LC_\Om)$.
Since $C_\Om\ge 1$, it follows that $LB_j\subset 3B$. If $B_i\cap B_j$ is nonempty
for some $i\ne j$, then there are a point $z\in B_i\cap B_j$  and
two curves
$\beta_i$ and $\beta_j$ connecting $z$ to $y_i$ and $y_j$ respectively, with lengths at most 
\[
\max\{Ld(z,y_i), Ld(z,y_j)\}<\frac{r}{C_\Om}.
\]
From this it follows that $\beta_i$ and $\beta_j$ are both contained in
$LB_i\cup LB_j\subset 3B\cap\Om$. 
Because both $\beta_i$ and $\beta_j$ have $z\in B_i\cap B_j$ in common,
$B_i$ should be contained in the
 same component of $3B\cap\Om$ as $y_j$, which is not possible
since $y_i\in B_i\subset G_i$. 
Hence the  balls $B_j$, $j=1,\ldots, k$, are pairwise disjoint. 
Thus by (\ref{lower-mass-bound}),
\[
    \mu(3B) \ge \sum_{j=1}^k \mu(B_j) 
   \ge \frac{k}{C_\mu^2(3LC_\Om)^{\log_2 C_\mu}} \mu(3B).
\]
Hence $k \le C_\mu^2(3LC_\Om)^{\log_2 C_\mu}$.
\end{proof}

By Theorem~4.32 in Bj\"orn--Bj\"orn~\cite{BBbook} we have an explicit
estimate $L \le 192 C_\mu^3 \CPI$. Hence 
the control over $N$ can be given solely in
terms of $C_\mu$, $C_\Om$, and the constant $\CPI$ associated with the 
Poincar\'e inequality, but not on the dilation 
constant $\la$ in the Poincar\'e inequality.

The above theorem makes it possible to employ the results from the previous section.
However, these conclusions and other results in this section hold for somewhat more general domains as well.
We therefore introduce the following notion.
Recall first that the \emph{$s$-dimensional Hausdorff content}
$\mathcal{H}_\infty^s(E)$ of a set $E\subset X$ is the number
\[
   \mathcal{H}_\infty^s(E):=\inf\biggl\{\sum_{j=1}^\infty r_j^s:
       E\subset\bigcup_{j=1}^\infty B(x_j,r_j) \biggr\}.
\]

\begin{deff}  \label{def-almost-John}
A domain $\Om\subset X$ is an \emph{almost John domain} if
for each $r>0$
there exists a closed set $F\subset \overline{\Om}$ such that
$\mathcal{H}_\infty^1(F) < r$ and $\Om\setm F$ is a John domain.
\end{deff}

Observe that the John constant and John center of $\Om \setm F$
are allowed to depend on $r$.
Typical examples of almost John domains which are not
John domains are outward cusps
such as \eqref{eq-outward-cusp},
and the domain in Example~\ref{ex-shrinking-pins}.
In both cases we can take
$F=\itoverline{B((0,0),r) \cap \Om}$.

\begin{thm} \label{thm-almost-John-finconn}
If $\Om$ is an almost John domain,
then it is finitely connected at the boundary.
\end{thm}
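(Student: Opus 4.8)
The plan is to reduce to the John case via Theorem~\ref{thm-John-Ncon}. Fix $x_0\in\bdry\Om$ and $r>0$; I must produce an open set $G$ with $x_0\in G\subset B(x_0,r)$ such that $G\cap\Om$ has only finitely many components. First I would pick a small $\eps>0$ (to be specified at the end) and apply the almost John hypothesis: there is a closed set $F\subset\overline{\Om}$ with $\mathcal{H}_\infty^1(F)<\eps$ such that $\Om':=\Om\setm F$ is a John domain, with some John constant $C_{\Om'}$ and John center $x_0'\in\Om'$. Since $X$ is proper and $\Om$ is bounded, $\overline{\Om}$ is compact, hence so is $F$; covering $F$ by balls whose radii sum to less than $\eps$ and extracting a finite subcover, I fix $F\subset\bigcup_{j=1}^M B(z_j,r_j)$ with $\sum_{j=1}^M r_j<\eps$. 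By Theorem~\ref{thm-John-Ncon}, $\Om'$ is at most $N'$-connected at each of its boundary points, where $N'$ depends only on $C_\mu$, $C_{\Om'}$ and the quasiconvexity constant $L$; it does no harm that $N'$ may depend on $\eps$.

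Next I would rerun the proof of Theorem~\ref{thm-John-Ncon}, but inside $\Om$. Choose $\rho\in(0,r/3)$ so small that $x_0'\notin B(x_0,3\rho)$ and that $B(x_0,2\rho)$ meets only those balls $B(z_j,r_j)$ whose closures contain $x_0$. Write $B=B(x_0,\rho)$. Exactly as in Theorem~\ref{thm-John-Ncon}, it suffices to bound the number of components of $3B\cap\Om$ that meet $B$: if $G_1,\dots,G_k$ are those components, then $G:=B\cup\bigcup_{i=1}^k G_i$ is open, $x_0\in G\subset 3B\subset B(x_0,r)$, and since every component of $B\cap\Om$ lies in some $G_i$, we get $G\cap\Om=\bigcup_{i=1}^k G_i$, which has exactly $k$ components. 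To bound $k$, split these components into two classes. If $G_i$ meets $B\cap\Om'$, pick $x_i\in G_i\cap B\cap\Om'$ and a John curve in $\Om'$ from $x_i$ to $x_0'$; since $x_0'\notin 3B$, this curve meets $\bdry 2B$ at a point $y_i$ while still lying in $G_i$, the John condition gives $\de_{\Om'}(y_i)>\rho/C_{\Om'}$, and quasiconvexity gives $B(y_i,\rho/LC_{\Om'})\subset G_i\subset\Om$; these balls lie in $3B$ and, being in distinct components, are pairwise disjoint, so the lower mass bound \eqref{lower-mass-bound} bounds the number of such $G_i$ by a constant depending only on $C_\mu$, $C_{\Om'}$, $L$. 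The remaining ("bad") components satisfy $G_i\cap B\cap\Om'=\emptyset$, hence $G_i\cap B\subset\Om\cap F\subset F$.

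The treatment of the bad components is where the real work lies, and here I would exploit that $F$ has small $1$-content. A bad component $G_i$ is connected, so if $G_i\subset F$ then $\tfrac12\diam G_i\le\mathcal{H}_\infty^1(G_i)\le\mathcal{H}_\infty^1(F)<\eps$, forcing $\diam G_i<2\eps$; while if $G_i\not\subset F$ then $G_i$ joins a point of $B$ to a point of $\Om'$ lying outside $B$, and along a path in $G_i$ between them the portion at distance in $[\rho,2\rho]$ from $x_0$ stays in $G_i\cap B(x_0,2\rho)\subset F$ and has diameter comparable to $\rho$, so again $\mathcal{H}_\infty^1(F)\gtrsim\rho$ unless this does not occur. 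Choosing the scales in the right order --- $\eps$ small, and then $\rho$ bounded below in terms of $\eps$ and of $d(x_0,x_0')$ --- one arranges $\eps<c\rho$, which rules out the second case and, via the finite cover $F\cap B\subset\bigcup_{x_0\in\overline{B(z_j,r_j)}}B(z_j,r_j)$ together with local connectedness of $X$ (so that a family of disjoint such tiny components cannot accumulate at a point of $\Om$, only at a boundary point, which can itself be excluded by the content bound once their diameters are controlled), bounds the number of bad components as well. I expect this ordering-of-scales argument, and the exclusion of infinitely many tiny components hidden inside $F$ where the John geometry of $\Om'$ gives no information, to be the main obstacle; granting it, $k$ is finite, $G$ is as required, and since $x_0\in\bdry\Om$ was arbitrary, $\Om$ is finitely connected at the boundary.
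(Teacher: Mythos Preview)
Your direct component-counting approach is different from the paper's, but the step you ``grant'' is the crux of the proof and, as written, contains an error. You classify a component $G_i$ of $3B\cap\Om$ as bad when $G_i\cap B\cap\Om'=\emptyset$, and then assert that if $G_i\not\subset F$ then ``the portion at distance in $[\rho,2\rho]$ from $x_0$ stays in $G_i\cap B(x_0,2\rho)\subset F$''. But $G_i\cap 2B\subset F$ does not follow from $G_i\cap B\subset F$: a bad $G_i$ may well meet $\Om'$ inside $2B\setm B$, and then neither your John-curve argument (which needs a starting point in $B\cap\Om'$) nor your content argument applies. The repair is to classify instead by whether $G_i\cap 2B\cap\Om'$ is empty. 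If nonempty, pick $z_i$ there and run the John curve in $\Om'$ to a center $x_0'\in\Om'\setm 3B$ (any point of a bounded John domain serves as a center with a modified constant, which also dissolves your ordering worry: fix $\rho<r/3$ first and only then choose $\eps<\rho/2$); letting $y_i$ be the first exit from $\tfrac{5}{2}B$, one gets $y_i\in G_i$, $\de_{\Om'}(y_i)\ge\rho/(2C_{\Om'})$, and the disjoint balls $B(y_i,\rho/(2LC_{\Om'}))\subset G_i\cap 3B$ are bounded in number by doubling. If instead $G_i\cap 2B\subset F$, the component $U$ of $G_i\cap 2B$ meeting $B$ has $\overline U\cap\bdry(2B)\ne\emptyset$ (otherwise $U$ would be clopen in $\Om$), so the connected set $U\subset F$ has $\diam U\ge\rho$ and hence $\mathcal{H}_\infty^1(F)\ge\rho/2>\eps$, a contradiction --- there are no bad components at all.

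For comparison, the paper avoids all component-counting. Lemma~\ref{lem-finconn-H} shows that the set $A\subset\bdry\Om$ of points at which finite connectedness fails has $\mathcal{H}_\infty^1(A)>0$ whenever it is nonempty. One then chooses $F$ with $\mathcal{H}_\infty^1(F)<\mathcal{H}_\infty^1(A)$ and finds $x\in A\setm F$; since $F$ is closed and finite connectedness is local, $\Om\setm F$ also fails to be finitely connected at $x$, contradicting Theorem~\ref{thm-John-Ncon}. Your (corrected) argument is more hands-on but gives the same conclusion; the paper's route is shorter because it exploits the structural fact that failure of finite connectedness is never isolated.
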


The converse is false as the domain
\[
     \Om:= \{(x,y,z) \in\R^3 : 0 < y< x^3 < 1 \text{ and } 0 <z<1\}
\]
shows. 
Note that $\Om$ is locally connected at the boundary.

To prove Theorem~\ref{thm-almost-John-finconn} we will use the following lemma.

\begin{lem} \label{lem-finconn-H}
Let
$
       A=\{x \in \bdy \Om : \Om \text{ is not finitely connected at } x\}.
$
Then either $A=\emptyset$, i.e.\ $\Om$ is finitely connected at the boundary,
or
$
    \mathcal{H}_{\infty}^1(A) >0.
$
\end{lem}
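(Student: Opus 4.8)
The plan is to argue by contradiction: suppose $A \ne \emptyset$ but $\mathcal{H}_\infty^1(A) = 0$; I will derive that $\Om$ is in fact finitely connected at every point of $A$, contradicting the definition of $A$. So fix $x_0 \in A$. Since $\Om$ is not finitely connected at $x_0$, Proposition~\ref{prop1-fin} tells us that either $N(r) = \infty$ for some $r>0$, or $x_0 \in \itoverline{H(r)}$ for some $r > 0$; in either case we can extract, for arbitrarily small $r$, infinitely many components of $B(x_0,r)\cap\Om$ accumulating at $x_0$ (in the second case the ``bad'' components making up $H(r)$ must be infinite in number and shrink toward $x_0$, since finitely many components each with $x_0$ not in its closure would keep $x_0$ off $\itoverline{H(r)}$).

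The core geometric idea is that each such small component $U$ of $B(x_0,r)\cap\Om$ forces a nontrivial piece of the topological boundary to sit near $x_0$: since $\itoverline{U}$ is connected, meets $\partial B(x_0,\rho)$ for radii $\rho$ ranging over an interval, and $U\subset\Om$, the set $\partial U \cap \Om$ lies on spheres, while $\itoverline{U}$ must also touch $\partial\Om$ somewhere (otherwise $U$ would be a component of the open set $B(x_0,r)\cap\Om$ that is relatively closed in $B(x_0,r)$, hence all of $B(x_0,r)$, impossible as $x_0 \in \partial\Om$). Having infinitely many such components accumulating at $x_0$ yields infinitely many points of $\partial\Om$ in every neighborhood of $x_0$; more importantly, because these components are \emph{disjoint} and connected, by a Vitali–type / radius-selection argument one produces, for each $n$, a collection of $n$ disjoint balls of some common radius $\rho_n > 0$ centered on $\partial\Om$ near $x_0$, each meeting $A$ — or more precisely one shows $x_0$ itself is a point where $A$ is ``thick'' in the sense that small balls around $x_0$ cannot be covered by a countable family of balls of total radius $< r$. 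The cleanest route: show that whenever $\Om$ is not finitely connected at $x_0$, there is $r_0>0$ such that $\mathcal{H}_\infty^1\bigl(\partial\Om \cap \itoverline{B(x_0,r_0)}\bigr) \ge c\, r_0$ for a constant $c$ — and then, applying this to every $x_0\in A$, a covering/density argument (Lebesgue density for Hausdorff content, or a direct Vitali covering lemma for $\mathcal{H}_\infty^1$) shows $\mathcal{H}_\infty^1(A)>0$.

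Concretely I would carry out the steps as follows. First, reduce via Proposition~\ref{prop1-fin} to the statement: if $\Om$ is not finitely connected at $x_0$, then for every small $r$ there are infinitely many components $U_1^r, U_2^r, \ldots$ of $B(x_0,r)\cap\Om$ with $x_0 \in \bigcap_i \itoverline{U_i^r}$. Second, for fixed small $r$, pick a radius $\rho$ with $0 < \rho < r/2$; each $\itoverline{U_i^r}$, being a continuum containing $x_0$ and a point outside $B(x_0,\rho)$, meets $\partial B(x_0,\rho)$, and $\itoverline{U_i^r}$ meets $\partial\Om$; pick $z_i \in \itoverline{U_i^r}\cap\partial\Om$. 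Since the $U_i^r$ are pairwise disjoint open connected sets, an elementary argument (each $U_i^r$ contains a point at distance $\ge \rho$ from $x_0$ but inside $B(x_0,r)$, and these points, together with the connectedness, give a definite ``separation'' on the sphere $\partial B(x_0,\rho)$) lets me extract, for each $n$, after passing to $n$ of the components and choosing $\rho$ appropriately, $n$ points $z_1,\ldots,z_n\in\partial\Om\cap B(x_0,r)$ whose mutual distances are bounded below by a fixed multiple of $\rho$; hence any cover of $\partial\Om\cap B(x_0,r)$ by balls of radius $< \rho/4$ needs at least $n$ balls, so $\mathcal{H}_\infty^1(\partial\Om\cap\itoverline{B(x_0,r)}) \ge n\cdot \rho/4$; optimizing gives the lower bound $\mathcal{H}_\infty^1(\partial\Om\cap\itoverline{B(x_0,r_0)})\ge c\,r_0$ for suitable $r_0$ (and the constant $c$ can be taken absolute). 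Third, this shows each $x_0\in A$ is a point of positive upper $\mathcal{H}_\infty^1$-density for $\partial\Om$; a standard covering argument then yields $\mathcal{H}_\infty^1(A)>0$: if it were zero, cover $A$ by balls of tiny total radius, contradicting the local lower density bound at any single point of $A$.

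\textbf{Main obstacle.} The delicate point is the quantitative separation step: turning ``infinitely many components of $B(x_0,r)\cap\Om$ accumulate at $x_0$'' into ``$n$ boundary points near $x_0$ that are $\gtrsim\rho$-separated'' for a radius $\rho$ that does not degenerate as $n$ grows. Distinct components can be geometrically intertwined and very thin (cf.\ the shrinking pins of Example~\ref{ex-shrinking-pins}), so one must choose the radius $\rho$ of the sphere $\partial B(x_0,\rho)$ carefully — essentially at a scale where $n$ of the components are still ``active'' — and use the disjointness of the components together with the continuum property to force the boundary points $z_i$ apart. I expect this to require a careful (but not long) topological argument, possibly phrased via: distinct components of $B(x_0,r)\cap\Om$ that both reach $\partial B(x_0,\rho)$ are separated within the annulus by pieces of $\partial\Om$, so a definite arc-length's worth of $\partial\Om$ sits between consecutive components on the relevant scale. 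Everything after that reduction is routine Hausdorff-content bookkeeping.
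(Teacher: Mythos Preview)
Your argument has a genuine gap in the final step. You establish (or try to establish) that $\mathcal{H}_\infty^1\bigl(\partial\Om\cap\itoverline{B(x_0,r_0)}\bigr)\ge c\,r_0$ for each $x_0\in A$, and then claim that a ``standard covering argument'' yields $\mathcal{H}_\infty^1(A)>0$. But density of $\partial\Om$ near points of $A$ says nothing about $\mathcal{H}_\infty^1(A)$: nothing you have written rules out $A=\{x_0\}$ being a single point with a lot of $\partial\Om$ nearby, in which case $\mathcal{H}_\infty^1(A)=0$. Covering $A$ by balls of small total radius gives no contradiction with a lower bound on the content of $\partial\Om$. The points $z_i\in\itoverline{U_i^r}\cap\partial\Om$ you produce are merely in $\partial\Om$, and you never argue that they (or any other points you construct) lie in $A$.

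The paper's proof closes exactly this gap. It fixes $x_0\in A$ and a radius $r$ witnessing the failure of finite connectedness, takes any $0<r'<r/2$, selects points $x_j\in U_j$ at distance exactly $r'$ from $x_0$ (where the $U_j$ are distinct components of $B(x_0,r)\cap\Om$ with $\dist(U_j,x_0)\to 0$), and passes to a subsequential limit $x'\in\partial\Om$ with $d(x_0,x')=r'$. The key step you are missing is then to show that $\Om$ is \emph{also} not finitely connected at $x'$, i.e.\ $x'\in A$: for small $r''$, each $U_{j_k}$ contributes a distinct component $V_k$ of $B(x',r'')\cap\Om$ with $\dist(V_k,x')\to 0$, so Proposition~\ref{prop1-fin} fails at $x'$. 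Thus $A$ meets the sphere $\partial B(x_0,r')$ for every $0<r'<r/2$, and one reads off $\mathcal{H}_\infty^1(A)\ge r/4$ directly --- no separation estimate and no covering argument needed.

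Two smaller issues: your reduction ``infinitely many components $U_i^r$ with $x_0\in\bigcap_i\itoverline{U_i^r}$'' is not what Proposition~\ref{prop1-fin} gives in the case $x_0\in\itoverline{H(r)}$, since the components forming $H(r)$ by definition do \emph{not} have $x_0$ in their closure; one only gets $\dist(U_j,x_0)\to 0$. And your separation step (``a definite arc-length's worth of $\partial\Om$ sits between consecutive components'') implicitly uses a one-dimensional ordering on spheres that is unavailable in a general metric space.
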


This is a special case of Lemma~2.1 in Herron--Koskela~\cite{herron-kosk}.
They prove their result in $\R^n$, but the proof is valid for
the metric spaces under consideration here.
For the reader's convenience we include a proof of
our weaker result since our proof is simpler
and more self-contained than the one in \cite{herron-kosk}.

\begin{proof}
Assume that $A \ne \emptyset$ and let $x_0 \in A$.
By Proposition~\ref{prop1-fin}, there is  $0<r<\diam \Om$ such that
either $N(r)=\infty$ or $x_0 \in \itoverline{H(r)}$.
In either case there is a sequence $\{U_j\}_{j=1}^\infty$
of distinct components of $\Om \cap B(x_0,r)$ such that
$\dist(U_j,x_0) \to 0$ as $j \to \infty$.
Since $\Om$ is connected, we must have
$\bdy U_j \cap (\Om \cap \bdy B(x_0,r)) \ne \emptyset$.

Let $0 < r' < \tfrac{1}{2} r$.
Then for $j$ large enough, we can find $x_j \in U_j$
such that $d(x_0,x_j)=r'$.
As $X$ is complete and hence proper, there is a
convergent subsequence $\{x_{j_k}\}_{k=1}^\infty$
with limit $x' \in X$.
Since the $U_j$ are distinct we see that $x' \in \bdy \Om$.
It also follows that $d(x_0,x')=r'$.

Now let $0 <r'' < \tfrac{1}{2}r$.
For each sufficiently large $k$ 
there is a component $V_k$ of $B(x',r'') \cap \Om$
such that $x_{j_k}\in V_k \subset U_{j_k}$ and
\[
     \dist(V_k,x') <\dist(U_{j_k},x') + 1/k \to 0
     \quad \text{as } k \to \infty.
\]
The components $V_k$ must be distinct. 
Therefore, either $x'$ is in the boundary of infinitely many of the sets 
$V_k$, or else $x'\in \itoverline{H(r'')}$.
Again using Proposition~\ref{prop1-fin}, we see that
$\Om$ cannot be finitely connected at $x'$.

We have thus shown that for every $0< r' < \tfrac{1}{2} r$
there is a point $x' \in A$ such that $d(x_0,x')=r'$.
It follows that
$\mathcal{H}_{\infty}^{1}(A) \ge \tfrac{1}{4} r>0$.
\end{proof}

\begin{proof}[Proof of Theorem~\ref{thm-almost-John-finconn}]
Assume 
that $\Om$ is not finitely connected.
Then by Lemma~\ref{lem-finconn-H} we have
$\mathcal{H}_{\infty}^{1}(A)>0$, where
\[
A=\{x \in \bdy \Om : \Om \text{ is not finitely connected at } x\}.
\]
Let $F \subset \overline{\Om}$ be a closed set
such that $\mathcal{H}_{\infty}^{1}(F) < \mathcal{H}_{\infty}^{1}(A)$.
Then there is  some $x \in A \setm F$.
As $F$ is closed, $\dist(x,F)>0$.
Since finite connectedness at a boundary point  is a local property it follows
that $\Om \setm F$ cannot be finitely connected at $x$.
Hence $\Om \setm F$ is not a John domain, by
Theorem~\ref{thm-John-Ncon}.
Thus $\Om$ cannot be an almost John domain.
\end{proof}

We can now collect the consequences of the results in the previous
section.

\begin{cor}  \label{cor-John-sec10}
Let $\Om$ be an almost John domain.
Then the following are true\/\textup{:}
\begin{enumerate}
\item  \label{John-a}
Every end is divisible by some prime end.
\item \label{John-b}
Every prime end has a singleton impression.
\item \label{John-f}
Every $x\in\partial\Om$ is accessible and there is at least one
prime end with impression $\{x\}$.
\item \label{John-c}
There is a  homeomorphism $\Phi: \clOmP \to \clOmm$
such that $\Phi|_{\Om}$ is the identity map.
\item \label{John-d}
If
\begin{equation} \label{eq-p-cond}
    1\le p\in Q(x)\ne(0,1] \quad \text{for all } x \in \bdy \Om,
\end{equation}
then $\partial_P\Om$ is also the $\Mod_p$-prime end boundary.
\item \label{John-e}
The prime end closure $\clOmP$ is metrizable and compact.
\end{enumerate}
\end{cor}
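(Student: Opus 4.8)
The plan is to reduce the whole statement to Theorem~\ref{thm-almost-John-finconn}, so the first and essentially only structural step is: since $\Om$ is an almost John domain, Theorem~\ref{thm-almost-John-finconn} gives that $\Om$ is finitely connected at the boundary. Everything else is then a bookkeeping exercise, reading off each of \ref{John-a}--\ref{John-e} from the results of Sections~\ref{sect-Mazur} and~\ref{sect-finconn} applied to a domain that is finitely connected at the boundary.

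Concretely, I would argue as follows. For \ref{John-b} and \ref{John-f} I would quote Theorem~\ref{thm-fin-con-homeo} directly: when $\Om$ is finitely connected at the boundary, all prime ends have singleton impressions, and every $x\in\bdy\Om$ is accessible and is the impression of some prime end. For \ref{John-a}, given an end $[E_k]$ its impression $I[E_k]$ is a nonempty subset of $\bdy\Om$; choosing any $x_0\in I[E_k]$ and applying Proposition~\ref{prop-end-divisible} (legitimate since $\Om$ is finitely connected at $x_0$) produces a prime end dividing $[E_k]$. For \ref{John-c} and the metrizability half of \ref{John-e} I would cite Corollary~\ref{metric-lemma-5-12}, which already supplies the homeomorphism $\Phi\colon\clOmP\to\clOmm$ with $\Phi|_\Om$ the identity, together with the metric $m_P$ on $\clOmP$ inducing the topology of Section~\ref{sect-top}. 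For the compactness half of \ref{John-e} I would use the implication \ref{i3-fin}$\imp$\ref{i3-P} of Theorem~\ref{thm-clOmm-cpt-new}: finite connectedness at the boundary forces $\clOmP$ to be compact. Finally, \ref{John-d} is precisely the last sentence of Theorem~\ref{thm-fin-con-homeo} under the hypothesis~\eqref{eq-p-cond}.

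Since all the hard work is already packaged inside Theorem~\ref{thm-almost-John-finconn} (and, behind it, Theorem~\ref{thm-John-Ncon} and Lemma~\ref{lem-finconn-H}) together with the cited results, I do not anticipate any genuine obstacle in this proof. The only spot that needs a moment's care is \ref{John-a}, where one must remember that Proposition~\ref{prop-end-divisible} is a statement about a single boundary point $x_0$, so the argument hinges on $I[E_k]$ being a nonempty subset of $\bdy\Om$, which lets us feed it such a point; the rest is purely a matter of citing the appropriate earlier statement for each item.
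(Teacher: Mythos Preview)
Your proposal is correct and follows essentially the same route as the paper: reduce to finite connectedness at the boundary via Theorem~\ref{thm-almost-John-finconn}, then read off each item from Proposition~\ref{prop-end-divisible}, Theorem~\ref{thm-fin-con-homeo}, Corollary~\ref{metric-lemma-5-12}, and Theorem~\ref{thm-clOmm-cpt-new}. If anything, your citations are slightly more precise than the paper's (you explicitly invoke Corollary~\ref{metric-lemma-5-12} for the homeomorphism in~\ref{John-c}, whereas the paper lumps this under Theorem~\ref{thm-fin-con-homeo}).
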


\begin{proof}
By Theorem~\ref{thm-almost-John-finconn},
$\Om$ is finitely connected at the boundary.

\ref{John-a} and \ref{John-b}
This follows from Proposition~\ref{prop-end-divisible}.

\ref{John-f}--\ref{John-d} This follows from Theorem~\ref{thm-fin-con-homeo}.

\ref{John-e} This follows from Corollary~\ref{metric-lemma-5-12}
and Theorem~\ref{thm-clOmm-cpt-new}.
\end{proof}

We also have the following consequence of a combination of
Theorem~\ref{thm-John-Ncon}, 
Propositions~\ref{prop-uniform} and~\ref{prop-finn-conn-N-ends}
and 
Corollary~\ref{cor-loc-conn}.

\begin{cor}
If $\Om$ is a John domain,
then there is a positive integer $N$, depending only on the doubling
constant, the John constant
and the quasiconvexity constant, such that
for every $x\in\partial\Om$ there is at least one, and at most $N$,
prime ends with impression $\{x\}$.

If $\Om$ is a uniform domain,
then
for every $x\in\partial\Om$ there is exactly one
prime end with impression $\{x\}$.
Moreover, there is a homeomorphism $\Upsilon: \clOmP \to \clOm$
such that $\Upsilon|_{\Om}$ is the identity map.
\end{cor}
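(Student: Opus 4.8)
The plan is to derive this corollary entirely from results already established, so the argument will be a short assembly job rather than a new proof.

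I would treat the John case first. Given a John domain $\Om$, Theorem~\ref{thm-John-Ncon} supplies a positive integer $N$, depending only on the doubling constant $C_\mu$, the John constant $C_\Om$ and the quasiconvexity constant $L$, such that $\Om$ is at most $N$-connected at every boundary point; in particular $\Om$ is finitely connected at the boundary (this also follows from Theorem~\ref{thm-almost-John-finconn}, since a John domain is trivially an almost John domain, taking $F=\emptyset$). Then I would fix $x\in\bdy\Om$ and invoke Proposition~\ref{prop-finn-conn-N-ends}, which gives exactly $N(x):=\lim_{r\to0}N(r)$ distinct prime ends with impression $\{x\}$ and asserts that no other prime end has $x$ in its impression. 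Finite connectedness at $x$ forces $N(r)\ge1$ (the remark preceding Definition~\ref{deff-Nconn}), hence $N(x)\ge1$, while $N$-connectedness gives $N(x)\le N$. This yields ``at least one and at most $N$'', with $N$ independent of $x$.

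For the uniform case I would use Proposition~\ref{prop-uniform}, which says a uniform domain is locally connected at the boundary, i.e.\ $N(x)=1$ for every $x\in\bdy\Om$. Corollary~\ref{cor-loc-conn} then applies directly: there is exactly one prime end with impression $\{x\}$ for each $x\in\bdy\Om$, and the map $\Upsilon\colon[E_k]\mapsto I[E_k]$, extended by the identity on $\Om$, is a homeomorphism from $\clOmP$ onto $\clOm$. Note that for this part we do not even need the implication ``uniform $\Rightarrow$ John'', since local connectedness at the boundary is the stronger hypothesis actually used.

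The only real care needed --- there is no serious obstacle --- is bookkeeping. One must check that the integer $N$ produced by Theorem~\ref{thm-John-Ncon} is genuinely uniform over $\bdy\Om$ (it is, as the bound there involves only the three structural constants), and one must not drop the lower bound ``at least one'', which relies on recalling that finite connectedness at a point implies $N(r)\ge1$ there. No new estimates or constructions are required.
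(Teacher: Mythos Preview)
Your proof is correct and follows exactly the route the paper intends: the corollary is stated there as a direct consequence of Theorem~\ref{thm-John-Ncon}, Propositions~\ref{prop-uniform} and~\ref{prop-finn-conn-N-ends}, and Corollary~\ref{cor-loc-conn}, which is precisely the combination you assemble. Your care in noting that finite connectedness gives $N(r)\ge1$ (hence at least one prime end) is a good detail that the paper leaves implicit.
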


We are now ready to formulate and prove 
the main result of this section.

\begin{thm} \label{thm-b-John-cor} \label{thm-singleton-gen}
If $\Om$ is an almost John domain and $p>Q-1$,
then every $\Mod_p$-end
is a prime end with singleton impression.
\end{thm}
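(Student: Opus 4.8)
The plan is to show that for an almost John domain with $p>Q-1$, any $\Modp$-chain $\{E_k\}_{k=1}^\infty$ is forced to have a singleton impression; once that is established, Proposition~\ref{prop-end-single} immediately gives that the corresponding end is a prime end, and $\Modp$-prime is then automatic by Lemma~\ref{lem-enum-prime-end}\,\ref{it-prime-end-eq}. So the heart of the argument is: \emph{a $\Modp$-end in an almost John domain has $\lim_{k\to\infty}\diam E_k=0$}. Suppose not; then by Lemma~\ref{lem-single-char} the impression $I[E_k]$ is a nondegenerate continuum contained in $\bdy\Om$. I want to contradict the decay condition $\lim_{k\to\infty}\Modp(E_k,K,\Om)=0$ (for all compact $K\subset\Om$), or rather its equivalent form with a single compact set of positive capacity, which is available since $p>1\ge Q-p$ forces... more carefully, since $p>Q-1\ge0$ and the domain is nontrivial we certainly have $p>1$ unless $Q<2$; in any case for the capacity equivalence we invoke Lemma~\ref{lem-cpt-equiv} (valid for $p>1$) and handle $p=1$ (possible only if $Q<2$) by the direct $\Modp$-estimate below.

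The strategy for the lower modulus bound is to use the almost John structure. Fix $r>0$ small; by Definition~\ref{def-almost-John} there is a closed $F\subset\clOm$ with $\mathcal H^1_\infty(F)<r$ such that $\Om'=\Om\setm F$ is a John domain with John center $x_0'$ (depending on $r$). Since $I[E_k]$ is a nondegenerate continuum in $\bdy\Om$, for $k$ large $E_k$ is a small connected subset of $\Om$ near the impression; as $\mathcal H^1_\infty(F)$ is small while $E_k$ meets $\bdy\Om$ in a set of positive diameter $\diam I[E_k]$ (using the continuity-of-diameter Lemma~\ref{lem-single-char}), I can choose a point $z_k\in E_k\setm F$ with $\de_\Om(z_k)$ comparable to $\dist(z_k,I[E_k])$ small but $z_k$ still far from $F$ — here the Hausdorff content smallness of $F$ is what guarantees such $z_k$ exist inside $E_k$. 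Now run the John curve $\g_k$ in $\Om'$ from $z_k$ to the fixed center $x_0'$; it parameterizes a tube of width $\gtrsim t/C_{\Om'}$ at distance $t$, and this tube connects (the interior of) $E_k$ to a fixed compact set $K'$ around $x_0'$ while staying in $\Om$. Using the standard modulus lower bound for such John tubes — this is exactly the content of the appendix lemmas on modulus/capacity estimates that the paper refers to (``Auxiliary results related to modulus and capacitary estimates\ldots collected in an appendix''), the relevant estimate being of the form $\Modp(E_k,K',\Om)\ge \Modp(\text{curves along }\g_k)\ge c>0$ with $c$ depending only on $C_{\Om'}$, the data, and the (fixed, positive) diameters involved, \emph{provided} $p>Q-1$ so that the cross-sectional balls of radius $\sim t$ contribute a divergent integral $\int_0^{\ell} t^{-(p-Q+1)}\,dt/t$-type quantity keeping the modulus bounded below. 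Since this $c$ does not go to $0$ as $k\to\infty$ (the geometry near the impression is uniformly ``fat'' once $\diam I[E_k]$ is bounded below, and $F$ can be taken with content as small as we like independently of $k$), this contradicts $\Modp(E_k,K',\Om)\to0$. Hence $\diam E_k\to0$.

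Concretely I would carry out the steps in this order. First, reduce to showing $\diam E_k\to0$, citing Proposition~\ref{prop-end-single} and Lemma~\ref{lem-enum-prime-end}\,\ref{it-prime-end-eq} for the rest. Second, assume $\diam E_k\not\to0$, so $I[E_k]$ is a continuum of diameter $d_0>0$ by Lemma~\ref{lem-single-char}, and by Remark~\ref{rmk-open} assume the $E_k$ are open. Third, pick $r$ with $0<r<d_0/100$ (say), get $F$ and the John domain $\Om'=\Om\setm F$ with center $x_0'$ and constant $C'$; fix a small ball $K'=\itoverline{B(x_0',\rho)}\subset\Om'$ as the target compact set (legitimate by Lemma~\ref{lem-cpt-equiv} for $p>1$; for $p=1$ work directly). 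Fourth, for each large $k$ produce $z_k\in E_k\setm F$ with $\dist(z_k,\bdy\Om)\le\dist(z_k,I[E_k])\le\tfrac14 d_0$ yet $\dist(z_k,F)>0$ — here is where $\mathcal H^1_\infty(F)<r$ is essential, since $E_k$ contains a connected piece of diameter comparable to $d_0$ touching $I[E_k]$, and such a connected piece cannot be covered by balls of total radius $<r$. Fifth, connect $z_k$ to $K'$ by a John curve of $\Om'$ and invoke the appendix modulus estimate to get $\Modp(E_k,K',\Om)\ge c>0$ with $c$ independent of $k$ (this is where $p>Q-1$ enters, via the lower mass bound~\eqref{lower-mass-bound} applied along the tube). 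Sixth, this contradicts~\eqref{deff-Modp-end-cond}, completing the proof.

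The main obstacle I anticipate is Step five: getting a clean, $k$-uniform lower bound for $\Modp(E_k,K',\Om)$ from the John-curve tube. One has to be careful that the tube really stays inside $\Om$ (not just $\Om'$ — but since $\Om'\subset\Om$ this is automatic), that the target end of the tube reaches the \emph{fixed} compact $K'$ regardless of $k$ (true because the John center $x_0'$ is fixed once $r$ is fixed, and $r$ is fixed once $d_0$ is fixed), and — the delicate point — that the width parameter $\de_{\Om'}(\g_k(t))$ is large enough near the ends; near $x_0'$ it is bounded below by a fixed amount, while near $z_k$ it may be small but the John inequality $t\le C'\de_{\Om'}(\g_k(t))$ still forces the cross-sections to grow linearly, and the integral $\int t^{Q-1-p}$-type bound converges precisely when $p>Q-1$, so the modulus of the tube is bounded below by a constant depending only on $C'$, $\rho$, $d_0$ and the doubling data. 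This is exactly the kind of estimate the paper has deferred to its appendix, so in the final write-up one would cite the appropriate appendix lemma rather than reproving it. The only genuinely new bookkeeping is the extraction of $z_k\in E_k\setm F$, which is a soft Hausdorff-content argument.
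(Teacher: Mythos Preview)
Your overall framework is the paper's: pass to the John subdomain $\Om'=\Om\setm F$ with $\mathcal H_\infty^1(F)<r$, fix a ball around its John center, and exploit a modulus estimate in $\Om'$ with the threshold $p>Q-1$. But Step five, as you have set it up, has a genuine gap.

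A single John curve from one point $z_k\in E_k\setm F$ to the center does \emph{not} produce a $k$-uniform lower bound on $\Modp(E_k,K',\Om)$ in the range $Q-1<p\le Q$. The tube based at $z_k$ has vanishing width at its tip, and the capacity of an $\varepsilon$-ball at $z_k$ relative to $K'$ through such a tube behaves like $\varepsilon^{Q-p}$ for $p<Q$ (logarithmically for $p=Q$). Since you yourself note that $\de_\Om(z_k)$ is small and tends to $0$ with $k$, any ball you can fit inside $E_k$ at $z_k$ must shrink and the bound collapses. Your $\int t^{Q-1-p}\,dt$ heuristic gives something $k$-uniform only when $p>Q$, not when $p>Q-1$; the single-tube mechanism simply cannot reach the full range of the theorem. (There is also no guarantee of a ball of \emph{fixed} radius inside $E_k$: the acceptable sets can be long and arbitrarily thin, as in the comb examples.)

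What the paper does instead is use the whole set $E_k\setm F$, not a single point. The appendix estimate (Corollary~\ref{cor-John-H<Mod}, proved via the telescoping chain argument of Lemma~\ref{lem-chain-imp-length-est}) gives, for open $E\subset\Om'\setm B$ and $p>Q-1$,
\[
\mathcal H_\infty^1(E)\le C\,\Modp(E,B,\Om'),
\]
with $C$ depending only on the John data of $\Om'$ and on $B$. Applying this to $E=E_k\setm F$ (open once the $E_k$ are taken open via Remark~\ref{rmk-open}), using monotonicity $\Modp(E_k\setm F,B,\Om')\le\Modp(E_k,B,\Om)$, connectedness of $E_k$, and subadditivity of $\mathcal H_\infty^1$, one gets directly
\[
\diam E_k \le \mathcal H_\infty^1(E_k) \le \mathcal H_\infty^1(F) + \mathcal H_\infty^1(E_k\setm F) \le r + C\,\Modp(E_k,B,\Om).
\]
Letting $k\to\infty$ and then $r\to0$ finishes the proof without any contradiction argument (and without needing Lemma~\ref{lem-cpt-equiv}: the $\Modp$-end condition already gives decay against the compact $\itoverline{B}$). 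The role of $p>Q-1$ here is exactly the exponent condition in Lemma~\ref{lem-chain-imp-length-est} with $s=1$; it is a Hausdorff-content-versus-capacity bound, not a single-tube modulus bound. If you prefer to keep your contradiction framing, replace Steps four and five by: $E_k\setm F$ has $1$-content bounded below (since $E_k$ is connected of diameter $\gtrsim d_0$ and $\mathcal H_\infty^1(F)<r\ll d_0$), and then Corollary~\ref{cor-John-H<Mod} supplies the uniform lower bound $\Modp(E_k,B,\Om)\ge c>0$ you were after.
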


Note that the conclusion of Theorem~\ref{thm-singleton-gen} fails if $[E_k]$
is merely  an end or if $p\leq Q-1$, see Examples~\ref{needPrime1} and~\ref{needPrime} .
If \eqref{eq-p-cond} holds, then
the existence of $\Mod_p$-ends at every $x\in\bdry\Om$ follows
from \ref{John-f} and \ref{John-d} in
Corollary~\ref{cor-John-sec10}.

To prove Theorem~\ref{thm-singleton-gen} we need the following lemma about chains of balls in John domains.
This lemma is a variant of a chain condition first formulated by Boman, 
see Boman~\cite{bo}, Haj\l asz--Koskela~\cite{hak}, and the 
references therein.
In this paper we use the following chain condition.

\begin{deff}\label{chainB}
We say that a set $E\subset\Om$ is 
\emph{chain-connected} to $B(x_0,\rho_0)\Subset\Om$ 
if there exists $M>0$ 
such that every $x\in E$
can be connected to the ball
$\Boo=B(x_0,\rho_0)$ by a chain of balls
\[
    \{\Bij: i=0,1,\ldots \text{ and } j=0,1,\ldots,m_i\}
\]
with the following properties\/\textup{:}
\begin{enumerate}
\item \label{first}
For all balls $B$ in the chain, we have $3\la B \subset \Om$.

\item \label{second}
For all $i$ and $j$, the ball $\Bij$ has radius
$\rho_i=2^{-i}\rho_0$ and center $\xij$ such that
$d(\xij,x)\le M\rho_i$.

\item \label{third}
For all $i$, we have $m_i \le M$.

\item \label{fourth}
For large $i$, we have $m_i=0$ and the balls $\Bio$
are centered at $x$.

\item \label{last}
The balls $\Bij$ are ordered lexicographically, i.e.\ $\Bij$ comes before
$B_{i',j'}$ if and only if $i<i'$ or $i=i'$ and $j<j'$.
If $\Bij$ and $B_{i',j'}$ are two neighbors with respect to this ordering,
then $\Bij \cap B_{i',j'}$ is nonempty.
\end{enumerate}
\end{deff}

\begin{lem}   \label{lem-John-chain}
Let $\Om$ be a John domain with a John center $x_0$ and a John constant $C_\Om$.
Let $\rho_0\le \de_\Om(x_0)/4\la$
and $A=C_\Om \de_\Om(x_0)/\rho_0 \ge 4C_\Om \la$.
Then $\Om$ is chain-connected to 
$B_{0,0} := B(x_0,\rho_0)$ in the sense
of Definition~\ref{chainB} with $M=2A$.
\end{lem}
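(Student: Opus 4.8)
The plan is to build the chain explicitly along a John curve. Fix $x\in\Om$ and let $\ga\colon[0,l_\ga]\to\Om$ be a John curve with $\ga(0)=x$ and $\ga(l_\ga)=x_0$, parameterized by arc length, so $t\le C_\Om\de_\Om(\ga(t))$ for all $t$. I would record three elementary consequences used throughout: (i) $\de_\Om(\ga(t))\ge t/C_\Om$, so a ball $B(\ga(t),\rho)$ with $3\la\rho\le t/C_\Om$ automatically satisfies $3\la B(\ga(t),\rho)\subset\Om$; (ii) evaluating the John inequality at $t=l_\ga$ gives $l_\ga\le C_\Om\de_\Om(x_0)=A\rho_0$, so the whole curve has length at most $A\rho_0$; and (iii) since $\de_\Om$ is $1$-Lipschitz along $\ga$, $\de_\Om(\ga(t))\ge\de_\Om(x)-t$, which compensates for (i) degenerating near $x$.

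The balls are grouped into generations, generation $i$ consisting of balls of radius $\rho_i=2^{-i}\rho_0$. Let $i_1$ be the least $i$ with $\rho_i\le\de_\Om(x)/(12\la C_\Om)$. For $1\le i<i_1$, generation $i$ covers the arc $\ga([3\la C_\Om\rho_i,6\la C_\Om\rho_i])$ by balls centred on $\ga$ with consecutive arc-length spacing at most $\rho_i$; their $3\la$-dilates lie in $\Om$ by (i), consecutive ones meet because the space is a length space, their centres are within $6\la C_\Om\rho_i\le 2A\rho_i$ of $x$ (using $A\ge4\la C_\Om$), and the arc has length $3\la C_\Om\rho_i$, so at most $3\la C_\Om+2\le 2A$ of them are needed. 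Generation $0$ is the same but extended to cover $\ga([3\la C_\Om\rho_0,l_\ga])$ (still admissible by (i)); by (ii) this uses at most $A+2\le 2A$ balls, one of which is $\Boo=B(x_0,\rho_0)$, the ball centred at $\ga(l_\ga)=x_0$. Generation $i_1$ handles the neighbourhood of $x$: the choice of $i_1$ gives $6\la C_\Om\rho_{i_1}\le\de_\Om(x)/2$, so (iii) yields $\de_\Om(\ga(t))\ge\de_\Om(x)/2\ge3\la\rho_{i_1}$ on $\ga([0,6\la C_\Om\rho_{i_1}])$, which we cover by at most $6\la C_\Om+2\le 2A$ balls of radius $\rho_{i_1}$, one centred at $\ga(0)=x$. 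Finally, for $i>i_1$ we put $m_i=0$ and $\Bio=B(x,\rho_i)$; these are nested, contain $x$, and have $3\la\Bio\subset\Om$ since $\rho_i\le\rho_{i_1}\le\de_\Om(x)/(3\la)$. (Degenerate configurations --- $l_\ga$ very small, or $x$ so deep that $i_1=0$ --- are handled in the same spirit, with some generations empty.)

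With the balls ordered lexicographically by generation, and within each generation from the arc closest to $x_0$ toward the arc closest to $x$, consecutive generations are arranged to share a common point $\ga(6\la C_\Om\rho_i)$ of the curve, and the last ball $B(x,\rho_{i_1})$ of generation $i_1$ contains the whole shrinking tail $\{B(x,\rho_i):i>i_1\}$; hence neighbours in the single chain always intersect. Conditions \ref{first}--\ref{last} of Definition~\ref{chainB} are then precisely what was checked generation by generation, with the count bound $m_i\le 2A=M$ holding throughout.

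I expect the genuine difficulty to be bookkeeping rather than any single estimate. The chain must be \emph{one} lexicographically ordered sequence of overlapping balls even though $t\mapsto\de_\Om(\ga(t))$ may oscillate; this is why the generation at parameter $t$ is the one forced by the monotone bound $\de_\Om(\ga(t))\ge t/C_\Om$ --- making the generations occupy \emph{consecutive} arcs $[3\la C_\Om\rho_i,6\la C_\Om\rho_i]$ --- rather than the largest admissible one. The other delicate point is the transition near $x$, where (i) is useless and one must splice in (iii): the threshold $\rho_{i_1}\le\de_\Om(x)/(12\la C_\Om)$ is chosen exactly so that the transitional generation still fits the $2A$ budget and reaches a ball centred at $x$ that absorbs the tail. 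Once the thresholds are pinned down, every inequality reduces to $A\ge4\la C_\Om$ and $l_\ga\le A\rho_0$.
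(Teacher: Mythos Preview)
Your proof is correct and follows essentially the same approach as the paper: place balls along the John curve, using the John estimate $t\le C_\Om\de_\Om(\ga(t))$ to control the $3\la$-dilates away from $x$ and the $1$-Lipschitz property of $\de_\Om$ for the transitional generation near $x$. The paper organizes the construction iteratively (each successor ball is chosen via the first entry time of $\ga$ into the current ball, dropping a generation when that time falls below $4\la C_\Om\rho_i$) rather than pre-assigning the arcs $[3\la C_\Om\rho_i,6\la C_\Om\rho_i]$ to generations, but the estimates and the role of $A\ge 4\la C_\Om$ are the same; note that your overlap claim only needs $d(\ga(s),\ga(t))\le|s-t|$ from the arc-length parameterization, not that $X$ is a length space.
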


\begin{proof}
For $x \in \Om$, let
$\ga:[0,l_\ga]\to \Om$ be a John curve,
parameterized by arc length, connecting $x=\ga(0)$ to $x_0=\ga(l_\ga)$.
Choose the smallest possible $i_x\in\N$ such that
$4\la C_\Om \rho_{i_x} \le \tfrac{1}{2}\de_\Om(x)$.
Recall that $\rho_{i_x}= 2^{-i_x}\rho_0$.

The first ball $B_{0,0}=B(x_0,\rho_0)$ in the chain clearly satisfies
$4\la B_{0,0} \subset \Om$.
Also, by \eqref{eq-def-John},
\[
d(x_0,x) \le l_\ga \le C_\Om \de_\Om(x_0) = A \rho_0.
\]

Suppose that the ball  $\Bij$ has already been constructed and
that it satisfies \ref{first}.
Let
\[
c=\inf\{t\in[0,l_\ga]: \ga(t)\in\Bij\}.
\]
Assume first that $i<i_x$.
If $c\ge 4\la C_\Om \rho_i$, then
let $\Bijj=B(\xijj,\rho_i)$ with $\xijj = \ga(c)$
be the successor of $\Bij$.
Note that by construction and by \eqref{eq-def-John},
\begin{equation}  \label{eq-de-Om-j}
4\la  \rho_i \le \frac{c}{C_\Om} \le  \de_\Om(\xijj),
\end{equation}
i.e.\ $4\la\Bijj\subset\Om$.

If $c<4\la C_\Om \rho_i$, then let $m_i=j$ and let
$\Biio=B(\xiio,\rho_{i+1})$ with $\xiio=\ga(c)$
be the successor of $\Bij$.
Note that \eqref{eq-de-Om-j} implies
\[
\de_\Om(\xiio) \ge \de_\Om(x_{i,m_i}) - \rho_i
\ge 4\la \rho_i -\rho_i \ge 4\la \rho_{i+1}
\]
and hence $4\la\Biio\subset\Om$.

For $i=i_x$ and $c>0$,  let $\Bixjj=B(\xixjj,\rho_{i_x})$
with $\xixjj = \ga(c)$ be the successor of $\Bixj$.
Note that if $c\ge 4\la C_\Om \rho_{i_x}$, then \eqref{eq-de-Om-j} implies
that $4\la\Bixjj\subset\Om$.
On the other hand, if $0<c<4\la C_\Om \rho_{i_x}$,
then the same conclusion follows from the
fact that
\[
d(\xixjj,x) \le c < 4\la C_\Om \rho_{i_x} \le \tfrac12 \de_\Om(x)
\]
and hence
\[
\de_\Om(\xixjj) \ge \de_\Om(x) - d(\xixjj,x) \ge \tfrac12 \de_\Om(x)
\ge 4\la \rho_{i_x}.
\]
If $i=i_x$ and $c=0$ or if $i>i_x$, then let $\Biio=B(x,\rho_{i+1})$
be the successor of $\Bij$.
Then clearly
\[
4\la \rho_i \le 4\la \rho_{i_x} \le \tfrac12 \de_\Om(x)
\]
and thus $4\la\Biio\subset\Om$.

The balls $\{\Bij: i=0,1,\ldots \text{ and } j=0,1,\ldots,m_i\}$
cover $\ga$ in the direction
from $x_0$ to $x$ and neighboring balls
always have nonempty intersection.
Thus \ref{last} is satisfied.
Also \ref{first} is satisfied by construction and the comments above.

As for the other properties,
note first that if $i>i_x$, then there is only one ball with radius $\rho_i$ and
that ball is centered at $x$.
This proves \ref{fourth}, so it remains to prove \ref{second} and \ref{third}.

For $i=0$ and all $j\le m_0$  we have that
\[
0\le d(x_{0,j},x)  < l_\ga - j\rho_0 \le C_\Om \de_\Om(x_0) -j\rho_0
= (A - j) \rho_0,
\]
showing that $m_0\le A$ and $d(x_{0,j},x) \le A \rho_0$.

Similarly, for $0<i\le i_x$ we have by construction that
\[
0\le d(\xij,x) < 4\la C_\Om \rho_{i-1} - j\rho_i = (8\la C_\Om - j) \rho_i
\]
and hence $j<8\la C_\Om \le 2A$.
This also shows that $d(\xij,x) < 2A \rho_i$.

For $i>i_x$, \ref{second} and \ref{third} are obvious.
\end{proof}

\begin{cor} \label{cor-John-H<Mod}
Let $\Om$ be a John domain with a John center $x_0$ and a John constant $C_\Om$.
Let $\rho_0\le \de_\Om(x_0)/4\la$ and $B= B(x_0,\rho_0)$.
If $p>Q-1$, then there exists a constant $C>0$ depending only on
$C_\Om$, $B$, $p$, the doubling constant and the constants in the
\p-Poincar\'e inequality, such that for all open $E\subset \Om\setm B$,
\[
\mathcal{H}_\infty^1(E) \le
C \Modp(E,B,\Om).
\]
\end{cor}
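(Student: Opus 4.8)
The plan is to combine the chain of balls from Lemma~\ref{lem-John-chain} (which encodes the John condition) with a Boman-type telescoping argument and a $5r$-covering argument, producing first a pointwise estimate at every point of $E$ and then the content bound; the hypothesis $p>Q-1$ will be used exactly once, to guarantee convergence of a geometric series $\sum_i\rho_i^{(p+1-Q)/p}$. First I would dispose of a trivial case and set up the right test function. Fix an open set $E\subset\Om\setm B$. Since $\mathcal{H}_\infty^1(E)\le\diam\Om$, if $\Modp(E,B,\Om)\ge c_0$ for a small constant $c_0$ to be fixed below (depending only on the doubling constant, the Poincar\'e constants, $C_\Om$ and $B$), we are done; so assume $\Modp(E,B,\Om)<c_0$ and choose a Borel function $\rho\ge0$, supported in $\Om$, admissible for $\G(E,B,\Om)$, with $\int_\Om\rho^p\,d\mu<c_0$. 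Set
\[
   u(x)=\min\{1,\dist_\rho(x,B)\}, \qquad x\in\Om,
\]
where $\dist_\rho(x,B)$ is the infimum of $\int_\sigma\rho\,ds$ over curves $\sigma$ in $\Om$ joining $x$ to $B$. The triangle inequality for $\dist_\rho$ shows $\rho$ is an upper gradient of $u$ in $\Om$; hence $u\in\Np(\Om)$ (it is bounded and $\Om$ has finite measure) and $g_u\le\rho$ a.e. Moreover $u\equiv0$ on $B$, while $u\equiv1$ on $E$ by admissibility of $\rho$; and since $E$ is \emph{open}, $u_{B(x,r)}=1$ for all small $r$ at every $x\in E$, which is what lets us avoid any exceptional set.

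Next comes the telescoping. Fix $x\in E$ and apply Lemma~\ref{lem-John-chain} with the given $\rho_0$, $A=C_\Om\de_\Om(x_0)/\rho_0$ and $M=2A$, obtaining a chain of balls $\{B_{i,j}^x\}$ joining $x$ to $B=B_{0,0}$; recall $\rho_i=2^{-i}\rho_0$, $3\la B_{i,j}^x\subset\Om$, $d(x_{i,j}^x,x)\le M\rho_i$, $m_i\le M$ for all $i$, and that for large $i$ one has $m_i=0$ and $B_{i,0}^x=B(x,\rho_i)$. Telescoping $u_{B(x,\rho_n)}-u_B$ along the chain, estimating each difference $|u_{B'}-u_{B''}|$ of consecutive averages by the \p-Poincar\'e inequality~\eqref{PI-ineq} on the dilate $3\la B'\subset\Om$ (allowed by property~\ref{first} of Definition~\ref{chainB}), grouping the at most $M+2$ consecutive pairs at each scale $i$ (property~\ref{third}), and letting $n\to\infty$ so the left side becomes $|1-0|=1$, yields
\[
   1\le C_1\sum_{i=0}^{\infty}\rho_i\Bigl(\vint_{3\la B_{i,j(i)}^x} g_u^p\,d\mu\Bigr)^{1/p},
\]
with $C_1$ depending only on the data. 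Since $3\la B_{i,j(i)}^x\subset B(x,C_2\rho_i)\cap\Om$, $C_2=M+3\la$, the lower mass bound~\eqref{lower-mass-bound} (applied in a fixed ball containing $\Om$) turns this into
\[
   1\le C_3\sum_{i=0}^{\infty}\rho_i\,\min\{\rho_i,\diam\Om\}^{-Q/p}\Bigl(\int_{B(x,C_2\rho_i)} g_u^p\,d\mu\Bigr)^{1/p}.
\]

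Now I would extract a single good scale at each point. Split the sum at a large index $i_0$. The partial sum over $i\le i_0$ is at most $C_4(i_0)(\int_\Om g_u^p\,d\mu)^{1/p}\le C_4(i_0)(\int_\Om\rho^p\,d\mu)^{1/p}$, which is $<\tfrac1{2C_3}$ once $c_0$ is taken small (in terms of $i_0$), so the tail over $i>i_0$ is $\ge\tfrac1{2C_3}$. If $\int_{B(x,C_2\rho_i)}g_u^p\,d\mu<C_2\rho_i$ held for every $i>i_0$, then writing $\alpha=(p+1-Q)/p$ — which is positive \emph{precisely because} $p>Q-1$ — the tail would be at most $C_3C_2^{1/p}\sum_{i>i_0}\rho_i^{\alpha}=C_3C_2^{1/p}\rho_0^{\alpha}2^{-(i_0+1)\alpha}/(1-2^{-\alpha})<\tfrac1{2C_3}$ once $i_0$ is fixed large enough (depending only on the data), a contradiction. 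Hence for each $x\in E$ there is a radius $s_x\le C_2\rho_0$ with $\int_{B(x,s_x)}g_u^p\,d\mu\ge s_x$. These radii are uniformly bounded, so the $5r$-covering lemma yields a countable pairwise disjoint subfamily $\{B(x_j,s_{x_j})\}_j$ with $E\subset\bigcup_j B(x_j,5s_{x_j})$, whence
\[
   \mathcal{H}_\infty^1(E)\le5\sum_j s_{x_j}\le5\sum_j\int_{B(x_j,s_{x_j})} g_u^p\,d\mu\le5\int_\Om g_u^p\,d\mu\le5\int_\Om\rho^p\,d\mu.
\]
Taking the infimum over admissible $\rho$ with $\int_\Om\rho^p<c_0$ gives $\mathcal{H}_\infty^1(E)\le5\Modp(E,B,\Om)$, which together with the trivial case proves the claim with $C=\max\{5,\diam\Om/c_0\}$.

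The main obstacle is the middle step: one must check that the chain from Lemma~\ref{lem-John-chain} is "good" enough for the telescoping — boundedly many balls per dyadic scale, all centers within a fixed multiple of $\rho_i$ of $x$, and $3\la$-dilates inside $\Om$ so that~\eqref{PI-ineq} applies — and that at scale $i$ the Poincar\'e average localizes inside $B(x,C_2\rho_i)$; and one must order the choices of constants (first the target constant in $\int_{B(x,s_x)}g_u^p\ge s_x$, then $i_0$, then $c_0$) so as to avoid circularity. The openness of $E$ is essential: it forces $u_{B(x,r)}=1$ at every point of $E$, so no quasicontinuity or capacity-null exceptional set intervenes.
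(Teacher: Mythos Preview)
Your proof is correct and follows essentially the same route as the paper. In the paper the corollary is a one-line combination of Lemma~\ref{lem-John-chain} with the general content estimate Lemma~\ref{lem-chain-imp-length-est} (applied with $s=1$); you have in effect reproved the latter inline, using the same ingredients---telescoping along the Boman chain, the Poincar\'e inequality on the $3\la$-dilates, the lower mass bound to convert averages to $\rho_i^{-Q/p}$, the convergence of $\sum_i\rho_i^{(p+1-Q)/p}$ when $p>Q-1$, and the $5r$-covering lemma. The only stylistic difference is that the paper extracts the ``good ball'' $B_x$ by directly comparing the telescoping sum against the convergent geometric series (so no trivial-case split, no threshold $i_0$, no preliminary choice of $c_0$ is needed), which is a bit cleaner than your contradiction argument but amounts to the same thing.
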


\begin{proof}
This follows directly from Lemmas~\ref{lem-John-chain}
and~\ref{lem-chain-imp-length-est}.
\end{proof}

\begin{proof}[Proof of Theorem~\ref{thm-singleton-gen}]
Let $0<r<\diam\Om$ and $F$ be the set associated with $r$
as in Definition~\ref{def-almost-John}.
Given an end $[E_k]$,
set $E_k'=E_k\setm F$ and $\Om'=\Om\setm F$.
By Remark~\ref{rmk-open} we may assume that the sets $E_k$ are 
open, and hence so are $E_k'$.
Let $x_0$ be a John center of $\Om'$ and $B=B(x_0,\rho)\Subset\Om'$.
If $k$ is large enough, then $E_k \cap B =\emptyset$.
We consider only such $k$ in the rest of the proof.

Every curve connecting $B$ to $E_k'$ in $\Om'$
connects $B$ to $E_k\supset E_k'$
in $\Om$ and hence
\[
    \Modp(E_k',B,\Om') \le \Modp(E_k,B,\Om).
\]
As $\Om'$ is a John domain, this together with
Corollary~\ref{cor-John-H<Mod} implies that
\[
   \mathcal{H}_{\infty}^{1}(E_k')
   \le C \Modp( E_k',B, \Om')
   \le C \Modp(E_k,B,\Om),
\]
where $C$ depends on $r$ but not on $E_k$.
Since $E_k$ is connected, it follows that
\[
   \diam E_k \le \mathcal{H}_{\infty}^{1}(E_k)
   \le \mathcal{H}_{\infty}^{1}(F) + \mathcal{H}_{\infty}^{1}(E_k')
       \le r + C \Modp(E_k,B,\Om).
\]
As $[E_k]$ is a $\Modp$-end, we know that
$\lim_{k\to\infty} \Modp(E_k,B,\Om)=0$.
Hence,
\[
  \limsup_{k\to\infty} \diam E_k \le r.
\]
Letting $r\to 0$ shows that $\lim_{k \to \infty} \diam E_k=0$, and
an application of
Proposition~\ref{prop-end-single}
(and Lemma~\ref{lem-single-char}) completes the proof.
\end{proof}

\begin{prop}  \label{prop-almost-John-prime-singleton}
Let $\Om$ be an almost John domain.
Then the following are equivalent\/\textup{:}
\begin{enumerate}
\item \label{i4-single}
$[E_k]$ is a singleton end\textup{;}
\item \label{i4-prime}
\setcounter{saveenumi}{\value{enumi}}
$[E_k]$ is a prime end.
\end{enumerate}

If moreover, $p>Q-1$ and $1 \le p\in Q(x)\ne(0,1]$ for all $x\in\bdry\Om$,
then the following
statements are also equivalent to the statements above\/\textup{:}
\begin{enumerate}
\setcounter{enumi}{\value{saveenumi}}
\item \label{i4-Modp}
$[E_k]$ is a $\Modp$-end\textup{;}
\item \label{i4-Modp-prime}
$[E_k]$ is a $\Modp$-prime end.
\end{enumerate}
In particular, the $\Modp$-end boundary coincides with the prime end boundary
$\bdy_P \Om$.
\end{prop}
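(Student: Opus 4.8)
The plan is to prove the proposition by assembling results already established and arranging the statements into a short chain of implications.

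For the unconditional part I would first observe that \ref{i4-single} $\imp$ \ref{i4-prime} is immediate from Proposition~\ref{prop-end-single}, since every end with a singleton impression is a prime end. For the converse \ref{i4-prime} $\imp$ \ref{i4-single}, I would use that an almost John domain is finitely connected at the boundary (Theorem~\ref{thm-almost-John-finconn}) and then apply Proposition~\ref{prop-end-divisible}: if $[E_k]$ is a prime end and $x_0\in I[E_k]$, that proposition forces $I[E_k]=\{x_0\}$, so $[E_k]$ is a singleton end. (Equivalently, one may quote Corollary~\ref{cor-John-sec10}\,\ref{John-b} directly.) This gives \ref{i4-single} $\eqv$ \ref{i4-prime}.

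Next, assume in addition that $p>Q-1$ and $1\le p\in Q(x)\ne(0,1]$ for every $x\in\bdry\Om$. I would close the cycle \ref{i4-single} $\imp$ \ref{i4-Modp-prime} $\imp$ \ref{i4-Modp} $\imp$ \ref{i4-single}. The first implication is Proposition~\ref{prop-single-Modp}: if $I[E_k]=\{x\}$ then $x\in\bdry\Om$, so $1\le p\in Q(x)\ne(0,1]$ and hence $[E_k]$ is a $\Modp$-prime end. The second implication is trivial, a $\Modp$-prime end being in particular a $\Modp$-end. The third implication \ref{i4-Modp} $\imp$ \ref{i4-single} is precisely Theorem~\ref{thm-singleton-gen}: in an almost John domain with $p>Q-1$, every $\Modp$-end is a prime end with singleton impression. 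Together with \ref{i4-single} $\eqv$ \ref{i4-prime} this yields the equivalence of all four statements.

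For the last assertion, the equivalence \ref{i4-prime} $\eqv$ \ref{i4-Modp} read for an arbitrary end $[E_k]$ says precisely that an end is a prime end if and only if it is a $\Modp$-end, so the $\Modp$-end boundary coincides with $\bdy_P\Om$; one may also note, via Lemma~\ref{lem-enum-prime-end}\,\ref{it-prime-end-eq}, that every $\Modp$-end here is automatically a $\Modp$-prime end. Since every ingredient is already available, I do not expect a genuine obstacle; the only care needed is bookkeeping of the hypotheses---\ref{i4-Modp} $\imp$ \ref{i4-single} uses $p>Q-1$, whereas \ref{i4-single} $\imp$ \ref{i4-Modp-prime} uses the pointwise dimension condition at every boundary point---and the one spot to avoid circularity is deriving \ref{i4-prime} $\imp$ \ref{i4-single} from finite connectedness at the boundary (via Proposition~\ref{prop-end-divisible}) rather than from the modulus machinery.
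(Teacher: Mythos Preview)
Your proposal is correct and follows essentially the same route as the paper: the paper also derives \ref{i4-single}\,$\Leftrightarrow$\,\ref{i4-prime} from Proposition~\ref{prop-end-single} and Corollary~\ref{cor-John-sec10}\,\ref{John-b}, and then closes the cycle \ref{i4-single}\,$\Rightarrow$\,\ref{i4-Modp-prime}\,$\Rightarrow$\,\ref{i4-Modp}\,$\Rightarrow$\,\ref{i4-single} via Proposition~\ref{prop-single-Modp}, the trivial implication, and Theorem~\ref{thm-singleton-gen}. Your additional remarks on the final assertion and on which hypothesis is used where are accurate and not needed for the core argument.
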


\begin{proof}
\ref{i4-single} $\imp$ \ref{i4-prime}
This follows from Proposition~\ref{prop-end-single}.

\ref{i4-prime} $\imp$ \ref{i4-single}
This follows from Corollary~\ref{cor-John-sec10}\,\ref{John-b}.

Assume finally that 
$p$ is as in the statement of the proposition.

\ref{i4-single} $\imp$ \ref{i4-Modp-prime}
This follows from Proposition~\ref{prop-single-Modp}

\ref{i4-Modp-prime} \imp \ref{i4-Modp}
This is trivial.

\ref{i4-Modp} \imp \ref{i4-single}
This follows from Theorem~\ref{thm-singleton-gen}.
\end{proof}

\section*{Appendix. Modulus and capacity estimates}

\setcounter{equation}{0}
\setcounter{section}{1}
\setcounter{theorem}{0}
\renewcommand{\thesection}{\textup{\Alph{section}}}%
\addcontentsline{toc}{section}{Appendix}

In this appendix, we will provide several estimates for the
modulus and capacity needed in our study of prime ends.
Recall that $p \ge 1$ and that $\la\ge 1$ denotes the dilation constant
in the \p-Poincar\'e inequality.

The following lemma will be important for our estimates.

\begin{lemma}\label{MP}
For any choice of  disjoint sets  $E,F\subset \Om$ we have
\begin{equation}  \label{eq-capp=modp}
   \Modp(E,F,\Om) = \capp(E,F,\Om),
\end{equation}
where $\capp(E,F,\Om)$ is the \emph{\p-capacity}
of the condenser $(E,F,\Om)$ defined by
 \begin{equation} \label{eq-capp-minimizer}
    \capp(E,F,\Om):=\inf_u \int_\Om g_u^p\,d\mu,
 \end{equation}
 with the infimum  taken over all $u\in N^{1,p}(\Om)$
 satisfying $0\le u\le 1$ on $\Om$, $u=1$ on $E$, and $u=0$ on $F$.
 \end{lemma}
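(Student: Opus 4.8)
The plan is to prove the two inequalities $\Modp(E,F,\Om) \le \capp(E,F,\Om)$ and $\capp(E,F,\Om) \le \Modp(E,F,\Om)$ separately, following the classical argument (see V\"ais\"al\"a~\cite{vaisala} in the Euclidean setting) adapted to the metric space framework.

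For the inequality $\Modp(E,F,\Om) \le \capp(E,F,\Om)$, I would start with an admissible function $u \in N^{1,p}(\Om)$ for the capacity, i.e.\ $0 \le u \le 1$, $u = 1$ on $E$ and $u = 0$ on $F$, and show that $g_u$ (or rather a slight enlargement of it) is admissible in the definition of $\Modp(\G(E,F,\Om))$. The key point is that for any rectifiable curve $\ga$ from $E$ to $F$, the upper gradient inequality gives $1 = |u(\ga(0)) - u(\ga(l_\ga))| \le \int_\ga g_u\, ds$. A technical subtlety is that $g_u$ is only a \emph{minimal \p-weak} upper gradient, so the inequality $\int_\ga g_u\, ds \ge 1$ holds only for \p-a.e.\ curve in $\G(E,F,\Om)$; however, the exceptional family has zero \p-modulus, and one can absorb it by adding $\infty\cdot\chi$ on a function of arbitrarily small $L^p$-norm, or invoke the fact that modifying $\rho$ on a \p-exceptional family does not change the modulus. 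This yields $\Modp(E,F,\Om) \le \int_\Om g_u^p\, d\mu$, and taking the infimum over all admissible $u$ gives the desired inequality. (If there is no admissible $u$ at all, the right-hand side is $\infty$ and there is nothing to prove.)

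For the reverse inequality $\capp(E,F,\Om) \le \Modp(E,F,\Om)$, I would take an admissible $\rho \ge 0$ for the modulus, i.e.\ $\int_\ga \rho\, ds \ge 1$ for all $\ga \in \G(E,F,\Om)$, and define the function
\[
   u(x) = \min\Bigl\{1,\ \inf_\ga \int_\ga \rho\, ds\Bigr\},
\]
where the infimum is over all rectifiable curves $\ga$ in $\Om$ from $F$ to $x$ (with $u(x) = 1$ if no such curve exists). Then $u = 0$ on $F$, $u = 1$ on $E$ by admissibility of $\rho$, and $0 \le u \le 1$ by construction. One checks that $\rho$ (truncated suitably, e.g.\ $\rho \cdot \chi_{\{u < 1\}}$) is an upper gradient of $u$: if $\ga$ goes from $x$ to $y$ within $\Om$, then any curve from $F$ to $x$ can be extended by $\ga$ to reach $y$, giving $u(y) \le u(x) + \int_\ga \rho\, ds$, and symmetrically, so $|u(x) - u(y)| \le \int_\ga \rho\, ds$. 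Hence $u \in N^{1,p}(\Om)$ with $g_u \le \rho$ a.e., so $\capp(E,F,\Om) \le \int_\Om \rho^p\, d\mu$; taking the infimum over admissible $\rho$ finishes the proof.

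The main obstacle is the care needed with \emph{\p-weak} upper gradients versus genuine upper gradients and with \p-exceptional curve families in the first inequality, together with verifying that the function $u$ constructed in the second part genuinely lies in $N^{1,p}(\Om)$ (measurability of $u$, and that $\rho \in L^p(\Om)$ really is an upper gradient in the a.e.-curve sense). These are standard but must be handled correctly; in a complete space supporting a \p-Poincar\'e inequality with $\mu$ doubling, the equivalence of Newtonian functions and the behavior of minimal \p-weak upper gradients (Shanmugalingam~\cite{Sh-rev}, \cite{Sh-harm}) make all of this routine. Since $E$ and $F$ are disjoint, there is no issue with the boundary conditions $u = 0$ on $F$ and $u = 1$ on $E$ being simultaneously imposed.
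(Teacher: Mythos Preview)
Your proposal is correct and follows essentially the same two-inequality approach as the paper. The only point the paper treats with more care is the measurability of the constructed function $u$ in the second inequality: you flag it as a concern but call it routine, whereas the paper singles it out as the key technical ingredient and proves a separate localization lemma (Lemma~\ref{lem-local-jjrrs}, building on Theorem~1.11 in~\cite{jjrrs}) to establish it.
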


Note that both $\Modp$ and $\capp$ are symmetric with respect to the
first two arguments.

For compact $E$ and $F$, equality~\eqref{eq-capp=modp} was obtained 
by  Kallunki--Shan\-mu\-ga\-lin\-gam~\cite{KaSh}, Theorem~1.1,
with a more involved proof,
whereas 
Heinonen--Koskela~\cite{hk}, Proposition~2.17,
obtained this result using a different definition of the capacity.
At that time it was not known if the two definitions give
the same capacity, and  it was the measurability result from
Theorem~1.11 in
J\"arvenp\"a\"a--J\"arvenp\"a\"a--Rogovin--Rogovin--Shanmugalingam~\cite{jjrrs}
that made it possible to prove the lemma in its present form.
To do so we need the following localization of 
Theorem~1.11 in~\cite{jjrrs}.

\begin{lem}  \label{lem-local-jjrrs}
Assume that $\rho\in L^p\loc(\Om)$ is an upper gradient in $\Om$ of
$u:\Om\to\oR$.
Then $u$ is measurable in $\Om$.
\end{lem}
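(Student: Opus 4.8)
The plan is to deduce Lemma~\ref{lem-local-jjrrs} from the global measurability result, Theorem~1.11 in~\cite{jjrrs}, by an exhaustion argument that upgrades local information to global information on larger and larger relatively compact pieces of $\Om$. The point is that Theorem~1.11 in~\cite{jjrrs} is stated for functions on the whole space (or on a ball) having a global upper gradient in $L^p$, whereas here we only assume $\rho \in L^p\loc(\Om)$ is an upper gradient of $u$ in $\Om$. Measurability is a local property: a function is measurable on $\Om$ if and only if its restriction to each member of a countable open cover of $\Om$ is measurable. So it suffices to show that $u$ is measurable on each set in a suitable exhaustion.

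First I would fix an exhaustion $\Om = \bigcup_{i=1}^\infty \Om_i$ by open sets $\Om_i$ with $\overline{\Om_i}$ compact and $\overline{\Om_i} \subset \Om_{i+1}$; this is possible since $\Om$ is open in the proper space $X$. On each $\Om_i$ the restriction $\rho|_{\Om_i}$ is an upper gradient of $u|_{\Om_i}$ (the defining inequality~\eqref{eq:upperGrad} only involves curves lying in the set, and every rectifiable curve in $\Om_i$ is a rectifiable curve in $\Om$), and moreover $\rho \in L^p(\Om_i)$ because $\overline{\Om_i}$ is compact and $\rho \in L^p\loc(\Om)$. Thus on each $\Om_i$ we are in a situation with a genuine $L^p$ upper gradient. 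One then applies Theorem~1.11 in~\cite{jjrrs} on $\Om_i$ — or, if that theorem is phrased only for balls or for the ambient space, one first covers $\overline{\Om_i}$ by finitely many balls $B$ with $2B \subset \Om$, observes that $\rho \in L^p(B)$ is an upper gradient of $u$ on $B$, and concludes $u$ is measurable on each such $B$, hence on $\Om_i$. In either case we obtain that $u|_{\Om_i}$ is measurable.

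Finally, since $\{\Om_i\}_{i=1}^\infty$ is a countable open cover of $\Om$ and $u|_{\Om_i}$ is measurable for every $i$, the function $u$ is measurable on $\Om$: for any Borel set $A \subset \oR$ we have $u^{-1}(A) = \bigcup_{i=1}^\infty \bigl( u^{-1}(A) \cap \Om_i \bigr) = \bigcup_{i=1}^\infty (u|_{\Om_i})^{-1}(A)$, a countable union of measurable sets. This completes the argument.

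The only genuine obstacle is making sure that the hypotheses of Theorem~1.11 in~\cite{jjrrs} are literally met on each piece — in particular that the relevant upper gradient really is in $L^p$ of that piece (handled by relative compactness and $\rho \in L^p\loc$) and that curves are not lost when passing from $\Om$ to $\Om_i$ or to a ball $B$ (handled by the observation that upper gradient inequalities localize downward to subdomains). Everything else is the routine bookkeeping of an exhaustion, and no quantitative estimate is needed.
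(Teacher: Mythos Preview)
Your localization strategy has a genuine gap. Theorem~1.11 in~\cite{jjrrs} is a statement about functions on the ambient space $X$ (which is assumed complete, doubling, and supporting a \p-Poincar\'e inequality), not about functions on arbitrary open subsets. When you restrict to $\Om_i$ or to a ball $B\subset\Om$ and try to apply the theorem there, you need $\Om_i$ (or $B$) \emph{as a metric measure space in its own right} to satisfy the hypotheses of that theorem. But an open subset of $X$ is typically not complete, and there is no reason a general subdomain should inherit a \p-Poincar\'e inequality or even doubling (balls in $\Om_i$ are intersections of $X$-balls with $\Om_i$, which behave badly near $\bdry\Om_i$). So neither the direct application on $\Om_i$ nor the fallback to balls is justified as stated.

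The paper's proof avoids this by \emph{pushing the problem back to $X$} rather than pulling the theorem down to a subdomain. Given $\Om'\Subset\Om$, it first truncates $u$ to a bounded function $u_k$ (so that the product with a cutoff has a controllable upper gradient), then multiplies by a Lipschitz cutoff $\eta$ with $\eta=1$ on $\Om'$ and $\spt\eta\subset\Om$. The product $u_k\eta$ is now defined on all of $X$ (extended by zero), and a direct curve-by-curve check---including curves that leave $\spt\eta$---shows that $\rho\chi_{\spt\eta}+kg_\eta\in L^p(X)$ is an upper gradient of $u_k\eta$ in $X$. Now Theorem~1.11 applies on $X$ itself, giving measurability of $u_k\eta$, hence of $u_k$ on $\Om'$, and letting $k\to\infty$ finishes. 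The two ingredients you are missing are precisely the truncation (needed because the Leibniz-type bound involves $|u|\,g_\eta$, which is not controlled without it) and the cutoff extension (needed to land in the setting where the cited theorem actually applies).
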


\begin{proof}
For $k>0$, let $u_k=\min\{k,\max\{-k,u\}\}$ be the truncation of $u$ 
at levels $\pm k$.
Let $\Om'\Subset\Om$ be an arbitrary open set and find  
a nonnegative Lipschitz
function $\eta$ with $\spt \eta \subset \Om$ such 
that $\eta=1$ on $\Om'$ and $0 \le \eta \le 1$ on $X$.
Let $g\in L^p(X)$ be an upper gradient of $\eta$ in $X$.

We shall show that the function $\rho\chi_{\spt\eta} + kg$
is an upper gradient of 
$u_k\eta$ in $X$.
To do so, let $\ga:[0,l_\ga]\to X$ be a nonconstant rectifiable
curve in $X$. 
If $\ga\subset X\setminus\spt\eta$, then
$|(u_k\eta)(\ga(0))-(u_k\eta)(\ga(l_\ga))|=0 \le \int_\ga (\rho\chi_{\spt\eta}+kg) \,ds$.
Otherwise, by splitting $\ga$ into two parts and possibly
reversing the orientation, we can assume that 
$x:=\ga(0)\in\spt\eta$, and
that either $\ga \subset \supp \eta$ or that
$\ga(l_\ga) \notin \supp \eta$.
In the latter case, let $c=\inf\{t: \ga(t) \notin \supp \eta\}$,
so that 
$\ga([0,c])\subset\spt\eta$ and $\eta(\ga(c))=0$.
In the former case let $c=l_\ga$.
In both cases we
have, with $\ga'=\ga|_{[0,c]}$, $z=\ga(c)$ and $y=\ga(l_\ga)$, that
\begin{align*}
|(u_k\eta)(x)-(u_k\eta)(y)| &= |(u_k\eta)(x)-(u_k\eta)(z)| \\
&\le |u_k(x)| |\eta(x)-\eta(z)| + |\eta(z)| |u_k(x)-u_k(z)| \\
&\le k \int_{\ga'} g\,ds +  \int_{\ga'} \rho\,ds \\
&\le \int_\ga (\rho\chi_{\spt\eta} + kg)\,ds.
\end{align*}
As $\ga$ was arbitrary, $\rho\chi_{\spt\eta} + kg \in L^p(X)$ is an upper 
gradient of $u_k\eta$ in $X$.
Theorem~1.11 in 
\cite{jjrrs}
implies that $u_k\eta$ is measurable in $X$. 
Letting $k\to\infty$ implies that $u=\lim_{k\to \infty}u_k\eta$ 
is measurable in $\Om'$.
Since $\Om'\Subset\Om$ was arbitrary, the result follows.
\end{proof}

\begin{proof}[Proof of Lemma~\ref{MP}]
To see the validity of (\ref{eq-capp=modp}),
note that by (\ref{eq:upperGrad})
and the fact that for the minimal $p$-weak upper gradient $g_v$ of
an (everywhere defined) function
$v\in \Np(\Om)$ there are upper gradients $g_j$ of $v$
such that
$g_j \to g_v$ in $L^p(\Om)$ (see Koskela--MacManus~\cite{KoMc}),
we have
$\capp(E,F,\Om)\geq \Modp(E,F,\Om)$. On the other hand, if 
$\rho\in L^p(\Om)$ is
an admissible function used for
computing $\Modp(E,F,\Om)$, then we define a function $u$ on $\Om$ by
\[
 u(x)=\min\left\{1, \inf_{\gamma_{E, x}}\int_{\gamma_{E, x}}\rho \, ds\right\},
\]
where 
the infimum is taken over all rectifiable curves connecting $E$ to $x$ in $\Om$.
Observe that $u=0$ on $E$, $u=1$ on $F$ and
$\rho$ is an upper gradient of $u$,
by Lemma~3.1 in Bj\"orn--Bj\"orn--Shanmugalingam~\cite{BBS5}
(or Lemma~5.25 in Bj\"orn--Bj\"orn~\cite{BBbook}).
By Lemma~\ref{lem-local-jjrrs},
the function $u$ is measurable in $\Om$,
and since $|u|\leq 1$ and $\Om$ is bounded,
it follows that $u\in N^{1,p}(\Om)$ and
\[
    \capp(E,F,\Om)\leq \int_\Om g_u^p\,d\mu\le\int_\Om\rho^p\, d\mu.
\]
Hence,
by taking infimum over all such $\rho$ we conclude that
\[
     \capp(E,F,\Om)\leq \Modp(E,F,\Om).
     \qedhere
\]
\end{proof}

\begin{lem}   \label{lem-mod>0}
Let $E, F \subset\Om$ be disjoint and with nonempty interiors.
Then
\[
\Modp(E,F,\Om) = \capp(E,F,\Om) >0.
\]
\end{lem}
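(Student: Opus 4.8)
The statement to prove is Lemma~\ref{lem-mod>0}: if $E,F\subset\Om$ are disjoint with nonempty interiors, then $\Modp(E,F,\Om)=\capp(E,F,\Om)>0$. The equality is immediate from Lemma~\ref{MP}, so the whole content is the strict positivity. Since $E$ and $F$ have nonempty interiors, we can fix balls $B(y_E,r_E)\subset E$ and $B(y_F,r_F)\subset F$. I would work with the modulus formulation and argue by contradiction: suppose $\Modp(E,F,\Om)=0$, so there are admissible $\rho_j\in L^p(\Om)$ with $\int_\Om\rho_j^p\,d\mu\to0$, each satisfying $\int_\ga\rho_j\,ds\ge1$ for every $\ga\in\G(E,F,\Om)$. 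The goal is to derive a contradiction by exhibiting a single curve $\ga_0\in\G(E,F,\Om)$ (or a curve family of positive modulus) along which $\int_{\ga_0}\rho_j\,ds$ cannot go to zero.

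First I would use quasiconvexity of $X$ (from the standing assumptions, via Semmes' theorem cited at the end of Section~\ref{sect-prelim}): there is a rectifiable curve $\ga_1$ in $X$ joining $y_E$ to $y_F$ with length at most $L\,d(y_E,y_F)$. This curve need not lie in $\Om$, which is the one genuine subtlety. To handle it, I would instead appeal to the \p-Poincar\'e inequality applied inside $\Om$: the standard argument (as in the proof that Poincar\'e domains have positive condenser capacity) shows that for the function $u$ built from an admissible $\rho$ as in the proof of Lemma~\ref{MP}, one has $0\le u\le1$, $u=0$ on a ball and $u=1$ on another ball, and then a telescoping chain-of-balls estimate using \eqref{PI-ineq} and the doubling property forces $\int_\Om g_u^p\,d\mu\ge c>0$ for a constant $c=c(E,F,\Om)$ independent of $\rho$. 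Concretely: since $X$ is quasiconvex and $\mu$ doubling supporting a Poincar\'e inequality, $\Om$ itself need not support a Poincar\'e inequality, but $\Modp(B(y_E,r_E),B(y_F,r_F),X)>0$ because these are nondegenerate condensers in a space supporting a \p-Poincar\'e inequality (see e.g.\ the capacity estimates in Heinonen~\cite{he} or the mass-bound argument in the Appendix). The cleanest route: bound $\Modp(E,F,\Om)\ge\Modp(B(y_E,r_E),B(y_F,r_F),\Om)$ by monotonicity, so it suffices to treat two balls.

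For two balls $B_E\subset E$, $B_F\subset F$, I would use Lemma~\ref{MP} to pass to the capacity $\capp(B_E,B_F,\Om)$, take any competitor $u\in\Np(\Om)$ with $u=1$ on $B_E$, $u=0$ on $B_F$, $0\le u\le1$, and estimate from below: extend the relevant ball and use that $|u_{B_E}-u_{B_F}|=1$; then inserting a chain of balls of comparable size joining the two balls inside $\Om$ (possible since $\Om$ is open and connected, hence — using local connectedness / quasiconvexity — any two interior balls are joined by a compactly contained polygonal-type chain) and applying the \p-Poincar\'e inequality on each ball of the chain gives $1=|u_{B_E}-u_{B_F}|\le C\bigl(\int_\Om g_u^p\,d\mu\bigr)^{1/p}$ with $C$ depending only on the chain, the doubling constant and $\CPI$. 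Taking the infimum over $u$ yields $\capp(B_E,B_F,\Om)\ge C^{-p}>0$, hence $\Modp(E,F,\Om)>0$.

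\textbf{Main obstacle.} The delicate point is that $\Om$ is merely a bounded domain and is not assumed to support a Poincar\'e inequality, so the chain-of-balls argument must be run with balls that stay compactly inside $\Om$ and join two \emph{fixed} interior balls; the constant is then allowed to depend on $E$, $F$ and $\Om$ (which is fine — the statement only asserts positivity, not a uniform bound). I would carry this out by first reducing (by monotonicity of $\Modp$ and Lemma~\ref{MP}) to the two-ball condenser, then choosing a fixed finite chain of balls $B_E=B^{(0)},B^{(1)},\dots,B^{(m)}=B_F$ with $\lambda B^{(i)}\Subset\Om$ and consecutive balls overlapping in a set of comparable measure, and finally telescoping $|u_{B^{(0)}}-u_{B^{(m)}}|\le\sum_i|u_{B^{(i)}}-u_{B^{(i+1)}}|$ together with \eqref{PI-ineq} on each $B^{(i)}$ and the doubling inequality to absorb the overlap ratios. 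This is routine once set up; the only thing requiring care is the existence of such a chain, which follows from connectedness and local connectedness of $\Om$ together with properness of $X$ (a compact connected subset of $\Om$ containing $y_E$ and $y_F$, at positive distance from $X\setm\Om$, can be covered by finitely many such balls).
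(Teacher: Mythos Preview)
Your proposal is correct and follows essentially the same approach as the paper: reduce to the capacity via Lemma~\ref{MP}, take an admissible $u$ with $u=1$ on a ball in $E$ and $u=0$ on a ball in $F$, connect the two balls by a finite chain of balls compactly contained in $\Om$, and telescope $1=|u_{B^{(0)}}-u_{B^{(m)}}|$ using the \p-Poincar\'e inequality on each ball. The only minor difference is in how the chain is obtained: the paper first produces a rectifiable curve $\ga\subset\Om$ joining the two centers (citing Lemma~4.38 in Bj\"orn--Bj\"orn~\cite{BBbook} for rectifiable connectedness of $\Om$), then covers $\ga$ by balls of a fixed radius $r<\dist(\ga,X\setm\Om)/3\la$, whereas you invoke pathconnectedness via local connectedness and cover a compact connected set; both work.
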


\begin{proof}
The equality follows from Lemma~\ref{MP}.
It is thus enough to show that
\[
\int_\Om g_u^p\,d\mu \ge c>0
\]
for every $u\in N^{1,p}(\Omega)$ such that
$u=1$ on $E$ and $u=0$ on $F$.
Note that if there are no such  functions $u$, then the theorem
holds trivially since then $\capp(E,F,\Om)=\infty>0$.

Let $x$ and $y$ be points in the interiors of $E$ and $F$, respectively.
Since $X$ is quasiconvex,
Lemma~4.38 in Bj\"orn--Bj\"orn~\cite{BBbook} 
implies that $\Om$ is
rectifiably connected and we can thus find
a rectifiable curve $\gamma:[0,l_\ga]\to\Omega$
connecting $x$ to $y$.
Let $0<r<\dist(\ga,X\setm\Om)/3\la$
be such that both $B(x,r)\subset E$ and
$B(y,r)\subset F$.
Cover $\ga$ by balls $B_j=B(x_j,r)$, $j=0,1,\ldots,n$, such that
$B_0=B(x,r)$, $B_n=B(y,r)$ and $B_j\cap B_{j+1}$ is nonempty for
$j=0,1,\ldots,n-1$.
Then $B_{j+1}\subset 3B_j$ and $B_j \subset 3B_{j+1}$ for $j=0,1,\ldots,n-1$.

Let $u\in N^{1,p}(\Omega)$ be such that
$u=1$ on $E$ and $u=0$ on $F$.
Then, since $\mu$ is doubling,
\[
|u_{B_j}-u_{B_{j+1}}| \le |u_{B_j}-u_{3B_{j}}| + |u_{B_{j+1}}-u_{3B_{j}}|
\le C\vint_{3B_j}|u-u_{3B_j}|\,d\mu.
\]
The \p-Poincar\'e inequality
then yields that
\begin{align*}
  1&=|u_{B_0}-u_{B_n}|
  \le \sum_{j=0}^{n-1}|u_{B_j}-u_{B_{j+1}}|
          \le C \sum_{j=0}^{n-1}\vint_{3B_j}|u-u_{3B_j}| \,d\mu\\
  &\le C \sum_{j=0}^{n-1} r \left(\vint_{3\lambda B_j}g_u^p\,d\mu\right)^{1/p}
           \le \Ct \left(\int_\Omega g_u^p\,d\mu\right)^{1/p},
\end{align*}
where $\Ct$ is independent of $u$
(but depends on $r$ and the sequence of balls $\{B_j\}_{j=0}^n$).
Taking infimum over all admissible functions $u$ yields the desired result.
\end{proof}

The following estimate is crucial for showing
that singleton ends are $\Modp$-ends in Proposition~\ref{prop-single-Modp}.
Recall that $Q(x)$ was defined in Definition~\ref{ptwise-dim}.

\begin{lem}  \label{lem-cap-0-mod-0}
Let $x\in \overline{\Om}$.
If $1 \le p\in Q(x)\ne(0,1]$, then
for every compact $K\subset\Om\setminus\{x\}$,
\begin{equation}   \label{eq-mod-to-0}
\lim_{r\to0} \Modp (B(x,r)\cap\Om,K,\Om) = 0.
\end{equation}
\end{lem}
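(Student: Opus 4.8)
The plan is to bound the modulus $\Modp(B(x,r)\cap\Om,K,\Om)$ from above by the capacity of a spherical condenser around $x$, and then to show that this capacity tends to $0$ as $r\to0$ under the hypothesis $1\le p\in Q(x)$. First I would use monotonicity of the modulus to pass to a larger configuration that no longer involves the fixed compact set $K$: since $K$ is compact and disjoint from $x$, there is $R_0>0$ with $K\cap B(x,R_0)=\emptyset$ and $\overline{B(x,R_0)}\Subset X$; any curve from $B(x,r)\cap\Om$ to $K$ inside $\Om$ must cross the annulus $B(x,R_0)\setminus B(x,r)$, so for $r<R_0$,
\[
\Modp(B(x,r)\cap\Om,K,\Om)\le \Modp\bigl(\overline{B(x,r)},X\setminus B(x,R_0),X\bigr).
\]
By Lemma~\ref{MP} this last quantity equals the \p-capacity $\capp(\overline{B(x,r)},X\setminus B(x,R_0),X)$, so it suffices to produce an admissible function $u$ for this condenser with $\int_X g_u^p\,d\mu$ small.

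The natural test function is the truncated, scaled logarithmic (or power) cutoff: for $p\in Q(x)$ pick $q\in Q(x)$ with $q>p-1$ (possible since $Q(x)\ne(0,1]$, so $\sup Q(x)>1\ge p-1$ when $p=1$, and for $p>1$ one uses that $p\in Q(x)$ already gives room, taking $q$ slightly below $p$ is not enough — one wants $q>p-1$, which holds because $p\le q_0$ and $q_0>1$ when $Q(x)\ne(0,1]$; choose $q\in(p-1,q_0]\cap Q(x)$). Define $u$ to be $1$ on $B(x,r)$, $0$ outside $B(x,R_0)$, and on the annulus let $u$ interpolate radially using a function of $d(\cdot,x)$ whose gradient is comparable to a suitable power of $1/d(\cdot,x)$; concretely take $g(y)=c\,d(x,y)^{-1}\bigl(\log(R_0/d(x,y))\bigr)^{-1}$ type weight, or a pure power weight $d(x,y)^{\beta-1}$ with $\beta>0$ small, normalized so that $\int_r^{R_0}$ of the radial density equals $1$. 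The key computation is then
\[
\int_X g_u^p\,d\mu \;\lesssim\; \sum_{k}\Bigl(\text{density on dyadic annulus } A_k=B(x,2^{-k})\setminus B(x,2^{-k-1})\Bigr)^p\,\mu(A_k),
\]
and here one invokes \eqref{upper-mass-bound-q} with the chosen $q\in Q(x)$: $\mu(A_k)\le\mu(B(x,2^{-k}))\le C_q(2^{-k}/R_0)^q\,\mu(B(x,R_0))$ — wait, this is the wrong direction; rather one wants a lower bound $\mu(B(x,r))$ comparable to $r^Q$ is false in general, so instead one estimates the capacity directly: choosing the radial density $\rho(t)\sim t^{-1}$ on $[r,R_0]$ gives admissibility after dividing by $\log(R_0/r)$, and
\[
\capp(\overline{B(x,r)},X\setminus B(x,R_0),X)\;\lesssim\;\frac{1}{(\log(R_0/r))^p}\sum_{k:2^{-k}\in[r,R_0]} 2^{kp}\,\mu(A_k).
\]
Using $\mu(A_k)\le C_q 2^{-kq}\mu(B(x,R_0))(\text{const})$ from \eqref{upper-mass-bound-q} (this is the correct direction: $\mu(B(x,2^{-k}))/\mu(B(x,R_0))\le C_q(2^{-k}/R_0)^q$), the summand is $\lesssim 2^{k(p-q)}$, and since $q>p-1$ we have $p-q<1$; the geometric-type sum over the $O(\log(R_0/r))$ relevant scales is dominated by its largest term $\sim (1/r)^{p-q}$ if $p>q$, or is $O(\log(R_0/r))$ if $p\le q$. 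In the borderline and subcritical cases this beats the $(\log(R_0/r))^{-p}$ factor (for $p>1$ even a single power suffices and one can skip the log altogether), giving a bound $\to0$ as $r\to0$.

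The main obstacle — and the reason the hypothesis $Q(x)\ne(0,1]$ appears — is precisely handling the case $p=1$ and the general (non-Ahlfors-regular) mass behavior: for $p=1$ a naive power cutoff has non-integrable gradient at the critical exponent, so one must use the logarithmic cutoff and exploit that $q$ can be taken strictly bigger than $p-1=0$, i.e.\ strictly positive, which is exactly what $Q(x)\ne(0,1]$ guarantees ($q_0>1>0$, in fact $q_0>0$ always suffices here but the condition $q>p-1$ with $p=1$ just needs $q>0$; the sharper condition $Q(x)\ne(0,1]$ is what makes $p=1$ work with room to spare, and for $p>1$ it ensures $p\le q_0$ is compatible with $q>p-1$). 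I would organize the write-up as: (i) reduce via monotonicity and Lemma~\ref{MP} to estimating a spherical condenser capacity; (ii) construct the radial cutoff $u$ and verify $u\in\Np$ and its upper gradient via the quasiconvexity/properness of $X$; (iii) carry out the dyadic-annulus estimate using \eqref{upper-mass-bound-q} with an admissible $q$; (iv) let $r\to0$. The delicate point to get right is the choice of $q$ and the normalization of the cutoff so that the exponent $p-q$ is strictly less than the number of scales' growth rate — everything else is routine.
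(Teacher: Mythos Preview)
Your reduction step is correct and matches the paper: pass by monotonicity to the spherical condenser $\Modp(\overline{B(x,r)},X\setminus B(x,R_0),X)$ and invoke Lemma~\ref{MP} to turn this into a capacity. The paper then simply cites the annular capacity estimate of Garofalo--Marola (Theorem~3.3 in~\cite{GaMa}), while you attempt to reproduce that estimate by hand via a radial cutoff and a dyadic decomposition. That is a legitimate route, but your exponent bookkeeping has a genuine error.

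The condition you impose, $q>p-1$, is too weak. With your logarithmic cutoff the bound is
\[
\capp(\overline{B(x,r)},X\setminus B(x,R_0),X)\;\lesssim\;\frac{1}{(\log(R_0/r))^{p}}\sum_{k:\,r\le 2^{-k}\le R_0} 2^{k(p-q)},
\]
and if $p>q$ (which your choice $q\in(p-1,q_0]\cap Q(x)$ allows) the sum is $\sim (1/r)^{p-q}$, a positive power of $1/r$; this is \emph{not} beaten by any power of the logarithm, so the bound does not tend to $0$. Your sentence ``in the borderline and subcritical cases this beats the log factor'' tacitly drops exactly the case your choice of $q$ created. The fix is to require $q\ge p$, which the hypotheses always permit: since $p\in Q(x)$ one has $p\le q_0$; if $p<q_0$ pick $q\in(p,q_0)\cap Q(x)$, and if $p=q_0$ then necessarily $Q(x)=(0,q_0]$ so take $q=p$, and then $Q(x)\ne(0,1]$ forces $p=q_0>1$, whence the bound $(\log(R_0/r))^{1-p}\to0$. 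This is precisely where the hypothesis $Q(x)\ne(0,1]$ enters, not the weaker ``$q>p-1$'' mechanism you describe. For comparison, the paper splits into $1<p<q_0$, $p=q_0$, and $p=1$; in the last case it does not build a cutoff at all but reduces to some $q>1$ via H\"older's inequality on the modulus, which is a clean alternative you might prefer for the endpoint.
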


The following example shows that we cannot allow $p=1$ and $Q(x)=(0,1]$
in Lemma~\ref{lem-cap-0-mod-0}.

\begin{example}\label{ex-Q-not01}
Let $X=\R$ (unweighted), $\Om=(-1,1)$, $K=\{0\}$,
$x=1$ and $0<r<1$.
Then every function $u$ admissible for $\capp (B(x,r)\cap\Om,K,\Om)$
satisfies $u(0)=0$ and  $u(1-r)=1$.
Hence
\[
\int_{-1}^1 |u'(t)|\,dt \ge u(1-r)-u(0) = 1,
\]
resulting in $\capp (B(x,r)\cap\Om,K,\Om) \ge 1$.
In view of Lemma~\ref{MP}, we see that
\eqref{eq-mod-to-0} fails.
\end{example}

\begin{proof}[Proof of Lemma~\ref{lem-cap-0-mod-0}]
By Lemma~\ref{MP} it suffices to show that
\begin{equation}  \label{eq-show-cap-to-0}
\lim_{r\to0} \capp (B(x,r)\cap\Om,K,\Om) = 0.
\end{equation}
Assume first that $1<p<q_0$, where $q_0$ is the right end point of $Q(x)$.
Choose $\eps>0$ such that $q:=p+\eps\in Q(x)$.
Theorem~3.3 in Garofalo--Marola~\cite{GaMa}
together with the upper mass bound estimate~\eqref{upper-mass-bound-q}
implies that for all sufficiently small $R>0$ there exists $C(R)>0$ such that
for all $0<r<R$ we have
\begin{align}  \label{eq-GM-p-le-q0}
\capp(B(x,r),X \setm B(x,R),X) &\le C(R) r^{-p} \mu(B(x,r)) \\
&\le C(R) r^{-p} C_q \mu(B(x,R)) \Bigl(\frac{r}{R}\Bigr)^q \nonumber\\
&= C(R)  C_q R^{-q} \mu(B(x,R)) r^\eps \to0 \quad \text{as } r\to0.
\end{align}
If instead $Q(x)=(0,q_0]\ne(0,1]$ and $p=q_0$, then $p>1$ and
the estimate from Theorem~3.3 in \cite{GaMa} becomes
\begin{align}  \label{eq-GM-p-q0}
\capp(B(x,r),X \setm B(x,R),X) &\le C(R) \Bigl( \log\frac{R}{r} \Bigr)^{1-p}\to0
\quad \text{as } r\to0.
\end{align}

Now, let $R>0$ be sufficiently small and such that $B(x,R)\subset X\setm K$.
Since every $u$ admissible in the definition of 
$\capp(B(x,r),X \setm B(x,R),X)$
is also admissible for $\capp (B(x,r)\cap\Om,K,\Om)$,
we conclude from
\eqref{eq-GM-p-le-q0} and~\eqref{eq-GM-p-q0} that
\eqref{eq-show-cap-to-0} holds for all $1<p\in Q(x)$.

Finally, if $p=1\in Q(x)\ne(0,1]$, then we have by above that
\eqref{eq-mod-to-0} holds for some $q>1$.
As $\Om$ is bounded, the H\"older inequality implies that
for every $\rho$ admissible in the definition of $\Mod_q (B(x,r)\cap\Om,K,\Om)$
(and thus also for $\Mod_1 (B(x,r)\cap\Om,K,\Om))$
we have
\[
\int_\Om \rho\,d\mu \leq \mu(\Om)^{1-1/q} \bigg(\int_\Om \rho^{q} \,d\mu\bigg)^{1/q}.
\]
Taking infimum over all such $u$ shows that
\[
\Mod_1 (B(x,r)\cap\Om,K,\Om) \leq \mu(\Om)^{1-1/q} \Mod_q (B(x,r)\cap\Om,K,\Om)^{1/q}
\]
and \eqref{eq-mod-to-0} holds also for $p=1$ in this case.
\end{proof}

Next, we shall relate the modulus to the Hausdorff content.

\begin{lem}  \label{lem-chain-imp-length-est}
Let $E\subset\Om$ be open
and $B(x_0,r)\Subset\Om\setm E$.
Assume that there exist $M>0$ and $0<\rho_0\le r$
such that $E$ 
can be chain-connected to the ball  
$\Boo=B(x_0,\rho_0)$ as in
Definition~\ref{chainB}.
Let $s>0$ and $p>Q-s$.
Then  there exists a constant $C$ depending only on
$M$, $p$, $s$, $Q$, $r$,
the doubling  constant $C_\mu$ and on the constants in the Poincar\'e inequality
such that
\[
\mathcal{H}_\infty^s(E) \le C \capp(E,B(x_0,r),\Om)
= C \Modp(E,B(x_0,r),\Om).
\]
\end{lem}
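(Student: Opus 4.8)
The plan is to use Lemma~\ref{MP} to replace $\Modp(E,B(x_0,r),\Om)$ by $\capp(E,B(x_0,r),\Om)$, and then to show that $\mathcal{H}_\infty^s(E)\le C\int_\Om g_u^p\,d\mu$ for every $u\in N^{1,p}(\Om)$ with $0\le u\le1$ on $\Om$, $u=1$ on $E$ and $u=0$ on $B(x_0,r)$; taking the infimum over such $u$ then yields the lemma. (If $\Modp(E,B(x_0,r),\Om)=\infty$, or if there is no admissible $u$, there is nothing to prove.) Extend $g_u$ by zero outside $\Om$.

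\emph{Telescoping along the chain.} Fix $x\in E$ and let $\{\Bij\}$ be a chain of balls connecting $x$ to $\Boo=B(x_0,\rho_0)$ as in Definition~\ref{chainB}. Since $u\equiv0$ on $B(x_0,r)\supset\Boo$ we have $u_{\Boo}=0$; since $u\equiv1$ on the open set $E\ni x$ and $\Bio=B(x,\rho_i)\subset E$ for all large $i$ by \ref{fourth}, we have $u_{\Bio}=1$ for all large $i$. Telescoping $u_{\Bio}-u_{\Boo}$ along consecutive balls of the chain, and estimating each difference $|u_B-u_{B'}|$ by passing through a ball $B''$ of radius at most $3\rho_i$ that contains the two (overlapping, comparable) neighbours and satisfies $\la B''\subset3\la\Bij\subset\Om$ (by \ref{first}), using the doubling property to compare averages and the \p-Poincar\'e inequality on $B''$ (valid since $\la B''\subset\Om$), one obtains
\[
   1\le C_1\sum_{i=0}^\infty\sum_{j=0}^{m_i}\rho_i\bigl(\vint_{3\la\Bij}g_u^p\,d\mu\bigr)^{1/p},
\]
with $C_1$ depending only on $C_\mu$ and $\CPI$. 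By \ref{third} this gives $1\le C_1(M+1)\sum_{i\ge0}\rho_i\bigl(\vint_{3\la B_{i,j_i}}g_u^p\,d\mu\bigr)^{1/p}$ for suitable indices $j_i=j_i(x)$.

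\emph{Selecting a scale and covering.} Since $3\la\Bij\subset\Om$ is bounded, the lower mass bound \eqref{lower-mass-bound} gives $\mu(3\la\Bij)\ge c_2\rho_i^Q$ for a constant $c_2$ (depending on $C_\mu$, $Q$, $\la$ and $\Om$), so with $\alpha=(p-Q)/p$ we get $1\le C_3\sum_{i\ge0}\rho_i^{\alpha}\bigl(\int_{3\la B_{i,j_i}}g_u^p\,d\mu\bigr)^{1/p}$. The hypothesis $p>Q-s$ is precisely $\alpha+s/p>0$, so $K_0:=\sum_{i\ge0}\rho_i^{\alpha+s/p}$ is a finite geometric sum bounded in terms of $p$, $s$, $Q$ and $r$ (as $\rho_0\le r$). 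Writing $\rho_i^{\alpha}=\rho_i^{\alpha+s/p}\rho_i^{-s/p}$ and applying pigeonhole to the convergent series $\sum_i\rho_i^{\alpha+s/p}$, there is an index $i=i(x)$ with $\int_{3\la B_{i(x),j_{i(x)}}}g_u^p\,d\mu\ge(C_3K_0)^{-p}\rho_{i(x)}^{s}$. Put $c_4=\max\{3\la,M+1\}$, let $D_x=c_4B_{i(x),j_{i(x)}}$ and let $t_x=c_4\rho_{i(x)}$ be its radius; then $x\in D_x$ by \ref{second} and $3\la B_{i(x),j_{i(x)}}\subset D_x$, so $\int_{D_x}g_u^p\,d\mu\ge(C_3K_0)^{-p}c_4^{-s}t_x^{s}$. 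The balls $\{D_x\}_{x\in E}$ cover $E$ with radii $\le c_4r$, so by the basic $5r$-covering lemma there is a pairwise disjoint subfamily $\{D_{x_k}\}_k$ with $E\subset\bigcup_k5D_{x_k}$, and hence
\[
   \mathcal{H}_\infty^s(E)\le\sum_k(5t_{x_k})^s\le5^s(C_3K_0)^pc_4^{s}\sum_k\int_{D_{x_k}}g_u^p\,d\mu\le5^s(C_3K_0)^pc_4^{s}\int_\Om g_u^p\,d\mu,
\]
using the disjointness of the $D_{x_k}$ and that $g_u$ vanishes off $\Om$. Taking the infimum over $u$ and invoking Lemma~\ref{MP} completes the argument, with a constant of the stated form.

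\emph{Main obstacle.} The heart of the matter is the telescoping estimate together with the scale selection: one has to notice that $p>Q-s$ is exactly the condition making $\sum_i\rho_i^{(p-Q+s)/p}$ a convergent geometric series, which is what lets the sum over all scales be collapsed to a single ball $3\la B_{i(x),j_{i(x)}}$ carrying energy $\gtrsim(\text{radius})^s$. After that, applying the $5r$-covering lemma to the \emph{undilated} selected balls avoids any overlap loss. The remaining effort is bookkeeping constants; the only slightly delicate point is that $c_2$ (and hence $C$) also depends on $\Om$ through, e.g., $\diam\Om$ and the measure of a fixed ball, which is harmless since $\Om$ is fixed.
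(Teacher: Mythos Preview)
Your proof is correct and follows essentially the same route as the paper: telescope $1=|u_{B_{i,0}}-u_{B_{0,0}}|$ along the chain, apply the \p-Poincar\'e inequality to each pair of neighbouring balls, use the lower mass bound to replace $\mu(3\la\Bij)$ by a power of $\rho_i$, then exploit the convergence of $\sum_i\rho_i^{(p-Q+s)/p}$ to select for each $x\in E$ a single ball carrying energy $\gtrsim(\text{radius})^s$, and finish with the $5r$-covering lemma.

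The one point where the paper is slightly sharper is the lower mass bound: rather than comparing $\mu(3\la\Bij)$ to a fixed ball containing $\clOm$ (which makes your $c_2$ depend on $\Om$), the paper first observes from properties \ref{second}--\ref{last} that $d(\xij,x_0)<4(M+1)\rho_0$, and then applies \eqref{lower-mass-bound} relative to $4(M+1)\Boo$, obtaining $\mu(3\la\Bij)\ge C(\rho_i/\rho_0)^Q\mu(\Boo)$. This keeps the final constant expressed in terms of $M$, $\rho_0$ and $\mu(\Boo)$ rather than $\diam\Om$, which better matches the dependencies asserted in the statement. Substantively, though, your argument and the paper's are the same.
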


The proof of this lemma is based on a technique 
introduced in Haj\l asz--Koskela~\cite{hak95}.

\begin{proof}
In view of Lemma~\ref{MP} it suffices to estimate
$\mathcal{H}_\infty^s(E)$ using $\capp(E,B(x_0,r),\Om)$.
Let
$u\in\Np(\Om)$ be such that $u=0$ on $B(x_0,r)$ and $u=1$ on $E$.
Consider $x\in E$ and let
$\mathcal{C}_x=\{\Bij:i=0,1,\ldots \text{ and } j=0,1,\ldots,m_i\}$
be the corresponding chain.
For each ball $B$ in the chain let $B^*$ be its immediate successor.
Then $B\cap B^*$ is nonempty and $\tfrac{1}{2}r(B) \le r(B^*) \le r(B)$,
where $r(B)$ is the radius of $B$.
Thus $B^*\subset 3B$ and $B \subset 5B^*$.
Note also that properties \ref{second}, \ref{third} and \ref{last}
imply that for all
$i=0,1,\ldots$ and $j=0,1,\ldots,m_i$, we have
\begin{equation}
d(\xij,x_0) \le (M+1) \sum_{k=0}^i 2\rho_k < 4(M+1)\rho_0.
\label{eq-Bij-subset-Boo}
\end{equation}
Since $u=1$ in the open set $E$ containing $x$, a telescopic argument together with assumption~\ref{fourth} implies that
\begin{align}
 1 
&= \lim_{i\to\infty} |u_{\Bio} - u_{\Boo}|
 \le \sum_{B\in \mathcal{C}_x} |u_{B}-u_{B^*}|
\le \sum_{B\in \mathcal{C}_x} (|u_{B}-u_{3B}| + |u_{B^*}-u_{3B}|).
\label{eq-telescopic}
\end{align}
The doubling property and the \p-Poincar\'e inequality yield
\[
|u_{B^*}-u_{3B}| \le C \vint_{3B} |u-u_{3B}|\,d\mu
\le Cr(B) \biggl( \vint_{3\la B} g_u^p\,d\mu \biggr)^{1/p}.
\]
The difference $|u_{B}-u_{3B}|$ is estimated similarly and inserting
both estimates into \eqref{eq-telescopic} implies that
\begin{align*}
  1 &\le C \sum_{B\in \mathcal{C}_x} \frac{r(B)}{\mu(3\la B)^{1/p}}
   \biggl( \int_{3\la B} g_u^p\,d\mu \biggr)^{1/p}.
\end{align*}
For each $B\in \mathcal{C}_x$,
\eqref{eq-Bij-subset-Boo} together with  \eqref{lower-mass-bound} gives
\[
\mu(3\la B) \ge \mu(B)
   \ge C \biggl( \frac{r(B)}{4(M+1)\rho_0}\biggr)^Q \mu(4(M+1)\Boo)
   \ge C \biggl( \frac{r(B)}{4(M+1)\rho_0}\biggr)^Q \mu(\Boo).
\]
The last estimate then becomes
\begin{align*}
  1  &\le \frac{C \rho_0^{{Q}/{p}}}{\mu(\Boo)^{1/p}}
       \sum_{B\in \mathcal{C}_x} r(B)^{1-Q/p}
           \biggl( \int_{3\la B} g_u^p\,d\mu \biggr)^{1/p},
\end{align*}
where $C$ depends only on $M$, $p$, $Q$,
the doubling  constant $C_\mu$ and on the constants in the Poincar\'e inequality,
but not on $u$.

Since $p>Q-s$, we have $p-Q+s>0$ and hence
\[
1 = C \sum_{i=1}^\infty 2^{-i(p-Q+s)/p}
\ge \frac{C}{M} \sum_{B\in \mathcal{C}_x} \biggl( \frac{r(B)}{\rho_0} \biggr)^{(p-Q+s)/p},
\]
where $C$ depends only on $p$, $Q$ and $s$.
Comparing the last two estimates we see that there exists a ball
$B_x\in\mathcal{C}_x$ such that
\begin{align*}
\biggl( \frac{r(B_x)}{r} \biggr)^{(p-Q+s)/p}
  &\le \biggl( \frac{r(B_x)}{\rho_0} \biggr)^{(p-Q+s)/p}\\
  &\le \frac{C r^{Q/p}}{\mu(\Boo)^{1/p}} r(B_x)^{1-Q/p}
\biggl( \int_{3\la B_x} g_u^p\,d\mu \biggr)^{1/p},
\end{align*}
where $C$ depends only on $M$, $p$, $Q$, $s$, $C_\mu$ and
the constants in the Poincar\'e inequality,
but not on $u$ or $x$.

Repeating this argument for every $x\in E$, we obtain balls $B_x$,
such that
\begin{equation}  \label{eq-est-rBx-gu}
r(B_x)^s \le \frac{C r^{p+s}}{\mu(\Boo)} \int_{3\la B_x} g_u^p\,d\mu.
\end{equation}
Note that $x\in 2MB_x$ for all $x\in E$ by \ref{second}.
Hence, the balls $\{2M B_x\}_{x\in E}$ cover $E$, as
do the balls $\{3M\la B_x\}_{x \in E}$. The 5-covering lemma
(Theorem~1.2 in Heinonen~\cite{he})
allows us to choose pairwise disjoint balls $3M\la B_{x_i}$,
$i=1,2\ldots,$ so that
$E\subset \bigcup_{i=1}^{\infty} 15M\la B_{x_i}$.
In particular, the balls $3\la B_{x_i}$, $i=1,2\ldots,$ are pairwise disjoint.
Thus we get from~\eqref{eq-est-rBx-gu}
that
\begin{align*}
\mathcal{H}^s_\infty(E) &\le \sum_{i=1}^\infty r(15M\la B_{x_i})^s
   = 15^s M^s\la^s \sum_{i=1}^\infty r(B_{x_i})^s \\
   &\le \frac{C r^{p+s}}{\mu(\Boo)} \sum_{i=1}^\infty
            \int_{3\la B_{x_i}} g_u^p\,d\mu
   \le \frac{C r^{p+s}}{\mu(\Boo)} \int_\Omega g_u^p.
\end{align*}
Taking infimum over all admissible functions $u$ completes the proof.
\end{proof}

\begin{lem}  \label{lem-ex-chain}
Let $E\Subset \Omega$ and $B=B(x_0,r)\Subset \Om\setm E$.
Then there exists $0<\rho_0<r$ such that $E$ can be 
chain-connected to
the ball $\Boo=B(x_0,\rho_0)$ 
as in Definition~\ref{chainB}.
\end{lem}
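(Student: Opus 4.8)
The plan is to run the Boman-type chain construction of Lemma~\ref{lem-John-chain}, which in the present situation is actually simpler than for John domains: since $E\Subset\Om$, on a whole neighbourhood of $E$ the function $\de_\Om$ is bounded below by a positive constant, so there is no geometric reason for the balls to shrink except the bookkeeping demanded by Definition~\ref{chainB}. Recall that $X$ is proper and locally connected, and that, by Lemma~4.38 in~\cite{BBbook}, every open connected subset of $X$ (in particular $\Om$) is rectifiably connected; let $L\ge1$ be the quasiconvexity constant of $X$.

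First I would produce a single open connected set $V$ with $\overline E\cup\{x_0\}\subset V$ and $\dist(\overline V,X\setm\Om)>0$. To do this, cover the compact set $\overline E$ by finitely many connected open sets whose closures are compact subsets of $\Om$ (possible since $X$ is proper and locally connected); join a point of each of these sets to $x_0$ by a rectifiable curve in $\Om$ (possible since $\Om$ is rectifiably connected); cover each of these (compact) curves by finitely many connected open subsets of $\Om$ with compact closures, arranged so that consecutive ones overlap, giving a ``tube'' around the curve that joins it to $x_0$; and let $V$ be the union of all these finitely many sets. Then $V$ is open and connected (each tube contains $x_0$ and meets one of the covering sets), $\overline V$ is compact and contained in $\Om$, and $\overline E\cup\{x_0\}\subset V$. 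Put $\eps:=\dist(\overline V,X\setm\Om)>0$, so that $\de_\Om(z)\ge\eps$ for every $z\in V$.

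Next I would fix a uniform length bound and the parameters. Since $\overline E$ is a compact subset of the open set $V$, we have $2\eta:=\dist(\overline E,X\setm V)>0$; cover $\overline E$ by finitely many balls $B(y_1,\eta/(2L)),\dots,B(y_n,\eta/(2L))$ with $y_k\in\overline E$, and, using rectifiable connectedness of $V$, fix curves in $V$ joining each $y_k$ to $x_0$, of lengths $\ell_1,\dots,\ell_n$. For $x\in\overline E$, pick $k$ with $d(x,y_k)<\eta/(2L)$; an $L$-quasiconvex curve from $x$ to $y_k$ has length $<\eta/2$, hence lies in $B(y_k,\eta)\subset V$, and concatenating it with the $k$-th fixed curve gives a rectifiable curve $\ga_x$ in $V$ from $x$ to $x_0$ of length at most $L_0:=\eta/2+\max_k\ell_k$, a bound independent of $x$. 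Now set $\rho_0:=\min\{r/2,\ \eps/(4\la)\}$ and $M:=\lceil L_0/\rho_0\rceil+1$, so $0<\rho_0<r$ and, for every $z\in V$, $B(z,3\la\rho_0)\subset B(z,\eps)\subset\Om$.

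Finally I would build the chain for each $x\in E\subset\overline E$. Parametrize $\ga_x:[0,l]\to V$ by arc length with $\ga_x(0)=x$, $\ga_x(l)=x_0$ and $l\le L_0$ (here $l>0$ since $x_0\notin E$). On level $0$ take $B_{0,j}=B(x_{0,j},\rho_0)$ with $x_{0,0}=x_0$, $x_{0,j}=\ga_x(l-j\rho_0)$ for $0<j<m_0$, and $x_{0,m_0}=x$, where $m_0:=\lceil l/\rho_0\rceil\le M-1$; then consecutive centres are at distance $\le\rho_0<2\rho_0$, so consecutive level-$0$ balls overlap, and $d(x_{0,j},x)\le l\le L_0\le M\rho_0$. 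For $i\ge1$ set $m_i:=0$ and $B_{i,0}:=B(x,2^{-i}\rho_0)$; these are nested, so consecutive balls in the lexicographic order overlap, for every $i\ge1$ one has $m_i=0$ with the ball centred at $x$, and trivially $d(x,x)\le M2^{-i}\rho_0$. Property~\ref{first} of Definition~\ref{chainB} holds because every centre lies in $V$ (the level-$0$ ones on $\ga_x$, the others equal to $x$) and $B(z,3\la\rho_0)\subset\Om$ there; properties \ref{second}, \ref{third}, \ref{fourth} and \ref{last} were verified above. As $\rho_0$ and $M$ do not depend on $x$, this shows that $E$ is chain-connected to $B(x_0,\rho_0)$, which completes the proof. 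The hard part is the construction of $V$ in the first step: a priori the curves joining points of $\overline E$ to $x_0$ inside $\Om$ may come arbitrarily close to $\bdy\Om$, and fattening them into a single connected set lying at a uniform positive distance from $\bdy\Om$ is exactly what makes all the subsequent choices uniform in $x$.
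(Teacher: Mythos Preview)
Your proof is correct. The overall strategy matches the paper's---find a uniformly compact connected ``corridor'' in $\Om$ containing both $\overline E$ and $x_0$, then lay down level-$0$ balls along curves in that corridor and shrink at $x$ for higher levels---but the implementations differ. The paper invokes Lemma~4.49 in~\cite{BBbook} to obtain a single component $G$ of $\Om_\eps=\{x:\dist(x,X\setm\Om)>\eps\}$ containing $\overline E$ and $x_0$, then takes a maximal $\rho_0/2$-packing of $\Om_\eps$; the doubling property bounds the number $N$ of packing balls, and for each $x$ a minimal subchain of the doubled balls covers a connecting curve, giving $m_0\le N$. You instead build the corridor $V$ by hand (tubes around finitely many curves), and control $m_0$ via a uniform bound $L_0$ on the length of connecting curves, obtained from quasiconvexity and compactness. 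Your route avoids the direct appeal to doubling at the cost of the extra compactness argument for $L_0$; the paper's route is shorter once Lemma~4.49 is quoted, and makes explicit that $M$ depends only on $\eps$, $\rho_0$ and the doubling constant. Either way the constant $M$ is uniform in $x$, which is the essential point.
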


\begin{proof}
Since $\Omega$ is connected, there exists $0<\eps<r$ such that
both $\itoverline{B}$ and $\itoverline{E}$ belong to
the same component $G$ of
\[
\Omega_\eps:=\{x\in\Omega\, :\,  \dist(x,X\bs\Omega)>\eps\},
\]
see Lemma~4.49 in Bj\"orn--Bj\"orn~\cite{BBbook}.
Choose $0<\rho_0\le\eps/6\la$ and let
$B_i=B(x_i,\rho_0/2)$, $i=1,\ldots,N,$ be a maximal pairwise disjoint
collection of balls with centers in $\Om_\eps$.
By the doubling property, there are only finitely many such balls
and their number $N$ depends only on $\eps$, $\rho_0$
and the doubling constant $C_\mu$.
 The balls $\{2B_i\}_{i=1}^N$ cover $\Om_\eps$ and
$6\lambda B_i\subset\Omega$ for all $i=1,2,\ldots,N.$

Let $x\in E$ be arbitrary.
By pathconnectedness of the component $G$,
there exists a curve $\ga$ in $G$ from $x_0$ to $x$.
We can therefore among the balls $2B_i$, $i=1,2,\ldots,N,$ choose a
minimal chain of balls
covering $\ga$.
Number these balls in the direction from $x_0$ to $x$ and
call them $\Boj$, $j=1,2,\ldots, m_0$.
Clearly, $m_0\le N$ and neighboring balls in the chain have nonempty
intersection.
Complete the chain by the balls $\Bio=B(x,\rho_i)$,
where $\rho_i=2^{-i}\rho_0$, $i=1,2,\ldots.$

It remains to verify that the conditions \ref{first}--\ref{last}
of Lemma~\ref{lem-chain-imp-length-est} are satisfied.
The only property that needs some justification is that
$d(\xij,x)\le M\rho_i$ with $M=\max\{N,2/\eps \rho_0\}$.
For $i\ge1$, this is trivial and for $i=0$ we have
$d(\xoj,x)\le \diam\Om_\eps\le2/\eps$.
The other properties follow by construction.
\end{proof}

\begin{rem}
The proof of Lemma~\ref{lem-ex-chain} shows that $M=\max\{N,2/\eps \rho_0\}$.
It follows that $M$ (and hence also $C$ in
Lemma~\ref{lem-chain-imp-length-est}) depends on $\dist(E,X\setm\Om)$.
The estimate in Lemma~\ref{lem-chain-imp-length-est}
therefore does not apply if we only know that $E\subset{\Om}$.
Indeed, in the topologist's comb in Example~\ref{comb} we have
for $p \le 2$, every compact $K\subset\Om$, 
\[
\Modp(E_k,K,\Om) = \Modp(F_k,K,\Om)
\to 0, \quad\text{as } k\to\infty,
\]
by Proposition~\ref{prop-single-Modp},
where $F_k=\bigl(\bigl(\tfrac{1}{2}-2^{-k},\tfrac{1}{2}+2^{-k}\bigr)
 \times (0,2^{-k})\bigr)\cap\Om$.
On the other hand, $\mathcal{H}^1_\infty(E_k)\ge\tfrac{1}{2}$ for all $k=1,2,\ldots$.
See, however, Lemma~\ref{lem-John-chain}.
\end{rem}

\begin{cor}    \label{cor-modp-Q-1}
 Let $E\Subset \Omega$ be open and $B\Subset \Om\setm E$ be a ball.
If $p>Q-1$, then there exists $C>0$ depending on $\dist(E,X\setm \Om)$ such that
\[
\mathcal{H}_\infty^1(E) \le C \Modp(E,B,\Om).
\]
\end{cor}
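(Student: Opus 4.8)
The final statement is Corollary~\ref{cor-modp-Q-1}, which asserts that for open $E\Subset\Om$ and a ball $B\Subset\Om\setm E$ with $p>Q-1$, one has $\mathcal{H}_\infty^1(E)\le C\Modp(E,B,\Om)$ with $C$ depending on $\dist(E,X\setm\Om)$.

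\textbf{Approach.} The plan is to combine the two preceding lemmas, exactly as was done for Corollary~\ref{cor-John-H<Mod} but now using the general chain-construction lemma rather than the John-specific one. First I would apply Lemma~\ref{lem-ex-chain} to the pair $E\Subset\Om$ and $B=B(x_0,r)\Subset\Om\setm E$: this produces a radius $0<\rho_0<r$ and a constant $M>0$ (depending, via the proof of Lemma~\ref{lem-ex-chain}, on $\dist(E,X\setm\Om)$, on $\rho_0$, and on the doubling constant) such that $E$ is chain-connected to the ball $\Boo=B(x_0,\rho_0)$ in the sense of Definition~\ref{chainB}. Then I would feed this into Lemma~\ref{lem-chain-imp-length-est} with the choice $s=1$; the hypothesis $p>Q-s$ of that lemma becomes precisely $p>Q-1$, which is our standing assumption. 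Lemma~\ref{lem-chain-imp-length-est} then yields
\[
\mathcal{H}_\infty^1(E)\le C\capp(E,B(x_0,r),\Om)=C\Modp(E,B(x_0,r),\Om),
\]
where $C$ depends only on $M$, $p$, $s=1$, $Q$, $r$, the doubling constant $C_\mu$ and the constants in the Poincar\'e inequality. Since $M$ (and $\rho_0$, $r$) are controlled in terms of $\dist(E,X\setm\Om)$ and the fixed data of the space, the constant $C$ depends on $\dist(E,X\setm\Om)$ as claimed. The equality $\capp=\Modp$ is Lemma~\ref{MP}, which applies since $E$ and $B$ are disjoint.

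\textbf{Remaining bookkeeping.} I should note one mild point: Lemma~\ref{lem-chain-imp-length-est} is stated with the condenser $\capp(E,B(x_0,r),\Om)$ where $B(x_0,r)$ is the \emph{original} ball from the hypothesis $B=B(x_0,r)\Subset\Om\setm E$, while the chain connects $E$ to the smaller ball $\Boo=B(x_0,\rho_0)$ with $\rho_0<r$; this is exactly the situation handled inside Lemma~\ref{lem-chain-imp-length-est} (it takes $0<\rho_0\le r$ in its hypotheses), so no extra argument is needed. The statement of the corollary writes ``a ball $B\Subset\Om\setm E$'' without naming its center and radius, so I would simply write $B=B(x_0,r)$ at the start of the proof.

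\textbf{Main obstacle.} There is essentially no obstacle here: the corollary is a direct specialization ($s=1$) of the machinery already assembled, and the only substantive work — the chain construction (Lemma~\ref{lem-ex-chain}) and the Hausdorff-content-versus-modulus estimate (Lemma~\ref{lem-chain-imp-length-est}, proved via the Haj\l asz--Koskela telescoping technique) — has been done. The one thing to be careful about is tracking that the dependence of the final constant on $\dist(E,X\setm\Om)$ is genuine and acknowledged (as emphasized in the Remark following Lemma~\ref{lem-ex-chain}, the estimate does \emph{not} hold uniformly for $E$ merely contained in $\Om$), so the proof should explicitly point out that this dependence enters through $M$ in Lemma~\ref{lem-ex-chain}.

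\begin{proof}
Write $B=B(x_0,r)$. Since $E\Subset\Om$ and $B\Subset\Om\setm E$, Lemma~\ref{lem-ex-chain} provides $0<\rho_0<r$ and $M>0$, with $M$ depending on $\dist(E,X\setm\Om)$, $\rho_0$ and the doubling constant, such that $E$ is chain-connected to $\Boo=B(x_0,\rho_0)$ in the sense of Definition~\ref{chainB}. Applying Lemma~\ref{lem-chain-imp-length-est} with $s=1$ (so that the hypothesis $p>Q-s$ reads $p>Q-1$, which holds by assumption), we obtain a constant $C>0$, depending only on $M$, $p$, $Q$, $r$, $C_\mu$ and the constants in the \p-Poincar\'e inequality, such that
\[
\mathcal{H}_\infty^1(E)\le C\capp(E,B(x_0,r),\Om)=C\Modp(E,B,\Om),
\]
where the equality is Lemma~\ref{MP}, valid since $E$ and $B$ are disjoint. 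As $M$, $\rho_0$ and $r$ are controlled in terms of $\dist(E,X\setm\Om)$ and the fixed data of the space, the constant $C$ depends on $\dist(E,X\setm\Om)$.
\end{proof}
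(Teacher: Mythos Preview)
Your proof is correct and follows exactly the same approach as the paper, which simply cites Lemmas~\ref{lem-ex-chain} and~\ref{lem-chain-imp-length-est} (with $s=1$) as yielding the result directly. Your additional bookkeeping on the dependence of $C$ through $M$ and the invocation of Lemma~\ref{MP} is accurate and merely makes explicit what the paper leaves implicit.
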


\begin{proof}
This follows directly from
Lemmas~\ref{lem-chain-imp-length-est}
and~\ref{lem-ex-chain}.
\end{proof}

\begin{lem}     \label{lem-imp-subset-bdry}
Let $\{E_k\}_{k=1}^\infty$ be a sequence of open acceptable sets
satisfying $\itoverline{E}_{k+1}\cap\Om\subset E_k$ for each $k$. If
$\lim_{k\to\infty}\Modp(E_k,B,\Om)=0$ for some ball $B\subset\Om\setm E_1$ and 
$p>Q-1$, then 
$I:=\bigcap_{k=1}^\infty \itoverline{E}_k\subset\bdry\Om$.
\end{lem}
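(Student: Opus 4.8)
The plan is to prove the statement by contradiction: assuming that some point of $I:=\bigcap_{k=1}^\infty\itoverline{E}_k$ lies in $\Om$, I will show that $\Modp(E_k,B,\Om)$ cannot tend to $0$. Since $\itoverline{E}_k\subset\itoverline\Om$ for each $k$, this gives $I\subset\bdry\Om$. First I would unpack the hypothesis $\itoverline{E}_{k+1}\cap\Om\subset E_k$: it makes $\{E_k\}_{k=1}^\infty$ decreasing, and, if $x\in I\cap\Om$, it forces $x\in E_k$ for every $k$ (since $x\in\itoverline{E}_{k+1}\cap\Om$). Next, using that $X$ is proper and $\Om$ open, I would fix $R>0$ with $2R<\dist(x,X\setm\Om)$, so that $\itoverline{B(x,R)}$ is a compact subset of $\Om$; after replacing $B$ by a sufficiently small concentric subball --- which is allowed since such a ball still lies in $\Om\setm E_1$ and only decreases $\Modp(E_k,B,\Om)$ by monotonicity of the modulus --- I may also assume that $\itoverline B\Subset\Om$.

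The heart of the argument is a truncation. Fix $k$. Since $\itoverline{E}_k$ is a continuum meeting $\bdry\Om$ and containing $x$, we have $\diam E_k\ge\dist(x,\bdry\Om)>2R$, so $E_k\not\subset B(x,R)$. Because $X$ is locally connected, $E_k$ is open and pathconnected (Remark~\ref{rmk-locconn}), so there is a curve in $E_k$ from $x$ to some point $y\notin B(x,R)$; letting $U_k$ be the component of the open set $E_k\cap B(x,R)$ containing $x$ --- itself open, again by local connectedness --- the portion of this curve before it first leaves $B(x,R)$ stays in $U_k$ and reaches a point at distance $R$ from $x$. Hence $\itoverline{U}_k\subset\itoverline{B(x,R)}\subset\Om$, so $U_k\Subset\Om$ with $\dist(U_k,X\setm\Om)\ge\dist(\itoverline{B(x,R)},X\setm\Om)>0$, while $\diam U_k\ge R$; and since $U_k\subset E_k\subset E_1$ and $\itoverline B\subset\Om\setm E_1$, also $B\Subset\Om\setm U_k$.

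Finally I would invoke Corollary~\ref{cor-modp-Q-1} --- this is where the hypothesis $p>Q-1$ enters --- applied to $U_k$ and $B$. The crucial observation is uniformity in $k$: because $U_k\subset B(x,R)$ for every $k$, the constant in that corollary, which depends only on $\dist(U_k,X\setm\Om)$ and the data of the space, can be taken to be one single constant $C$ valid for all $k$. Combining this with the elementary bound $\diam F\le\mathcal{H}_\infty^1(F)$ for connected $F$ and the monotonicity of $\Modp$ (using $U_k\subset E_k$), I get
\[
   R\le\diam U_k\le\mathcal{H}_\infty^1(U_k)\le C\,\Modp(U_k,B,\Om)\le C\,\Modp(E_k,B,\Om)
\]
for every $k$, and letting $k\to\infty$ contradicts the hypothesis $\Modp(E_k,B,\Om)\to0$. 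I expect the truncation step to be the main obstacle: Corollary~\ref{cor-modp-Q-1} cannot be applied to $E_k$ itself, since $E_k$ need not be compactly contained in $\Om$, so one must pass to $U_k$ and simultaneously guarantee that $\diam U_k$ does not degenerate as $k\to\infty$ (which is why the ball $B(x,R)$ is fixed in advance using $\dist(x,\bdry\Om)$ together with the fact that each $E_k$ reaches $\bdry\Om$) and that the content–modulus constant stays uniform in $k$.
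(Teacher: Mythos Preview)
Your proof is correct and takes a genuinely different route from the paper's. Both arguments ultimately rest on Corollary~\ref{cor-modp-Q-1}, but they exploit it differently. The paper first shows that $\mathcal{H}^1(I\cap\Om)=0$ by applying the corollary to the truncations $E_k\cap\Om_\delta$ for each fixed $\delta>0$ (the constant being uniform in $k$ since $\dist(E_k\cap\Om_\delta,X\setm\Om)\ge\delta$), and then derives a contradiction from the connectedness of $I$: if $I\cap\Om$ were nonempty, the component of $I\cap\Om$ through a point $x\in I\cap\Om_{2\delta}$ would have to reach $\bdry\Om_\delta$, giving a connected piece of diameter at least $\delta$ inside a set of zero one-dimensional Hausdorff measure --- which the paper rules out via a separate ball-chaining argument. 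You instead work directly with the connected open sets $E_k$ rather than with $I$: knowing that $x\in E_k$ and that $\itoverline{E}_k$ meets $\bdry\Om$, you extract a component $U_k$ of $E_k\cap B(x,R)$ with $\diam U_k\ge R$ and $U_k\subset\itoverline{B(x,R)}\Subset\Om$, and apply the corollary once with a constant that is uniform in $k$ because all the $U_k$ sit inside the fixed compact set $\itoverline{B(x,R)}$. Your approach is more direct and bypasses the topological detour through components of $I\cap\Om$; the paper's approach, on the other hand, yields the intermediate conclusion $\mathcal{H}^1(I\cap\Om)=0$, which is stronger than needed for the lemma as stated.
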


\begin{proof}
Clearly, $I$ is a compact connected set and $I\cap\bdry\Om$ is nonempty.
Let 
\[
\Om_\de=\{x\in\Om:\dist(x,X\setm\Om)>\de\}.
\]
Corollary~\ref{cor-modp-Q-1} applied to $E_k\cap\Om_{\de}$ shows that
\(
\mathcal{H}_\infty^1(I\cap\Om_\de)=0
\)
for each $\de>0$. It follows that $\mathcal{H}^1(I\cap\Om_\de)=0$ and hence
also $\mathcal{H}^1(I\cap\Om)=0$,
where $\mathcal{H}^1$ denotes the one-dimensional Hausdorff measure.

Assume that $I\cap\Om$ is nonempty and find $\de>0$ such that
$I\cap\Om_{2\de}\ne\emptyset$.
Fix $x\in I\cap\Om_{2\de}$ and let $U$ be the component of $I\cap\Om$
containing $x$.
Note that $U$ is connected but not necessarily pathconnected.
Let $0<\eps<\de/2,$ 
and
cover $U$ by (finitely or countably many) balls $B_j=B(x_j,r_j)$ so that
$\sum_{j} r_j <\eps$.
Let $V$ consist of all points $y\in U$ for which there exists a chain
$\{B_{j_k}\}_{k=1}^{N_y}$ (depending on $y$) of balls from this cover
such that $x\in B_{j_1}$,
$y\in B_{N_y}$ and $B_{j_k}\cap B_{j_{k+1}}\ne\emptyset$ for all $k$.
If $y\in V\cap B_j$ for some $j$, then $U\cap B_j\subset V$ and hence
$V$ is open in $U$.
Similarly, $U\setm V$ is open in $U$ and must therefore be empty, since
$U$ is connected.

Assume next that $U\cap\bdry\Om_\de=\emptyset$.
Then the connected set $I$ could be written as a disjoint union of the
nonempty relatively open sets $U\cap\Om_\de$ and
$I\setm\overline{U\cap\Om_\de}$, which is a contradiction.
Thus, there exists $z\in U\cap\bdry\Om_\de\subset V$. 
Hence there is a chain $\{B_{j_k}\}_{k=1}^{N_z}$ of balls from the cover
of $U$ above satisfying $x\in B_{j_1}$, $z\in B_{j_{N_z}}$, and $B_{j_k}\cap B_{j_{k+1}}$
nonempty, and so
\[
  \delta\le d(z,x)\le \sum_{k=1}^{N_z}2r_{j_k}\le 2\sum_j r_j<2\eps<\delta,
\]
which is not possible. 
This contradiction shows that $I\cap\Om$ must be empty,
i.e.\ $I\subset\bdry\Om$.
\end{proof}

\begin{lem}\label{lem-cpt-equiv}
Let $[E_k]$ be an end and $p>1$.
Then $\lim_{j\to\infty}\Modp(E_j,K,\Om)=0$
for every compact $K\subset\Om$ if and only if
$\lim_{j\to\infty}\Modp(E_j,K_0,\Om)=0$
for some compact  $K_0\subset\Om$ with $\Cp(K_0)>0$.
\end{lem}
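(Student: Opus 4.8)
The plan is as follows. The ``only if'' direction is trivial: any closed ball $\overline B$ with $\overline B\Subset\Om$ is compact with nonempty interior, hence $\Cp(\overline B)>0$, so it can play the role of $K_0$. So I focus on the ``if'' direction.

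Assume $\Modp(E_j,K_0,\Om)\to0$ for some compact $K_0\subset\Om$ with $\Cp(K_0)>0$, and fix an arbitrary compact $K\subset\Om$. First I would record a reduction. Since $\bigcap_j\itoverline{E}_j=I[E_k]\subset\bdry\Om$ and $\{\itoverline{E}_j\}_{j=1}^\infty$ is a decreasing sequence of compact sets ($X$ being proper), the finite intersection property shows that $\itoverline{E}_j$ is disjoint from any fixed compact subset of $\Om$ once $j$ is large; I will only consider such $j$. Covering $K$ by finitely many closed balls $\overline{B_1},\dots,\overline{B_m}$ with $2\la\overline{B_i}\Subset\Om$ and using that every curve in $\G(E_j,K,\Om)$ ends in some $\overline{B_i}$, subadditivity of the modulus gives $\Modp(E_j,K,\Om)\le\sum_{i}\Modp(E_j,\overline{B_i},\Om)$. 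Hence it will be enough to prove that $\Modp(E_j,\overline B,\Om)\to0$ for an arbitrary fixed closed ball $\overline B$ with $2\la\overline B\Subset\Om$.

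To do this, I would use Lemma~\ref{MP} to rewrite everything in terms of the variational capacity: for large $j$, $\Modp(E_j,\overline B,\Om)=\capp(E_j,\overline B,\Om)$ and $\Modp(E_j,K_0,\Om)=\capp(E_j,K_0,\Om)$. Choose $u_j\in N^{1,p}(\Om)$ with $0\le u_j\le1$, $u_j=1$ on $E_j$, $u_j=0$ on $K_0$, and $\int_\Om g_{u_j}^p\,\dmu<\capp(E_j,K_0,\Om)+2^{-j}=:\eps_j\to0$. The crucial point is to show that $\vint_{2\overline B}u_j\,\dmu\to0$. Since $\Cp(K_0)>0$ and $\Cp$ is subadditive, $K_0$ meets some ball in a set of positive $p$-capacity; by a Maz'ya-type inequality --- this is the one place where $p>1$ is essential --- there are then a ball $B_*$ with $2\la B_*\Subset\Om$ and a constant $C_1$ (depending only on $B_*$, $K_0$, $p$ and the doubling and Poincar\'e constants) such that $\bigl(\vint_{B_*}u_j^p\,\dmu\bigr)^{1/p}\le C_1\bigl(\int_{\la B_*}g_{u_j}^p\,\dmu\bigr)^{1/p}\le C_1\eps_j^{1/p}$, because $u_j$ vanishes on that positive-capacity subset of $K_0$. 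Joining $\overline B$ to $B_*$ by a finite chain of overlapping balls whose dilates lie in $\Om$ (possible since $\Om$ is pathconnected) and iterating the $p$-Poincar\'e inequality along the chain yields $|(u_j)_{2\overline B}-(u_j)_{B_*}|\le C_2\eps_j^{1/p}$; combining this with the $p$-Poincar\'e inequality on $2\overline B$ gives $\vint_{2\overline B}u_j\,\dmu\le C_3\eps_j^{1/p}\to0$.

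Finally I would truncate near $\overline B$. Take a Lipschitz function $\eta$ with $\eta=0$ on $\overline B$, $\eta=1$ outside $\tfrac32\overline B$ and $0\le\eta\le1$, and set $w_j=\eta u_j$. Then $w_j=1$ on $E_j$ (for large $j$, $\tfrac32\overline B$ is disjoint from $E_j$), $w_j=0$ on $\overline B$, $0\le w_j\le1$, and $g_{w_j}\le g_{u_j}+u_j g_\eta$ with $g_\eta$ bounded and supported in $2\overline B$. Using $u_j^p\le u_j$,
\[
\capp(E_j,\overline B,\Om)\le\int_\Om g_{w_j}^p\,\dmu\le 2^{p-1}\Bigl(\eps_j+C\!\int_{2\overline B}u_j\,\dmu\Bigr)\le 2^{p-1}\bigl(\eps_j+C'\eps_j^{1/p}\bigr)\longrightarrow0,
\]
which finishes the argument. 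The hard part will be the passage from ``$u_j$ vanishes on $K_0$ with small energy'' to ``$u_j$ is small on average on a fixed ball'': this relies on extracting from $\Cp(K_0)>0$ a single ball carrying a positive-capacity piece of $K_0$, together with a Maz'ya-type Sobolev--Poincar\'e inequality for functions vanishing on that piece, and it is exactly this step that breaks down when $p=1$. A secondary nuisance is that $\Om$ itself need not support a Poincar\'e inequality, so every estimate has to be localized to balls and finite chains of balls compactly contained in $\Om$, which is why the chaining step is needed to carry information from $B_*$ over to $\overline B$.
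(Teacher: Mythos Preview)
Your proof is correct and takes a genuinely different route from the paper's.

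The paper argues softly: it passes (via a Mazur-type convex combination lemma) to functions $v_j$ converging in $L^p$ and q.e.\ to a limit $u$ with vanishing upper gradient; connectedness of $\Om$ and the Poincar\'e inequality force $u$ to be constant q.e., and $\Cp(K_0)>0$ pins that constant to $0$. Then locally quasi-uniform convergence of $v_j$ to $0$ lets one cut off at level $\tfrac12$ to produce admissible functions for $\capp(E_{N_j},K\setminus A_\eps,\Om)$, while the small-capacity exceptional set $A_\eps$ is handled separately by a Lipschitz cutoff; finally $\eps\to0$.

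Your argument is more direct and quantitative: you reduce to a single ball $\overline B$ via subadditivity of $\Modp$, use a Maz'ya-type capacitary Poincar\'e inequality to bound the $L^p$-average of $u_j$ on a ball $B_*$ meeting $K_0$, chain balls inside $\Om$ to transfer smallness of the average to $2\overline B$, and then use a Leibniz-rule cutoff to produce admissible functions for $\capp(E_j,\overline B,\Om)$. This avoids the Mazur lemma and the quasi-uniform convergence machinery entirely, and it yields an explicit rate $O(\eps_j^{1/p})$ rather than a soft limit. The paper's approach, on the other hand, treats the general compact $K$ in one stroke without first reducing to balls.

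One minor remark: your assertion that the Maz'ya step is ``the one place where $p>1$ is essential'' is not obviously right --- Maz'ya-type inequalities of the form you use are available for $p\ge1$ in this setting. The paper itself locates the obstruction to $p=1$ elsewhere (in the convergence machinery it invokes) and simply leaves the $p=1$ case open. This does not affect the validity of your proof for $p>1$, which is what the lemma claims.
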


Here
\begin{equation*} 
  \Cp (E) :=\inf    \|u\|_{\Np(X)}^p,
\end{equation*}
where the infimum is taken over all everywhere defined 
functions $u\in \Np (X) $ such that
$u \ge1$ on $E$.
We say that a property 
holds \emph{quasieverywhere} (q.e.)
if the set of points  for which it fails
has $\Cp$ capacity zero.

An alternative proof of this result,
using superharmonic functions, 
is given as Proposition~5.14 in 
the forthcoming paper Bj\"orn--Bj\"orn~\cite{BBnonopen}
(which needs to be combined with Lemma~\ref{MP} to give 
Lemma~\ref{lem-cpt-equiv}).

\begin{proof}
Assume that $\lim_{j\to\infty}\Modp(E_j,K_0, \Om)=0$ for some compact set
$K_0\subset\Om$ with positive capacity,
and let $K\subset\Om$ be compact.

By Lemma~\ref{MP}, we can equivalently work with capacities.
We can thus find $u_j\in\Np(\Om)$ admissible in the definition of
$\capp(E_j,K_0,\Om)$, such that
$g_{u_j}\to0$ in $L^p(\Om)$, as $j\to\infty$.
Lemma~3.2 in Bj\"orn--Bj\"orn--Parviainen~\cite{BBP} 
or Lemma~6.2 in Bj\"orn--Bj\"orn~\cite{BBbook} provides us with
$u\in\Np(\Om)$ and convex combinations $v_j=\sum_{i=j}^{N_j} a_{j,i} u_i$ and
$g_j=\sum_{i=j}^{N_j} a_{j,i} g_{u_i}$ such that $v_j\to u$ both in $L^p(\Om)$
and q.e., and $g_j\to g$ in $L^p(\Om)$, where $g$ is a \p-weak upper gradient of $u$.
Note that $v_j=0$ on $K_0$ and $v_j=1$ on $E_{N_j}$, $j=1,2,\ldots$.
Since $g_{u_j}\to0$ in $L^p(\Om)$, we see that $g=0$ a.e.\ in $\Om$.
As $\Om$ is connected, the Poincar\'e inequality implies that $u$
is constant q.e.\ in $\Om$.
Since $u=0$ q.e.\ on the set $K_0$ with positive capacity, we must have
$u=0$ q.e.\ in $\Om$,  and hence $v_j\to0$ in $\Np(\Om)$.

Corollary~3.9 in Shanmugalingam~\cite{Sh-rev}
or Corollary~1.72 in~\cite{BBbook}  implies that $v_j\to 0$
locally quasi-uniformly, i.e.\
for every $\eps>0$ we can find $A_\eps\subset\Om$  such that
$\Cp(A_\eps)<\eps$ and $v_j\to 0$ uniformly in $K\setminus A_\eps$.
It follows that for sufficiently large $j$,
we have $v_j<\tfrac12$ on $K\setminus A_\eps$,
and thus the function $w_j=\max\bigl\{0,2\bigl(v_j-\tfrac12\bigr)\bigr\}$
is admissible in the definition of $\capp(E_{N_j},K\setm A_\eps,\Om)$, i.e.\
\begin{equation} \label{eq-wj}
\capp(E_{N_j},K\setm A_\eps,\Om) \le \int_\Om g_{w_j}^p \, d\mu
    \le 2^p \int_\Om g_{v_j}^p \, d\mu
    \le 2^p \int_\Om g_{j}^p \, d\mu.
\end{equation}

Next, if $v\in\Np(X)$ is admissible in the definition of $\Cp(A_\eps)$,
then
for some nonnegative Lipschitz function $\eta$ with compact support
in $\Om$ and such that $\eta=1$ on $K$, the function
$1-v\eta$ is admissible in the definition of $\capp(E_j,K\cap A_\eps,\Om)$,
for sufficiently large $j$.
Hence by the Leibniz rule (Theorem~2.15 in \cite{BBbook}) we obtain
\begin{align*}
\capp(E_j,K\cap A_\eps,\Om) &\le \int_\Om g_{1-v\eta}^p\,d\mu
\le \int_\Om (v g_\eta + \eta g_v)^p\,d\mu \le C\|v\|_{\Np(X)}
\end{align*}
for sufficiently large $j$.
Taking infimum over all such $v$ shows that
$\capp(E_j,K\cap A_\eps,\Om)\le C\eps$ for sufficiently large $j$.
Combining this with \eqref{eq-wj} we obtain that, for sufficiently large $j$,
\[
\capp(E_{N_j},K,\Om)
     \le 2^p \int_\Om g_{j}^p \, d\mu + C\eps
    \to C\eps, \quad \text{as } j\to\infty.
\]
Letting $\eps\to0$ finishes the proof,
since the converse implication is trivial.
\end{proof}

\end{document}